\newtheorem{theorem}{Theorem}
\newtheorem{theo}{Theorem}[section]
\newtheorem{theo-app}{Theorem}[section]
\newtheorem{corollary}[theo]{Corollary}
\newtheorem{proposition}[theo]{Proposition}
\newtheorem*{claim}{Claim}
\newtheorem{lemma}[theo]{Lemma}
\theoremstyle{definition}
\newtheorem{example}[theo]{Example}
\newtheorem{remark}[theo]{Remark}
\newtheorem*{remark*}{Remark}
\newtheorem{definition}[theo]{Definition}
\numberwithin{equation}{section}
\newcommand{\eqdef}{:=}
\renewcommand{\hat}{\widehat}
\newcommand{\sA}{\mathscr{A}}
\DeclareMathOperator{\BD}{BD}
\DeclareMathOperator{\Indiff}{Indiff}
\newcommand{\R}{\mathbb{R}}
\newcommand{\N}{\mathbb{N}}
\DeclareMathOperator{\diam}{diam}
\DeclareMathOperator{\dist}{Dist}
\DeclareMathOperator{\interior}{int}
\DeclareMathOperator{\Lip}{Lip}
\DeclareMathOperator{\Crit}{{\rm Crit}}
\DeclareMathOperator{\Comp}{{\rm Comp}}
\def\bN{\mathbb{N}}\def\bC{\mathbb{C}}
\def\bR{\mathbb{R}}
\def\cA{\mathscr{A}}
\def\cL{\EuScript{B}}
\def\cU{\EuScript{U}}
\def\cM{\EuScript{M}}
\def\cL{\EuScript{L}}
\def\cO{\mathcal{O}}
\def\cB{\mathscr{B}}
\def\e{\varepsilon}
\DeclareMathSymbol{\varnothing}{\mathord}{AMSb}{"3F}
\renewcommand{\emptyset}{\varnothing}
\author[K. Gelfert]{Katrin Gelfert} \address{Instituto de Matem\'atica, Universidade Federal do Rio de Janeiro,
Cidade Universit\'aria - Ilha do Fund\~ao, Rio de Janeiro 21945-909,  Brazil}
\email{gelfert@im.ufrj.br}
\author[F. Przytycki]{Feliks Przytycki$^\dag$} \address{Institute of Mathematics, Polish Academy of Sciences, ul. \'{S}niadeckich 8, 00-956 Warszawa, Poland}
\email{feliksp@impan.pl}
\author[M. Rams]{{Micha\l} Rams$^\dag$} \address{Institute of Mathematics, Polish Academy of Sciences, ul. \'{S}niadeckich 8, 00-956 Warszawa, Poland}
\email{rams@impan.pl}
\thanks{
This paper was partially supported by CNPq, Faperj,  Pronex (Brazil), and BREUDS.
$\dag$Partially supported by Polish MNiSW Grant N N201 607640.
}
\begin{document}

\date{\today}

\title[Lyapunov spectrum]{Lyapunov spectrum for multimodal maps}

\begin{abstract}
We study the dimension spectrum of Lyapunov exponents for multimodal maps
of the interval and their generalizations.
We also present related results for rational maps on the Riemann sphere.
\end{abstract}

\keywords{}
\subjclass[2010]{%
37D25, 
37D35, 
28A78, 
37E05,  
37C45, 
37F10
}

\maketitle
\tableofcontents

\section{Introduction}

This paper is an interval multimodal mapping counterpart of the authors' earlier paper on complex rational maps on the Riemann sphere, \cite{GelPrzRam:10}. Some new ideas yield also a progress in the complex setting (see Appendix A).

We consider a $C^3$ (or $C^2$ with bounded distortion)  multimodal map $f$ on a finite union $\hat I$ of pairwise disjoint closed intervals in $\bR$ and the iteration of $f$ on the maximal forward invariant subset $K$ (in other words: $K$ is  $f$-forward invariant and $f|_K$ satisfies the Darboux property\footnote{See \cite[Appendix A]{PrzRiv:13} and references therein.}). We assume that $f|_K$ is topologically transitive and has positive topological entropy. Given $\alpha\in\bR$, we consider the level sets of Lyapunov exponents of $f$
\[
	\cL(\alpha):=
	\big\{x\in K\colon \chi(x):=\lim_{n\to\infty} \frac1n \log\,\lvert (f^n)'(x)\rvert=\alpha\big\}
\]	
and study the function $\alpha\mapsto \dim_{\rm H}\cL(\alpha)$, which is called Hausdorff dimension spectrum of level sets $\cL(\alpha)$ for Lyapunov exponents, or shortly, the  \emph{Lyapunov spectrum}.
We prove that this spectrum can be expressed
in terms of the Legendre transform of the \emph{geometric pressure function}, that is, of the function $t\mapsto P(f|_K, -t\log\,\lvert f'\rvert)$.
We only allow non-flat critical points in $K$.

This is a classical result in the uniformly hyperbolic case, see e.g. \cite{Ols:95,Wei:99,Pes:97} and earlier
papers by mathematical physicists, see e.g. \cite{EckPro:86,ColLebPor:87,Ran:89}.
But our system is just `chaotic' in the sense that it is topologically transitive and has positive topological entropy.
Observe that our system has a uniformly hyperbolic factor obtained in the following way:
 Identifying end points of gaps (intervals complementary to $K$ in case that $K$ Cantor set; it can be {\it a priori} a finite union of closed intervals $\hat I\,$),  by a continuous change  of coordinates we get a piecewise continuous piecewise monotone map of the interval with constant slope $h_{\rm {top}}(f|_K)$. In particular, this map is piecewise uniformly hyperbolic, see \cite[Appendix A]{PrzRiv:13}. Unfortunately, the geometric potential $x\mapsto -t\log\,\lvert f'(x)\rvert$ does not survive this coordinate change. Hence this approach would not help studying the geometric properties of the original map.

We are also studying the irregular part of the spectrum, that is, the level sets of irregular points
\[
\cL(\alpha,\beta) \eqdef
 \left\{x\in K\colon \underline\chi(x)=\alpha, \, \overline\chi(x)=\beta\right\},\quad
 \alpha<\beta
\]
for the lower and the upper Lyapunov exponents.
In some cases we obtain a precise formula, conjectured in previous papers in both the real and the complex cases. Our main tools are:  hyperbolic approximation to show lower bounds and conformal measures and their generalizations to show upper bounds.
The existence of those conformal measures has been proved via an inducing scheme in \cite[Theorem A]{PrzRiv:13}.

In particular, we show that some of certain level sets are empty. Denoting by $\chi_{\inf}$ the infimum of exponents of  $f$-invariant probability measures on $K$, we obtain that $\chi(x)\ge \chi_{\inf}$ for every $x\in K$ for which $\chi(x)$ exists and  is different from $-\infty$. This was conjectured before  in
\cite{IomTod:11,IomTod:13}\footnote{After this paper was written, J. Rivera-Letelier explained to us that $\chi(x)\ge\chi_{\inf}$ follows easily from $-\lim_{n\to\infty} \frac1n\log\max\,\lvert W_n\rvert\ge\chi_{\inf}$, where the maximum is taken over all connected components $W_n$ of preimages $f^{-n}(W)$ and $W\subset \hat I$ a sufficiently small interval. The latter inequality was proved in \cite{R-L:} under additional, but not substantial, assumptions on $f$. See Appendix B for details.}.

Moreover, in the case that $K=\hat I$,  $f$ is non-exceptional, and has no indifferent periodic orbits in $K$, we conclude that $\underline\chi(x)^\sharp\ge \chi_{\inf}$ provided $\overline\chi(x)>0$. Here  $\underline\chi(x)^\sharp$ denotes a certain non-negative average of $\underline\chi(x)$ and $\overline\chi(x)$.

The novelty of this paper, compared to methods used in \cite{GelPrzRam:10}, is that we investigate not only noncritical pull-backs but also allow pull-backs of bounded criticality (Topological Collet-Eckmann type feature).

Our setting of multimodal maps has been elaborated recently by FP and J.~Rivera-Letelier~\cite{PrzRiv:13}. Substantial earlier contributions in this theory have been done in particular by F. Hofbauer, G. Iommi and M. Todd.
\smallskip

\noindent\textbf{Acknowlegement.}
The authors thank G.~Levin and J.~Rivera-Letelier for  inspiring discussions.

\subsection{Generalized Multimodal Maps}\label{ss:generalized}

In this paper we work in the {\it multimodal} setting elaborated in \cite{PrzRiv:13}.
We will consider a $C^r$ map $f$ on a certain open subset of the real line into the real line. We assume $r=2$ if not stated otherwise. A point $x$ is said to be \emph{critical} if  $f'(x)=0$.
We denote by $\Crit(f)$ the set of all critical points of $f$.
We denote by $\Crit_T(f)\subset\Crit(f)$ the set of all turning critical points and by $\Crit_I(f)\subset \Crit(f)$ the set of all inflection critical points.
We say that $f$ is \emph{non-flat} at a critical point $c$ if
 $f(x)=\pm\lvert\phi(x)\rvert^d +f(c)$ for some $d\ge 2$ and $\phi$ a $C^r$ diffeomorphism in a neighborhood of $c$ with $\phi(c)=0$, see \cite[Chapter IV]{deMvanS:}.
In this paper we assume that $f$ has only a finite number of all critical points and that all of them are non-flat.
We call such a map {\it multimodal}.

We denote by $\deg f$ the geometric degree of $f$, that is,
\[
	\deg f:=\sup_{x\in K}\#f^{-1}|_K(\{x\})\,.
\]

Let $K\subset\bR$ be an infinite compact set in the domain of $f$. We assume that $K$ is \emph{forward invariant} under $f$, that is $f(K)= K$. We also assume that $f\colon K\to K$ is \emph{topologically transitive}, that is, for all $V_1,V_2\subset K$ nonempty and open in $K$ there is $n\ge1$ such that $f^n(V_1)\cap V_2\ne\emptyset$.

We usually will assume that $f|_K$ has positive topological entropy.

We assume that there exists a covering of $K$ by a finite family of pairwise disjoint closed intervals $\hat I_j$ in the domain of $f$, with end points in $K$,
such that $K$ is the maximal compact forward invariant set in $\hat I_K\eqdef\bigcup_j\hat I_j$.

We assume that $\hat I_K\setminus K$ does not contain critical points.

Furthermore, we consider a $C^r$ extension of $f$ to some neighborhoods $U^j$ of $\hat I_j$ such that $U^j$ are pairwise disjoint and none of $U^j\setminus K$ contains a critical point. Let $U=\bigcup_j U^j$. Sometimes to distinguish $U$ we write $U_K$.

Following \cite{PrzRiv:13} we denote the space of the quadruples $(f,K,\hat I_K, U)$ satisfying the above properties by $\sA_+^r$. We simply write $\sA_+$ if $r=2$.
As in~\cite{PrzRiv:13}, the subscript $+$ indicates that $f|_K$ has positive topological entropy. We will omit it in case this assumption is not necessary. Sometimes we abbreviate to $(f,K, \hat I_K)$, $(f,K,U)$, or just $(f,K)$.
For such quadruples, triples, couples, we sometimes use the name {\it generalized $\sA$-multimodal maps} or {\it systems}.

If  $(f,K)\in\sA$, then $K$ is either the union of a finite collection of compact intervals $K=\hat I_K$ or a Cantor set, see \cite[Lemma 2.1]{PrzRiv:13}.

\subsection{Periodic orbits}

We call a point $p\in U$ \emph{periodic} if there exists $m\ge 1$ such that $f^m(p)=p$ and we call the smallest such number $m$ its \emph{period}. We denote by $\cO(p)$ its periodic orbit and define the \emph{basin of attraction} of $\cO(p)$ by
\[
	\cB(p)\eqdef
	\interior \{x\in I\colon  f^n(x)\to \cO(p)\text{ as } n\to\infty\}.
\]
The orbit $\cO(p)$ is called \emph{attracting} if $\cO(p)\subset \cB(\cO(p))$. The orbit is called \emph{repelling} if $\lvert (f^m)'(p)\rvert\ge1$ and some small neighborhood $W$ of $\cO(p)$ is forward invariant for $g={f|_W}^{-1}$ and satisfies $g^n(W)\to\cO(p)$ as $n\to\infty$. If $\lvert(f^m)'(p)\rvert>1$ then $p$ is said to be \emph{hyperbolic repelling}. In the remaining cases we call $\cO(p)$ \emph{indifferent}. We denote the set of indifferent periodic points by $\Indiff(f)$.

The multimodal systems we study in this paper are natural generalizations of the class of multimodal maps of the interval $f\colon I\to I$ (extended to a neighborhood $U$ of $I$), where $K=I\setminus \bigcup_p\cB(p)$ is called \emph{Julia set}, see
\cite[Example 1.8]{PrzRiv:13} or \cite[Chapter IV]{deMvanS:}. A wider class of examples are so-called {\it basic sets} in the spectral decomposition, see again \cite[Example 1.9]{PrzRiv:13} or \cite[Chapter III.4]{deMvanS:}.

A special case occurs if $K=\hat I_K$ and, in particular, if $K=I$. This will allow us to prove stronger forms of Theorem~\ref{main3} and Theorem~\ref{strangestrange}.

For $(f,K)\in\sA$ there are no periodic orbits in $\hat I_K\setminus K$. Further, periodic orbits in $K$ are either hyperbolic repelling (and there are infinitely many such orbits if $(f,K)\in\sA_+$) or non-hyperbolic repelling or indifferent (and of those two types there are at most finitely many orbits).
By changing $f$ outside $K$ and shrinking $U$ if necessary, we can also achieve that  periodic orbits outside $K$ are hyperbolic repelling (see \cite[Remark 1.6 and Appendix A]{PrzRiv:13}).

\begin{definition}\label{wi}
For $(f,K,U)\in \sA$ we call $K$  \emph{weakly isolated},  if there exists an open set $U'\subset U$ such that every $f$-periodic orbit $\cO(p)$ which is contained in $U'$ must be contained in $K$.
\end{definition}

Recall that in the definition of $\sA$ we required that $K$ is the maximal invariant set in $\hat I_K$, so the weak isolation condition  is not automatically satisfied, {\it a priori} there can be hyperbolic repelling periodic orbits close to $K$ intersecting $\bR\setminus \widehat I_K$, see \cite[Example 2.12]{PrzRiv:13}.

If $K$ is maximal in an open neighborhood of $K$ then it is weakly isolated, and even isolated, see the definition in Subsection~\ref{ss:pressure}, but observe that $\hat I_K$ does not contain an open neighborhood of $K$.

In the main theorems of this paper we need to assume that $K$ is weakly isolated. Notice however that if $K$ is Julia set (see the definition above), then the weak isolation condition is automatically satisfied.

\subsection{Distortion} 

Let us introduce some more notation following the terminology in~\cite{deMvanS:}.
We denote by
\[
	\dist g|_Z \eqdef \sup\limits_{x,y\in Z}\frac{\lvert g'(x)\rvert}{\lvert g'(y)\rvert}
\]
the maximal \emph{distortion} of a differentiable map $g$ on a set $Z$.
Given two intervals $U\subset V$, we say that $V$ is an \emph{$\varepsilon$-scaled neighborhood} of $U$ if $V\setminus U$ has two components of length $\varepsilon\lvert U\rvert$. In this case we also denote
\begin{equation}\label{notescaled}
	V \eqdef (1+2\varepsilon)\diamond U.
\end{equation}
Correspondingly, we write
\[
	U=(1+2\varepsilon)^{-1}\diamond V\,.
\]
We say that $(f,K)\in \sA$ satisfies a \emph{bounded distortion condition} if there exists $\delta>0$ such that for every $\varepsilon>0$ there is $C=C(\varepsilon)>0$ such that the following holds:
For every  pair of intervals $I_1, I_2 \subset U$ such that $I_1$ intersects $K$ and $\lvert I_2\rvert\le \delta$ and every $n>0$ for which $f^n$ maps an interval $I_1'$ containing $I_1$ diffeomorphically onto an interval $I_2'$ being the $\varepsilon$-scaled neighborhood of $I_2$ then
\begin{equation}\label{disttt}
	\dist f^n|_{I_1}\le C(\varepsilon).
\end{equation}
This easily implies that there exists $\tau(\varepsilon)$ such that $I_1'$ contains a $\tau(\varepsilon)$-neighbourhood of $I_1$. If $(f,K)\in \sA$ satisfies bounded distortion condition, we write $(f,K)\in\sA^{\BD}$.

\begin{remark}\label{rem:boudeddistorionremark}
For $(f,K)\in\sA^{\BD}$, the only periodic orbits in $K$ are either hyperbolic repelling, or indifferent one-side repelling (see \cite[Remark 1.11]{PrzRiv:13}).

If $(f,K)\in \sA^3$ and all periodic orbits in $K$ are hyperbolic repelling, then $f$ can be changed outside $K$ if necessary, so that $(f,K)\in \sA^{\BD}$ (see \cite[comments after Theorem A, Remark 2.14, and Lemma A.4]{PrzRiv:13}). The $\sA^{\BD}$ assumption can be therefore considered as the weaker and the appropriate one  if only facts concerning $f|_K$ are asserted. Indeed, it assumes only  $C^2$ smoothness and allows indifferent periodic orbits in $K$.
\end{remark}

\subsection{Exceptional sets}\label{exceptional}

Denote by $NO(f,K)$ the set of points where $f|_K$ is not open.
Observe that $NO(f,K)\subset \Crit_T(f)\cup\partial\hat I_K$ (see also~\cite[Lemma 2.2]{PrzRiv:13}).
We call $S\eqdef \Crit(f)\cup\partial\hat I_K$ the \emph{singular set} and $S'\eqdef \Crit(f)\cup NO(f,K)$ the \emph{restricted singular set}.
 In fact, $S'$ plays a more substantial role in this paper (and in \cite{PrzRiv:13}) than $S$ does.

Given an arbitrary finite set $\Sigma\subset K$, we call a nonempty set $E\subset K$ \emph{weakly $\Sigma$-exceptional}, if $E$ is non-dense in $K$ and satisfies
\begin{equation}\label{except-condition}
	{f|_K}^{-1}(E)\setminus  \Sigma \subset E
	\,.
\end{equation}	
Condition~\eqref{except-condition} can be read that all backward trajectories for $f$ which start in $E$ and go outside $E$ are ``immediately blocked'' by $\Sigma$.
We say that $E$ is \emph{$\Sigma$-exceptional} if it is forward invariant and $\Sigma$-exceptional.

Observe that the union of (weakly) $\Sigma$-exceptional sets is (weakly) $\Sigma$-exceptional. Note that, by~\cite[Proposition 2.7]{PrzRiv:13}, all weakly $\Sigma$-exceptional sets $E$ are finite and have a uniformly bounded cardinality.
Therefore, there is a finite maximal weakly $\Sigma$-exceptional set. Indeed, the union of all weakly $\Sigma$-exceptional sets clearly has the property \eqref{except-condition} and is not dense since there is a uniform bound for its cardinality.

Finally we call a point $x\in K$ \emph{(weakly) $\Sigma$-exceptional} if $x$ belongs to a (weakly) $\Sigma$-exceptional set.

We shall apply this for $\Sigma=S'$. If there does not exist any $S'$-exceptional set, we call $f$ \emph{non-$S'$-exceptional} or just \emph{non-exceptional}, otherwise we call $f$ \emph{exceptional}.

\begin{example}\label{ex:1}
 	Chebyshev maps such as, for example, $f(x)= x^2-2$ with $K=\hat I_K$ being the unit interval, are $S$-exceptional, but many other interval maps are not.
	
	The critical value of a unimodal map (for example, a quadratic polynomial)
constitutes a (one-element) weakly $S$-exceptional set.
\end{example}

\subsection{Lyapunov exponents}\label{Lyapunov}

We are interested  in the spectrum of Lyapunov exponents of the map $f\colon K\to K$. Given $x$ we denote by $\underline\chi(x)$ and $\overline\chi(x)$ the \emph{lower} and \emph{upper Lyapunov exponent} at $x$, respectively, where
\[
\underline\chi(x)\eqdef\liminf_{n\to\infty}\frac{1}{n}\log\, \lvert(f^n)'(x)\rvert,\quad
\overline\chi(x)\eqdef\limsup_{n\to\infty}\frac{1}{n}\log\, \lvert(f^n)'(x)\rvert.
\]
If both values coincide then we call the common value the
\emph{Lyapunov exponent} at $x$ and denote it by $\chi(x)$. We call such $x$ \emph{Lyapunov regular}.
We allow $\chi(x)=-\infty$. Given $\alpha\in\bR$, let
\[
	\cL(\alpha)\eqdef \big\{ x\in K\colon \chi(x)=\alpha\big\},
\]
that is, the set of Lyapunov regular points in $K$ which have exponent $\alpha$. For given numbers $0\le \alpha\le\beta$ we consider also the following
level sets
\[
\cL(\alpha,\beta) \eqdef
 \left\{x\in K\colon \underline\chi(x)=\alpha, \, \overline\chi(x)=\beta\right\}.
\]
If $\alpha<\beta$ then $\cL(\alpha,\beta)$ is contained in the set of so-called \emph{irregular points}
\[
\cL_{\rm irr}\eqdef\left\{ x\in K\colon \underline\chi(x)<\overline\chi(x)\right\}.
\]

Given a compact $f$-invariant set $X\subset K$, we denote by $\cM(X)$ the set of all $f$-invariant Borel probability measures supported on $X$. We denote by $\cM_{\rm E}(X)$ the subset of ergodic measures contained in $\cM(X)$. We simply write $\cM=\cM(X)$ and $\cM_{\rm E}=\cM_{\rm E}(X)$ if $X=K$.

Note that in the following definitions it is irrelevant whether we consider $\mu\in\cM_{\rm E}$ or $\mu\in\cM$.  Given $\mu\in\cM$, we denote by $h_\mu(f)$ the \emph{measure theoretic entropy} of $\mu$ and by
\[
	\chi(\mu)\eqdef\int\log\,\lvert f'\rvert\,d\mu
\]
the \emph{Lyapunov exponent} of $\mu$ (if $\mu\in\cM_{\rm E}$ then for $\mu$-a.e. $x$ it is equal to $\chi(x)$ by Birkhoff Ergodic Theorem).
Notice that for $(f,K)\in \sA$ we have $\chi(\mu)\ge 0$ for any probability $f$-invariant $\mu$ supported
on $K$ (see \cite[Theorem B]{Prz:93}, and \cite[Appendix A]{R-L:} for a simplified proof).

\subsection{Pressure and Legendre-like transform}\label{Pressure}

Given $t\in\bR$, we define the function $\varphi_t\colon K\to [-\infty, +\infty]$ by
\begin{equation}\label{varphit}
        \varphi_t(x) \eqdef
         -t\log\,\lvert f'(x)\rvert.
\end{equation}
We define the \emph{variational pressure} of $\varphi_t$ (with respect to $f|_K$)  by
\begin{equation}\label{wirm}
 P_{\rm var}(\varphi_t)\eqdef
      \max_{\mu\in\cM(f)}\left( h_\mu(f)+\int_\Lambda \varphi_t \,d\mu\right)
      =\max_{\mu\in\cM(f)}\Big( h_\mu(f)-t\chi(\mu)\Big).
 \end{equation}
We shall often denote this pressure (and other pressures being equal to it, see \eqref{pressures}) simply by $P(t)$ and call \emph{geometric pressure}.
The function $t\mapsto P(t)$ is convex as the maximum of convex (even affine) functions
$$
t\mapsto
h_\mu(f)-t\,\chi(\mu).
$$
It is monotone decreasing, since for all $\mu\in\cM(f)$ we have $\chi\ge 0$ (see the references above).

Let
 \begin{equation}\label{yu}
	\chi_{\rm inf}\eqdef \inf\{\chi(\mu)\colon\mu\in\cM\},\quad
	\chi_{\rm sup}\eqdef \sup\{\chi(\mu)\colon \mu\in \cM \}.
\end{equation}
The definition of the variational pressure~\eqref{wirm} implies
\[
	\chi_{\rm inf} = \lim_{t\to\infty} - \frac 1 t P_{\rm var}(\varphi_t), \quad
	\chi_{\rm sup} =  \lim_{t\to -\infty} - \frac 1 t P_{\rm var}(\varphi_t).
\]
Observe that $\chi_{\rm inf}$ and $\chi_{\rm sup}$ coincide with $\alpha^-$ and $\alpha^+$, the inclinations of respectively the right and the left asymptotes of $-P_{\rm var}(\varphi_t)$, see~\cite{GelPrzRam:10}.

Given real $\alpha\not=0$, let us denote
\begin{equation}\label{def:Fa}
F(\alpha) \eqdef \frac{1}{\lvert \alpha\rvert} \inf_{t\in\bR}
\left(P(t)+\alpha t \right)
\end{equation}
Observe that $\alpha\mapsto \inf_t(P(t)+\alpha t)$ is a Legendre-like transform of $P(t)$, more precisely $\alpha\mapsto -\inf_t(P(t)-\alpha t)$ is the Legendre transform of the function $P(t)$ (see e.g. \cite{PrzUrb:10,Pes:97}). Note that while the function $\alpha\mapsto \inf_t(P(t)+\alpha t)$ is always concave, the function $\alpha\mapsto F(\alpha)$ is not always concave (see, for example,~\cite{IomKiw:09}). However, on any compact interval it achieves its minimal value at an endpoint. Some of the possible shapes of the graphs of the function $t\mapsto P(t)$ are shown in Figure~\ref{geometric-pressure}, the corresponding graphs of $\alpha\mapsto F(\alpha)$ are shown in Figure~\ref{Fig:Legendretransform}.
Let also
\begin{equation}\label{def:Fb}
F(0) \eqdef \lim_{\alpha\to 0_+} F(\alpha).
\end{equation}
Note that $F\not=-\infty$ precisely on the interval $[\chi_{\inf},\chi_{\sup}]$, where it is non-negative.

Denote
\[
	t_0\eqdef \inf\big\{t\in\bR\colon P_{\rm var}(\varphi_t )= 0\big\}.
\]	
This number is well defined since the latter set is non-empty, see \cite[Proposition 1.19]{PrzRiv:13}.  It coincides with the so-called hyperbolic dimension, that is, the supremum of the Hausdorff dimension of expanding Cantor repellers (see Section~\ref{ss:bridges} and \cite{PrzUrb:10},~\cite{PrzRiv:13}).

\medskip

There are two significantly different behaviors of $f$:

\medskip

\noindent\emph{The case $\chi_{\inf}>0$, Lyapunov hyperbolic condition (LyapHyp).}\footnote{In~\cite{PrzRiv:13} this property is called \emph{Lyapunov}.}
This property implies that $P(t)\to-\infty$ as $t\to\infty$, with the slope bounded away from 0. Note that  for every $(f,K)\in\sA_+^{\BD}$ satisfying the weak isolation condition, LyapHyp  is equivalent to TCE together with the absence of indifferent periodic points (see Section~\ref{ss:TCE} for the definition of TCE and~\cite[Theorem C]{PrzRiv:13}).

\medskip

\noindent\emph{The case $\chi_{\inf}=0$.}
In this case we have $P(t)\equiv 0$ for all $t\ge t_0$. Indeed, by contradiction, assume that $P(t)<0$. Since for all $\mu\in \cM$ by the variational principle we have $A(\mu,t):=h_\mu(f)-t\chi(\mu)\le P(t)$ and $A(\mu,0)=h_{\mu}(f)\ge 0$, we would obtain $\chi(\mu)=(A(\mu,0)-A(\mu,t))/t\ge -P(t)>0$, a contradiction.

\medskip

This distinction is illustrated on Figure~\ref{geometric-pressure}.
For the definition of $t_+$  see Subsection~\ref{ss:phase}.

\medskip

Denote by $\chi^\ast$ the right derivative of $P(t)$ at $t=t_0$. Notice that $F(\alpha)$ attains its maximum at $\chi^\ast$ and note that this maximum is equal to $t_0$. Observe that $F(\alpha)$ is  increasing on $[\chi_{\inf},\chi^\ast]$ and non-increasing
on $[\chi^\ast,\chi_{\sup}]$.
Hence, for every $\alpha\le\beta$ we have
\begin{equation}\label{prop:FFF}
	\min\{F(q)\colon \alpha \le q \le \beta\}
	=\min\{F(\alpha),F(\beta)\}\,.
\end{equation}
In the case  $\chi_{\inf}=0$, the maximum of $F$ is attained at 0, so we have $F(0)=t_0$. If $P(t)$ is not differentiable at $t_0$ then we have $F(\alpha)\equiv t_0$ on the whole interval between the right and the left derivatives of $-P(t)$ at $t_0$.

\begin{figure}
\begin{minipage}[c]{\linewidth}
\centering
\begin{overpic}[scale=.35]{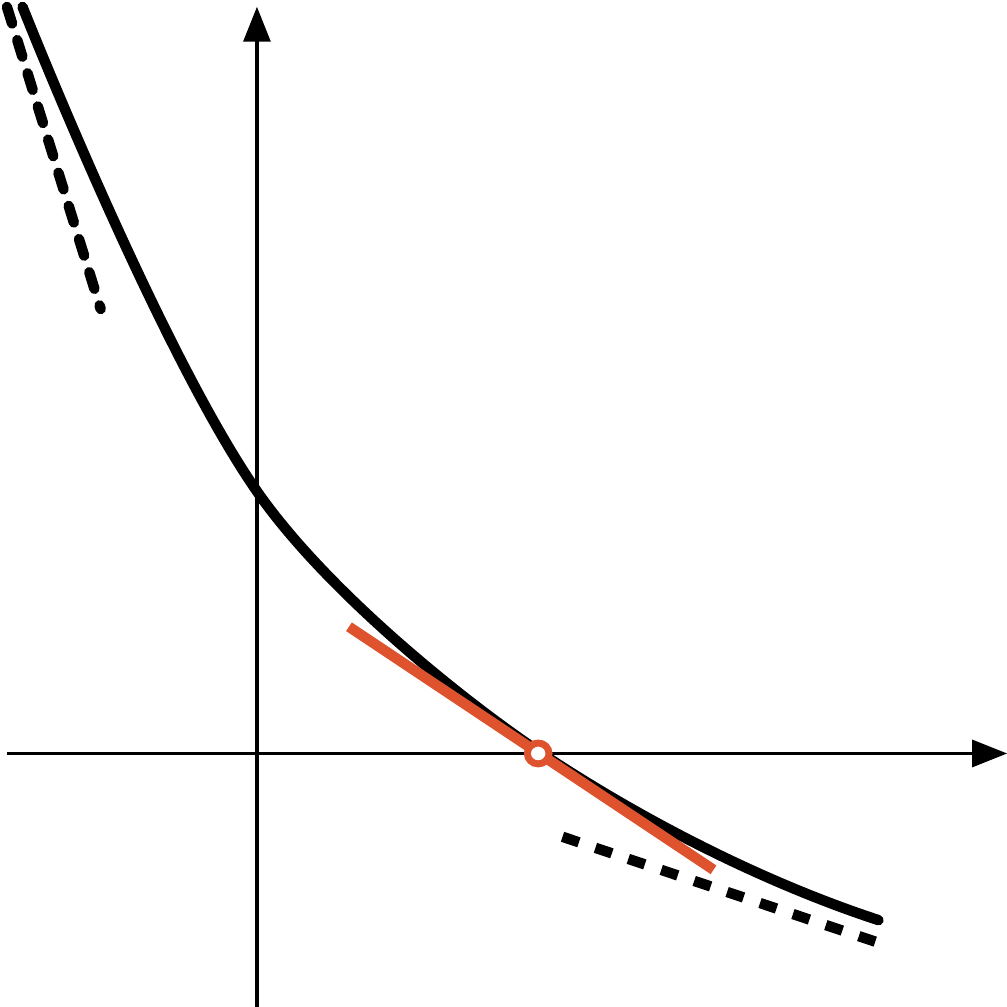}
        \put(102,23){\small$t$}
        \put(52,32){\small$t_0$}
        \put(30,93){\small$P(t)$}
        \put(-15,63){\tiny$-\chi_{\rm sup}$}
        \put(26,28){\tiny$-\chi^\ast$}
        \put(50,4){\tiny$-\chi_{\rm inf}$}
 \end{overpic}
 \hspace{0.5cm}
 \begin{overpic}[scale=.35]{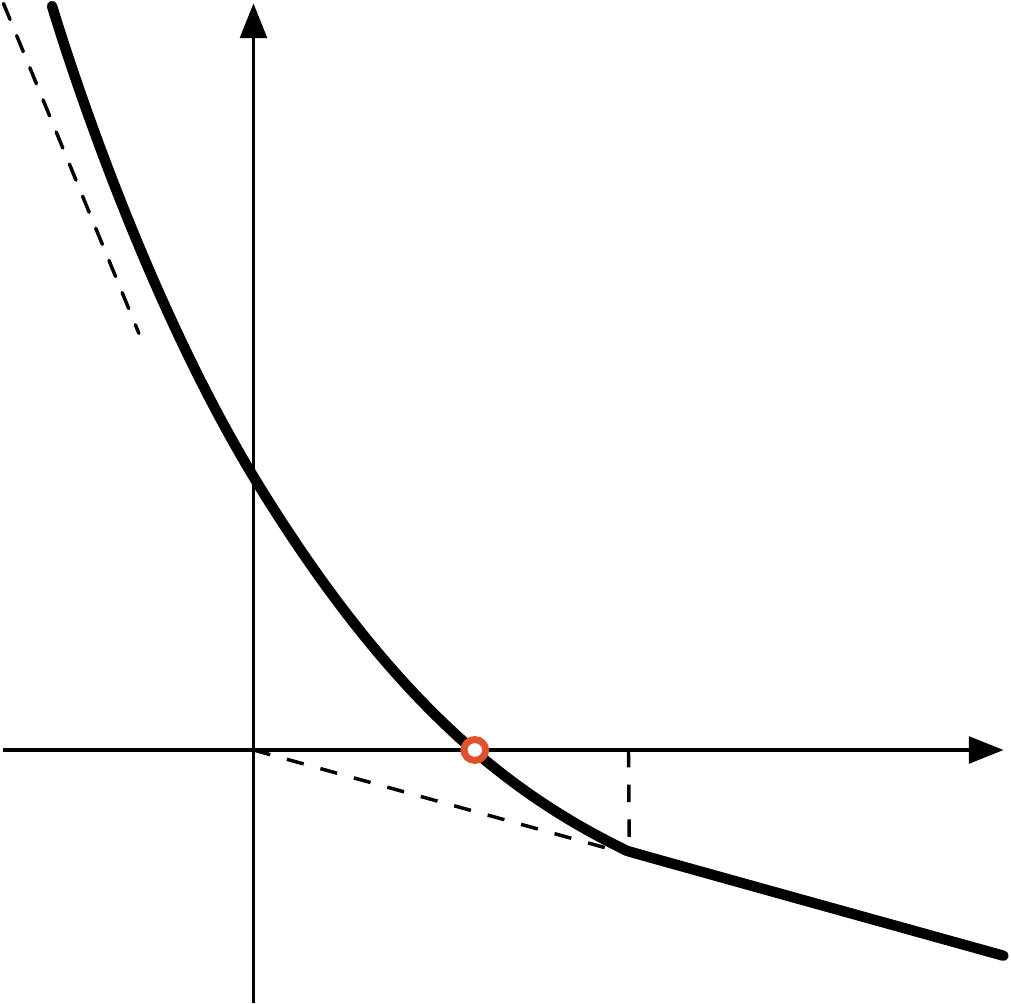}
        \put(102,23){\small$t$}
        \put(44,32){\small$t_0$}
        \put(60,32){\small$t_+$}
        \put(30,93){\small$P(t)$}
        \put(-15,63){\tiny$-\chi_{\rm sup}$}
        \put(60,4){\tiny$-\chi_{\rm inf}$}
\end{overpic}
 \hspace{0.5cm}
\begin{overpic}[scale=.35]{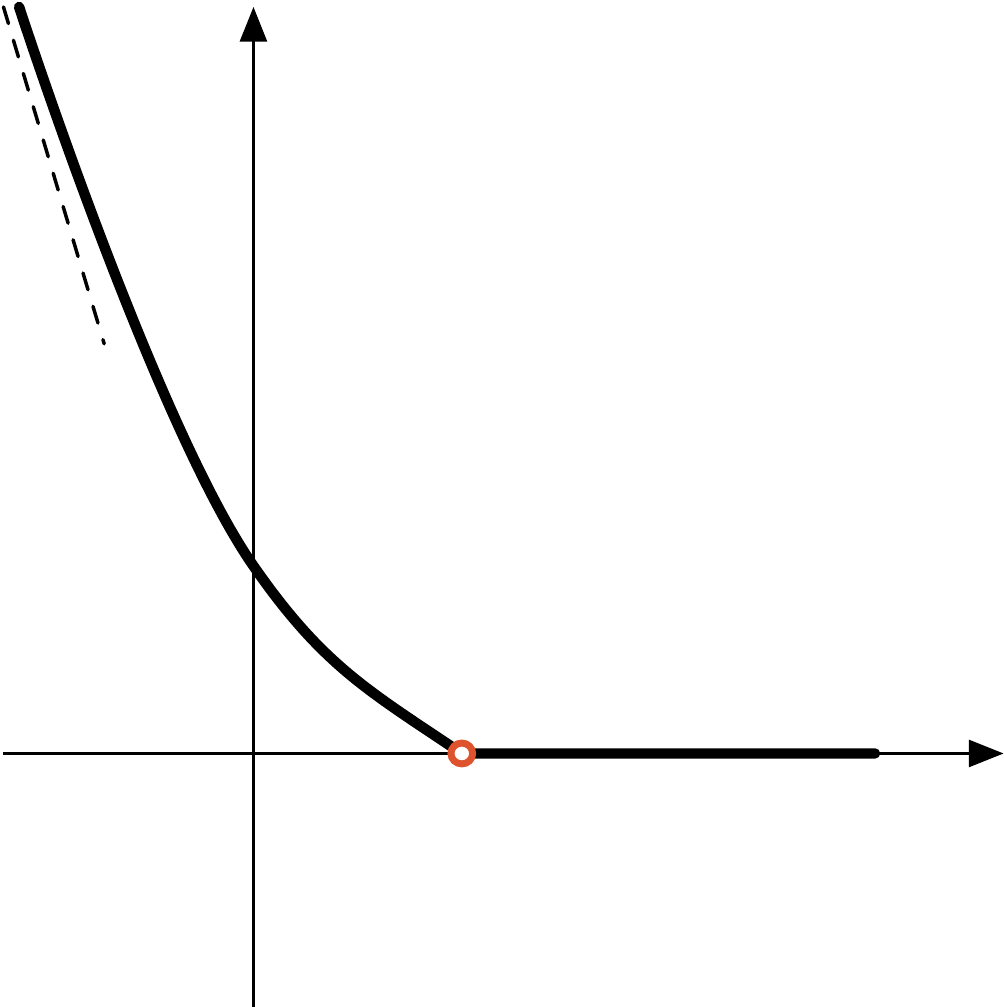}
        \put(102,23){\small$t$}
        \put(44,32){\small$t_0=t_+$}
        \put(30,93){\small$P(t)$}
        \put(-15,63){\tiny$-\chi_{\rm sup}$}
        \put(50,15){\tiny$-\chi_{\rm inf}$}
        \put(50,3){\tiny$=-\chi^\ast=0$}
\end{overpic}
\caption{The geometric pressure: LyapHyp with $t_+=\infty$, LyapHyp with $t_+<\infty$, and non-LyapHyp}
\label{geometric-pressure}
\end{minipage}
\end{figure}

\begin{figure}
\begin{minipage}[c]{\linewidth}
\centering
\begin{overpic}[scale=.35]{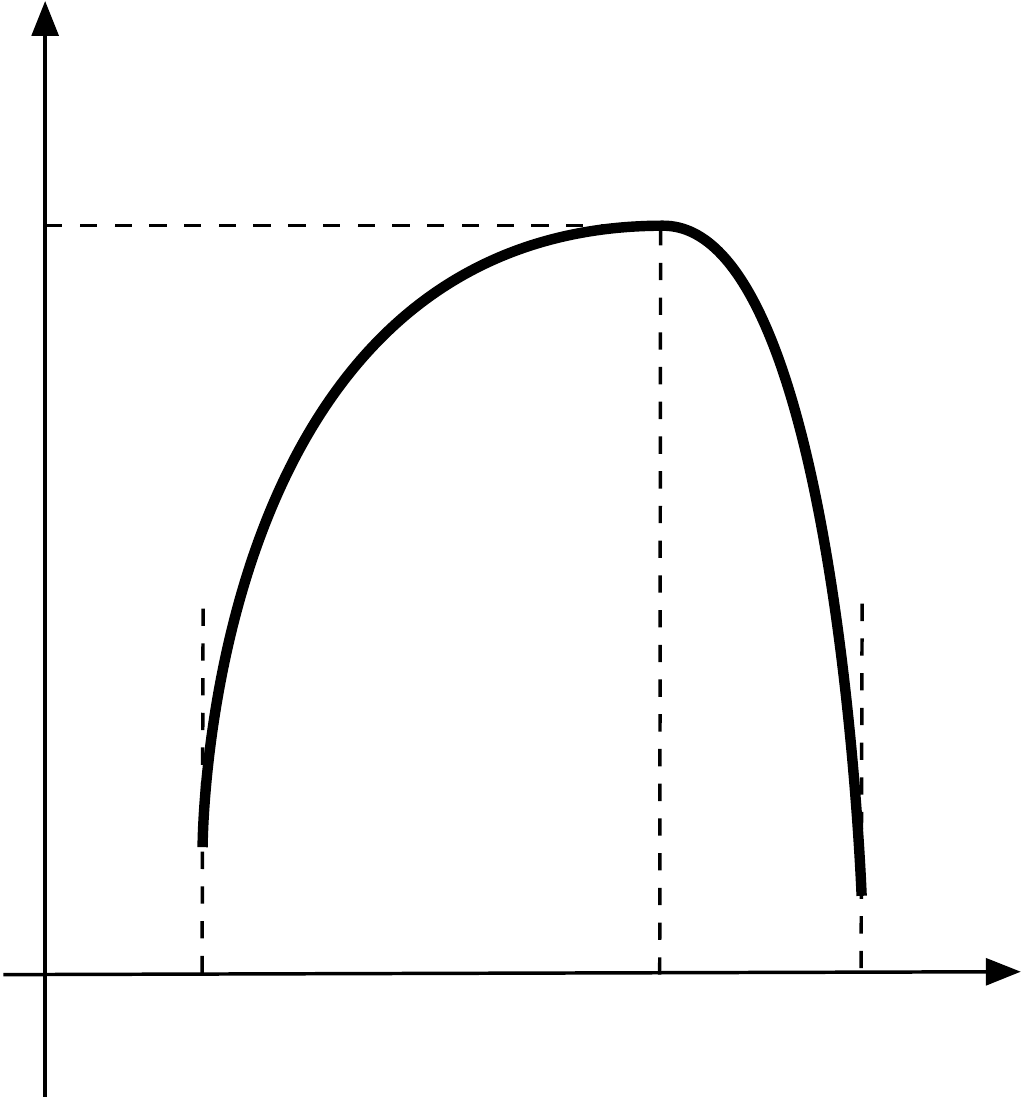}
        \put(8,93){\small$F(\alpha)$}
        \put(95,10){\small$\alpha$}
        \put(-6,78){\small$t_0$}
        \put(15,3){\tiny$\chi_{\rm inf}$}
        \put(56,3){\tiny$\chi_\ast$}
        \put(70,3){\tiny$\chi_{\rm sup}$}
 \end{overpic}
 \hspace{0.5cm}
 \begin{overpic}[scale=.35]{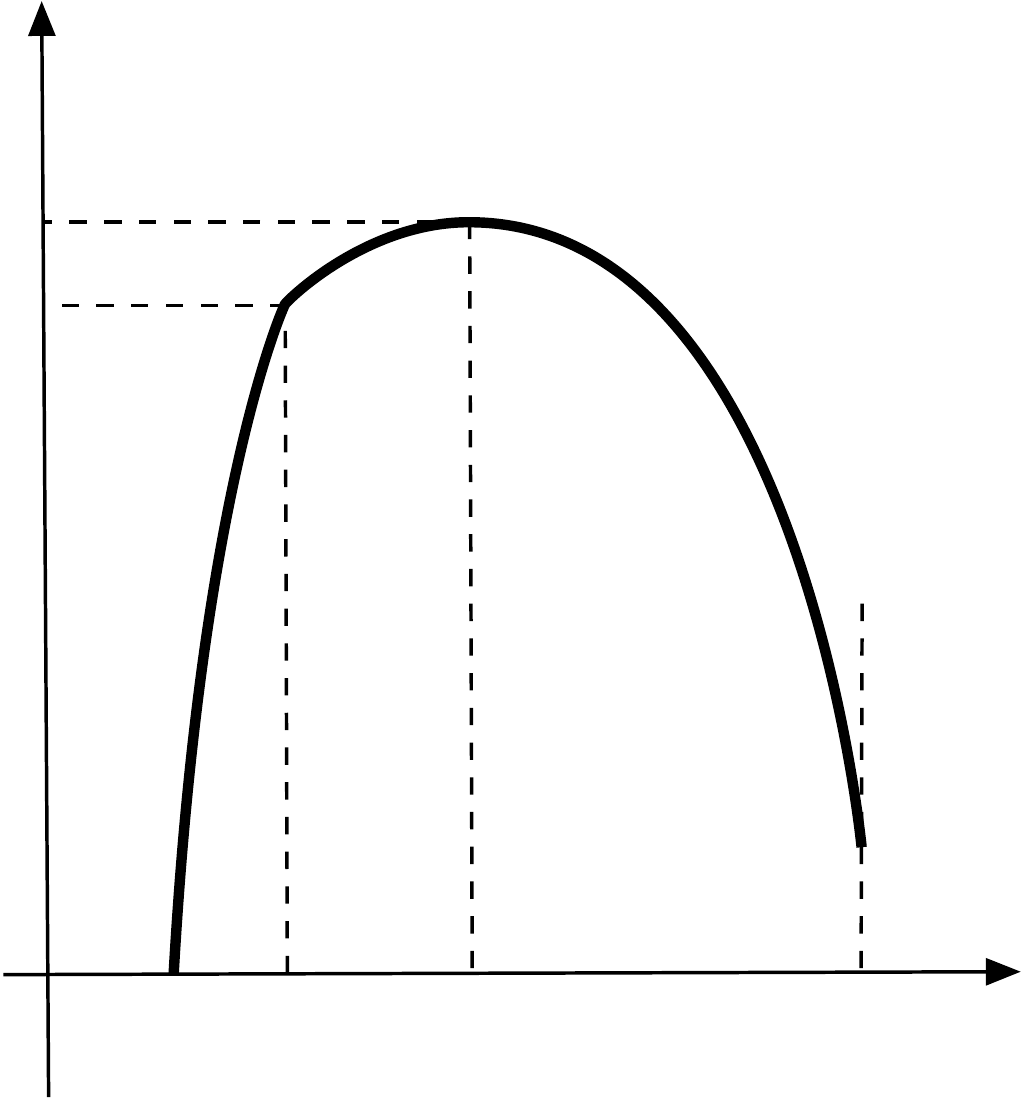}
        \put(8,93){\small$F(\alpha)$}
        \put(95,10){\small$\alpha$}
        \put(-7,78){\small$t_0$}
        \put(-7,70){\small$t_+$}
        \put(9,3){\tiny$\chi_{\rm inf}$}
        \put(39,3){\tiny$\chi^\ast$}
        \put(70,3){\tiny$\chi_{\rm sup}$}
\end{overpic}
 \hspace{0.5cm}
 \begin{overpic}[scale=.35]{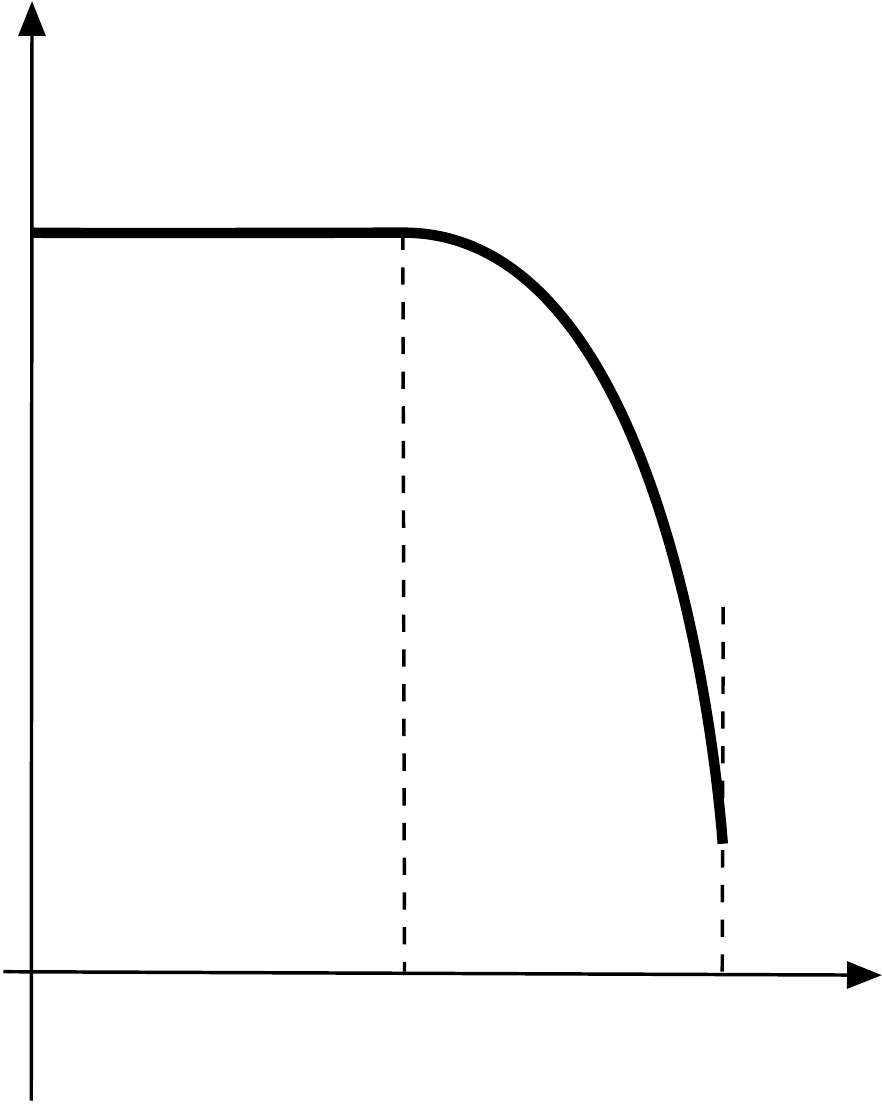}
        \put(-27,77){\small$t_0=t_+$}
        \put(8,93){\small$F(\alpha)$}
        \put(83,10){\small$\alpha$}
        \put(5,3){\tiny$\chi_\ast=0$}
        \put(60,3){\tiny$\chi_{\rm sup}$}
\end{overpic}
\caption{$F(\alpha)$: LyapHyp with $t_+=\infty$, LyapHyp with $t_+<\infty$, and non-LyapHyp}
\label{Fig:Legendretransform}
\end{minipage}
\end{figure}

\subsection{Main results}\label{main}
	
\begin{theorem} \label{main2}
Let $(f,K)\in \sA^{\BD}_+$ satisfy the weak isolation condition and be non-exceptional.
For any $\alpha\le \beta \le \chi_{\sup}$ with $\beta>0$, and additionally assuming $\alpha>0$ if $\chi_{\inf}=0$,  we have

\begin{equation}\label{upper-weak}
\min\{F(\alpha), F(\beta)\}\le \dim_{\rm H} \cL(\alpha, \beta)\le
\max\big\{0,\max_{\alpha\leq q \leq \beta}F(q)\big\} .
\end{equation}
In particular, for any $\alpha\in [\chi_{\inf},\chi_{\sup}] \setminus \{0\}$ we have
\[
\dim_{\rm H} \cL(\alpha) = F(\alpha).
\]
We have also
\[
\dim_{\rm H} \cL(0) \geq F(0)\,.
\]
Moreover,
\begin{equation}\label{empty1}
\left\{ x\in K\colon -\infty<\chi(x)<\chi_{\inf}\right\}
 =\emptyset,
 \end{equation}
 \begin{equation}\label{empty2}
 \left\{ x\in K\colon \overline{\chi}(x)>\chi_{\sup}\right\}
 = \emptyset,
\end{equation}
and
\begin{equation}\label{dim0}
\dim_{\rm H}\left\{ x\in K\colon 0<\overline{\chi}(x)<\chi_{\inf}\right\} = 0.
\end{equation}
\end{theorem}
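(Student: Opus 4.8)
The plan is to establish the four displayed assertions of Theorem~\ref{main2} by combining a variational (thermodynamic) lower bound with a conformal-measure upper bound, treating the Legendre-transform identity and the three exhaustion/emptiness statements separately but using a common toolbox.

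\emph{Lower bound.} To prove $\dim_{\rm H}\cL(\alpha,\beta)\ge\min\{F(\alpha),F(\beta)\}$, and in particular $\dim_{\rm H}\cL(\alpha)\ge F(\alpha)$, I would use hyperbolic approximation: for $\alpha\in(\chi_{\inf},\chi_{\sup})\setminus\{0\}$ and any $\e>0$ there is an expanding Cantor repeller $X\subset K$ (invariant under some iterate, with uniform hyperbolicity and bounded distortion) carrying an ergodic measure $\mu$ with $\chi(\mu)$ close to $\alpha$ and $h_\mu(f)-t\chi(\mu)$ within $\e$ of $P(t)$ for a chosen optimal $t$; then $h_\mu(f)/\chi(\mu)$ approximates $F(\alpha)$, and by the Bowen-Ruelle-Manning-McCluskey dimension formula on the repeller the generic set of $\mu$ has Hausdorff dimension close to $h_\mu(f)/\chi(\mu)$. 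Since this generic set lies in $\cL(\alpha)\subset\cL(\alpha,\beta)$ when $\alpha=\beta$, and for $\alpha<\beta$ one splices together (in a Moran-like alternating construction along subsystems realizing $\alpha$ and $\beta$) orbit segments so that $\underline\chi=\alpha$, $\overline\chi=\beta$ while keeping the dimension of the resulting set close to $\min\{F(\alpha),F(\beta)\}$, the lower bound in~\eqref{upper-weak} follows. The alternating construction and the verification that it produces the exact $\liminf$/$\limsup$ values without dropping below $\min\{F(\alpha),F(\beta)\}$ in dimension is the technically delicate part; it is where the novelty mentioned in the introduction (allowing pull-backs of bounded criticality) enters to make the repellers large enough.

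\emph{Upper bound and emptiness statements.} For the upper bound $\dim_{\rm H}\cL(\alpha,\beta)\le\max\{0,\max_{\alpha\le q\le\beta}F(q)\}$ I would use the conformal measures from~\cite[Theorem A]{PrzRiv:13}: for each $t$ there is a $(\exp P(t)\cdot\lvert f'\rvert^{t})$-conformal measure $m_t$ on $K$ (a generalized conformal measure when $t\ge t_0$), and a standard covering/mass-distribution argument shows that on the set where $\underline\chi(x)$ and $\overline\chi(x)$ are pinned in $[\alpha,\beta]$ one has, along suitable pull-back neighborhoods with controlled distortion, $m_t$-measure of a ball of radius $r$ bounded below by roughly $r^{(P(t)+q t)/q}$ for $q$ ranging in $[\alpha,\beta]$; optimizing over $t$ and using~\eqref{prop:FFF} gives the bound $\max_{\alpha\le q\le\beta}F(q)=\max\{F(\alpha),F(\beta)\}$. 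The emptiness statement~\eqref{empty1} follows from the inequality $\chi(x)\ge\chi_{\inf}$ quoted in the introduction (alternatively, from $P(t)+\alpha t\ge 0$ failing for $\alpha<\chi_{\inf}$, which would force $F(\alpha)=-\infty$ and a contradiction with any point existing there via the conformal-measure estimate); \eqref{empty2} follows symmetrically from the large-exponent side, using that $\overline\chi(x)>\chi_{\sup}$ would produce, via Pliss-type selection of hyperbolic times, an invariant measure of exponent exceeding $\chi_{\sup}$. For~\eqref{dim0}, on $\{0<\overline\chi(x)<\chi_{\inf}\}$ one has $\underline\chi(x)\le\overline\chi(x)<\chi_{\inf}$, so for every $t$ the bound $P(t)+q t$ with $q<\chi_{\inf}$ combined with $P(t)\to-\infty$ at the right slope $-\chi_{\inf}$ (LyapHyp side) or with $P(t)\equiv0$ gives $F(q)=-\infty$ for $q<\chi_{\inf}$, hence the conformal-measure covering estimate forces Hausdorff dimension zero. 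The main obstacle I anticipate is controlling distortion along pull-backs that pass near critical points: one must select, by a TCE-type argument, shrinking neighborhoods and times at which the relevant inverse branches have bounded distortion and exponentially small diameter, and patch the conformal-measure estimate across the finitely many critical points and the non-open points $NO(f,K)$ — this is exactly where the weak isolation and non-exceptionality hypotheses are used, the latter to guarantee that conformal measures are non-atomic and positive on open sets so that the mass-distribution principle applies.

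\emph{Identity $\dim_{\rm H}\cL(\alpha)=F(\alpha)$.} For $\alpha\in[\chi_{\inf},\chi_{\sup}]\setminus\{0\}$ the matching upper and lower bounds from the two paragraphs above (with $\alpha=\beta$, noting $\max_{q=\alpha}F(q)=F(\alpha)\ge0$ on this range) give equality; the one-sided statement $\dim_{\rm H}\cL(0)\ge F(0)$ is the lower bound at $\alpha=\beta=0$, obtained by approximating with repellers of exponent tending to $0_+$ and invoking~\eqref{def:Fb}. I would organize the write-up so that the conformal-measure covering lemma is proved once, in a form that simultaneously yields the upper bound in~\eqref{upper-weak}, \eqref{empty1}, \eqref{empty2}, and~\eqref{dim0}, since all four are instances of "points with exponents confined to a sub-interval $J$ of $[\chi_{\inf},\chi_{\sup}]$ sit in a set of dimension $\le\max_{q\in J}F(q)$, which is $-\infty$ hence the set is empty when $J$ avoids $[\chi_{\inf},\chi_{\sup}]$."
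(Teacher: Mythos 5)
Your plan for the lower bound (hyperbolic approximation by expanding Cantor repellers, the dimension formula on them, and an alternating/splicing construction to realize $\underline\chi=\alpha$, $\overline\chi=\beta$ with dimension at least $\min\{F(\alpha),F(\beta)\}$) and for the dimension upper bounds (a conformal-measure covering estimate along pull-back times with controlled distortion, optimized over $t$ and using \eqref{prop:FFF}) is essentially the route the paper takes, via ECRs, bridges, Proposition~\ref{decaf}, Theorem~\ref{closing}, and Proposition~\ref{prop:conicalupper}. But there is a genuine gap in your treatment of \eqref{empty1}. Your first suggestion — that it ``follows from the inequality $\chi(x)\ge\chi_{\inf}$ quoted in the introduction'' — is circular: that inequality \emph{is} \eqref{empty1}, and it is precisely the part the paper identifies as requiring an entirely new argument (the introduction only announces it; the Rivera-Letelier route is an appendix added after the fact). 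Your second suggestion — deducing emptiness from $F(q)=-\infty$ for $q<\chi_{\inf}$ via the conformal-measure estimate — does not work either, because the conformal-type estimate is only available at points which return to large scale with bounded distortion along pull-backs avoiding $S'$ (the $S'$-conical points). For the remaining points one only knows (Proposition~\ref{prop:wander}) that those with $\overline\chi(x)>0$ form a set of Hausdorff dimension zero; dimension zero is enough for \eqref{dim0} but says nothing about emptiness, and the paper explicitly notes that emptiness of the full (non-conical) level set cannot be obtained this way since the CaS measure requires omitting $S'$. The paper's actual proof of \eqref{empty1} (Theorem~\ref{chi}) is a different argument: using TCE (available because the relevant case is $\chi_{\inf}>0$, so LyapHyp holds and there are no indifferent orbits), one pulls back a ball at TCE times, shows the image $f^n(W_n)$ contains a definite portion of $K$ (the analysis of \eqref{a}, including the delicate case when $f^n(x)$ spends almost all its time near the finite set $\partial_{r/3}$), closes the loop by strong transitivity, and uses weak isolation to produce periodic orbits in $K$ whose exponents approach $\chi(x)$, forcing $\chi(x)\ge\chi_{\inf}$. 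Nothing in your proposal supplies this mechanism, nor any other mechanism converting a single orbit with small exponent into an invariant measure of small exponent.

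Two secondary points. First, Theorem~\ref{main2} allows indifferent periodic orbits, so you cannot invoke the true conformal measures of \cite[Theorem A]{PrzRiv:13}; you need the Patterson--Sullivan-type measures conformal away from $S'$ (the CaS measures $\mu^*_t$), and then the restriction to $S'$-conical points, together with the zero-dimensionality of the non-conical part with positive upper exponent, is not an optional refinement but the step that makes the covering estimate legitimate — you gesture at selecting good times by a TCE-type argument, but you never dispose of the points for which no such times exist. Second, your derivation of \eqref{dim0} should be phrased as: non-$S'$-conical points with $\overline\chi>0$ have dimension zero, and $S'$-conical points with $\overline\chi<\chi_{\inf}$ do not exist (by the shadowing/periodic-orbit argument of Corollary~\ref{cor:conicalupperlower} or by divergence of the CaS-measure estimate); the pure Legendre-transform statement $F(q)=-\infty$ alone does not act on points you cannot reach with the measure. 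The statement \eqref{empty2} via Pliss times and limit invariant measures is fine and matches the paper.
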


The assumption of the above theorem are, for example, satisfied if $(f,K) \in\sA^3_+$ and there are no indifferent periodic orbits (see Remark~\ref{rem:boudeddistorionremark}).

\begin{remark}
Theorem \ref{main2} has the same assertion as \cite[Theorem 2]{GelPrzRam:10}
(except that we do not assume $\alpha>0$ which is in fact not needed neither here nor there).
The strategy of the proof is also the same. However, there are technical differences, and the part which has an entirely new proof, since the holomorphic one does not work, is to show that $\{ x\in K\colon 0\le \chi(x)<\chi_{\inf}\}=\emptyset$ in \eqref{empty1}.
This answers a conjecture in \cite[p.4]{IomTod:13} and \cite[Remark 4.1]{IomTod:11}.
In Theorem~\ref{strangestrange} below, we prove an even a stronger fact under the additional assumptions that $K=\hat I_K$ and that $K$ contains no indifferent periodic orbits.
\end{remark}

Let us mention that the above results extend~\cite{IomTod:11} in the following sense: our results enable us to cover also the boundary points of the spectrum $\cL(\alpha)$ for $\alpha=\chi_{\rm inf}$ and $\chi_{\rm sup}$, as well as the irregular points $\cL(\alpha,\beta)$ for $\alpha<\beta$. Moreover, additionally we are able to consider maps that have indifferent periodic orbits.

Hofbauer~\cite{Hof:10} proves results about the Lyapunov spectrum in a situation which is in some aspects considerably more general than the one discussed in the present paper (he considers piecewise monotone piecewise continuous interval maps), however which does not allow singularities $\pm \infty$ for the geometric potential.

Similarly to the complex case we do not know if $\{x\in K: \chi(x)=-\infty\}=\bigcup_{n\ge 0} f^{-n}(\Crit(f))$, unless there is at most one critical point in $K$. In this case the proof of the equality is the same as in the complex situation.

\medskip

Under additional assumptions we can improve (\ref{upper-weak}), the right hand side inequality, sometimes getting even the equality.

\smallskip

\begin{theorem}\label{main3}
Let $(f,K)\in \sA^{\BD}_+$ (or $\sA^3_+$) satisfy the weak isolation condition and be non-exceptional and without indifferent  periodic orbits.
Then given numbers  $\alpha\le \beta \le \chi_{\sup}$ with $\beta>0$, we have
	\begin{equation}\label{eq:main3ineq}
		\dim_{\rm H}\cL(\alpha,\beta) \le \max\{0,F(\beta)\}.
	\end{equation}
If we assume $K=\hat I_K$ (i.e. $K$ is the finite union of intervals), then
\begin{equation}\label{eq:main3ineq2}
\dim_{\rm H}\cL(\alpha,\beta) \le
\max\big\{0,\min\{F(\alpha^\sharp),F(\beta)\}\big\},
\end{equation}
where
\[
	\alpha^\sharp:=\frac{\beta}{1+(\beta-\alpha)/\chi_{\sup}}\,.
\]	
\end{theorem}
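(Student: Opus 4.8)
The plan is to establish both inequalities in Theorem~\ref{main3} by constructing appropriate covers of $\cL(\alpha,\beta)$ and estimating their Hausdorff content using conformal measures. For \eqref{eq:main3ineq}, I would proceed as follows. Since there are no indifferent periodic orbits and $(f,K)$ satisfies the weak isolation and non-exceptionality hypotheses, for each $t$ slightly larger than the value realizing $F(\beta)$ one can produce (via \cite[Theorem A]{PrzRiv:13}) a conformal measure $\nu_t$ of exponent $P(t)$ for the potential $\varphi_t=-t\log|f'|$, meaning $\nu_t(f(A)) = \int_A e^{P(t)-\varphi_t}\,d\nu_t$ on sets where $f$ is injective. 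The key point is that a point $x$ with $\overline\chi(x)\le\beta$ has, along a subsequence $n_k\to\infty$, pull-backs $W_{n_k}\ni x$ of a fixed small interval $W$ whose derivative grows no faster than $e^{(\beta+\e)n_k}$; controlling the distortion on these pull-backs (here the BD assumption and the absence of indifferent orbits are essential, since criticality of pull-backs must be handled as in the TCE-type arguments the introduction advertises as the novelty over \cite{GelPrzRam:10}), one gets $\diam(W_{n_k}) \gtrsim e^{-(\beta+\e)n_k}$ and $\nu_t(W_{n_k}) \lesssim e^{(P(t)+t(\beta+\e))n_k}$. Summing over a Vitali-type subcover at scale $\to 0$ then bounds the $s$-dimensional Hausdorff measure for any $s > (P(t)+t\beta)/\beta$, and optimizing over admissible $t$ yields $\dim_{\rm H}\cL(\alpha,\beta)\le F(\beta)$; the $\max\{0,\cdot\}$ is forced because dimension is nonnegative while $F(\beta)$ can be negative past $\chi_{\sup}$-type thresholds.

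For the sharper bound \eqref{eq:main3ineq2} under $K=\hat I_K$, the additional input is that when $K$ is a genuine union of intervals one can also use the lower exponent $\alpha$ to one's advantage: a point with $\underline\chi(x)=\alpha$ spends, along another subsequence, a definite proportion of time contracting at rate close to $\alpha$ and the complementary time at rate at most $\beta$, so on suitable pull-backs the effective exponent is an average of $\alpha$ and $\beta$. The weighted average $\alpha^\sharp = \beta/(1+(\beta-\alpha)/\chi_{\sup})$ is exactly the exponent obtained by balancing a stretch at rate $\beta$ against a stretch at rate $\alpha$ where the proportions are dictated by the constraint that the total Birkhoff sum of $\log|f'|$ grow linearly — the denominator encodes how much slower growth at rate $\alpha$ "costs" relative to the maximal rate $\chi_{\sup}$. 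I would run the same conformal-measure covering argument but now at these intermediate scales, getting the bound $F(\alpha^\sharp)$ from one family of pull-backs and $F(\beta)$ from the other, and take the minimum; since both are upper bounds for the same set, their minimum is too. The fact that $K=\hat I_K$ is used to guarantee that the relevant pull-backs actually reach full-size intervals (no Cantor-gap obstructions) so that the distortion and diameter estimates go through cleanly, and also that $\chi_{\sup}$ is attained/approximated by measures whose pull-back geometry one can exploit.

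The main obstacle I expect is the distortion control on pull-backs of bounded criticality. Unlike the holomorphic setting, here critical pull-backs genuinely occur and the real-variable Koebe/bounded-distortion machinery must be combined with a careful bookkeeping of how many critical pull-backs accumulate and with what multiplicity — this is precisely where $\sA^{\BD}$ (and the exclusion of indifferent orbits, which would otherwise destroy the exponential diameter estimates) is indispensable. Concretely, one needs a statement roughly of the form: for $x$ with $\overline\chi(x)\le\beta$ and any $\e>0$, there are infinitely many $n$ and pull-backs $W_n\ni x$ of a fixed $W$ with $\diam(W_n)\ge e^{-(\beta+\e)n}$ and uniformly bounded distortion of $f^n$ on a slightly shrunk $W_n$. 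Deriving this cleanly, tracking the constants through the shrinking-neighborhoods lemma implied by \eqref{disttt}, and making sure the Vitali covering at each scale has controlled overlap, is the technical heart; once it is in place, the pressure/Legendre optimization producing $F(\beta)$ and $F(\alpha^\sharp)$ is routine, essentially identical to the argument in \cite{GelPrzRam:10}.
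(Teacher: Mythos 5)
Your overall architecture (conformal measures from \cite[Theorem A]{PrzRiv:13}, pull-backs along a subsequence of times, Legendre optimization in $t$) matches the paper's, but two of your central estimates point in the wrong direction and the key lemma that makes the scheme work is missing. To bound $\dim_{\rm H}\cL(\alpha,\beta)$ from above one needs, at the chosen times $n$, an \emph{upper} bound on $\diam\Comp_x f^{-n}(B(f^n(x),r))$ (of order $e^{-n(q-\e)}$) together with a \emph{lower} bound on its conformal measure (of order $e^{-n(P(t)+tq+\e)}$), so that the lower local dimension of $\mu_t$ at $x$ is at most $(P(t)+tq)/q$ and Frostman's lemma applies; your claimed estimates $\diam(W_{n_k})\gtrsim e^{-(\beta+\e)n_k}$ and $\nu_t(W_{n_k})\lesssim\dots$ are the reverse inequalities and a Vitali-type cover with only per-piece measure upper bounds does not bound a Hausdorff sum. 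Relatedly, the hypothesis $\overline\chi(x)\le\beta$ gives only upper bounds on derivatives and cannot by itself produce shrinking pull-backs; what is needed is a set of times of positive upper density that are simultaneously (restricted) Pliss hyperbolic (so Proposition~\ref{p.telescope} forces exponential shrinking), of bounded criticality in the sense of \eqref{Shrink2} (which is \emph{not} a consequence of the BD hypothesis or absence of indifferent orbits, since no TCE is assumed for an arbitrary point of $\cL(\alpha,\beta)$ -- the paper obtains it from the ``shadows'' argument based on $\sum'-\log|f^j(x)-c|\le Qn$), and at which $\tfrac1n\log|(f^n)'(x)|$ is pinned near a prescribed $q$. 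This is exactly Lemma~\ref{Levin} (built on Lemmas~\ref{geometric1} and~\ref{lemmaalphasharp}), and it is also where $\alpha^\sharp$ really comes from: hyperbolic times can only be found with finite-time exponent $q\in[\alpha^\sharp,\beta]$, because the roof function $\Phi^\sigma$ has $\liminf\le\alpha^\sharp$ with the slope controlled by $L_0\le\chi_{\sup}$; your ``averaging stretches at rates $\alpha$ and $\beta$'' heuristic gestures at this but does not identify the mechanism, and you name this whole block as the expected obstacle without resolving it.

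You also misplace the role of $K=\hat I_K$ and consequently cannot explain why the general bound \eqref{eq:main3ineq} is only $F(\beta)$ while \eqref{eq:main3ineq2} adds $F(\alpha^\sharp)$. The delicate point (Proposition~\ref{upper}, Step 3) is not distortion on the pull-back but the image: in the interval setting $f^n(W_n)$ need not be the full ball $B(f^n(x),r/2)$, so one must show $\mu_t(f^n(W_n))$ is bounded below. When $K=\hat I_K$ an endpoint of $W_n$ lies in $K$, hence $f^n(W_n)$ contains a definite ball around a point of $K$ and $\mu_t(f^n(W_n))\ge\Upsilon(\mu_t,r/4)$, so one may take $q=\alpha^\sharp$ and obtain $F(\alpha^\sharp)$. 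In the Cantor case the orbit may persistently return near the boundary of large gaps, and the paper's remedy (the $\partial_{r/3}$ analysis, shadowing a nearby repelling periodic point, and the auxiliary property \eqref{auxiliary}) only yields exponents $q_i\in[q,\beta]$ rather than $q_i\equiv q$, which is precisely why only $q=\beta$, i.e.\ the bound $F(\beta)$, survives in general. None of this appears in your proposal, so as written the proof of both inequalities has a genuine gap rather than a merely technical one.
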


Note that $\alpha^\sharp$ is positive and $\alpha < \alpha^\sharp < \beta$ (except for the case $\alpha=\beta$, when $\alpha^\sharp = \alpha = \beta$).

\begin{remark}
If $F(\alpha)\ge F(\beta)$ then, together with Theorem~\ref{main2} we obtain the  equality
\begin{equation}\label{eq:main3ineqeasy}
\dim_{\rm H}\cL(\alpha,\beta)= F(\beta)\,.
\end{equation}
In particular, we obtain the equality \eqref{eq:main3ineqeasy} for non-LyapHyp systems $(f,K)$ (systems where $\chi_{\inf}=0$).

For uniformly hyperbolic systems for all $\chi_{\inf}\le\alpha\le\beta\le \chi_{\sup}$ the equality
\[
	\dim_{\rm H}\cL(\alpha,\beta)=\min \{F(\alpha) , F(\beta)\}
\]	
holds.
The question whether this equality holds in general remains open.
\end{remark}

The trouble with improving in the right hand estimate \eqref{upper-weak} the maximum by the minimum, was that the  times one can go to large scale avoiding $S'$ for a conical point $x$ (see Subsection~\ref{ss:conical points}) are not related to the times of an {\it a priori} given Lyapunov exponent between $\alpha$ and $\beta$. The new idea is to consider a larger set of (restricted Pliss hyperbolic) times where one can go to large scale with finite criticality, thus allowing us to control the exponent.

We deduce also the following strong version of a part of the property (\ref{empty1}) of Theorem~\ref{main2}.

\begin{theorem}\label{strangestrange}
Let $(f,K)\in \sA_+^{\BD}$ (or $\sA_+^3$) satisfying $K=\hat I_K$ and being non-exceptional and without indifferent periodic  orbits. Then,
given numbers $\alpha\le\beta$ with  $\alpha^\sharp<\chi_{\inf}$ and $\beta>0$, the set $\cL(\alpha,\beta)$ is empty.
\end{theorem}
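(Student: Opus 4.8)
The plan is to prove Theorem~\ref{strangestrange} by deriving a contradiction from the assumption that $\cL(\alpha,\beta)\neq\emptyset$ using the sharper form \eqref{eq:main3ineq2} of Theorem~\ref{main3} together with the properties of the function $F$ recorded in Subsection~\ref{Pressure}. The basic observation is that $F(q)=-\infty$ for every $q<\chi_{\inf}$, and in particular $F(\alpha^\sharp)=-\infty$ whenever $\alpha^\sharp<\chi_{\inf}$. Since $\beta>0$, the hypotheses of Theorem~\ref{main3} are met ($(f,K)\in\sA_+^{\BD}$, weakly isolated, non-exceptional, no indifferent periodic orbits), and the additional assumption $K=\hat I_K$ is exactly what is needed to apply inequality \eqref{eq:main3ineq2}. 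Plugging in gives
\[
\dim_{\rm H}\cL(\alpha,\beta)\le\max\bigl\{0,\min\{F(\alpha^\sharp),F(\beta)\}\bigr\}
=\max\{0,-\infty\}=0,
\]
so the Hausdorff dimension of $\cL(\alpha,\beta)$ is zero. This is not yet emptiness, so the real content of the proof is to upgrade ``dimension zero'' to ``empty''.

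To do this I would re-examine the proof of \eqref{eq:main3ineq2} rather than merely quote its conclusion: the upper bound there is obtained by covering $\cL(\alpha,\beta)$ (or the relevant part of it) by balls along suitable hyperbolic (restricted Pliss) times and estimating a sum of the form $\sum r_i^s$, where the radii $r_i$ are comparable to $|(f^{n_i})'|^{-1}$ on the pull-back pieces and $s$ is any exponent exceeding $F(\alpha^\sharp)$ in the relevant range. When $\alpha^\sharp<\chi_{\inf}$, the value $\inf_t(P(t)+\alpha^\sharp t)$ is $-\infty$ because the right asymptote of $-P_{\rm var}(\varphi_t)$ has inclination $\chi_{\inf}>\alpha^\sharp$; concretely, $P(t)+\alpha^\sharp t\to-\infty$ as $t\to+\infty$. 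This means that in the covering argument one can take $t$ arbitrarily large, and the corresponding Moran-type sum can be made to converge to an arbitrarily small value for every fixed $s>0$, in fact for $s=0$ in the limit, which forces the covered set to have not only zero Hausdorff dimension but zero ``counting'' content: the number of pieces needed at each scale is subexponential in a way incompatible with the set being nonempty along a recurrent orbit. I would phrase this as: if $x\in\cL(\alpha,\beta)$ then $x$ has a Lyapunov exponent $\underline\chi(x)=\alpha$, and one shows that $x$ must in fact have $\underline\chi(x)^\sharp\ge\chi_{\inf}$ (the quantity mentioned in the introduction, where $\underline\chi(x)^\sharp$ is the non-negative average of $\underline\chi(x)$ and $\overline\chi(x)$ corresponding to the formula for $\alpha^\sharp$), contradicting $\alpha^\sharp<\chi_{\inf}$.

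Thus the cleaner route is: first establish the pointwise inequality $\underline\chi(x)^\sharp\ge\chi_{\inf}$ for every $x\in K$ with $\overline\chi(x)>0$ (this is flagged in the introduction as a consequence obtained ``in the case $K=\hat I_K$, $f$ non-exceptional, no indifferent periodic orbits''), and then observe that $x\in\cL(\alpha,\beta)$ with $\beta>0$ would give $\underline\chi(x)^\sharp=\alpha^\sharp<\chi_{\inf}$, a contradiction, so $\cL(\alpha,\beta)=\emptyset$. The pointwise inequality in turn is proved by the hyperbolic-times / bounded-criticality machinery developed for Theorem~\ref{main3}: for a conical (or more generally TCE-type) point one finds times $n_i\to\infty$ at which one can pull back a fixed-size interval around $f^{n_i}(x)$ with bounded criticality and bounded distortion, producing an expanding Cantor repeller whose dimension is at most $t_0$ and whose existence forces, via the relation between its dimension and the exponents realized, the bound $\alpha^\sharp\ge\chi_{\inf}$; here the weighted average exponent $\alpha^\sharp$ (rather than just $\alpha$) appears precisely because the Pliss times realize the exponent $\overline\chi(x)=\beta$ on the pulled-back part while the full orbit up to time $n_i$ only realizes $\underline\chi(x)=\alpha$.

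The main obstacle I expect is the last point: correctly identifying which exponent the hyperbolic construction ``sees'' and showing it is $\alpha^\sharp$ rather than $\alpha$ or $\beta$. This is the combinatorial heart — one must control simultaneously (i) the density and location of good (restricted Pliss) times, (ii) the criticality accumulated along the pull-backs, and (iii) the bounded distortion on the large-scale pieces — and then perform the bookkeeping that converts a lower bound on the expansion along good times into the weighted-average lower bound $\chi_{\inf}\le\alpha^\sharp=\beta/(1+(\beta-\alpha)/\chi_{\sup})$. Everything else (convergence of the Moran sums, the value $F(\alpha^\sharp)=-\infty$, reduction of emptiness to the pointwise exponent inequality) is either routine or already available from the earlier sections; the delicate step is making the geometry of the pull-backs yield exactly the exponent $\alpha^\sharp$, which is why the introduction highlights the ``new idea'' of using restricted Pliss hyperbolic times to control the exponent.
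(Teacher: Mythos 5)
Your reduction is the right one in spirit: you correctly see that quoting \eqref{eq:main3ineq2} only yields $\dim_{\rm H}\cL(\alpha,\beta)=0$, that the real content is a pointwise statement at each $x\in\cL(\alpha,\beta)$, that $F(\alpha^\sharp)=-\infty$ when $\alpha^\sharp<\chi_{\inf}$, and that the hypothesis $K=\hat I_K$ is what lets the restricted Pliss-time machinery realize the exponent $\alpha^\sharp$ (rather than only some $q_i\in[\alpha^\sharp,\beta]$). But the step that upgrades this to emptiness is exactly the part you leave unproved, and neither of the two mechanisms you sketch for it works. The ``zero counting content'' argument cannot give emptiness: a single point is covered by one piece at every scale, so no bound on the number or size of covering pieces, however strong, rules out nonemptiness. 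And the alternative sketch --- ``producing an expanding Cantor repeller whose dimension is at most $t_0$, whose existence forces $\alpha^\sharp\ge\chi_{\inf}$'' --- is not a mechanism: ECRs are the tool for the \emph{lower} bounds in this paper, and there is no implication from the existence of such a repeller to a pointwise inequality on $\underline\chi(x)^\sharp$. So the crucial pointwise inequality, which you correctly identify as the heart of the matter, is asserted rather than derived.

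What the paper actually does at this point is short but uses an ingredient absent from your proposal: the genuine conformal measure $\mu_t$ (with Jacobian $e^{P(t)}\lvert f'\rvert^t$, $t<t_+$), whose existence is the sole reason for the hypothesis of no indifferent periodic orbits. For $x\in\cL(\alpha,\beta)$ (not weakly $S'$-exceptional, after replacing $x$ by an iterate), Proposition~\ref{upper} in the special case $q_i\equiv q=\alpha^\sharp$ --- and this is precisely where $K=\hat I_K$ is used, to guarantee $\mu_t(f^{n}(W_{n}))\ge\Upsilon(\mu_t,r/4)$ so that the pull-back estimate \eqref{Shrink4} holds with the fixed exponent $q$ --- yields times $n_i\to\infty$ with
\[
\diam\Comp_x f^{-n_i}\bigl(B(f^{n_i}(x),r)\bigr)\le e^{-n_i(\alpha^\sharp-\varepsilon)}
\quad\text{and}\quad
\mu_t\bigl(\Comp_x f^{-n_i}(B(f^{n_i}(x),r))\bigr)\ge e^{-n_i(P(t)+\alpha^\sharp t+\varepsilon\lvert t\rvert)}\,\Upsilon(t),
\]
hence $\underline d_{\mu_t}(x)\le (P(t)+\alpha^\sharp t)/\alpha^\sharp$, i.e.\ \eqref{local} with $q=\alpha^\sharp$. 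Since $F(\alpha^\sharp)=-\infty$, one can choose $t<t_+$ with $P(t)+\alpha^\sharp t<0$; then the lower local dimension of $\mu_t$ at $x$ is negative, meaning $\mu_t(B(x,\rho))\to\infty$ as $\rho\to0$, which contradicts the finiteness of $\mu_t$. This contradiction occurs at the single point $x$, so $\cL(\alpha,\beta)=\emptyset$. In short: you have the right skeleton, but the engine --- the measure-theoretic pull-back estimate against the conformal measure, made to run at the exponent $\alpha^\sharp$ thanks to $K=\hat I_K$ --- is missing, and without it (or some substitute such as shadowing the restricted Pliss times by a periodic orbit of exponent close to $\alpha^\sharp<\chi_{\inf}$) the argument does not close.
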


\begin{remark}
The proof of the fact that
$
\left\{ x\in K\colon -\infty<\chi(x)<\chi_{\inf}\right\}=\emptyset
$
(observe that this set concerns Lyapunov regular points only) in Theorem~\ref{main2}, provided in Section~\ref{s:completeness}, is conceptually simpler: it does not use the existence of conformal measures, see the end of Subsection~\ref{ss:conformal} for this notation.

The existence of true conformal measures is the only reason for which we have to assume no indifferent periodic orbits in Theorems~\ref{main3} and~\ref{strangestrange}.
\end{remark}

Theorem~\ref{main2} is proven in Section~\ref{sectionofthetheorem}, except for relation~\eqref{empty1} which is shown in Section~\ref{s:completeness}. Theorems~\ref{main3} and~\ref{strangestrange} are proven in Section~\ref{strong}.

\section{Tools}\label{s:tools}
\subsection{Pressure functions}\label{ss:pressure}

Recall that $f$ is said to be \emph{uniformly expanding} or \emph{uniformly hyperbolic} on a set $X\subset I$ if there exists $\lambda>1$ such that for every $n\ge 1$ we have $\lvert (f^n)'\rvert\ge \lambda^n$ for every $x\in X$.
A compact set $X\subset \bR$ is said to be \emph{isolated} if there is an open neighborhood $U$ of $X$ in the domain of $f$ such that $f^n(x)\in U$ for  all $n\ge 0$ implies $x\in X$.
A set $X\subset I$ is said to be \emph{invariant} if $f(X)\subset X$.
A compact $f$-invariant isolated uniformly expanding set $X\subset I$ is called  \emph{expanding repeller}.

Given $X\subset I$ invariant and $\varphi\colon X\to\bR$ continuous, we denote by $P_{f|X}(\varphi)$ the standard \emph{topological pressure} of $\varphi$ with respect to $f|_X$ (see, for example,~\cite{Wal:81} or~\cite{PrzUrb:10}). In the case $X=K$ we simply write $P(\varphi)$.
Furthermore, we denote by $P_{\rm hyp}(\varphi)$ the \emph{hyperbolic pressure} defined by
\[
	P_{\rm hyp}(\varphi)\eqdef \sup_{X\subset K}P_{f|X}(\varphi),
\]
where the supremum is taken over all expanding repellers $X\subset K$.

By~\cite[Theorem B]{PrzRiv:13}, for every $(f,K)\in \sA_+^{\BD}$ and $t\in\bR$ we have
\begin{equation}\label{pressures}
	P(t)=P(\varphi_t)\eqdef
	P_{\rm var}(\varphi_t)  =
	P_{\rm varhyp}(\varphi_t) =  P_{\rm hyp}(\varphi_t).
\end{equation}

The proof of the equality $P_{\rm var}(\varphi_t)  =
	P_{\rm varhyp}(\varphi_t)$ is not easy in the interval case and follows from the equality of various
definitions of Topological Collet-Eckmann maps, see \cite{R-L:} or \cite[Lemma 4.6 and Theorem C]{PrzRiv:13}. The inequality $P_{\rm varhyp}(\varphi_t) \ge P_{\rm hyp}(\varphi_t)$ follows immediately from the variational principle.
The proof of $P_{\rm varhyp}(\varphi_t) \le P_{\rm hyp}(\varphi_t)$ will be outlined in Section~\ref{Preliminary Constructions} for completeness, and to introduce another variant of
$P_{\rm hyp}(\varphi_t)$ (on Cantor sets).

\subsection{Phase transitions}\label{ss:phase}

Let $(f,K)\in\cA_+^{\rm BD}$. Let	
\[
t_+\eqdef
\sup\big\{t\in\bR\colon P(\varphi_t)+t\,\chi_{\rm inf}>0 \big\}.
\]

Observe that since $P(\varphi_0)=h_{\rm top}(f|_K)>0$, we have $t_+>0$. Note that both cases $t_+=\infty$ as well as  $t_+<\infty$ are possible. The latter case occurs for example
in the non-LyapHyp case, that is, if $\chi_{\inf}=0$. see Section~\ref{ss:TCE} and references therein. Observe that the case $t_+<\infty$ can happen even if  $\chi_{\inf}>0$ (and $P$ can be differentiable or nondifferentiable at $t_+$),  see \cite{CorRiv:13}.

Let also
\[t_-\eqdef
	\inf\big\{t\in\bR\colon P(\varphi_t)+t\,\chi_{\rm sup}>0 \big\}.
\]
Observe that $t_-<0$. There are examples in which $t_->-\infty$ (for example, $f(x)=x^2-2$), but this can happen only for  $f$ exceptional. This
was proved in the interval setting under some restrictions,  see \cite[Theorem B]{IomTod:10}.

That in our situation $t_-=-\infty$ for non-exceptional $f$ can be seen from the following argument.
Suppose to the contrary that $P(\varphi_t)=-t\chi_{\sup}$ for some $t<0$.
Then this holds for every $t'<t$ by the definition of $P$.
By the definition of $P$ there exists a sequence  $\mu_n\in \cM$ weakly* convergent to some $\mu_{t'}\in\cM$ such that  $-t'\chi(\mu_n)+h_{\mu_n}(f)\to P(\varphi_{t'})$, hence $-t'\chi(\mu_n)\to P(\varphi_{t'})$, as the entropies must be equal to 0.
Then by the upper semi-continuity of the function $\mu\mapsto \chi(\mu)$  on $\cM(f)$ we get
$-t'\chi(\mu_{t'})=P(t')$ and hence $\chi(\mu_{t'})=\chi_{\sup}$ (in particular $\mu_{t'}$ is na equilibrium state). Now by the Key Lemma in \cite{InoRiv:12}\footnote{$t>0$ is assumed there, but J.Rivera-Letelier has informed us that under small modifications the proof works also for $t<0$} adapted to $(f,K)\in\sA^{\BD}_+$, since we assume $(f,K)$ non-exceptional, we get $P(\varphi_{t'})>-t'\chi(\mu_{t'})$, contradiction.

Figure~\ref{geometric-pressure} illustrates various cases. In all three cases the asymptotes $-\chi_{\sup}$ are depicted. In the first case (LyapHyp with $t_+=\infty$)  the asymptote $-\chi_{\inf}$ does not always pass through $0$, it could intersect the vertical axis at a positive number. This corresponds to the case when $F(\chi_{\sup})$ and $F(\chi_{\inf})$ are positive. See \cite{Sch:99} for uniformly hyperbolic examples with H\"older potentials.

\subsection{Conformal measures}\label{ss:conformal}

Given a function $\phi\colon K\to\bR$, we say that a finite Borel measure $\mu$ supported on $K$ is $\phi$-\emph{conformal measure} if it is forward quasi-invariant, that is, sets of measure zero are sent to sets of measure zero, and  satisfies
\begin{equation}\label{def:conff}
	\mu(f(A)) = \int_A\phi\,d\mu
\end{equation}
for every $A\subset K$ on which $f$ is injective.
Conformal measures not always exist for $t<0$, in particular in the presence of critical points in $K$. {It is at the moment unclear whether conformal measures exist for $t>0$ or $t<0$ in the exceptional case. However the following weaker version will be sufficient for our setting.

We call $\mu$ \emph{conformal away from the restricted singular set $S'$} or a \emph{CaS measure}
 if~\eqref{def:conff} is satisfied under the additional assumption that $A$ is disjoint from $S'$. As usually we call $\phi$ the \emph{Jacobian} of $f$ for $\mu$.

\begin{proposition} If $f$ is non-exceptional, then
for every $t<t_+$ there exists a CaS measure with Jacobian $\lambda(t)\lvert f'\rvert^t$ for
$\log\lambda(t)=P(t)$ which is positive on open sets. We will denoted this measure by $\mu^\ast_t$.
\end{proposition}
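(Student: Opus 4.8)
The plan is to construct the CaS measure $\mu^\ast_t$ as a weak$^*$ limit of normalized sums of point masses along backward orbits, following the classical Patterson--Sullivan construction adapted to the interval setting with singularities. First I would fix $t<t_+$ and, using the identity $P(\varphi_t) = P_{\mathrm{hyp}}(\varphi_t)$ from \eqref{pressures}, choose an exhaustion of $K$ by expanding repellers $X_k \subset K$ with $P_{f|X_k}(\varphi_t) \to P(t)$. On each $X_k$ the classical Ruelle--Perron--Frobenius theory for H\"older potentials on expanding repellers provides a genuine $\lambda_k\lvert f'\rvert^t$-conformal measure $\nu_k$ supported on $X_k$ with $\log\lambda_k = P_{f|X_k}(\varphi_t)$. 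The measures $\nu_k$ are probability measures on the compact set $\hat I_K$, so after passing to a subsequence I obtain a weak$^*$ limit $\mu^\ast_t$. The first task is then to verify that $\mu^\ast_t$ satisfies the conformality relation \eqref{def:conff} \emph{away from $S'$}: on any interval $A$ disjoint from $S'$ on which $f$ is injective, $f$ is a local diffeomorphism with $f|_A$ open onto its image, so the relation $\nu_k(f(A)) = \int_A \lambda_k\lvert f'\rvert^t\,d\nu_k$ passes to the limit by a standard argument on continuity sets, using that $\lvert f'\rvert^t$ is continuous and bounded away from $0$ and $\infty$ on a neighborhood of $A$ (since $A$ avoids $\Crit(f)$), and that $\lambda_k \to \lambda(t) = e^{P(t)}$.

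The second task is to rule out that the limit measure is trivial or concentrated on the wrong set. Since each $\nu_k$ is a probability measure, $\mu^\ast_t$ is a probability measure and in particular nonzero; one must check $\supp\mu^\ast_t \subset K$, which follows because $K$ is the maximal forward invariant compact set in $\hat I_K$ together with the fact that $\mu^\ast_t$ is, by the conformality relation, forward quasi-invariant, combined with the weak isolation hypothesis to push the support argument through near the boundary. To see that $\mu^\ast_t$ is positive on open subsets of $K$ one uses topological transitivity of $f|_K$ together with the conformality relation: if $\mu^\ast_t(V) = 0$ for some nonempty open $V$, then pulling back along a dense backward orbit and using that the Jacobian $\lambda(t)\lvert f'\rvert^t$ is strictly positive on any compact avoiding $\Crit(f)$, one spreads the null set over a dense open subset, forcing $\mu^\ast_t = 0$, a contradiction; the points of $S'$ being finite in number do not obstruct this, since one can choose backward orbits and intermediate intervals avoiding them.

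The role of the hypotheses $t < t_+$ and non-exceptionality enters in guaranteeing that the normalization does not degenerate: the condition $t<t_+$ ensures $P(\varphi_t) + t\chi_{\inf} > 0$, which by the discussion preceding the proposition keeps the pressure $P(t)$ strictly above the asymptotic slope regime and in particular makes the RPF eigenvalues $\lambda_k$ of the approximating repellers bounded and bounded away from degeneracy; non-exceptionality is what prevents the limit mass from escaping onto a proper exceptional set (an invariant non-dense set that a $\lvert f'\rvert^t$-conformal measure could otherwise concentrate on), which is exactly the obstruction identified earlier in the phase-transition discussion for $t<0$. I expect the main obstacle to be precisely this last point: controlling the behavior of the approximating measures near the critical points and near $\partial\hat I_K$ so that the weak$^*$ limit does not lose the conformality relation there and does not collapse its mass onto $S'$ or onto a set violating \eqref{def:conff}; this is where the finiteness and non-flatness of $\Crit(f)$, the weak isolation, and non-exceptionality all have to be used in concert, and it is the technically delicate part of the argument (the analogous complex construction appears in \cite{GelPrzRam:10} and the inducing-scheme construction of true conformal measures in \cite[Theorem A]{PrzRiv:13}, on which one may also rely for the parts of the statement that overlap).
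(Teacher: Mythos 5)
Your construction does not deliver the conformality relation you need, and this is the central gap. The measures $\nu_k$ given by Ruelle--Perron--Frobenius theory are conformal only for the \emph{subsystem} $f|_{X_k}$: the relation $\nu_k(f(A'))=\int_{A'}\lambda_k\lvert f'\rvert^t\,d\nu_k$ holds for $A'\subset X_k$, and since $X_k$ is not backward invariant one only gets, for an interval $A$ disjoint from $S'$ on which $f$ is injective, the one-sided estimate $\nu_k(f(A))\ge\nu_k\bigl(f(A\cap X_k)\bigr)=\int_A\lambda_k\lvert f'\rvert^t\,d\nu_k$; the set $\bigl(f(A)\cap X_k\bigr)\setminus f(A\cap X_k)$ can carry positive $\nu_k$-mass coming from preimages outside $X_k$. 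So the identity you propose to ``pass to the limit by a standard argument on continuity sets'' is not even satisfied by the approximating measures, and a weak$^*$ limit of such measures has no reason to satisfy \eqref{def:conff} away from $S'$. This is precisely why the paper does not use expanding repellers here at all: $\mu^*_t$ is built by the Patterson--Sullivan method directly on $K$ (as in \cite[pp.~333--334]{PrzUrb:10} and \cite[Appendix~C]{PrzRiv:13}), which sums over all backward orbits in $K$ and yields both the CaS property and the two-sided atom inequalities \eqref{PS}.

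The positivity argument is also incomplete. Pulling a null open set $V$ back along a dense backward orbit avoiding $S'$ only shows that $\mu^*_t$ vanishes on a dense \emph{open} subset of $K$, i.e.\ that it is carried by a closed nowhere dense set; it does not force $\mu^*_t=0$, and the dangerous scenario --- all the mass sitting in atoms on a finite set --- is exactly the one left open. The paper closes it in two steps you do not have: first, \cite[Lemma~C.3]{PrzRiv:13} upgrades ``$\mu^*_t=0$ on some open set'' to ``$\mu^*_t$ is supported on a weakly $S'$-exceptional set $E$'' (which is finite); second, this is contradicted using non-exceptionality together with the inequalities \eqref{PS}, by a case analysis on the sign of $t$: for $t<0$ the left inequality propagates an atom forward to some $f^n(x)$ that is not weakly $S'$-exceptional (Lemma~\ref{lem:denseback}, item~2), while for $t>0$ the right inequality propagates an atom backward to infinitely many preimages, which cannot all lie in the finite set $E$ (and $t=0$ is handled by a measure of maximal entropy). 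Your appeal to non-exceptionality is pointed in the right direction, but without the Patterson--Sullivan inequalities \eqref{PS} and the dichotomy from \cite[Lemma~C.3]{PrzRiv:13} there is no mechanism in your argument to exclude an atomic limit supported on an exceptional set. (Relying on \cite[Theorem~A]{PrzRiv:13} does not help here either: that result requires the absence of indifferent periodic orbits, which this proposition deliberately avoids assuming.)
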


\begin{proof}
We construct $\mu^*_t$ by Patterson-Sullivan's method, as in \cite[pages 333--334]{PrzUrb:10} (see also \cite{PrzRivSmi:04} for the $t<0$ complex case).

For
$t>0$ it is a conformal measure in the complex case, but \eqref{def:conff} can fail at points in $S'$ in the interval case,
see \cite[Appendix C]{PrzRiv:13} for a more precise description. It is proved
in \cite[Lemma C.3]{PrzRiv:13} that for all $t\in\bR$ if $\mu^*_t$ is 0 on an open set, then it is supported on a weakly $S'$- exceptional set $E$ (which is finite, as mentioned in Subsection~\ref{exceptional}). In the case that $f$ is non-exceptional this leads to contradiction.

\smallskip

By Patterson-Sullivan's construction (see
\cite[(C.2)]{PrzRiv:13}) for every $x\in K$ and every $z\in f^{-1}(x)\cap K$
we have
\begin{equation}\label{PS}
\mu^*_t(z) |f'(z)|^t  \le \mu^*_t(x)\le \sum_{y\in f^{-1}(x)\cap K} \mu^*_t(y) |f'(y)|^t,
\end{equation}
provided no expression of the form $0\cdot \infty $ is involved.

We consider three cases:
\smallskip

\noindent\textbf{Case $t<0$.}
If $\mu^*_t(x)>0$ at $x\in E$  then we have atoms of $\mu^*_t$ at all points $f^n(x)$ for $n\ge 0$ due to the left hand side inequality \eqref{PS}. In particular, since $f$ is non-exceptional, $\mu^*_t$ has an atom at some $w=f^n(x)$ not weakly $S'$-exceptional (this will be proved in Lemma~\ref{lem:denseback}, item 2 below). This contradicts the condition that $\mu^*_t$ is supported on $E$.
\smallskip

\noindent\textbf{Case $t>0$.}
If $\mu^*_t$ has an atom at $x$ then it has an atom at a point $y\in f^{-1}(x)$ by the right hand side inequality
\eqref{PS}.  Therefore it has atoms at all $f^k$-preimages of $x$ which constitute an infinite set, which is therefore not contained in $E$.
We have a contradiction again.
\smallskip

\noindent\textbf{Case $t=0$.}  We can consider a measure with maximal entropy as $\mu^*_0$.
\end{proof}

In the proof of Theorem~\ref{main3}, in Section~\ref{strong} we rely on a part of \cite[Theorem A]{PrzRiv:13}, asserting that for
$(f,K)\in \sA^{\BD}_+$ without indifferent periodic orbits, then there exists a probability measure $\mu_t$ which is truly conformal for the function
$\varphi=e^{P(t)}|f'|^t$ for every $t<t_+$,  non-atomic, positive on open sets,
supported on the set of conical points in $K$, see Subsection~\ref{ss:conical points}.

\subsection{Topological Collet-Eckmann and related notions}\label{ss:TCE}

In this section we will discuss some (partially equivalent) conditions for $(f,K)\in\sA$.

Given $y=f^n(x)$, denote by $\Comp_x f^{-n}(B)$ the component of the preimage of an interval $B\ni y$, which contains $x$. We sometimes call this component a \emph{pull-back} of $B$ by $f^n$. For $(f,K)\in \sA$ we consider only pull-backs intersecting $K$.
This component is an interval, to denote its length we use either $\lvert\cdot\rvert$ or $\diam$ (to avoid confusion with other meanings of $\lvert\cdot\rvert$).
\medskip

\noindent
\emph{Lyapunov Hyperbolic Condition (LyapHyp)}: $\displaystyle \chi_{\inf}>0$.
\medskip

\noindent
\emph{Topological Collet-Eckmann Condition (TCE)}:
There exist numbers $M \ge 0$, $P \ge 1$, and $r>0$ such that for every
$x\in K$ there exists a strictly increasing sequence of positive integers
$(n_j)_{j\ge1}$ such that for every $j\ge 1$ we have $n_j \le P \cdot j$ and
\begin{multline}\label{TCE}
        \#\big\{k\in\{0,\ldots,n_j-1\}\colon  \\
       \Comp_{f^k(x)}  f^{-(n_j-k)}B(f^{n_j}(x),r)
                \cap\Crit(f) \not= \emptyset \big\} \le M.
\end{multline}
This condition is considered under the absence of indifferent periodic orbits; then all components above are well insider $U_K$ for $r$ sufficiently small (see BaShrink below).
Of course the number $r$ can be taken arbitrarily small and $P$ can be taken arbitrarily close to 1 (at the cost of increasing $M$).
\medskip

\noindent
\emph{Exponential shrinking of components (ExpShrink)}:
        There exist numbers $\lambda>1$ and $r>0$  such that for every $x \in K$,
every $n > 0$, and every connected component $W$ of $f^{-n}(B(x, r))$ which intersects $K$ we have
$$
        \lvert W\rvert  \le \lambda^{-n}\,.
$$

See~\cite{PrzRiv:13} for the usage of the condition LyapHyp in the multimodal case and for example~\cite{PrzRivSmi:03} in the complex holomorphic setting.
In the interval case the proof of the fact that, assumed weak isolation, LyapHyp implies TCE is different from the holomorphic case and is given in the multimodal case in~\cite{R-L:} and later in \cite[Theorem C]{PrzRiv:13} (in the more general case of $f\in\sA_+^{\BD}$).

By~\cite[Theorem C]{PrzRiv:13}, for every $(f,K)\in\sA_+^{\BD}$ satisfying the weak isolation condition, LyapHyp  is equivalent to TCE together with the absence of indifferent periodic points which in turn is equivalent to ExpShrink.
For more conditions and their equivalences in the interval case, see \cite[Theorem C]{PrzRiv:13} or
\cite{R-L:}.

\medskip

In the sequel, instead of ExpShrink, we will use  only the following weaker  condition which is satisfied for every $(f,K)\in\sA_+$ (see~\cite[Lemma 2.10]{PrzRiv:13}).

\medskip

\noindent\emph{Backward Shrinking (BaShrink)}:
	For every $\varepsilon>0$ there exists $\delta>0$ such that  if $T$ is an open interval in $\R$ intersecting $K$, disjoint from $\Indiff(f)$, and satisfying  $\lvert T\rvert \le \delta$, then for every $n\ge 0$ and every component $T'$ of $f^{-n}(T)$ intersecting $K$ we have $\lvert T'\rvert\le \varepsilon$. Moreover, the lengths of all components of $f^{-n}(T)$ intersecting $K$ converge to 0 uniformly as $n\to\infty$.

In particular for $\delta$ sufficiently small, for every $\varepsilon'>0$ there exists a positive integer $\varkappa=\varkappa(\delta,\varepsilon')$ such that for every $T$ as above, satisfying $|T|\le \delta$, and for every component $T'$ of $f^{-n}(T)$ intersecting $K$ with $n\ge \varkappa$, we have $\lvert T'\rvert\le \varepsilon'$.

\section{Preliminary constructions}\label{Preliminary Constructions}

\subsection{Outside singular set}\label{outside}

 \begin{lemma}\label{l:K-homeo}
	 For $(f,K)\in\sA$ there exists $\delta>0$ such that for every interval $T\subset U_K$ intersecting $K$ and disjoint from $S'$, if $|T|<\delta$, then $f\colon T\to f(T)$ is a homeomorphism and
 \begin{equation}\label{K-homeo}
 f(T)\cap K = f(T\cap K).
 \end{equation}
 \end{lemma}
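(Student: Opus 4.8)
The plan is to choose $\delta$ so small that it is dominated by the minimal separation between the finitely many special points and sets involved, and then to argue locally. First I would set up the relevant finite data: the set $S' = \Crit(f) \cup NO(f,K)$ is finite, $K$ is compact, and the intervals $\hat I_j$ (hence $\hat I_K$) together with their neighborhoods $U^j$ are finite in number and pairwise disjoint. Let $d_0$ be the minimum of the distances between distinct components $U^j$, the distance from $\hat I_K$ to $\partial U$, and — crucially — a lower bound for $|f'|$ away from $\Crit(f)$ on a suitable compact set. Since $f$ is $C^2$ (or at least $C^1$) and has only non-flat critical points, for any fixed neighborhood $N$ of $\Crit(f)$ the derivative $f'$ is bounded away from $0$ on $\hat I_K \setminus N$; I would take $\delta$ small enough that any interval $T$ with $|T| < \delta$ meeting $K$ but disjoint from $S'$ stays inside a single $U^j$, stays away from $\Crit(f)$ (so $f'$ has constant sign and is nonzero on $T$), and stays away from $\partial \hat I_K$. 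The first conclusion, that $f\colon T \to f(T)$ is a homeomorphism, is then immediate: $f$ is $C^1$ with $f' \neq 0$ of constant sign on the interval $T$, hence strictly monotone, hence a homeomorphism onto its image.

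For the identity \eqref{K-homeo}, the inclusion $f(T \cap K) \subset f(T) \cap K$ is trivial from forward invariance $f(K) = K$. The content is the reverse inclusion $f(T) \cap K \subset f(T \cap K)$. Take $y \in f(T) \cap K$; since $f|_T$ is a homeomorphism there is a unique $x \in T$ with $f(x) = y$, and I must show $x \in K$. Here is where the choice $\delta < \delta_{\mathrm{BaShrink}}$ and the hypothesis that $T$ is disjoint from $S'$ (in particular from $NO(f,K)$, the non-openness set, and from $\partial \hat I_K$) enters. Because $x \notin NO(f,K)$, the map $f|_K$ is open at every point of $K$ near $x$; the standard argument (cf. \cite[Lemma 2.2]{PrzRiv:13}) is that $f(T \cap K)$ is a relatively open neighborhood of $y$ in $K$, so if $x \notin K$ then a whole one-sided neighborhood of $x$ in $T$ misses $K$, yet its $f$-image is a one-sided neighborhood of $y$ in $f(T)$ that must meet $K$ — contradicting that $f$ restricted to a small interval around $x$ is a homeomorphism taking the complement of $K$ to the complement of $K$. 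More directly: $x$ lies in the closed interval $\hat I_K$ (as $\delta$ was chosen below the distance from $K$ to $\partial \hat I_K$ and $T$ meets $K$), and since $\hat I_K \setminus K$ contains no critical points and, when $K$ is a Cantor set, consists of gaps whose endpoints lie in $K$, a point $x \in \hat I_K \setminus K$ would lie in the interior of a gap $G$; then $f(G)$ is a nondegenerate interval in the complement of $K$ (using that $K$ is maximal invariant in $\hat I_K$ and $f$ is injective on $T \supset G \cap T$), so $f(x) \notin K$, a contradiction. Either way $x \in K$, giving $y \in f(T \cap K)$.

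The main obstacle I expect is the careful handling of the case $x \in \hat I_K \setminus K$, i.e.\ ruling out that a preimage of a point of $K$ can sit in a gap of $K$. This requires simultaneously using: maximality of $K$ in $\hat I_K$, injectivity of $f$ on $T$ (so the gap is not folded), the absence of critical points in $\hat I_K \setminus K$, and BaShrink to ensure $T$ (hence the relevant gap piece) is short enough that no wrapping or exiting of $U_K$ occurs. The openness hypothesis (disjointness from $NO(f,K)$) is what makes $f(T \cap K)$ relatively open in $K$ and is the clean way to close the argument; I would make that the primary route and keep the gap-analysis as the concrete backup. All constants are extracted from finitely many compact/finite objects, so no quantitative estimates beyond "take $\delta$ smaller" are needed.
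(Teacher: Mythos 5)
There is a genuine gap, and it sits at the heart of the lemma. You claim that disjointness of $T$ from $S'$ lets you also keep $T$ away from $\partial\hat I_K$ (``in particular from $NO(f,K)$ \dots and from $\partial\hat I_K$''), and later that $x\in\hat I_K$ because ``$\delta$ was chosen below the distance from $K$ to $\partial\hat I_K$.'' Both claims are false in this setting: the endpoints of the intervals $\hat I_j$ belong to $K$ by the standing assumptions, so ${\rm dist}(K,\partial\hat I_K)=0$, and $S'=\Crit(f)\cup NO(f,K)$ contains only those boundary points at which $f|_K$ fails to be open, so in general $\partial\hat I_K\not\subset S'$. Hence an arbitrarily short interval $T$ meeting $K$ and disjoint from $S'$ may contain a point $x_0\in\partial\hat I_K$ and have a whole subinterval $T'$ lying outside $\hat I_K$. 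This is the only nontrivial case of the lemma (it is the reason the statement uses $S'$ rather than $S=\Crit(f)\cup\partial\hat I_K$), and your argument never treats it: for a preimage $x\in T'\subset\bR\setminus\hat I_K$ of a point $y\in K$, maximality of $K$ in $\hat I_K$ gives nothing, and your assertion that $f$ near $x$ maps the complement of $K$ into the complement of $K$ is simply not true outside $\hat I_K$ (points of $\bR\setminus\hat I_K$ can well map into $K$). Likewise ``its $f$-image \dots must meet $K$'' is unjustified, since $y$ may be accumulated by $K$ only from the other side. Your treatment of the case $x\in\hat I_K\setminus K$ via maximality, and the homeomorphism statement itself, are fine, but they cover only the easy half.

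The paper's proof is devoted precisely to the missing case. Writing $T=T''\cup T'$ with $x_0\in\partial\hat I_K$, $T''\subset\hat I_K$ and $T'\cap\hat I_K=\emptyset$, it shows $f(T')\cap K=\emptyset$ by a dichotomy: if $f(x_0)$ were accumulated by points of $K\cap f(T')$, then, since $f$ is injective on $T$ and $K\cap T\subset T''$, the map $f|_K$ would fail to be open at $x_0$, i.e.\ $x_0\in NO(f,K)\subset S'$, contradicting $T\cap S'=\emptyset$; otherwise $f(x_0)$ borders a gap of $K$ on the $f(T')$ side, and $\delta$ is chosen in advance (using a Lipschitz constant $L$ of $f$ on a neighborhood of $K$ and the lengths $g_x$ of the gaps adjacent to the finitely many points $f(x)$, $x\in\partial\hat I_K$) so that $\lvert f(T')\rvert\le L\delta$ is shorter than that gap, whence $f(T')\cap K=\emptyset$. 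Combined with the maximality argument on $T''$, this gives \eqref{K-homeo}. To repair your proposal you would need to add exactly this boundary-straddling analysis (or an equivalent one); no choice of small $\delta$ can make the case disappear.
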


 In \cite{PrzRiv:13} such $f$ on $T$ is called {\it $K$-homeomorphism}.
 For completeness we provide the proof of this lemma.

 \begin{proof}
For $x\in\partial \hat I_K$ (there are only finitely many such points) consider the point $f(x)$. There are two possibilities: either $f(x)$ is accumulated by points in $K$ from both sides, or $f(x)$ is a boundary point of $K$. In the latter case we denote by $g_x$ the length of the component of $\bR\setminus K$ adjacent to $f(x)$.
Let $U'$ be a neighborhood of $K$ with closure in $U_K$.   Let $L$ be Lipschitz constant for $f|_{U'}$. Choose
\[
	\delta=\min\Big\{{\rm dist} (\partial U_K, \partial U'),\frac1L\max_xg_x\Big\}\,.
\]

Consider now an open interval $T$ of length shorter than $\delta$ which intersects $K$ and is disjoint from $S'$. Since $K$ is forward invariant, the only possibility that \eqref{K-homeo} fails is when
$f(T)\cap K\setminus f(T\cap K)\ne\emptyset$. In such a situation $T$ must contain a point $x\in\partial \hat I_K$.
Indeed, $T$ intersects $\hat I_K$ because it intersects $K$, and if $T\subset \hat I_K$ then \eqref{K-homeo} holds since $f^{-1}(K\cap f(T))\subset K$, by maximality of $K$.
Let $T'$ be the component of $T\setminus \{x\}$ which is disjoint from $\hat I_K$. If $f(x)$ is accumulated by points in $K\cap f(T')$, then $x\in NO(f,K)$ and hence $x\in S'$, a contradiction. Otherwise, $f(x)$ is a boundary point of $K$ and, by our choice of $\delta$, we have $f(T')\cap K=\emptyset$. Since $T'':=T\setminus T'\subset \hat I_K$, maximality of $K$ implies property
\eqref{K-homeo} for $T''$. Hence \eqref{K-homeo} holds for $T$.
\end{proof}

\subsection{More on $K$-homeomorphisms}

\begin{lemma}\label{no-truncation}
Let $(f,K)\in \sA_+$ be non-exceptional, satisfying weak isolation condition.
Consider any $x\in K$ not weakly $S'$-exceptional. Then for a constant $r>0$ small enough depending on $(f,K)$ and  every integer $n$ large enough the following holds.
Assume that $B(f^n(x),r/2)$ does not contain any indifferent periodic point.
Let $W_n$ be the pull-back of $B(f^n(x),r/2)$ for $f^n$ which contains $x$.
For all $j=0,\ldots,n$ denote $W_n^j:=f^{j}(W_n)$. Then for all $j=0,\ldots,n-1$ the interior of  $W_n^j$ is disjoint from  $NO(f,K)\setminus\Crit_T(f)$.
\end{lemma}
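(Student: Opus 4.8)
The plan is to argue by contradiction along a backward orbit, exploiting the fact that being ``not weakly $S'$-exceptional'' means backward orbits of $x$ are, in a suitable sense, unobstructed, while a turning/non-open point in the interior of some $W_n^j$ would force a truncation that cannot be undone. First I would fix $r>0$ small enough that Lemma~\ref{l:K-homeo} applies (i.e. every interval of length $<\delta$ meeting $K$ and disjoint from $S'$ is a $K$-homeomorphism domain), and small enough, using BaShrink, that all pull-backs of intervals of length $r$ that avoid $\Indiff(f)$ have length $<\delta$; this is where the hypothesis that $B(f^n(x),r/2)$ contains no indifferent periodic point is used, so that the whole chain $W_n^{n}\supset f(W_n^{n-1})\supset\cdots$ stays in the regime where BaShrink and the $K$-homeomorphism lemma apply.

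Next, suppose the conclusion fails: there is $j\in\{0,\ldots,n-1\}$ with $\interior W_n^j$ meeting $NO(f,K)\setminus\Crit_T(f)$. Recall $NO(f,K)\subset\Crit_T(f)\cup\partial\hat I_K$, so such a point lies in $\partial\hat I_K\setminus\Crit_T(f)$; call it $b$. The point of $b\in\partial\hat I_K$ being in $NO(f,K)$ is that $f$ is not open at $b$ as a map of $K$, which (since $b\notin\Crit_T(f)$, so $f$ is a local homeomorphism of $\R$ near $b$) means one side of $b$ inside $W_n^j$ is disjoint from $K$ near $b$ while $f(b)$ is accumulated by $K$ from that image side — i.e. $f$ ``creates'' points of $K$ near $f(b)$ that are not images of $K$-points near $b$. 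The key step is then to run this forward: since $W_n^{j}$ contains $b$ in its interior and $f^{n-j}\colon W_n^j\to W_n^n=B(f^n(x),r/2)$ is obtained by iterating $f$ on intervals each of length $<\delta$ disjoint from $S'$ (the interiors avoid $S'$ for indices $<n$, and in particular avoid $\Crit(f)$), at each step \eqref{K-homeo} of Lemma~\ref{l:K-homeo} holds, so $f^{n-j}(W_n^j\cap K)=W_n^n\cap K=B(f^n(x),r/2)\cap K$; but the non-openness at $b$ contradicts \eqref{K-homeo} for the first step $f\colon W_n^j\to W_n^{j+1}$ unless $W_n^j$ is not disjoint from $S'$ — which it is, since $\interior W_n^j$ avoids $\Crit(f)$ and the only points of $\partial\hat I_K$ it can meet are not turning, hence give exactly this contradiction. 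More carefully: if $\interior W_n^j\ni b\in\partial\hat I_K$, then one component $T'$ of $W_n^j\setminus\{b\}$ lies outside $\hat I_K$; by the choice of $\delta$ (as in the proof of Lemma~\ref{l:K-homeo}) either $f(T')\cap K=\emptyset$, and then since $b\notin NO(f,K)$ would follow, contradiction, or $f(b)$ is accumulated by $K\cap f(T')$, forcing $b\in NO(f,K)\cap\Crit_T(f)$ — wait, that is only excluded by hypothesis on the side, so instead this case is what produces the failure of $f|_K$ openness at $b$, and we must rule it out differently.

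The clean way around this is to use that $x$ is not weakly $S'$-exceptional. Here is the mechanism. Consider the maximal weakly $S'$-exceptional set $E$ (finite, by Subsection~\ref{exceptional}) and the finite set $\bigcup_{k\ge0} f^k(\partial\hat I_K\cup \Crit(f))$ is not quite finite, so instead: the obstruction point $b\in\partial\hat I_K$ has the property that on one side it is ``glued'' to a gap of $K$; the crucial observation is that $f(b)$ then lies in the forward orbit structure of $\partial\hat I_K$, and pulling the interval $B(f^n(x),r/2)$ back along $f^{n}$ to $W_n$, the component $W_n$ contains $x$; if an intermediate $W_n^j$ has a point of $\partial\hat I_K$ in its interior, then $x\in W_n = \Comp_x f^{-n}(B(f^n(x),r/2))$ lies in a pull-back whose backward branch is ``truncated'' at $b$ — meaning every backward trajectory from $f^n(x)$ toward $x$ must pass through the finite set $\partial\hat I_K$ at step $j$. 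Iterating this for arbitrarily large $n$ (using BaShrink so that $W_n$ shrinks to $\{x\}$), one shows $x$ lies in the set of points all of whose long backward branches are blocked by $S'$, and by \cite[Proposition 2.7]{PrzRiv:13} (finiteness and the structure of weakly $\Sigma$-exceptional sets, $\Sigma = S'$) this forces $x$ to be weakly $S'$-exceptional — contradicting the hypothesis. I expect this last step — extracting from ``interior of some $W_n^j$ meets $NO(f,K)\setminus\Crit_T(f)$ for infinitely many $n$'' the conclusion ``$x$ is weakly $S'$-exceptional'' — to be the main obstacle: it requires a careful compactness/pigeonhole argument over the finitely many possible obstruction points $b\in\partial\hat I_K$ and the finitely many weakly $S'$-exceptional sets, together with the observation that the relevant backward branch genuinely reaches $x$ (so that the block at $b$ is a block on a backward trajectory of $x$), and one must handle the bookkeeping of which side of $b$ carries $K$ so as to invoke precisely condition \eqref{except-condition}. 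The earlier parts (choosing $r,\delta$, reducing to $b\in\partial\hat I_K\setminus\Crit_T(f)$, and the $K$-homeomorphism chain) are routine given Lemma~\ref{l:K-homeo} and BaShrink.
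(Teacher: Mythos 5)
Your final mechanism does not work, and the gap is exactly at the step you flag as ``the main obstacle'': from ``for arbitrarily large $n$ the interior of some $W_n^j$ meets $NO(f,K)\setminus\Crit_T(f)$'' one cannot conclude that $x$ is weakly $S'$-exceptional. Condition \eqref{except-condition} blocks backward trajectories of $x$ only when they pass \emph{exactly} through points of $S'$; the obstruction point $b\in\partial\hat I_K$ merely lies somewhere inside the interval $W_n^j$, and there is no reason for it to lie on the backward branch $f^n(x),f^{n-1}(x),\ldots,x$, i.e.\ no reason that $f^j(x)\in S'$. Hence no backward trajectory of $x$ is ``blocked'', the set $\cO^-_{reg}(x)$ can perfectly well be dense in $K$ while every $W_n^j$ contains boundary points, and no contradiction with non-weak-exceptionality arises. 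A symptom of the problem is that your argument never invokes the weak isolation condition, which is an essential hypothesis of the lemma. (A secondary issue: in your preliminary $K$-homeomorphism chain you assume the interiors of the $W_n^j$ avoid $\Crit(f)$; this is not available --- the lemma deliberately allows turning critical points inside the $W_n^j$, which is why only $NO(f,K)\setminus\Crit_T(f)$ is excluded --- but you abandon that route yourself.)

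The paper's actual proof is a closing/shadowing argument. Assuming $z\in\interior W_n^{j_0}$ is a non-turning point at which $f|_K$ is not open, one finds $z'\in W_n^{j_0+1}$ arbitrarily close to $f(z)$ whose $f$-preimage $z''\in W_n^{j_0}$ is \emph{not} in $K$. Non-weak-exceptionality of $x$ is used only to produce a backward trajectory of $x$ in $K$ avoiding $S'$ converging to a periodic orbit $\cO(p)$ chosen outside the finite set of weakly $S'$-exceptional points; a backward trajectory of $p$ avoiding $S'$ is then brought into $W_n^{j_0+1}$ within $\e$ of $z'$. Pulling back small balls along these two $S'$-free branches and concatenating with $f^{j_0+1}$ on $W_n$ gives a loop $W_n\to W_n^{j_0+1}\supset V_{n_0}\to B(p,\delta)\supset Z_{n_1}\to W_n$, hence a periodic orbit of period $j_0+1+n_0+n_1$ lying close to $K$ and passing arbitrarily close to $z''$. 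The weak isolation condition forces this orbit into $K$, hence $z''\in K$, a contradiction. This use of weak isolation to upgrade shadowing orbits to points of $K$ is the ingredient your proposal is missing, and without it (or a substitute) the conclusion cannot be extracted from the exceptionality hypotheses alone.
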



\begin{proof}
One condition we impose on $r$ is that $T$ not containing any indifferent periodic point with $\lvert T\rvert\le r$ satisfies BaShrink.

Choose an arbitrary periodic point $p$ whose periodic orbit $\cO(p)$ consists of points not weakly $S'$-exceptional. This is possible since the set of weakly $S'$-exceptional points is finite, see Subsection~\ref{exceptional}, and the set of all periodic points in $K$ is infinite, see \cite[Proposition 2.5]{PrzRiv:13}.

Suppose there exists $j_0\in\{0,...,n-1\}$
and  $z\in \interior W_n^{j_0}$ not being a critical turning point, at which $f|_K$ is not open. We assume that $r$, hence $|W_n^{j_0}|$, are small enough, (smaller than the minimal mutual distance between the points in $NO(f,K)$) that there are no turning critical points in $W_n^{j_0}$. Then there exists $z'\in W_n^{j_0+1}$ arbitrarily close to $f(z)$ whose $f$-preimage $z''$ in $W_n^{j_0}$ is not in $K$.
Let $z_0=x,z_{1},...$ be a backward trajectory of $x$ in $K$,  (i.e. $f(z_j)=z_{j-1}$), converging to $\cO(p)$ omitting $S'$ (possible since we have assumed $x$ is not weakly $S'$-exceptional)
and let $p=y_0,y_{1},...$ be a backward trajectory of $p$ in $K$ such that $z'$ is its accumulation point.
Choose $n_0$ such that $y_{n_0}$ is close enough to $z'$ to be in $W_n^{j_0+1}$. In fact we want $|y_{n_0}-z'|<\e$ for an arbitrarily small $\e>0$.

Choose  $\delta>0$ small enough that all consecutive pull-backs $V_j$ for $f$ of $B(p,\delta)$ along $y_{j},j=0,1,2,..., n_0$ are disjoint from $S'$ and $V_{n_0}$ is also close enough to $z'$ to be in $W_n^{j_0+1}$ and even within an arbitrarily chosen neighbourhood, say in $B(z',\e)$.

If $n$
is large enough, then $|W_n|$ is so small that all consecutive pull-backs $Z_j$ of $W_n$ for $f$ along $z_j,j=0,-1,-2,...$ are disjoint from $S'$. Choose $n_1$ such that $Z_{n_1}\subset B(p,\delta)$.

Let summarize:
$f^{j_0+1}(W_n)=W_n^{j_0+1}$,\
$V_{n_0}\subset W_n^{j_0+1}$,\
$f^{n_0}(V_{n_0})=B(p,\delta)$,\
$Z_{n_1}\subset B(p,\delta)$,\
and $f^{n_1}(Z_{n_1})=W_n$.

 Hence there exists a periodic orbit $O$, of period $j_0+1 + n_0 + n_1$ close to $K$ and passing arbitrarily close to $z''$ for $\e$ small. Hence by the weak isolation assumption $\cO\subset K$ hence $z''\in K$, contradiction.
\end{proof}

The considerations on shadowing as above can be found for other aims in \cite[Subsection 7.3, item 1a]{PrzRiv:13}. It is also related to the `bridges' technique in Subsection~\ref{ss:bridges}.

\begin{corollary}\label{cor:no-truncation}
In the situation of Lemma~\ref{no-truncation}, in particular for $x$ not weakly $S'$-exceptional, for $r$ small enough and all $n$ large enough, $f$ is a $K$-homeomorphism on each
$\interior W_n^j$ not containing any turning critical point, onto $\interior W_n^{j+1}$.
If there is a turning critical
point  $c\in \interior W_n^j$, then $f$ is a $K$-homeomorphism on both components of $\interior W_n^j\setminus\{c\}$, one of them is mapped {onto} $\interior W_n^{j+1}$ and the other one is mapped {into}
$\interior W_n^{j+1}$.
\end{corollary}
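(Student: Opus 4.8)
The plan is to derive Corollary~\ref{cor:no-truncation} directly from Lemma~\ref{no-truncation} together with Lemma~\ref{l:K-homeo}. Fix $x\in K$ not weakly $S'$-exceptional, and take $r>0$ small and $n$ large enough so that the conclusion of Lemma~\ref{no-truncation} applies, and additionally so that each $W_n^j$ has length at most the $\delta$ of Lemma~\ref{l:K-homeo} and at most the minimal mutual distance between points of $NO(f,K)$ and between points of $\Crit(f)$ (by BaShrink the $W_n^j$ have uniformly small diameter once $|W_n|$ is small, which holds for $n$ large since $W_n$ is a pull-back of an interval of fixed size $r/2$ disjoint from $\Indiff(f)$). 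In particular each $\interior W_n^j$ contains at most one turning critical point and, by Lemma~\ref{no-truncation}, its interior meets $NO(f,K)$ only possibly at that turning critical point.

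First I would treat the case where $\interior W_n^j$ contains no turning critical point. Then $\interior W_n^j$ is disjoint from $\Crit_T(f)$, and by Lemma~\ref{no-truncation} it is disjoint from $NO(f,K)\setminus\Crit_T(f)$ as well, hence disjoint from $NO(f,K)$; it is also disjoint from $\Crit_I(f)$ (inflection critical points) if we additionally impose that $r$ is small enough that no $W_n^j$ contains an inflection critical point in its interior — this is legitimate since there are finitely many critical points and, again by BaShrink, all the $W_n^j$ can be made arbitrarily short; alternatively one observes that an inflection critical point $c$ would force $f'(c)=0$ but $f$ is still locally injective there, so $f|_{\interior W_n^j}$ remains a homeomorphism onto its image. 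In any case $\interior W_n^j$ is an interval intersecting $K$, disjoint from $S'=\Crit(f)\cup NO(f,K)$, of length below $\delta$; Lemma~\ref{l:K-homeo} then gives that $f$ is a homeomorphism from $\interior W_n^j$ onto $f(\interior W_n^j)=\interior W_n^{j+1}$ and that $f(\interior W_n^j)\cap K=f(\interior W_n^j\cap K)$, i.e. $f$ is a $K$-homeomorphism there.

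Next I would treat the case of a turning critical point $c\in\interior W_n^j$. Write $\interior W_n^j\setminus\{c\}$ as the disjoint union of two open subintervals $A_1,A_2$. Each $A_i$ is an interval intersecting $K$ or disjoint from $K$; if it intersects $K$, it is disjoint from $\Crit(f)$ (no other critical point in $W_n^j$ by our choice of $r$, and $c$ removed) and disjoint from $NO(f,K)$ (by Lemma~\ref{no-truncation}, since the only point of $NO(f,K)$ in $\interior W_n^j$ is $c$), hence disjoint from $S'$, and of length below $\delta$, so Lemma~\ref{l:K-homeo} applies and $f|_{A_i}$ is a $K$-homeomorphism onto $f(A_i)$. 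Since $c$ is a turning point, $f$ maps a one-sided neighborhood of $c$ in $\interior W_n^j$ onto a one-sided neighborhood of $f(c)$ and the other one-sided neighborhood onto the same side; consequently exactly one of the $f(A_i)$ equals $\interior W_n^{j+1}$ (the side that ``wraps around'' covers the full image) and the other is contained in $\interior W_n^{j+1}$. This is precisely the claimed dichotomy.

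The main obstacle is not any single deep step but the bookkeeping of how small $r$ must be chosen and the verification that the hypotheses of Lemma~\ref{l:K-homeo} (interval intersecting $K$, disjoint from $S'$, length $<\delta$) genuinely hold on each piece $\interior W_n^j$ or each half $A_i$; the one genuinely substantive input, namely that the interiors avoid $NO(f,K)$ except at turning critical points, is exactly what Lemma~\ref{no-truncation} provides, so the corollary is essentially a packaging of that lemma with the $K$-homeomorphism criterion. One should also note that the statement is about interiors only, which is what lets us safely excise the turning critical point and apply Lemma~\ref{l:K-homeo} to open intervals.
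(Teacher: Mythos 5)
Your architecture is the same as the paper's: split according to whether $\interior W_n^j$ contains a turning critical point, use Lemma~\ref{no-truncation} to conclude that the interior meets $NO(f,K)$ at most at that turning point, and then invoke Lemma~\ref{l:K-homeo} (splitting at $c$ in the turning case, with the wrap-around argument giving ``onto'' on one side and ``into'' on the other). So the proposal is essentially the paper's proof, written out with more bookkeeping.

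One justification, however, is genuinely wrong: you cannot arrange, by taking $r$ small, that no $W_n^j$ contains an inflection critical point in its interior. Shrinking $r$ (via BaShrink or Proposition~\ref{p.telescope}) makes the pull-backs short, but it does not keep them away from $\Crit(f)$: the orbit of $x$ may pass arbitrarily close to an inflection critical point, and then some $W_n^j$ contains it no matter how small $r$ is --- indeed the paper's whole pull-back/TCE machinery is about pull-backs that do capture critical points. Shortness only guarantees at most one critical point per $W_n^j$ (your use of this in the turning case is fine). Your fallback --- that $f$ is locally injective at an inflection point, so $f$ is still a homeomorphism on $\interior W_n^j$ --- is the right observation, but it does not by itself license the application of Lemma~\ref{l:K-homeo}, whose hypothesis requires the interval to be disjoint from all of $S'=\Crit(f)\cup NO(f,K)$, hence also from inflection critical points. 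To close this, either split $\interior W_n^j$ at the inflection point and apply Lemma~\ref{l:K-homeo} to each half (the identity $f(T)\cap K=f(T\cap K)$ is stable under taking unions), or observe that in the proof of Lemma~\ref{l:K-homeo} critical points enter only through injectivity, so for the set identity only $NO(f,K)$ and $\partial\hat I_K$ matter --- which is how the paper's own terse proof should be read. A smaller loose end: in the turning case you only treat components $A_i$ of $\interior W_n^j\setminus\{c\}$ that intersect $K$; if one component misses $K$, the $K$-homeomorphism claim for it amounts to $f(A_i)\cap K=\emptyset$ and deserves a word (e.g.\ maximality of $K$ in $\hat I_K$), though the paper glosses over this point as well.
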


\begin{proof}
 By Lemma~\ref{no-truncation} if $\interior W_n^j$ contains
 $z\in NO(f,K)$, then $z\in \Crit_T(f)$.
 So if there is no turning critical point in $\interior W_n^j$ there is no point belonging to $NO(f,K)$ neither. Hence $f$ is a homeomorphism on $\interior W_n^j$, and  also a $K$-homeomorphism, since $\interior W_n^j$ contains $f^j(x)\in K$, by Lemma~\ref{l:K-homeo}.

 If there is a turning critical point   $c\in \interior W_n^j$, then there is no other turning critical point
  in $\interior W_n^j$ since the interval is short enough, and $f(c)$ is an end of $f(W_n^j)$ and at least one of the ends of $W_n^j$ is mapped to the other end of $f(W_n^j)$. Since there are no points belonging to $NO(f,K)\setminus\{c\}$ in $W_n^j$ if this interval is short enough, the $K$-homeomorphism property on each component of $\interior W_n^j\setminus\{c\}$ asserted in the corollary, follows.
\end{proof}

\subsection{More on topological recurrence and exceptional sets}\label{toprec}

The fact that $f\colon K \to K$ is topologically transitive and has positive topological entropy implies that it is weakly topologically exact (see~\cite[Lemma A.7]{PrzRiv:13}). However, we will sometimes need a weaker conclusion, that $f|_K$ is
strongly topologically transitive, which immediately follows from weak topological exactness    and, less easily, from the topological transitivity, i.e. it is automatically true for $(f,K)\in\sA$,
see \cite{Kam:02} or \cite[Proposition 2.4]{PrzRiv:13}

We say that $f|_K$ is \emph{weakly topologically exact} (see also~\cite[Definition 1.15]{PrzRiv:13}) if  there exists a positive integer $N$
 such that for every nonempty open set $V\subset K$ there exists a positive integer $n=n(V)$ such that $f^{n}(V)\cup\ldots\cup f^{n+N-1}(V)=K$. Notice that this automatically implies the same for all $n\ge n(V)$, by acting by $f^{n-n(V)}$ on both sides of the equality.

\begin{definition}\label{def:strtoptra}
The map $f|_K$ is \emph{strongly topologically transitive}\footnote{This name was used in \cite{Kam:02}. See also the references therein. In \cite[Definition 2.3]{PrzRiv:13} it is called {\it density of preimages property}  (dp).}
if for every nonempty open $V\subset K$ there is a positive integer $m=m(V)$ such that $V\cup f(V)\cup\ldots \cup f^m(V)=K$.
Note that $f|_K$ is strong topologically transitivity if, and only if, the union of backward trajectories of every point in $K$ is dense in $K$.

Notice that for a given $\Delta$ one can choose $m(V)$ which works for all $V=B(x,\Delta)$, $x\in K$. We will denoted this number by $m_c(\Delta)$.
\end{definition}

Later we will need the following notation.  For a measure $\mu$ and $\Delta>0$ let
\begin{equation}\label{Upsilon}
\Upsilon(\mu,\Delta):=\inf_{x\in K} \mu(B(x,\Delta)).
\end{equation}
In our case, for $\mu$ being $\phi$-conformal this number is positive, which follows easily from strong topologically transitivity of $(f,K)$ due to the existence of $m_c(\Delta)$. In the case of $\mu$ being CaS measure, this positivity also holds provided this measure has infinite support.

In Section~\ref{strong}  we shall need the following very easy general fact. We will apply it to $X=K$.

\begin{lemma}\label{bunch}
Let $(X,\rho)$ be a compact infinite metric space without isolated points. Then for every $r>0$ and integer $m>0$ there exists $\e=\e(r,m)>0$ such that for every $x\in X$ there exists
$Z\subset X\cap B(x,r)$ with $\# Z\ge m$ such that for all $z_1,z_2\in Z$ with $z_1\not=z_2$ we have $\rho(z_1,z_2)\ge \e$.
\end{lemma}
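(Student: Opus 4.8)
We must show: in a compact infinite metric space $(X,\rho)$ without isolated points, for every $r>0$ and $m\in\N$ there is $\e=\e(r,m)>0$ such that every ball $B(x,r)$ contains an $m$-point $\e$-separated subset.



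The plan is to fix $r>0$ and $m\in\N$ and to obtain a value $\e=\e(r,m)$ that works uniformly in $x$ by a compactness argument applied to a suitable ``separation function'' on $X$. For $x\in X$ set
\[
g(x):=\sup\Big\{\min_{1\le i<j\le m}\rho(z_i,z_j)\ :\ z_1,\dots,z_m\in B(x,r)\cap X\text{ pairwise distinct}\Big\}.
\]
The supremum is taken over a nonempty set: since $X$ has no isolated points, $x$ is a limit point of $X$, so $B(x,r)$ contains infinitely many points of $X$, in particular $m$ pairwise distinct ones; and the supremum is bounded above by $\diam X<\infty$. Thus $g\colon X\to(0,\infty)$ is well defined, and the conclusion of the lemma for a given $\e$ is exactly the assertion $g(x)>\e$ for all $x\in X$ (if $g(x)>\e$, then some admissible tuple has minimal pairwise distance $\ge\e$, and the corresponding $Z$ works). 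Hence it suffices to produce $\e>0$ with $g>\e$ on $X$.

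I would first note $g(x)>0$ for every $x$: the $m$ pairwise distinct points of $B(x,r)\cap X$ found above have strictly positive minimal pairwise distance, which is a lower bound for $g(x)$. Next I would check that $g$ is lower semicontinuous. For a fixed tuple $\mathbf z=(z_1,\dots,z_m)$ of pairwise distinct points, put $O_{\mathbf z}:=\bigcap_{i=1}^m B(z_i,r)$, which is open, and $c_{\mathbf z}:=\min_{i<j}\rho(z_i,z_j)>0$; then $z_i\in B(x,r)$ for all $i$ precisely when $x\in O_{\mathbf z}$, so $g\ge c_{\mathbf z}\mathbf 1_{O_{\mathbf z}}$, and in fact $g=\sup_{\mathbf z}c_{\mathbf z}\mathbf 1_{O_{\mathbf z}}$. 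Each $c_{\mathbf z}\mathbf 1_{O_{\mathbf z}}$ is lower semicontinuous, being a nonnegative constant multiple of the indicator of an open set, and a pointwise supremum of lower semicontinuous functions is lower semicontinuous; hence so is $g$.

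Finally, since $X$ is compact and $g$ is lower semicontinuous and strictly positive, $g$ attains its infimum, so $c_0:=\min_{x\in X}g(x)>0$. Taking $\e:=c_0/2$ we get $g(x)\ge c_0>\e$ for every $x$, which, by the reformulation above, yields for each $x\in X$ a set $Z\subset X\cap B(x,r)$ with $\#Z=m$ whose pairwise distances are all $\ge\e$; this is the lemma. The only substantive point is the uniformity in $x$, and this is exactly what compactness of $X$ together with lower semicontinuity of $g$ provides; an equivalent route avoiding semicontinuity is a direct contradiction argument, extracting $x_n\to x_*$ along which the best $m$-point separation inside $B(x_n,r)$ tends to $0$, and then transplanting an $m$-point $2\rho_0$-separated configuration from a small ball around the non-isolated point $x_*$ into $B(x_n,r)$ for all large $n$, contradicting $2\rho_0>0$.
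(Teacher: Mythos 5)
Your proof is correct, and its main route differs from the paper's. The paper argues by contradiction with sequential compactness: it notes that a pointwise $\varepsilon(x)>0$ exists because $x$ is not isolated, supposes no uniform $\varepsilon$ works, extracts $x_n\to x$ with $\varepsilon(x_n)\to 0$, and uses the inclusion $B(x,r/2)\subset B(x_n,r)$ for large $n$ to force the pointwise constant at $x$ (for the halved radius $r/2$) to vanish, a contradiction. You instead package the optimal $m$-point separation into a function $g$, verify $g>0$ pointwise from the absence of isolated points, prove lower semicontinuity by writing $g=\sup_{\mathbf z}c_{\mathbf z}\mathbf 1_{O_{\mathbf z}}$ with $O_{\mathbf z}=\bigcap_i B(z_i,r)$ open, and invoke the fact that a lower semicontinuous positive function on a compact space attains its minimum. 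Both are compactness arguments, but yours is direct rather than by contradiction, avoids the radius-halving step entirely (you get separated sets inside $B(x,r)$ itself, with $\varepsilon$ equal to half the exact infimum of the optimal separation), and isolates the uniformity issue in a reusable semicontinuity statement; the paper's version is shorter and more elementary, needing nothing beyond extracting a convergent subsequence. Your closing remark about the ``direct contradiction argument, transplanting a separated configuration from a small ball around $x_*$ into $B(x_n,r)$'' is essentially the paper's proof, so you have in effect both arguments.
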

\begin{proof}
For every point $x\in X$ for every $r$ and $m$ there exists $\e(x)$ satisfying the assertion because $x$ is not isolated, the problem in this lemma is to prove that one can choose one $\e$ working for all $x$. Assume that this is not the case and that there exists a sequence $(x_n)_n$ such that $\e(x_n)\to 0$. Let $x$ be an accumulation point of sequence $(x_n)$, passing to a subsequence we can assume $x=\lim x_n$. Then $B(x,r/2) \subset B(x_n,r)$ for $n$ large enough, hence for the same $m$ and for $r'=r/2$ we would have $\e(x)=0$ -- contradiction.
\end{proof}

The following lemma provides a justification of the notions of `exceptional' and `non-exceptional'.
Given $x\in K$ let
\[\begin{split}
	&\cO^-_{reg}(x)\eqdef\\
	&\big\{y=(f|_K)^{-n}(x)\colon n=0,1,\ldots, f^k(y)\notin S'\text{ for every }k=0,\ldots,n-1
	\big\}
\end{split}\]
be the union of all backward trajectories (finite or infinite) for $f|_K$ starting at $x$ that do not hit $S'$. Recall the notion of being weakly $S'$-exceptional in Section~\ref{exceptional}.

\begin{lemma}\label{lem:denseback}
	Let $(f,K)\in \sA_+$. Then
\begin{enumerate}
\item For every $x\in K$ not weakly $S'$-exceptional (we do allow $x\in S'$) the set $\cO^-_{reg}(x)$ is dense in $K$.
\item For every $x\in K$ not $S'$-exceptional  there exists $n\ge 0$ such that
$f^n(x)$ is not weakly $S'$-exceptional.
\item For every invariant measure $\mu$ which gives measure 0 to all $S'$-exceptional sets, the set of points $x\in K$ for which the set $\cO^-_{reg}(x)$ is dense in $K$ has positive measure $\mu$.
\end{enumerate}
\end{lemma}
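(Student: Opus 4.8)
The three items are proved together, starting from the fact that $f|_K$ is strongly topologically transitive, i.e. the union of backward trajectories of every point in $K$ is dense in $K$ (Definition~\ref{def:strtoptra}). The only obstruction to a backward trajectory staying inside $K$ while avoiding $S'$ is that it might be forced through $S'$ at some step. The plan is to control this by the finiteness of weakly $S'$-exceptional sets (Subsection~\ref{exceptional}).

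For item (1), fix $x\in K$ not weakly $S'$-exceptional and suppose for contradiction that $\cO^-_{reg}(x)$ is \emph{not} dense. Let $E$ be the closure of $\cO^-_{reg}(x)$ in $K$; then $E\ne K$, so $E$ is non-dense. I would verify that $E$ satisfies the blocking condition~\eqref{except-condition}: if $y\in E$ and $z\in {f|_K}^{-1}(y)$ with $z\notin S'$, then any backward trajectory realizing $y$ in $\cO^-_{reg}(x)$ (or a point of $\cO^-_{reg}(x)$ near $y$, using continuity of $f$ and that $z\notin S'$ so that $f$ is a local $K$-homeomorphism near $z$ by Lemma~\ref{l:K-homeo}) extends by $z$ to an element of $\cO^-_{reg}(x)$, hence $z\in E$; one has to take a little care passing from $y$ itself to nearby points of $\cO^-_{reg}(x)$, but since $z\notin S'$ the preimage branch is continuous and $E$ is closed, so $z\in E$. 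Thus $E$ is weakly $S'$-exceptional and contains $x$, contradicting the hypothesis that $x$ is not weakly $S'$-exceptional. Therefore $\cO^-_{reg}(x)$ is dense.

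For item (2), let $x\in K$ be not $S'$-exceptional. The set of all weakly $S'$-exceptional points is finite, say $W$ (Subsection~\ref{exceptional}). If $f^n(x)\in W$ for all $n\ge 0$, then the forward orbit of $x$ lies in the finite set $W$; let $E_0$ be the maximal weakly $S'$-exceptional set containing the forward orbit of $x$ — more precisely, take $E_0$ to be the union of all weakly $S'$-exceptional sets, which is itself weakly $S'$-exceptional and forward invariant under $f|_K$ modulo $S'$ in the appropriate sense; then the forward-invariant closure of $\{f^n(x)\}$ inside $E_0$ is a finite, non-dense, forward-invariant, $S'$-exceptional set containing $x$, contradicting that $x$ is not $S'$-exceptional. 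Hence some $f^n(x)$ is not weakly $S'$-exceptional. (The point to check carefully is that the union of all weakly $S'$-exceptional sets together with the orbit of $x$ is still non-dense, which holds by the uniform cardinality bound of Subsection~\ref{exceptional}.)

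For item (3), let $\mu$ be $f$-invariant and assign measure $0$ to every $S'$-exceptional set. By item (1) (and item (2)), the set $G$ of points $x\in K$ such that $\cO^-_{reg}(x)$ is dense contains $K\setminus W'$, where $W'$ is the set of weakly $S'$-exceptional points that are also $S'$-exceptional together with their exceptional-set saturations — in any case $K\setminus G$ is contained in a finite union of $S'$-exceptional sets (every point not in $G$ is, by item (1), weakly $S'$-exceptional; by item (2), if it were not itself $S'$-exceptional some forward image would escape, and one pushes the density of $\cO^-_{reg}$ forward), hence $\mu(K\setminus G)=0$ and so $\mu(G)=1>0$. The main obstacle throughout is item (1): the delicate point is the passage, when checking~\eqref{except-condition} for the closure $E$ of $\cO^-_{reg}(x)$, from a limit point $y\in E$ and a chosen preimage $z\notin S'$ to genuine elements of $\cO^-_{reg}(x)$ accumulating on $z$ — this requires that $f$ restricted to a small neighborhood of $z$ be a $K$-homeomorphism, which is exactly Lemma~\ref{l:K-homeo} applied on an interval of length $<\delta$ around $z$ disjoint from $S'$.
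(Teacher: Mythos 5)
Your item (1) is correct and is essentially the paper's argument, but the detour through the closure is unnecessary: the definition of a weakly $S'$-exceptional set does not require the set to be closed, and $\cO^-_{reg}(x)$ itself satisfies \eqref{except-condition} (if $y\in\cO^-_{reg}(x)$ and $z\in {f|_K}^{-1}(y)\setminus S'$, the backward trajectory defining $y$ extended by $z$ still avoids $S'$), so if it were non-dense it would be weakly $S'$-exceptional and $x$ would be weakly $S'$-exceptional — no continuity argument and no appeal to Lemma~\ref{l:K-homeo} is needed.

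The genuine gaps are in items (2) and (3). In (2), being $S'$-exceptional means forward invariant \emph{and} weakly $S'$-exceptional, so your candidate set must satisfy the blocking condition \eqref{except-condition}; you verify finiteness, non-density and forward invariance, but never \eqref{except-condition}, and the ``forward-invariant closure of $\{f^n(x)\}$'' does not satisfy it in general: a point $f^n(x)$ typically has $K$-preimages outside $S'$ that are not on the orbit. Nor does $E_0$, the union of all weakly $S'$-exceptional sets, help, since it need not be forward invariant (``forward invariant modulo $S'$ in the appropriate sense'' is not a property you can use). The paper closes exactly this gap by taking $\Sigma=\bigcup_{y\in\cO^+(x)}\cO^-_{reg}(y)$: each $\cO^-_{reg}(y)$ satisfies \eqref{except-condition} automatically, is contained in the weakly exceptional set containing $y$ (hence finite, with the uniform cardinality bound making the union finite), and the union is forward invariant, so $\Sigma$ is $S'$-exceptional and contains $x$ — the desired contradiction. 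In (3), your key claim that $K\setminus G$ is contained in a finite union of $S'$-exceptional sets is unjustified and false in general: item (1) only gives that points of $K\setminus G$ are \emph{weakly} $S'$-exceptional, and weakly exceptional points need not lie in any $S'$-exceptional set (for a non-exceptional unimodal map there are no $S'$-exceptional sets at all, yet the critical value $v$ can have $\cO^-_{reg}(v)=\{v\}$, so $K\setminus G\neq\emptyset$). Also ``pushing the density of $\cO^-_{reg}$ forward'' is not a valid step: density of $\cO^-_{reg}(f^n(x))$ does not imply density of $\cO^-_{reg}(x)$, because backward branches through $x$ must pass through $x,\dots,f^{n-1}(x)$, which may meet $S'$. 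The correct deduction — which gives only $\mu(G)>0$, which is all the lemma asserts — uses invariance of $\mu$: the union of all $S'$-exceptional sets is $S'$-exceptional, hence $\mu$-null, so $\mu$-a.e.\ $x$ is not $S'$-exceptional; by (2) and (1) some $f^n(x)$ lies in $G$, so $\bigcup_{n\ge 0}f^{-n}(G)$ has full measure, and then $\mu(G)=\mu(f^{-n}(G))>0$ for some $n$.
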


\begin{proof}
By its definition, the set $\cO^-_{reg}(x)$ satisfies~\eqref{except-condition} for $\Sigma=S'$, hence as $x$ does not belong to any  weakly $S'$-exceptional set, $\cO^-_{reg}(x)$ must be dense in $K$ which proves the
Item 1.

Suppose that each $y=f^n(x), n\ge 0$ belongs to a weakly $S'$-exceptional set $\Sigma(y)$. Then, by definition,
$\cO^-_{reg}(y)\subset \Sigma(y)$, hence it is finite and weakly $S'$-exceptional.
Then $\Sigma:=\bigcup_{y\in \cO^+(x)}\cO^-_{reg}(y)$, for $\cO^+(x)$
 denoting the forward orbit of $x$, is finite, in particular not dense, and forward invariant. Therefore $\Sigma$ is $S'$-exceptional, hence $x$ belongs to an $S'$-exceptional set. This proves Item 2.

Item 3 follows immediately from Item 2.
\end{proof}

Formally the weak exactness property has not been needed in the proof of Lemma~\ref{lem:denseback}. However, it is needed to prove finiteness of the union of all weakly $S'$-exceptional sets, see \cite[Proposition 2.7, Part 2.]{PrzRiv:13}.
Have in mind the example of irrational rotation of the circle (though formally it is not in $\R$) or a `solenoid' $K=\omega(c)$ for an infinitely renormalizable unimodal map of the interval and $c$ its critical value, where all finite blocks $\{c,f(c),\ldots,f^n(c)\}$ are weakly $S'$-exceptional sets. These examples are strongly topologically transitive, but not weakly exact.

\subsection{Cantor repellers and `bridges'}\label{ss:bridges}

We describe a construction to `connect' two given uniformly expanding repellers by building `bridges' between them.

Let $f\colon U\to \R$ be a $C^{1+\varepsilon}$  map on an open domain $U\subset \R$.
We call a set $X\subset U$ a $f$-uniformly expanding \emph{Cantor repeller (ECR)} if $f|_X$ is a uniformly expanding repeller and a limit set of a finite graph directed system (GDS)  satisfying the strong separation condition (SSC) with respect to $f$. Recall that a GDS satisfying the SSC with respect to $f$ is a family of domains and maps satisfying the
following conditions (compare~\cite[pp.~3,~58]{MauUrb:00}):
\begin{itemize}
\item[(i)] There exists a finite family $\cU= \{U_k\}$  of open intervals with pairwise disjoint closures.
\item[(ii)] There exists a family $G=\{g_{k\ell}\}$ of branches of $f^{-1}$ mapping $\overline{U_\ell}$ into $U_k$ with bounded distortion (not all pairs
$k,\ell$ must appear here).\\
Note that a general definition of GDS allows many maps $g$ from each $\overline{ U_\ell}$ to each $U_k$. Here however there can be at most one, since we assume that $f$-critical points are far away from $X$ the intervals $U_k$ are short and that the maps $g$ are branches of $f^{-1}$.
\item[(iii)] We have
\[
	X= \bigcap_{n=1}^\infty\bigcup_{k_1,\ldots,k_n} g_{k_1k_2}\circ
	g_{k_2k_3}\circ \cdots \circ g_{k_{n-1}k_n} (U_{k_n}).
\]
We assume that we have $f(X)=X$ and hence that for each $k$ there exists $\ell$ and for each $\ell$ there exists $k$ such that $g_{k\ell}\in G$.
\end{itemize}
We can view $k$'s as vertices and  $g_{k\ell}$ as edges from $\ell$ to $k$ of a directed graph $\Gamma=\Gamma({\cU},G)$.

Observe that even if $X$ contains points in $NO(f,K)$, the resulting restricted map $f|_X$ is open as $X$ is a limit set of a GDS satisfying the SSC.

We import the following result from Pesin-Katok theory (see, for example,~\cite[Theorem 11.6.1]{PrzUrb:10}, \cite[Theorem 4.1]{PrzRiv:13} (concerning our interval case)
\cite{Gel:10}, and also~\cite{MisSzl:80} for earlier related results). Recall again the definition of weakly isolated in Definition~\ref{wi}.

\begin{lemma}\label{lem:Katok}
Consider $f\colon U\to \R$ for an open set $U\subset \R$, being a $C^2$ map with all critical points non-flat. Consider an arbitrary compact $f$-invariant $X\subset U$.
Let $\mu\in\cM_{\rm E}$ be supported on $X$ and have positive Lyapunov exponent. Let $\varphi\colon U\to\bR$ be a continuous function. Then there exists a sequence $(X_k)_{k\ge1}\subset U$ of ECRs or periodic orbits, such that for every $k$
	\begin{equation}\label{var-ineq}
		\liminf_{k\to\infty}P_{f|X_k}(\varphi)\ge h_\mu(f)+\int\varphi\,d\mu\,,
	\end{equation}
	and that every sequence $(\mu_k)_{k\ge1}$ of measures $\mu_k\in\cM_{\rm E}(X_k)$ converges to $\mu$ in the weak$\ast$-topology. Moreover, we have
	\begin{equation}\label{chi-ineq}
		\lim_{k\to\infty}\chi(\mu_k)\to\chi(\mu)\,.
	\end{equation}
If $X$ is weakly isolated, then one can choose $X_k\subset X$.
\end{lemma}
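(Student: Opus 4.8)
\textbf{Proof proposal for Lemma~\ref{lem:Katok}.}

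The plan is to reduce the statement to the classical Pesin--Katok approximation theorem, taking care of the extra features of our setting: the map $f$ is only $C^2$ with non-flat critical points (hence not globally expanding), the invariant set $X$ may contain points of $NO(f,K)$, and we want the approximating ECRs to sit inside $X$ when $X$ is weakly isolated. First I would dispose of the degenerate case: if $\mu$ is supported on a periodic orbit, take $X_k=\supp\mu$ for all $k$ and all conclusions are trivial. So assume $\mu$ is non-atomic. Since $\chi(\mu)>0$ and all critical points are non-flat, $\mu$ gives zero measure to $\bigcup_{n\ge 0}f^{-n}(\Crit(f))$, so $\log|f'|\in L^1(\mu)$ and the Birkhoff averages of $\log|f'|$, of $\varphi$, and the local entropy (Brin--Katok) all converge $\mu$-a.e.\ to $\chi(\mu)$, $\int\varphi\,d\mu$, and $h_\mu(f)$ respectively. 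Fix $\eta>0$. By Egorov and regularity of $\mu$ there is a compact set $A$ with $\mu(A)>1/2$ on which all three convergences are uniform and on which the local stable/unstable "Pesin block" estimates hold with uniform constants.

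Next I would run the standard hyperbolic-times / Pliss argument on $A$: for $\mu$-a.e.\ $x$ there are infinitely many $n$ with $f^n(x)\in A$ and $|(f^j)'(x)|\ge e^{j(\chi(\mu)-\eta)}$ for all $0\le j\le n$ (hyperbolic times), and along such returns the pull-back of a fixed small ball $B(f^n(x),r)$ along the orbit is a diffeomorphism onto its image with bounded distortion and contracts exponentially; here $r$ is chosen small enough that $B(y,r)$ is disjoint from $\Crit(f)$ for all $y\in A$ (possible since $A$ avoids a neighborhood of the critical set up to a set of small measure, which we may remove from $A$). This is exactly where the non-flatness and the fact that $A$ avoids $\Crit(f)$ are used: it guarantees the pull-backs are genuine inverse branches with Koebe-type distortion control, so the resulting Cantor set is a GDS satisfying the SSC as in conditions (i)--(iii). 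Choosing a maximal $(n,\e)$-separated subset of such good points and the associated inverse branches produces, for each large $n$, a finite family of contractions whose limit set $X_n$ is an $f$-uniformly expanding ECR; by the Katok entropy formula the topological entropy of $f|_{X_n}$ (equivalently the number of branches, up to the factor $n$) is at least $h_\mu(f)-2\eta$, the Lyapunov exponents on $X_n$ are within $\eta$ of $\chi(\mu)$, and Birkhoff averages of $\varphi$ on $X_n$ are within $\eta$ of $\int\varphi\,d\mu$. Letting $\eta\to 0$ along a sequence gives ECRs $(X_k)$ with \eqref{var-ineq}, \eqref{chi-ineq}, and weak$\ast$ convergence $\mu_k\to\mu$ for every choice $\mu_k\in\cM_{\rm E}(X_k)$ (the latter because any such $\mu_k$ is carried by orbits shadowing typical $\mu$-orbits, so its Birkhoff data is pinned down).

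For the last clause, suppose $X$ is weakly isolated. The inverse branches above are defined on small balls centered at points of $A\subset X$ and their images are small intervals meeting $X$; by weak isolation (Definition~\ref{wi}), for $r$ small enough the periodic points produced by composing these branches lie in $U'$, hence in $X$, and more generally the whole limit set $X_n$ of the GDS — being a nested intersection of pull-backs of sets meeting $X$ under a map for which $X$ is the maximal invariant set in a neighborhood — is contained in $X$. Concretely, every point of $X_n$ has a full forward orbit staying in $\bigcup_k \overline{U_k}$, a neighborhood we may take inside $U'$, so it must belong to $X$; this uses that the $\overline{U_k}$ cluster on $A\subset X$ and the pull-backs shrink (BaShrink is available for $(f,K)\in\sA_+$). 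The main obstacle I anticipate is precisely this containment step together with the distortion control near $NO(f,K)$: one must ensure that the inverse branches used to build the GDS never "see" a turning critical point and that the SSC truly holds, which is why $r$ and the intervals $U_k$ must be chosen short compared to the mutual distances of points in $S'$ and why we restrict $A$ away from $\Crit(f)$ — the Corollary~\ref{cor:no-truncation}-type analysis guarantees that on such short intervals $f$ is a $K$-homeomorphism, so the combinatorics of the graph $\Gamma(\cU,G)$ are unambiguous and $f|_{X_n}$ is open even if $X_n$ meets $NO(f,K)$.
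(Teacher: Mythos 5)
Your main construction (Pesin blocks via Egorov, Pliss hyperbolic times, bounded-distortion inverse branches away from $\Crit(f)$, a separated set of returns generating a GDS whose limit set is an ECR, Katok's entropy estimate, and control of Birkhoff averages of $\varphi$ and of $\log|f'|$ so that every $\mu_k\in\cM_{\rm E}(X_k)$ converges to $\mu$) is exactly the standard Katok--Pesin route that the paper itself imports by reference, so that part is fine as a sketch and not a different approach.

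There is, however, a genuine gap in your treatment of the last clause. To get $X_k\subset X$ you argue that every point of $X_k$ has its full forward orbit in a small neighborhood contained in $U'$ and "must therefore belong to $X$", invoking "a map for which $X$ is the maximal invariant set in a neighborhood". Weak isolation (Definition~\ref{wi}) gives no such maximality: it only says that every \emph{periodic} orbit contained in $U'$ lies in $X$, and the paper is explicit that this is strictly weaker than isolation --- $K$ is maximal only in $\hat I_K$, which is not a neighborhood of $K$ when $K$ is a Cantor set, and \cite[Example 2.12]{PrzRiv:13} shows nearby invariant objects need not be in $K$. So the step "forward orbit stays in $U'$ $\Rightarrow$ point is in $X$" is unjustified for a general weakly isolated $X$, and this is precisely the case the clause is meant to cover. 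The repair is short and is the argument the paper actually uses: the ECR $X_k$ you build is (conjugate to) a transitive subshift of finite type, so its periodic points are dense in $X_k$; each such periodic orbit lies in $U'$ when the construction is carried out sufficiently close to $X$, hence lies in $X$ by weak isolation; since $X$ is compact, density of these periodic points in $X_k$ gives $X_k\subset X$. Your proposal already contains the periodic-orbit observation, but the conclusion must go through density of periodic points in $X_k$, not through a maximality property that weak isolation does not provide.
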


For $X=K$ for $(f,K)\in\sA_+^{\BD}$ satisfying the weak isolation condition,
Lemma \ref{lem:Katok} implies easily $P_{\rm varhyp}(\varphi_t)\le P_{\rm hyp}(\varphi_t)$, see \eqref{pressures}.
Indeed, by definition we can consider $\mu$ so that $h_\mu(f)+\int\varphi_t\,d\mu$ is arbitrarily close to  $P_{\rm varhyp}(\varphi_t)$ and next find $X_k$ as in Lemma  \ref{lem:Katok} applied to $\varphi\equiv 0$.
We get from \eqref{var-ineq} $h_{{\rm top}}(f|_{X_k})$ at least $h_\mu(f)$ (up to an arbitrarily small positive number) and applying \eqref{chi-ineq} for $\mu_k$ being measures of maximal entropy on $X_k$,  we obtain
\begin{equation}\label{P-ECR}
	P_{\rm varhyp}(\varphi_t)
	\le  P_{\rm ECR}(\varphi|_X)
		\eqdef \sup_{X\subset K}P_{f|_X}(\varphi)
		\le P_{\rm hyp}(\varphi_t),
\end{equation}
where the supremum is taken over all ECRs and periodic repelling orbits $X\subset K$.
This uses an inequality in the classical variational principle for the continuous potentials $\varphi_t|_{X_k}$, namely:
$h_{\mu_k}(f|_{X_k})+ \int \varphi_t\, d\mu_k \le P_{f|_{X_k}}(\varphi_t)$.

Compare also \cite[Lemma 4.2]{PrzRiv:13}.

\begin{definition}\label{defi:bridge}
Consider $(f,K)\in\sA_+^{\BD}$ satisfying the weak isolation condition, and set $X=K$.
Consider two ECRs or periodic repelling orbits $X_1$ and $X_2$ contained in $K$, resulting from families of domains $\cU_1=(U_{1,k})$ and $\cU_2=(U_{2,k})$ respectively.  A {\it bridge} connecting $X_1$ to $X_2$ is a backward trajectory that starts in $X_1$, goes close to $X_2$, and in-between avoids the singular set $S'$. More precisely it is a sequence of points $x_n\in K$ for $n=0,-1,\ldots,N$ such that $f(x_n)=x_{n-1}$, $x=x_0\in U_{1,k}$ for an integer $k$ and the components $W_n$ of $f^{-n}(U_{1,k})$ containing $x_n$ are disjoint from $S'$ for $n=0,\ldots,N$, disjoint from $\bigcup_k U_{i,k}$ for $n=1,\ldots,N-1$ and $W_N\subset U_{2,k'}$ for an index  $k'$.
\end{definition}

If $(f,K)$ is non-exceptional then for any two $X_1, X_2$ a bridge exists. In fact by
Lemma~\ref{lem:denseback} Item 3, in $X_1$, for every invariant measure $\mu$ on $X_1$, there is a positive measure set of points $x$ whose backward trajectories omitting $S'$ are dense in $K$. This allows to prove that for every $X_2$ a bridge to $X_2$ exists (for appropriate $\cU_i$).

Based on this idea of bridges connecting ECRs, $X_1$ to $X_2$ and $X_2$ to $X_1$, one can prove the following lemma (compare~\cite[Lemma 2]{GelPrzRam:10}).

\begin{lemma} \label{bridge}
	For any  two disjoint ECRs or periodic orbits $X_1,X_2 \subset K$  there exists an ECR set $X\subset K$ containing the set $X_1\cup X_2$. If $f$ is topologically transitive on each $X_i$, $i=1$, $2$, then $f|_X$ is topologically transitive.
\end{lemma}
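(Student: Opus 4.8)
## Proof proposal for Lemma~\ref{bridge}

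\textbf{Strategy.} The plan is to build $X$ as the limit set of a new graph directed system (GDS) that contains the graphs $\Gamma_1$, $\Gamma_2$ of $X_1$, $X_2$ as subgraphs, together with a finite collection of extra edges — the ``bridges'' — joining them in both directions. The key point is that the whole construction can be arranged to still satisfy the strong separation condition with respect to $f$, so that the resulting limit set is again an ECR in the sense of Subsection~\ref{ss:bridges}. Transitivity of $f|_X$ will then follow from the fact that the underlying graph becomes strongly connected.

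\textbf{Key steps.} First I would invoke the existence of bridges: since $(f,K)$ is non-exceptional, by Lemma~\ref{lem:denseback}(3) applied to an invariant measure on $X_1$ there is a positive-measure set of points in $X_1$ whose backward trajectories omitting $S'$ are dense in $K$; choosing such a backward trajectory that comes close to $X_2$ yields, for suitable choices of the defining families $\cU_1$, $\cU_2$, a bridge $x_0,x_{-1},\dots,x_{-N}$ from $X_1$ to $X_2$ as in Definition~\ref{defi:bridge}, and symmetrically a bridge from $X_2$ to $X_1$. Second, I would turn each bridge into a family of new vertices and edges: the points $x_{-n}$ ($1\le n\le N-1$) become centers of new small intervals $V_n\ni x_{-n}$ taken as components of $f^{-n}$ of the relevant $U_{1,k}$, and since (by definition of a bridge) these $W_n$ are disjoint from $S'$ and from $\bigcup_k U_{i,k}$ for the interior indices, each restriction $f|_{V_n}$ is a homeomorphism onto (a neighborhood of) $V_{n-1}$ with bounded distortion by the $\sA^{\BD}$ assumption (using Lemma~\ref{l:K-homeo} / Corollary~\ref{cor:no-truncation}); thus each $f|_{V_n}$ has a well-defined inverse branch $g$, giving a chain of edges from $X_1$ into $X_2$, and likewise back. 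Third, I would shrink all intervals involved — the $U_{i,k}$ and the bridge intervals $V_n$ — uniformly, using BaShrink (Subsection~\ref{ss:TCE}), so that the finitely many new intervals have pairwise disjoint closures and are disjoint from the closures of all the $U_{i,k}$; because there are only finitely many intervals in total, a single small scale works. Fourth, I would check uniform expansion of the resulting GDS: along $\Gamma_1$ and $\Gamma_2$ it is inherited; along a bridge the compositions $f^N$ over the $V_n$ need not be expanding, but this is harmless — one passes to a higher iterate or, more simply, observes that any admissible infinite path in the combined graph uses bridge edges only boundedly often between long excursions in $\Gamma_1$ or $\Gamma_2$ (since each bridge is a finite path that must end inside some $U_{i,k}$), so the Birkhoff averages of $\log|f'|$ along any such path are bounded below by a positive constant; hence $f|_X$ is uniformly expanding. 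Finally, $X=\bigcap_n \bigcup g_{k_1k_2}\circ\cdots\circ g_{k_{n-1}k_n}(U_{k_n})$ over the combined graph is the desired ECR, it contains $X_1\cup X_2$ (take paths staying in $\Gamma_i$), and if each $f|_{X_i}$ is topologically transitive then the combined graph is strongly connected, so $f|_X$ is topologically transitive.

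\textbf{Main obstacle.} The delicate step is the simultaneous verification of the strong separation condition and of uniform expansion for the combined GDS. Ensuring SSC requires that the finitely many bridge intervals can be chosen so small that they are mutually disjoint and disjoint from all the $U_{i,k}$, which is exactly where BaShrink and the finiteness of the graph are essential; and ensuring that adding the (a priori non-expanding) bridge edges does not destroy uniform expansion requires the bookkeeping argument that bridges are traversed only in bounded finite blocks. A secondary technical point is that one must possibly first replace $X_1$, $X_2$ by slightly modified ECRs (same dynamics, refined defining families $\cU_i$) so that a bridge actually lands inside one of the $U_{i,k}$ rather than merely near $X_2$; this is routine given that an ECR admits arbitrarily fine Markov covers, but it must be stated. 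Everything else — bounded distortion of the bridge branches, $K$-homeomorphism property on the bridge intervals, passing $X_i\subset K$ — follows from the results already established in Section~\ref{Preliminary Constructions} and the $\sA^{\BD}_+$ plus weak isolation hypotheses.
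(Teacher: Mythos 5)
Your overall route is the same as the paper's: Lemma~\ref{bridge} is proved there exactly by the bridge construction of Definition~\ref{defi:bridge} (bridges exist by non-exceptionality via Lemma~\ref{lem:denseback}), with the details deferred to \cite[Lemma 2]{GelPrzRam:10}; your combined-GDS formulation, the small-scale SSC check, and transitivity from strong connectivity of the graph all match that scheme.

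There is, however, a genuine gap in your verification of uniform expansion. The claim that any admissible path of the combined graph uses bridge edges only between ``long excursions'' in $\Gamma_1$ or $\Gamma_2$ is not justified, and for the naive graph it is false: after traversing (in forward time) the bridge coming from the $X_2$-side, the orbit lands somewhere in the terminal interval $U_{1,k}$, and nothing prevents the entry interval of the reverse bridge from lying inside $U_{1,k}$, or from being reachable within a bounded number of $\Gamma_1$-steps. Hence the limit set of your GDS contains points whose itineraries alternate the two bridges with only boundedly many (possibly zero) symbols of $\Gamma_1\cup\Gamma_2$ in between. Since bridge components are only required to avoid $S'$, they may pass arbitrarily close to $\Crit(f)$, so $\log\lvert f'\rvert$ can be very negative along a bridge; for such alternating itineraries the Birkhoff averages of $\log\lvert f'\rvert$ need not be positive, so the limit set need not be uniformly expanding, and passing to a higher iterate of $f$ cannot repair a path whose asymptotic exponent is nonpositive. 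The missing ingredient is precisely the bookkeeping you allude to but do not carry out: either choose each bridge so that the composed inverse branch lands in a sufficiently small (deep) cylinder of the target repeller, so that by bounded distortion and BaShrink the bridge map $f^{N}\colon W_N\to U_{1,k}$ is itself uniformly expanding, checking this is compatible with the disjointness requirements in Definition~\ref{defi:bridge}; or modify the graph (refined cylinder vertices, or a bounded counter) so that any two bridge crossings are separated by at least $m$ steps inside $X_1$ or $X_2$, with $m$ large compared with the lengths and contraction constants of the two fixed bridges, and conclude expansion for an iterate. This step is the actual content of the argument in \cite[Lemma 2]{GelPrzRam:10}; without it your construction does not yield an ECR.
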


In case two ECRs intersect, in a preliminary step we first have to modify one of them by the following lemma. For that we also have to take into consideration ECRs with respect to $f^n$.

In the following two results we use the standard notation  $S_\ell\phi=\phi+\phi\circ f+\cdots+\phi\circ f^{\ell-1}$ for a function $\phi\colon X\to\bR$ and natural number $\ell>0$.

\begin{lemma} \label{bridge2}
Let $X$ be a topologically transitive $f^n$-ECR. Let $\phi_i\colon X\to\bR$ be a finite number of H\"older continuous functions. Then for any open sufficiently small interval $D$ intersecting $X$ and for any $\varepsilon>0$ one can find a set $X'\subset D\cap X$ and a natural number $\ell>0$ such that $X'$ is an $f^\ell$-ECR and for every $\phi_i$ satisfies
\[
	\frac{1}{\ell} P_{f^\ell|X'}(S_\ell\phi_i)
	\geq \frac{1}{n}P_{f^n|X}(S_n\phi_i) - \varepsilon\,.
\]
\end{lemma}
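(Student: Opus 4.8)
The plan is to reduce everything to a statement about first-return maps inside a small window. Fix a small open interval $D$ intersecting $X$. Since $X$ is a topologically transitive $f^n$-ECR, it is the limit set of a GDS satisfying the SSC; in particular $f^n|_X$ is topologically mixing on a cycle of pieces, and by the SSC there is a cylinder $[a_1\ldots a_m]$ of the GDS whose image under the corresponding composition of inverse branches is contained in $D$. First I would pass to the first-return map of $f^n$ to a union of such small pieces $Y\subset D\cap X$ and observe, via the thermodynamic formalism for finitely irreducible GDS (or Cantor repellers) — e.g.\ the pressure formula in terms of the transfer operator / the variational principle of Lemma~\ref{lem:Katok} combined with \eqref{P-ECR} — that for each H\"older potential $\phi_i$ the pressure of $S_n\phi_i$ for $f^n|_X$ can be approximated from below by the pressure on a subsystem $X'\subset Y$ supported on a finite set of admissible words, i.e.\ a finite-type subshift. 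Concretely: choose an equilibrium state $\mu_i$ (or a near-maximizer in the variational principle) for $S_n\phi_i$ on $X$, and use Lemma~\ref{lem:Katok} applied to $f^n|_X$, the invariant set $X$, the measure $\mu_i$, and the potential $\phi_i$, to get an $f^n$-ECR $X_i'\subset X$ with $\frac1n P_{f^n|X_i'}(S_n\phi_i)\ge \frac1n P_{f^n|X}(S_n\phi_i)-\varepsilon/2$; weak isolation of $X$ (it is an ECR) lets us take $X_i'\subset X$.

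The two remaining issues are (a) making a single $X'$ work simultaneously for all the finitely many $\phi_i$, and (b) forcing $X'\subset D$ while turning it into an $f^\ell$-ECR for a single return time $\ell$. For (a) I would invoke Lemma~\ref{bridge} (bridges): given the finitely many ECRs $X_1',\ldots,X_r'$ produced above, build one ECR $X''\supset\bigcup_i X_i'$ inside $X$ on which $f^n$ is topologically transitive; the pressure of $S_n\phi_i$ on $X''$ dominates that on $X_i'$, so $\frac1n P_{f^n|X''}(S_n\phi_i)\ge \frac1n P_{f^n|X}(S_n\phi_i)-\varepsilon/2$ for every $i$ at once. For (b), since $f^n|_{X''}$ is topologically transitive and $X''$ is a Cantor repeller, there is a piece $P_0$ of its GDS contained in $D$; let $\ell=n\cdot q$ where $q$ is a return time of the dynamics to $P_0$, and take $X'\subset P_0\cap X''$ to be the maximal $f^\ell$-invariant subset, i.e.\ the natural sub-repeller generated by the first-return branches of $f^n|_{X''}$ into $P_0$. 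By bounded distortion and the SSC this $X'$ is again an ECR (now for $f^\ell$), and the standard Abramov-type / first-return relation between pressures — $\tfrac1\ell P_{f^\ell|X'}(S_\ell\phi_i)\ge \tfrac1n P_{f^n|X''}(S_n\phi_i)-\varepsilon/2$ when $P_0$ is chosen as a sufficiently ``large'' piece so that the induced system captures almost all of the pressure (this is where the ``$D$ sufficiently small'' is harmless and where one can even enlarge by a further bridge inside $P_0$ if needed) — gives the desired inequality, after renaming $\varepsilon$.

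The main obstacle I expect is step (b): controlling the loss of pressure when passing to the first-return (induced) system on a small piece $P_0\subset D$. In general inducing can lose pressure, so one has to choose the inducing domain carefully — essentially take $P_0$ to be (the image in $D$ of) a long but fixed cylinder and note that for a mixing Cantor repeller the induced subsystem on a cylinder carries pressure arbitrarily close to the original, uniformly once the cylinder is fixed; quantitatively this follows from the Bowen-type characterization $P_{f^n|X}(S_n\phi)=\lim_{k}\frac1k\log\sum_{|w|=k}\exp(\sup S_{nk}\phi|_{[w]})$ together with the observation that restricting the sum to words beginning and ending in $P_0$ changes the exponential growth rate by $O(1/k)\to0$. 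The distortion bounds needed for ``$X'$ is an $f^\ell$-ECR'' are exactly the bounded-distortion property built into the GDS/SSC definition in Subsection~\ref{ss:bridges}, so no new estimate is required there; the only genuinely new bookkeeping is tracking $S_\ell\phi_i=S_{nq}\phi_i$ versus $q$ copies of $S_n\phi_i$ along return orbits, which is routine once the return time $q$ is fixed.
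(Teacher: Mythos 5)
Your route is essentially the paper's own (the paper defers to \cite[Lemma 4]{GelPrzRam:10}, adding only the remark that $D$ must be taken so small that all considered pull-backs of $D$ are univalent): localize into a cylinder of the GDS/Markov structure inside $D$ and control the pressure via Bowen sums restricted to words that begin and end in that cylinder, the loss being negligible because of bounded-length connecting words and the bounded distortion built into the SSC. Two small corrections: the Katok-plus-bridges preamble in your step (a) is superfluous, since $X$ is already a transitive ECR carrying the full pressure of every $\phi_i$ and the localization construction is independent of the potential; and in step (b) you must build $X'$ from returning branches of one \emph{common} word length $k$ (so $\ell=nk$), which is exactly what your restricted-Bowen-sum estimate furnishes, rather than from genuine first-return branches with variable return times, since otherwise the induced system is not an $f^\ell$-ECR for a single $\ell$ and a single fixed return time $q$ need not capture the pressure.
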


Combining the Lemmas~\ref{lem:Katok},~\ref{bridge}, and~\ref{bridge2}, we obtain the following key result.

\begin{proposition}\label{decaf}
	There exists a sequence $\{a_m\}_m$ of positive integers and a sequence $X_m\subset K$ of $f^{a_m}$-invariant uniformly expanding topologically transitive repellers such that for every $t\in\bR$, we
have
\begin{equation}\label{micha}
    P(\varphi_t)=\lim\limits_{m\to\infty} \frac 1 {a_m} P_{f^{a_m}|X_m}(S_{a_m}\varphi_t)
    = \sup\limits_{m\ge 1} \frac 1 {a_m} P_{f^{a_m}|X_m}(S_{a_m}\varphi_t).
\end{equation}
For every $\alpha\in (\chi_{\rm inf}, \chi_{\rm sup})$ we have
\begin{equation}\label{fcon}
     F(\alpha)
    = \lim\limits_{m\to\infty}F_m(\alpha)=     \sup\limits_{m\ge 1}F_m(\alpha)
\end{equation}
and
\begin{equation}\label{alla}
    \lim\limits_{m\to \infty} \chi_{\rm inf}^m = \inf\limits_{m\ge 1}\chi_{\rm inf}^m =\chi_{\rm inf} ,
\quad
\lim\limits_{m\to \infty} \chi_{\rm sup}^m  = \sup\limits_{m\ge 1}\chi_{\rm sup}^m =\chi_{\rm sup} ,
\end{equation}
where $F_m$ and $\chi_{\rm inf/sup}$ are defined as in~\eqref{def:Fa}
and~\eqref{yu} but for the pressure $\frac{1}{a_m}P_{f^{a_m}|X_m}(S_{a_m}\phi)$ instead of $P(\phi)$.
\end{proposition}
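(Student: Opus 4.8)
\textbf{Proof plan for Proposition~\ref{decaf}.}

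The plan is to build the sequence $X_m$ by an exhaustion argument, combining the Pesin--Katok approximation from Lemma~\ref{lem:Katok} with the bridging Lemmas~\ref{bridge} and~\ref{bridge2}. First I would fix a countable set of test data: choose countably many values $t\in\bR$ (say a dense sequence, together with all rationals) and countably many measures. For each $m$, using the variational characterization $P(\varphi_t)=P_{\rm var}(\varphi_t)=P_{\rm varhyp}(\varphi_t)$ from \eqref{pressures} and \eqref{P-ECR}, pick finitely many ergodic measures $\mu$ with positive Lyapunov exponent whose values $h_\mu(f)-t\chi(\mu)$ approximate $P(\varphi_t)$ to within $1/m$ for each of the first $m$ chosen values of $t$, and whose exponents approximate $\chi_{\rm inf}$ and $\chi_{\rm sup}$ to within $1/m$. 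Apply Lemma~\ref{lem:Katok} (with $X=K$, which is weakly isolated by hypothesis, and with $\varphi\equiv 0$ so as to capture entropy) to each such $\mu$ to obtain an ECR or periodic orbit $Y\subset K$ on which the topological pressure $P_{f|Y}(\varphi_t)$ — equivalently, for the relevant normalized iterate, $\frac1n P_{f^n|Y}(S_n\varphi_t)$ — is within $2/m$ of $h_\mu(f)-t\chi(\mu)$ simultaneously for the chosen $t$'s (this uses the weak$\ast$ convergence $\mu_k\to\mu$ together with continuity of $\chi$ in \eqref{chi-ineq} and uniform continuity of $\log|f'|$ away from critical points, which are far from an ECR).

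Next I would amalgamate these finitely many repellers into a single one. Given the finite collection $Y_1,\dots,Y_N$ produced above, I would iterate Lemma~\ref{bridge} (after, if necessary, first disjointifying intersecting repellers via Lemma~\ref{bridge2}, which only costs another $\varepsilon$ in pressure and replaces $f$ by a suitable iterate $f^{a_m}$): Lemma~\ref{bridge} joins any two disjoint ECRs into one ECR containing both and preserving topological transitivity, so by induction one gets a single topologically transitive $f^{a_m}$-ECR $X_m\subset K$ containing all of $Y_1,\dots,Y_N$. Since pressure is monotone under inclusion of invariant subsets, $\frac{1}{a_m}P_{f^{a_m}|X_m}(S_{a_m}\varphi_t)\ge \frac{1}{a_m}P_{f^{a_m}|Y_i}(S_{a_m}\varphi_t)$ for each $i$, so $X_m$ inherits all the lower bounds, and $\chi_{\rm inf}^m$, $\chi_{\rm sup}^m$ straddle $\chi_{\rm inf}$, $\chi_{\rm sup}$ up to $O(1/m)$. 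Conversely, $X_m\subset K$ forces $\frac{1}{a_m}P_{f^{a_m}|X_m}(S_{a_m}\varphi_t)\le P_{\rm hyp}(\varphi_t)=P(\varphi_t)$ by \eqref{pressures}; hence \eqref{micha} holds with the limit equal to the supremum (monotonicity in $m$ can be arranged by always including $X_{m-1}$ in the finite collection bridged at step $m$). The statement \eqref{alla} follows the same way from the definitions of $\chi_{\rm inf/sup}^m$.

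Finally, \eqref{fcon} is a formal consequence of \eqref{micha} and \eqref{alla} via the definition \eqref{def:Fa}. Indeed $F(\alpha)=\frac{1}{|\alpha|}\inf_t(P(t)+\alpha t)$ and $F_m(\alpha)=\frac{1}{|\alpha|}\inf_t\big(\frac{1}{a_m}P_{f^{a_m}|X_m}(S_{a_m}\varphi_t)+\alpha t\big)$; since $\frac{1}{a_m}P_{f^{a_m}|X_m}(S_{a_m}\varphi_t)\le P(t)$ pointwise and monotonically increasing in $m$ with pointwise limit $P(t)$, the infima over $t$ converge upward to $\inf_t(P(t)+\alpha t)$, provided $\alpha\in(\chi_{\rm inf},\chi_{\rm sup})$ so that the infimum in $F(\alpha)$ is attained at some finite $t=t(\alpha)$ in the interior of the domain where all the $P_{f^{a_m}|X_m}$ are also finite and the convergence is locally uniform (convexity of the pressures makes the local uniform convergence automatic on compact subsets of the interior). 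The main obstacle I expect is bookkeeping: one must choose the countable dense set of parameters $t$ and the approximating measures carefully so that a \emph{single} ECR $X_m$ handles all finitely many constraints at level $m$ at once, and one must be careful that the passage to iterates $f^{a_m}$ forced by Lemma~\ref{bridge2} is compatible across different $t$'s — this is why the statement is phrased with the normalized pressures $\frac{1}{a_m}P_{f^{a_m}|X_m}(S_{a_m}\varphi_t)$ rather than $P_{f|X_m}(\varphi_t)$. The uniform-continuity estimates needed to transfer approximate equalities for $h_\mu-t\chi(\mu)$ to approximate equalities for topological pressures on the $X_m$ are routine given that ECRs stay uniformly away from $\Crit(f)$.
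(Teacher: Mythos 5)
Your proposal is correct and follows essentially the same route as the paper, which obtains Proposition~\ref{decaf} precisely by combining the Katok--Pesin approximation of Lemma~\ref{lem:Katok} (applied with $\varphi\equiv 0$ together with \eqref{chi-ineq}), the bridging Lemmas~\ref{bridge} and~\ref{bridge2} to merge finitely many repellers into one transitive $f^{a_m}$-ECR inside $K$, and the equalities \eqref{pressures} for the upper bound, exactly as in \cite[Lemmas 2--4]{GelPrzRam:10}. The only points you leave implicit (passing from a countable dense set of $t$ to all $t$, and finding positive-exponent ergodic measures with exponent near $\chi_{\inf}$ when $\chi_{\inf}=0$) are routine consequences of convexity and of $P_{\rm var}=P_{\rm varhyp}$, which you already cite.
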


The proofs of Lemmas~\ref{lem:Katok},~\ref{bridge}, and~\ref{bridge2} are almost verbatim as the proofs of~\cite[Lemmas~3,~2, and~4]{GelPrzRam:10}.
We would like to remark that to make the proof of Lemma~\ref{bridge2} work (and this should have been mentioned also in the proof of~\cite[Lemma 4]{GelPrzRam:10}) we need to choose $D$ of sufficiently small size such that the considered pull-backs (see Subsection~\ref{Telescope constructions} below) of $D$ are all univalent.

Finally notice that our sets $X_m$ are in $K$ since periodic points are dense in them and if they are sufficiently close to $K$ they are in $K$ by the weak isolation property.

\medskip
In fact one can assume all $a_m=1$. This follows from the fact that one can replace $X_m$ by $Y:=\bigcup_{j=0}^{a_m-1} X_m$ and find $Y'$ containing $Y$ also $f$-invariant uniformly expanding topologically transitive and else isolated (the property we might have lost for$Y$), see \cite[Proposition 4.5.6.]{PrzUrb:10}.

\subsection{Local properties at critical points}\label{s:lp}

Let $c$ be a non-flat critical point of a $C^1$ map $f$.
There exist constants $R_0>0$ and $A_0>1$ such that for every $x$ with $\lvert x-c\rvert\le R_0$ there is $d(c)\ge 2$ so that
\begin{equation}\label{e:slowl}
	A_0^{-1}\le \frac{\lvert f'(x)\rvert}{\lvert x-c\rvert^{d(c)-1}}\le A_0.
\end{equation}
In our situation there are only finitely many critical points, hence $A_0$ can be chosen uniformly.

\subsection{Telescope constructions}\label{Telescope constructions}

A number $n\in\bN$ is said to be a \emph{Pliss hyperbolic time} for a point
$x$ with exponent $\sigma>0$ if
\begin{equation}\label{hyp-times}
\lvert (f^k)'(f^{n-k}(x))\rvert \ge e^{k\sigma} \text{ for every
}1\le k \le n.
\end{equation}
The following is an immediate consequence of the Pliss lemma (see, for
example,~\cite{Pli:72} or \cite[Lemma 3.1]{ABV}. In Lemma~\ref{geometric1} below we will provide a more refined abstract result.

\begin{lemma}[Pliss lemma]\label{lemma-pliss}
Let $f$ be a $C^1$ map of a closed interval.
For every point $x$ with $\overline\chi(x)>\sigma>0$ there exist infinitely many Pliss hyperbolic times for $x$ with exponent $\sigma$. Moreover, if $H\subset\bN$ denote the set of hyperbolic times, then we have
\[
	\overline d(H):= \limsup_{n\to\infty}\frac{\#(H\cap[0,n])}{n}>0.
\]	
\end{lemma}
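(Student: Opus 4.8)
\textbf{Proof plan for the Pliss lemma (Lemma~\ref{lemma-pliss}).}
The plan is to deduce both assertions from the classical combinatorial Pliss lemma applied to a well-chosen sequence of real numbers. Fix $x$ with $\overline\chi(x)>\sigma>0$, and choose an auxiliary constant $\sigma'$ with $\overline\chi(x)>\sigma'>\sigma$. For $n\ge 0$ set $a_n:=\log\lvert f'(f^n(x))\rvert$; since $f$ is $C^1$ on a closed interval, $\lvert f'\rvert$ is bounded, so the $a_n$ are bounded above by some constant $A$. Then $\sum_{j=0}^{n-1}a_j=\log\lvert(f^n)'(x)\rvert$, and by definition of $\overline\chi$ there are infinitely many $n$ with $\sum_{j=0}^{n-1}a_j\ge n\sigma'$. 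These are exactly the hypotheses of the combinatorial Pliss lemma (see \cite{Pli:72} or \cite[Lemma~3.1]{ABV}): given a sequence bounded above by $A$ whose averages along an infinite subsequence exceed $\sigma'$, and given $\sigma<\sigma'$, the set
\[
	H:=\big\{ n\ge 1\colon \textstyle\sum_{j=m}^{n-1}a_j\ge (n-m)\sigma\text{ for all }0\le m\le n-1\big\}
\]
is infinite and has positive lower density, indeed
\[
	\underline d(H)=\liminf_{n\to\infty}\frac{\#(H\cap[0,n])}{n}\ge \frac{\sigma'-\sigma}{A-\sigma}>0.
\]

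It remains to translate the defining condition of $H$ into the Pliss-hyperbolic-time condition~\eqref{hyp-times}. For $n\in H$ and $1\le k\le n$, put $m:=n-k$; then
\[
	\log\lvert(f^k)'(f^{n-k}(x))\rvert
	=\sum_{j=n-k}^{n-1}a_j
	\ge k\sigma,
\]
by the defining inequality of $H$ with this choice of $m$, which is exactly $\lvert(f^k)'(f^{n-k}(x))\rvert\ge e^{k\sigma}$. Hence every $n\in H$ is a Pliss hyperbolic time for $x$ with exponent $\sigma$, so there are infinitely many such times, and since $\underline d(H)>0$ a fortiori $\overline d(H)\ge\underline d(H)>0$, proving the density statement.

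The only genuinely nontrivial ingredient is the combinatorial Pliss lemma itself, which I am invoking as a black box (the one-line proof compares, for each candidate $n$, the first index $m\le n$ at which the tail sum from $m$ drops below the $\sigma$-threshold, and a counting argument bounds the "bad" indices). There is no real obstacle here: the whole content of the statement is the reduction to that lemma, together with the boundedness of $\lvert f'\rvert$ — which uses $C^1$ regularity on a \emph{closed} interval — to guarantee the upper bound $A$ needed to run it. One should only be mildly careful that the constant $R_0$ etc.\ play no role; the estimate is purely along the orbit of the single point $x$ and does not require uniformity over $K$ or any control near critical points.
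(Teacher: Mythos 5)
Your reduction to the classical combinatorial Pliss lemma is exactly the route the paper takes: the paper offers no proof beyond citing \cite{Pli:72} and \cite[Lemma 3.1]{ABV}, and your translation (set $a_j=\log\lvert f'(f^j(x))\rvert$, use boundedness of $\lvert f'\rvert$ on a closed interval for the upper bound $A$, and rewrite the defining inequality of $H$ via $m=n-k$ and the chain rule) is the intended, correct reduction. Two small points you leave implicit are harmless: the $a_j$ are finite because the hypothesis $\overline\chi(x)>\sigma>0$ forces the orbit to avoid $\Crit(f)$ (otherwise $\overline\chi(x)=-\infty$), and the combinatorial lemma needs no lower bound on the $a_j$.

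There is, however, one genuine misstep: from the hypothesis $\overline\chi(x)>\sigma'$ you only get \emph{infinitely many} $N$ with $\sum_{j=0}^{N-1}a_j\ge \sigma' N$, and the combinatorial Pliss lemma, applied to each such finite block, yields $\#\bigl(H\cap[0,N]\bigr)\ge \frac{\sigma'-\sigma}{A-\sigma}\,N$ \emph{along that subsequence only}. This gives $\overline d(H)\ge \frac{\sigma'-\sigma}{A-\sigma}>0$, which is what the lemma claims, but your intermediate assertion $\underline d(H)\ge \frac{\sigma'-\sigma}{A-\sigma}$ is false under a mere limsup hypothesis: between the good times $N_i$ the cumulative sums may collapse (long stretches with $a_j$ very negative, of lengths growing faster than everything preceding), during which $H$ gains no new elements, so the lower density can be $0$ even though the upper density is positive; positive lower density would require $\underline\chi(x)>\sigma'$. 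Since you then deduce $\overline d(H)>0$ from $\underline d(H)>0$, the fix is simply to drop the lower-density claim and note that the block estimate along the subsequence $N_i$ already gives the upper-density bound — after this correction the argument is complete and coincides with the paper's.
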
	

We will call $\overline d(H)$ the \emph{upper density} of $H$.

\begin{proposition}[Telescope] \label{p.telescope}
	Let $f\colon I\to I$ be a $C^1$ multimodal map on a closed interval $I$.
	Given $\varepsilon>0$ and $\sigma>0$, there exist constants $A_1=A_1(\varepsilon)>0$ and $R_1=R_1(\varepsilon,\sigma)>0$ such that the following is true. Given  a point $x\in I$ with $\overline\chi(x)>\sigma>0$, for every number $r\in(0,R_1)$, for every $n\ge1$ which is a Pliss hyperbolic time for $x$ with exponent $\sigma$, and for every $k=1,\ldots, n$ we have
    \begin{align*}
    \diam \Comp_{f^{n-k}(x)} f^{-k}\left(B(f^n(x),r)\right)
    \le
    r\,A_1\, e^{k\varepsilon} \, |(f^k)'(f^{n-k}(x))|^{-1} \le \\ r\,A_1\, e^{-k(\sigma-\varepsilon)}.
    \end{align*}
\end{proposition}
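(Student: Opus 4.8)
The plan is to pull back the ball $B(f^n(x),r)$ step by step along the backward orbit $f^{n-1}(x), f^{n-2}(x),\dots,f^{n-k}(x)$, controlling at each step the growth of the diameter. Write $B_0=B(f^n(x),r)$ and inductively $B_j=\Comp_{f^{n-j}(x)}f^{-j}(B_0)$, so that $f(B_j)\subset B_{j-1}$ and $f^{n-j}(x)\in B_j$. The crucial quantitative input is that each point $f^{n-j}(x)$ lies in $B_j$, together with the Pliss condition \eqref{hyp-times}, which gives $\lvert (f^{j})'(f^{n-j}(x))\rvert\ge e^{j\sigma}$ for all $1\le j\le n$. First I would choose $R_1$ small enough (depending on $\varepsilon,\sigma$) that all the pull-backs $B_j$, $j\le n$, have length at most $R_0$ (the radius from Subsection~\ref{s:lp} controlling the critical behaviour) and small enough to stay inside a fixed neighbourhood where $f$ has bounded geometry; this uses that $\operatorname{diam}B_j\le e^{-j(\sigma-\varepsilon)}\operatorname{diam}B_{j-1}\cdot(\text{const})$ once we prove the estimate, so one can run an a priori bootstrap: as long as the $B_j$'s are small the estimate holds, and the estimate forces them to stay small.

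The core of the argument is the one-step comparison. Fix $j$ and look at the branch of $f$ taking $B_j$ onto (a subset of) $B_{j-1}$, with $f^{n-j}(x)\in B_j$ mapping to $f^{n-j+1}(x)\in B_{j-1}$. We want $\operatorname{diam}B_j\le (\text{const})\,e^{\varepsilon}\,\lvert f'(f^{n-j}(x))\rvert^{-1}\operatorname{diam}B_{j-1}$; iterating this from $j=1$ to $j=k$ and using the chain rule gives exactly $\operatorname{diam}B_k\le r\,A_1\,e^{k\varepsilon}\lvert(f^k)'(f^{n-k}(x))\rvert^{-1}$, and then \eqref{hyp-times} yields the second inequality $r\,A_1\,e^{-k(\sigma-\varepsilon)}$. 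There are two cases. If $B_j$ contains no critical point, then $f|_{B_j}$ has distortion close to $1$ once $B_j$ is short (by $C^1$, or by the bounded distortion hypothesis if we only assume $C^2$ with bounded distortion), so $\operatorname{diam}B_j\approx\lvert f'(f^{n-j}(x))\rvert^{-1}\operatorname{diam}B_{j-1}$ up to a factor $e^{\varepsilon}$, which is the harmless case. If $B_j$ does contain a critical point $c$, then since $\lvert(f^{j})'(f^{n-j}(x))\rvert\ge e^{j\sigma}\ge e^{\sigma}$ and more relevantly $\lvert f'(f^{n-j}(x))\rvert$ cannot be too small — because if $f^{n-j}(x)$ were very close to $c$ the Pliss expansion at the \emph{next} step would be destroyed — one shows that $f^{n-j}(x)$ is bounded away from $c$ relative to $\operatorname{diam}B_j$; quantitatively, using \eqref{e:slowl}, $\lvert f^{n-j}(x)-c\rvert^{d(c)-1}\asymp\lvert f'(f^{n-j}(x))\rvert$, and the Koebe-type distortion estimate for the power-like map near $c$ gives that the pull-back of $B_{j-1}$ that contains $f^{n-j}(x)$ has diameter at most $(\text{const})\lvert f'(f^{n-j}(x))\rvert^{-1}\operatorname{diam}B_{j-1}$, again up to the factor $e^{\varepsilon}$. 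The constant $A_1$ accumulates the finitely many distortion constants and the constants $A_0$, and the key point is that it depends only on $\varepsilon$ (and $f$), not on $\sigma$ or $n$; $R_1$ in addition depends on $\sigma$ because the smallness needed to absorb the critical contributions into $e^{\varepsilon}$ scales with how fast we can afford the $B_j$ to shrink.

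The main obstacle is the handling of critical pull-backs: making precise the claim that a Pliss hyperbolic time at $x$ forces every $f^{n-j}(x)$ to be well inside $B_j$ away from any critical point it might contain, with an estimate good enough to keep the per-step loss below $e^{\varepsilon}$. This is where \eqref{e:slowl} and a Koebe distortion lemma for non-flat (power-like) critical points enter, and one must be careful that the bound is uniform over the finitely many critical points and their local degrees $d(c)$. Once this one-step estimate is in place, the proposition follows by a straightforward induction on $k$ combined with the chain rule and the Pliss inequality \eqref{hyp-times}; the a priori smallness of all intermediate pull-backs, needed to apply the distortion estimates, is secured by the bootstrap described above once $R_1$ is chosen appropriately.
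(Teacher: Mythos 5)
There is a genuine gap at the heart of your treatment of critical pull-backs. You want a one-step estimate with loss at most $e^{\varepsilon}$ even when the pull-back $B_j$ contains a critical point $c$, and you justify it by claiming that the Pliss condition forces $f^{n-j}(x)$ to be far from $c$ (relative to $\diam B_j$), ``because otherwise the Pliss expansion at the next step would be destroyed.'' This is not true: the hyperbolic-time condition \eqref{hyp-times} bounds from below only the products $\lvert (f^k)'(f^{n-k}(x))\rvert$ ending at time $n$, never an individual factor $\lvert f'(f^{n-j}(x))\rvert$ for $j\ge 2$, since a tiny derivative at time $n-j$ can be compensated by huge expansion along $f^{n-j+1}(x),\dots,f^{n-1}(x)$. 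Moreover the conclusion you want is false in general: for $f(y)=(y-c)^2+v$, if $f^{n-j}(x)$ sits at distance comparable to the half-length of the preimage interval from $c$, the ratio $\diam B_j / \bigl(\lvert f'(f^{n-j}(x))\rvert^{-1}\diam B_{j-1}\bigr)$ is close to $2$ (in general close to $d(c)$, up to the non-flatness constants of \eqref{e:slowl}), and nothing in \eqref{hyp-times} excludes this. So the per-step loss at a critical encounter cannot be pushed below $e^{\varepsilon}$; it can only be bounded by a fixed constant of order $d(c)A_0^4$, which is exactly what Lemma~\ref{l:crit} gives, unconditionally, for any point of the interval.

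Once you accept a bounded (not small) loss per critical encounter, the proposition only follows if critical encounters are sparse along the pull-back, and this requires an argument you do not supply. The paper's proof gets it combinatorially: it keeps all pull-backs of size at most $R_\ast$, introduces the scale $A(N)$ below which no point near a critical point $c$ can return near $c$ within $N$ iterates, and chooses $R_\ast<A(N)/2$; then any two $c$-critical times differ by at least $N$, so up to time $k$ there are at most about $M(1+k/N)$ critical encounters, and choosing $N$ so that $(2dA_0^4)^{2M}<e^{N\varepsilon/2}$ the accumulated loss is absorbed into $A_1e^{k\varepsilon}$, while the non-critical steps contribute only a distortion factor $(1+\varepsilon/2)^k$. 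Your bootstrap for keeping the pull-backs small and your use of the chain rule plus \eqref{hyp-times} for the second inequality are fine, but without either a correct small-loss estimate at critical times (which does not exist) or this separation-and-counting mechanism, the proof does not close.
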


This proposition was first stated and proved, with various variants,
in the complex setting in \cite{Prz:90}. Here we provide a slightly simplified proof for interval maps. Notice that only $C^1$ regularity is required and that no advanced distortion properties are needed.

We first prove a preliminary result. Recall the choice of constants $A_0,R_0$ in Section~\ref{s:lp}.

\begin{lemma}\label{l:crit}
	Given a point $c\in\Crit(f)$, let $T\subset B(c, R_0)$ be an interval and $x\in T$. Then
	\[
		\diam f(T) \geq \frac{1}{2d(c)A_0^4}\,\lvert f'(x)\rvert\,\diam T.
	\]
\end{lemma}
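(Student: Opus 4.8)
The idea is to compare $\diam f(T)$ with $\lvert f'(x)\rvert\,\diam T$ using only the non-flatness estimate \eqref{e:slowl} near the critical point $c$. Write $d=d(c)$. The map $f$ restricted to $B(c,R_0)$ need not be monotone, but it has at most one turning point there, namely $c$ itself; so $T$ is split by $c$ into at most two subintervals on each of which $f$ is monotone. Let $T_0$ be the one of these (at most two) pieces which is the longer one, so that $\lvert T_0\rvert\ge \tfrac12\lvert T\rvert$, and on $T_0$ the map $f$ is monotone. Since $f(T)\supset f(T_0)$, it suffices to bound $\diam f(T_0)$ from below by $\tfrac{1}{d A_0^4}\lvert f'(x)\rvert\,\diam T_0$ (losing the extra factor $\tfrac12$ from $\lvert T_0\rvert\ge\tfrac12\lvert T\rvert$ if $x\notin T_0$, but see below).

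First I would treat the case $x\in T_0$. By the mean value theorem there is $\xi\in T_0$ with $\diam f(T_0)=\lvert f'(\xi)\rvert\,\diam T_0$. Now I compare $\lvert f'(\xi)\rvert$ with $\lvert f'(x)\rvert$: applying \eqref{e:slowl} at both points gives
\[
	\frac{\lvert f'(\xi)\rvert}{\lvert f'(x)\rvert}
	\ge A_0^{-2}\,\frac{\lvert \xi-c\rvert^{d-1}}{\lvert x-c\rvert^{d-1}}.
\]
If $\lvert\xi-c\rvert\ge\lvert x-c\rvert$ this is at once $\ge A_0^{-2}\ge A_0^{-4}$ and we are done (even with room to spare). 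If instead $\lvert\xi-c\rvert<\lvert x-c\rvert$, then, since $x$ and $\xi$ lie in the same monotone piece $T_0$ and $c$ is an endpoint of $T_0$, the whole of $T_0$ between $x$ and $c$ is contained in $T_0$, so $\diam T_0\ge \lvert x-c\rvert-\lvert\xi-c\rvert$; but we can do better by noting that the points of $T_0$ at distance between $\tfrac12\lvert x-c\rvert$ and $\lvert x-c\rvert$ from $c$ form a subinterval $T_1\subset T_0$ of length $\ge\tfrac12\lvert x-c\rvert$ on which $\lvert f'\rvert\ge A_0^{-1}(\tfrac12\lvert x-c\rvert)^{d-1}$, whence
\[
	\diam f(T_0)\ge \diam f(T_1)
	\ge \tfrac12\lvert x-c\rvert\cdot A_0^{-1}\bigl(\tfrac12\lvert x-c\rvert\bigr)^{d-1}
	= 2^{-d}A_0^{-1}\lvert x-c\rvert^{d}.
\]
On the other hand $\lvert f'(x)\rvert\,\diam T_0 \le A_0\lvert x-c\rvert^{d-1}\cdot\diam T$, and since also $\diam T\le 2\lvert x-c\rvert$ is false in general I instead observe that we only need $\diam f(T)\ge \tfrac{1}{2dA_0^4}\lvert f'(x)\rvert\diam T$, so it remains to bound $\lvert f'(x)\rvert\,\diam T$. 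Here I use that $f$ restricted to the monotone piece of $T$ through $x$ has image of length $\ge \lvert f'(x)\rvert\cdot(\text{length of that piece})/(\text{distortion})$; more cleanly, apply the mean value theorem on that piece to compare with $\lvert x-c\rvert$.

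Honestly, the cleanest route — and the one I would actually write — avoids the awkward bookkeeping above: split $T$ at $c$ into monotone pieces, pick the piece $T_\ast$ containing $x$ (if $x=c$ the statement is trivial since $f'(x)=0$), apply the mean value theorem on $T_\ast$ to get $\xi$ with $\diam f(T_\ast)=\lvert f'(\xi)\rvert\diam T_\ast$, and then split once more into a ``near'' subpiece and a ``far'' subpiece relative to the midpoint of $[x,c]$; on whichever subpiece has length $\ge\tfrac12\diam T_\ast$ we control $\lvert f'\rvert$ from below via \eqref{e:slowl} by a fixed multiple of $\lvert x-c\rvert^{d-1}\ge A_0^{-1}\lvert f'(x)\rvert/A_0 = A_0^{-2}\lvert f'(x)\rv{}$-type quantity, and then $\diam f(T)\ge\diam f(T_\ast)\ge \tfrac12\diam T_\ast\cdot(\text{that bound})\ge\tfrac12\cdot\tfrac12\diam T\cdot A_0^{-2}\cdot\frac{1}{d}\lvert f'(x)\rvert\cdot A_0^{-1}$, the $1/d$ coming from the worst ratio $(\tfrac12)^{d-1}\ge (\text{something})$ — here one uses $2^{d-1}\le 2d\cdot A_0$ is false, so one keeps the exponent and absorbs $2^{d-1}$ into the $d$-dependent constant, which is exactly why the stated constant is $\tfrac{1}{2dA_0^4}$ rather than something $d$-free.

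\textbf{Main obstacle.} The only real subtlety is handling the case where $x$ is much closer to $c$ than the bulk of $T$ is: then $\lvert f'(x)\rvert$ is small (order $\lvert x-c\rvert^{d-1}$) while $\diam f(T)$ is governed by the far end of $T$; one must check the inequality is still not tight, and this is where the factor $d$ (equivalently, the crude estimate $\lvert f'\rvert\ge A_0^{-1}2^{-(d-1)}\lvert x-c\rvert^{d-1}$ on the half of $T_\ast$ farther from $c$) enters. All distortion-type constants are just powers of $A_0$ from \eqref{e:slowl}, and no $C^{1+\varepsilon}$ or Koebe-type input is needed, consistent with the remark that only $C^1$ regularity is used.
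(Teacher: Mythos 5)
Your proposal has two genuine gaps. First, the case in which $x$ lies in the \emph{shorter} monotone piece of $T$ (on the opposite side of $c$ from the bulk of $T$) is never handled: in your first attempt you fix $T_0$ to be the longer piece and only treat $x\in T_0$, while in your ``cleanest route'' you switch to $T_\ast$, the piece containing $x$, and then use $\diam T_\ast\ge\tfrac12\diam T$, which fails exactly in that case. This is precisely the situation the paper deals with by a reflection argument: it proves the estimate on the long piece $[y,c]$ (with $y$ the endpoint of $T$ farthest from $c$) and, when $x$ is on the other side, replaces $x$ by $\hat x=c-(x-c)\in[y,c]$, applying \eqref{e:slowl} twice to compare $\lvert f'(\hat x)\rvert$ with $\lvert f'(x)\rvert$; this is where the factor $\tfrac12$ and the power $A_0^4$ in the constant come from. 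Without this step (or a substitute) your argument simply does not cover all positions of $x$ in $T$.

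Second, even in the case you do treat, your method does not reach the stated constant. Bounding $\lvert f'\rvert$ from below on the half of the monotone piece farther from $c$ by $A_0^{-1}\bigl(\tfrac12\lvert x-c\rvert\bigr)^{d-1}$ loses a factor $2^{d-1}$, so you obtain a constant of order $2^{-d}A_0^{-4}$, which is strictly smaller than $\tfrac{1}{2dA_0^4}$ once $d\ge3$; you concede this yourself when you note that $2^{d-1}\le 2dA_0$ ``is false'' and propose to ``absorb $2^{d-1}$ into the $d$-dependent constant'' --- but the constant in the statement is fixed to be $\tfrac{1}{2dA_0^4}$, so there is nothing to absorb it into. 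The missing idea is the paper's integration step: on a one-sided interval whose endpoints have distances $b\le a$ from $c$ one has $\diam f(T)\ge\int_b^a A_0^{-1}s^{d-1}\,ds=\tfrac{1}{A_0d}(a^d-b^d)\ge\tfrac{1}{A_0d}\,a^{d-1}(a-b)$, and since $\lvert f'(x)\rvert\le A_0\lvert x-c\rvert^{d-1}\le A_0a^{d-1}$ this produces the factor $\tfrac1d$ with no exponential loss. (Your passing appeal to ``distortion'' on the monotone piece through $x$ is also unavailable: on an interval abutting $c$ the ratio of derivatives is unbounded, which is the very reason this lemma exists.) For the telescope application a constant like $2^{-d}A_0^{-4}$ would in fact suffice, but as a proof of the lemma as stated, both the case analysis and the constant are incomplete.
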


\begin{proof}
Denote $d=d(c)$. Let $y$ be the endpoint of $T$ furthest away from $c$ and $z$ the endpoint closest to $c$.
We can assume that $y$ is to the left of $c$.

Suppose first that $T$ is on one side of $c$ (i.e. its interior does not contain $c$).
Hence $\diam T = \lvert y-z\rvert$. By~\eqref{e:slowl}  we have
\[
	\lvert f'(x)\rvert \geq A_0^{-1} \,\lvert x-c\rvert^{d-1}\,,
\]
which implies
\[
	\diam f(T) \geq \int_{\lvert z-c\rvert}^{\lvert y-c\rvert} A_0^{-1} s^{d-1} \,ds \,.
\]
Combining this inequality with
\[
	\lvert f'(y)\rvert \leq A_0 \,\lvert y-c\rvert^{d-1}\,,
\]
we obtain
\[\begin{split}
	\diam f(T)
	&\geq \frac 1 {A_0 d} \left(\lvert y-c\rvert^d - \lvert z-c\rvert^d\right) \\
	&\geq \frac 1 {A_0 d}  \,\lvert y-c\rvert^{d-1} \cdot \lvert y-z\rvert 	\\	&\geq \frac{1}{A_0^2 d} \lvert f'(x)\rvert \diam T.
\end{split}\]

 Suppose now that $y$ and $z$ are on the opposite sides of $c$. Suppose first that $x\in [y,c]$. Then we apply the above to $T$ replaced by $[y,c]$ and obtain
 \[
 	\diam f(T)\ge \diam f([y,c])
	\ge \frac{1}{A_0^2 d}|f'(x)|\cdot |y-c|
	\ge \frac{1}{2A_0^2 d}|f'(x)|\cdot \diam T\,.
\]

 Suppose now that $x\in[c,z]$. Then we apply~\eqref{e:slowl} to the point $\widehat x= c-(x-c)$.  Then $\widehat x\in [y,c]$ and by the previous estimate and applying~\eqref{e:slowl} we obtain
 \[
 	\diam f(T)\ge \frac{1}{2A_0^2 d}|f'(\widehat x)|\cdot \diam T
	\ge \frac{1}{2A_0^4 d}|f'( x)|\cdot \diam T.
\]	
The proof is complete.
\end{proof}

\begin{remark}\label{unifyingdistortion}
If the bounded distortion condition holds, the following unifying version of Lemma~\ref{l:crit} holds.
There exists $C>0$ depending only on $f$ such that for any interval $T$ short enough and $x\in T$
\[
		\diam f(T) \geq C \lvert f'(x)\rvert\,\diam T.
	\]
 Indeed, if $T$ is close to a critical point, this follows from Lemma~\ref{l:crit}.  Otherwise if, say, $2\diamond T \cap \Crit(f)=\emptyset$, this follows from the bounded distortion condition.
 \end{remark}

Let $\lambda(r)$ be the minimum of $\lvert f'\rvert$ outside the $r$-neighborhood of $\Crit$. Let $\gamma(\cdot)$ be the modulus of continuity of the function $\lvert f'\rvert$. We have the following simple estimation.

\begin{lemma} \label{l:univ}
Given an interval $T$ of length $R'$ that is in distance at least $R$ from $\Crit(f)$, we have	
    \[
		\sup_{x,y\in T} \frac{\lvert f'(x)\rvert}{\lvert f'(y)\rvert}
		\leq \frac {\lambda(R) + \gamma(R')} {\lambda(R)}.
	\]
\end{lemma}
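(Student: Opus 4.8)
This is an elementary estimate and I expect no real obstacle; the only point to be careful about is matching the two quantities $\lambda(R)$ and $\gamma(R')$ to the right geometric features of $T$. The plan is as follows. Fix $x,y\in T$. Since $\dist(T,\Crit(f))\ge R$, every point of $T$—in particular $x$ and $y$—lies outside the $R$-neighborhood of $\Crit(f)$, so by the definition of $\lambda$ we have $\lvert f'(x)\rvert\ge\lambda(R)$ and $\lvert f'(y)\rvert\ge\lambda(R)$. (Here we use tacitly that $\lambda(R)>0$, which holds because $\Crit(f)$ is finite and all critical points are non-flat, so $\lvert f'\rvert$ is bounded below away from $\Crit$; if $\lambda(R)=0$ the asserted bound is vacuous, or one restricts to $R$ small enough.)

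Next I would use the modulus of continuity. Since $x,y\in T$ and $\diam T=R'$, we have $\lvert x-y\rvert\le R'$, so (taking $\gamma$ nondecreasing, as one may) the definition of the modulus of continuity of $\lvert f'\rvert$ gives
\[
	\big\lvert\,\lvert f'(x)\rvert-\lvert f'(y)\rvert\,\big\rvert\le\gamma(\lvert x-y\rvert)\le\gamma(R').
\]
In particular $\lvert f'(x)\rvert\le\lvert f'(y)\rvert+\gamma(R')$.

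Combining the two displays,
\[
	\frac{\lvert f'(x)\rvert}{\lvert f'(y)\rvert}
	\le 1+\frac{\gamma(R')}{\lvert f'(y)\rvert}
	\le 1+\frac{\gamma(R')}{\lambda(R)}
	=\frac{\lambda(R)+\gamma(R')}{\lambda(R)}.
\]
Taking the supremum over $x,y\in T$ yields the claim. The estimate on $\lvert f'(y)\rvert$ in the denominator is where the distance hypothesis $\dist(T,\Crit(f))\ge R$ enters, and the bound on the numerator's deviation is where $\diam T=R'$ enters; no distortion theory or non-flatness beyond positivity of $\lambda(R)$ is needed.
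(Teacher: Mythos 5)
Your argument is correct and is exactly the intended one: the paper states this as a "simple estimation" and omits the proof, and the obvious route—bounding $\lvert f'(y)\rvert\ge\lambda(R)$ from the distance hypothesis and $\lvert f'(x)\rvert\le\lvert f'(y)\rvert+\gamma(R')$ from the modulus of continuity over $\lvert x-y\rvert\le R'$—is precisely what the authors had in mind. Your side remark about $\lambda(R)>0$ (finiteness and non-flatness of the critical points) is fine and not an issue.
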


\begin{proof}[Proof of Proposition~\ref{p.telescope}]
Fix $\sigma$ and $\varepsilon \in(0,\min\{\sigma,1\})$. Let $n$ be a Pliss hyperbolic time for $x$ with exponent $\sigma$. For $k=0,\ldots,n$ and $r>0$ consider the sets
\[
	W_k(r) \eqdef {\rm Comp}_{f^{n-k}(x)} f^{-k}(B(f^n(x), r))\,,
\]
Let $R_1<R_\ast$ be positive numbers to be defined below. Let
\[
	M\eqdef\#\Crit,\quad d\eqdef\max d(c)\,.
\]	
Let $A(N)$ be the minimum distance $\ell$ such that there exists a critical point $c$ and a point $y\in B(c,\ell)$ such that $f^k(y) \in B(c,\ell)$ for some $k\leq N$. For a critical point $c$ we say that a number $k\leq n$ is a $c$-critical time if ${\rm {dist}}(c, W_k(R_1)) < R_\ast$.
Let $C^{crit}$ be the set of times which are $c$-critical for some $c\in {\rm Crit}$.

The following claim will immediately prove the proposition.

\begin{claim}
There exist positive numbers $R_\ast \in(0, R_0/2)$, $R_1\in(0,R_\ast)$, $A_1$, and a positive integer $N$ such that for all $k=0,\ldots,n$ and  $r\in(0,R_1)$ we have
\begin{itemize}
\item[(a)] $\displaystyle\diam W_k(R_1) \leq R_*$,\vspace{0.1cm}
\item[(b)] for every $i \in C^{crit}$, $i \leq k$, we have
	\[
		\frac{\diam W_{i -1}(r)}{\diam W_i(r) }
		\ge\frac{1}{2\,d\,A_0^4} \,\lvert f'(f^{n-i}(x))\rvert
	\]
	while
	\[
		\prod_{i \notin C^{crit}, i\leq k}  {\rm Dist}_{W_i(R_1)} f
		\leq \left(1+\frac\varepsilon2\right)^k\,.
	\]	
\item[(c)] given $c\in {\rm Crit}$, any two $c$-critical times $\le k$ differ  at least by $N$,\vspace{0.1cm}
\item[(d)] $\displaystyle\diam W_k(r) \leq r \,A_1 \,e^{k\varepsilon} \,\lvert (f^k)'(f^{n-k}(x))\rvert^{-1}$.
\end{itemize}
\end{claim}

\begin{proof}
Let us first fix some constants. Let
\begin{equation}\label{eq:choiceA}
	A_1 > (2\,d{A_0}^4)^{2M}\,.
\end{equation}
Choose a positive integer $N$ satisfying
\begin{equation}\label{eq:choiceN}
	A_1< e^{N\varepsilon/2}\,.
\end{equation}
Choose a positive number $R_\ast$ satisfying
\begin{equation}\label{eq:choiceRstar}
	R_\ast<\min\left\{\frac{R_0}{2},\frac{A(N)}{2}\right\}	\,.
\end{equation}
Choose a positive number $R_1$ satisfying
\begin{equation}\label{eq:choiceR1}
	R_1<\min\left\{\lambda(R_0) \frac{R_\ast}{2}A_1, \frac{{R_\ast}^d}{2}A_0 A_1 d\right\}
\end{equation}
and
\begin{equation}\label{eq:choiceR1b}
	\gamma(R_1) < \varepsilon \frac{\lambda(R_\ast)}{2}\,.
\end{equation}

The proof is by induction on $k$.
Note that all statements (a)-(d) hold for $k=0$. Suppose that the claim holds for $k=\ell$.

Observe that we have chosen $A_1$ and $N$ sufficiently large such that we can conclude (d) from (a)--(c). We have chosen $R_\ast$ sufficiently small such that (a) implies (c). Finally, we have chosen $R_1$ small enough that the previous lemmas and (a) imply (b). We now are going to conclude the properties (a)--(d) for $\ell+1$.
\\[0.1cm]
Property (a). Since (d) holds at time $\ell$ and since $n$ is a Pliss hyperbolic time, by the choice of $\varepsilon$ we have
\[
	\diam W_\ell(R_1) \leq R_1 A_1e^{\ell\varepsilon}\lvert (f^\ell)'(f^{n-\ell}(x))\rvert^{-1}
	\le R_1A_1e^{\ell(\varepsilon-\sigma)}
	<R_1A_1
	\,.
\]	
Thus,  together with~\eqref{eq:choiceR1}, we can conclude
\[
	\diam W_{\ell+1}(R_1)
	\leq \frac{R_1 A_1}{\lambda(R_0)}  + (A_0 \,d\, R_1 A_1)^{1/d}
	\leq R_\ast
\]
and therefore, we have (a) at time $\ell+1$.\\[0.2cm]
Property (b). We will distinguish two cases.

If $\ell+1\in C^{crit}$ then ${\rm{dist}} (c,W_{\ell+1}(R_1))< R_\ast<R_0/2$ for some $c\in\Crit$ and by Lemma \ref{l:crit} we have
\[
	\frac{\diam W_\ell(r) }{ \diam W_{\ell+1}(r)}
	\ge\frac{1}{2\,d\,A_0^4} \, \lvert f'(f^{n-(\ell+1)}(x))\rvert
\]
while the estimate of the product remains unaltered. Thus, (b) holds for $\ell+1$ in this case.

If $\ell+1\notin C^{crit}$, then ${\rm{dist}}(c,W_{\ell+1}(R_1))\ge R_\ast$ for all $c\in\Crit$ and hence, by Lemma \ref{l:univ} and by~\eqref{eq:choiceR1b} we obtain
\[
\dist_{W_{\ell+1}(R_1)} f
= \sup_{x,y\in W_{\ell+1}(R_1)}\frac{\lvert f'(x)\rvert}{\lvert f'(y)\rvert}
\leq \frac {\lambda(R_\ast) + \gamma(R_1)} {\lambda(R_\ast)}
\leq  1+\frac{\varepsilon}{2}
\,.
\]
Thus, we obtain
\[
	\prod_{i \notin C^{crit}, i\leq \ell+1}  {\rm Dist}_{W_i(R_1)} f
		\leq \left(1+\frac{\varepsilon}{2}\right)^{\ell+1}\,.
\]
On the other hand, we have
\[\begin{split}
	\frac{\diam W_\ell(r) }{ \diam W_{\ell+1}(r)}
	&\ge   \lvert f'(f^{n-(\ell+1)}(x))\rvert\cdot
	\left(\dist_{W_{\ell+1}(R_1)}f\right)^{-1}  \\
	&\ge \frac{1}{2\,d\,A_0^4}\,\lvert f'(f^{n-(\ell+1)}(x))\rvert\,,
\end{split}\]
that is, (b) holds for $\ell+1$ in this case.\\[0.2cm]
Property (c). If (a) holds at times $0,\ldots,\ell$, then it follows from~\eqref{eq:choiceRstar} that (c) holds at time $\ell+1$.
Indeed, by (a), $W_i(R_1) \subset B(c, 2R_\ast)$ holds for all $c$-critical times $i\le \ell$. The definition of $A(N)$ implies that no point $y\in W_i(R_1)$ has a preimage $f^{-(\ell+1)}(y)\in B(c, 2R_*)$ for $\ell+1<N$. Hence, times $i + \ell+1$ are not $c$-critical for $\ell+1<N$.\\[0.1cm]
Property (d). It remains to show that (d) is true at time $\ell+1$.
For all noncritical times $i\le \ell+1$ we have
\[
	\frac {\diam W_{i-1}(r)} {\diam W_i(r)}
	\geq \lvert f'(f^{n-i}(x))\rvert \cdot \left({\rm Dist}_{W_{i}(R_1)} f\right)^{-1}
	\,,
\]
while for all $c$-critical times $i\le\ell+1$, using (b), we have
\[
	\frac {\diam W_{i-1}(r)} {\diam W_i(r)}
	\geq \lvert f'(f^{n-i}(x))\rvert \cdot \frac{1}{2\,d\,A_0^4}.
\]
Thus, together with~\eqref{eq:choiceA}, we obtain
\[\begin{split}
	\diam & W_{\ell+1}(r)
	= r \frac {\diam W_{1}(r)} {\diam W_0(r)}  \cdots
		\frac {\diam W_{\ell+1}(r)} {\diam W_\ell(r)}  \\
	&\le r\,\lvert (f^{\ell+1})'(f^{n-(\ell+1)}(x))\rvert^{-1}
		\cdot\prod_{i\notin C^{crit},i\le\ell+1}\dist_{W_i(R_1)}f	
		\cdot\left(2\,d\,A_0^4\right)^{2M}\\
	&\le r\, \lvert (f^{\ell+1})'(f^{n-(\ell+1)}(x))\rvert^{-1}  e^{(\ell+1)\varepsilon}A_1
\end{split}\]
Here we used also the fact that there are only $M$ critical points and that by (c) the critical times $\le \ell+1$ for each of them happen at most once every $N$ iteration. Thus, we obtain (d) for $\ell+1$.
This finishes the proof.
\end{proof}
This completes the proof of the proposition.
\end{proof}

\begin{remark}
The proof is simpler when one assumes bounded distortion condition. In this case it is not necessary to go step by step, one can take the whole non-critical stretch of the trajectory of $x$ and use the bounded distortion there.
One proceeds as in the pull-back construction, see Subsection~\ref{pullback}.\end{remark}

\subsection{Pull-back construction}\label{pullback}

To formulate our second preliminary technical result we use the following construction, see~\cite{GraSmi:09,PrzRivSmi:03,PrzRivSmi:04} and \cite[Definition 5.1]{PrzRiv:13}.
Some complications arise in presence of indifferent periodic orbits.

Recall that $S'$ denotes the restricted singular set
\[
	S'=\Crit(f)\cup NO(f,K)\,.
\]

\noindent\textbf{Pull-back construction:}
Let $(f,K,U)\in \sA_+$.
	Given $n>0$, $R>0$, and $y\in K$,
consider  a backward trajectory $(y_i)_{i=1}^n$ of $y$, that is, $y_0=y$ and $y_{i+1}\in f^{-1}(y_i)$ for every $i=1, \ldots, n-1$. Let $k_1$ be the smallest integer for which  $\Comp_{y_{k_1}}f^{-k_1}(B(y,R))$ contains a point in $S'$.
For every $\ell\ge 1$ let then $k_{\ell+1}>k_\ell$ be the smallest integer such that
$\Comp_{y_{k_{\ell+1}}}f^{-(k_{\ell+1}-k_\ell)}(B(y_{k_\ell},R))$ contains a point in $S'$ and so on.

In this way, to each backward branch $(y_i)_i$ considered, we associate a sequence $(k_\ell)_\ell$ of numbers $1\le k_\ell\le n$ that has a maximal element that we denote  by $k$. We consider the set of all pairs $(y_k, k)$ built from all the backward branches that start from $y$ and denote by $N(y,n,R)$ the cardinality of this set.
\medskip

In order to make this construction work, we need to know that every considered preimage is well-defined.
If there are no indifferent periodic orbits in $K$ then this is easy to achieve provided that $R$ is small enough so that all pull-backs
of intervals of length at most $2R$ intersecting $K$ are well inside $U_K$ (it is sufficient if their closures are in $U_K$), so that they do not become truncated. This is possible in the absence of indifferent periodic orbits by \cite[Lemma 2.10]{PrzRiv:13}, compare the definition of BaShrink in Subsection~\ref{ss:TCE}.\footnote{Notice that this difficulty does not appear for rational mappings on the Riemann sphere.}

\smallskip

In presence of indifferent periodic orbits this construction works if we {\it a priori} know
or assume the following:

\smallskip

\begin{itemize}
\item [($\ast$)]
Let
\[
	R':=\frac 12{\rm{dist}}(S',\Indiff(f))
\]
Assume that all pull-backs  $\Comp_{y_i}f^{-i}(B(y,R))$ for $i=0,\ldots,k_1$ have diameters smaller than $R'$ and are well inside $U_K$.
\end{itemize}
Additionally we assume that $R$ is small enough such that for every $x\in S'$ the pull-backs $\Comp_{z}f^{-k}(B(x,R))$,  for all $k=0,1,\ldots$ and $f^k(z)=x$ have diameters smaller than $R'$ (in particular with $k=0$ we have $2R<R'$). This is possible
since $S'\cap \Indiff(f)=\emptyset$ ($S'$ has no periodic points) and since $S'$ is finite (refer again to \cite[Lemma 2.10]{PrzRiv:13}, by BaShrink). We also assume, as before, that $R$ is small enough that all pull-backs of intervals of length at most  $2R$ intersecting $K$ and disjoint from $\Indiff(f)$ are well inside $U_K$.

\smallskip

Under the above assumptions, the pull-back construction works. Indeed, the pull-backs  $\Comp_{y_i}f^{-i}(B(y,R))$ for $i=0,\ldots,k_1$ are short by assumption.
Then for $c\in S'\cap \Comp_{y_{k_1}}f^{-k_1}(B(y,R))$ we have $|c-y_{k_1}|<R'$, hence $B(y_{k_1}, R')$ and therefore $B(y_{k_1}, R)$ are disjoint from $\Indiff(f)$ and we can continue the construction.

\begin{remark}\label{rem:pulllback}
	As mentioned before, the condition ($\ast$) holds for $R$ small enough if $B(y,R)$ does not contain any indifferent periodic point. Notice that it also holds if $k_1$ is a Pliss hyperbolic time
 for $y_{k_1}$, by Proposition~\ref{p.telescope}.

 In fact due to Proposition~\ref{p.telescope} the backward shrinking as, say,  in ($\ast$) holds for any piece of a trajectory $x,f(x),...,f^n(x)$ if $n$ is a Pliss hyperbolic time for $x\in K$.
\end{remark}

\begin{proposition}[Pull-back]\label{p.pullback}
	Given $\varepsilon>0$, there exist numbers $A_2=A_2(\varepsilon)>0$ and $R_2=R_2(\varepsilon)>0$ such that for every $r\in (0,R_2)$ and every $y\in K$ we have
	\[
		N(y,n,r)<A_2\,e^{n\varepsilon}\quad\text{ for all }n\ge 1.
	\]
\end{proposition}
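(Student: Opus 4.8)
The plan is to show that every \emph{terminal pair} $(y_k,k)$ counted by $N(y,n,r)$ carries a short combinatorial code, and that the number of admissible codes is subexponential in $n$ once $r$ is small. First I will fix a large integer $\varkappa$ (to be pinned down by the arithmetic at the end) and then choose $R_2$ small. Combining the property BaShrink (Subsection~\ref{ss:TCE}) for pull-backs of order $\ge\varkappa$ with a compactness bound for pull-backs of order $<\varkappa$, I can take $R_2$ so small that for every $r\le R_2$, every $y\in K$ and every $m\ge0$, any component $\Comp_z f^{-m}(B(y,r))$ meeting $K$ has diameter less than a fixed $\delta_0>0$, chosen below the minimal mutual distance between the finitely many points of $S'\cup\{f^g(c)\colon c\in S',\ 0\le g\le\varkappa\}$ and small enough for Lemma~\ref{l:K-homeo}; I also keep $r<\delta_0$ and small enough that the pull-back construction of Subsection~\ref{pullback} is well defined. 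Fix $y$, $n$, a backward branch $(y_i)_{i=0}^n$ in $K$ and its critical times $k_1<\dots<k_L\le n$; set $k_0:=0$, $m_\ell:=k_\ell-k_{\ell-1}$. By minimality of $k_\ell$ the components $\Comp_{y_j}f^{-(j-k_{\ell-1})}(B(y_{k_{\ell-1}},r))$ for $k_{\ell-1}\le j\le k_\ell-1$ avoid $S'$, so by Lemma~\ref{l:K-homeo} the map $f^{m_\ell-1}$ is a $K$-homeomorphism from $\Comp_{y_{k_\ell-1}}f^{-(m_\ell-1)}(B(y_{k_{\ell-1}},r))$ onto $B(y_{k_{\ell-1}},r)$; and since $C_\ell:=\Comp_{y_{k_\ell}}f^{-m_\ell}(B(y_{k_{\ell-1}},r))$ has diameter $<\delta_0$, it contains exactly one point $c_\ell\in S'$, near which $f$ is at most $D$-to-one (one may take $D=2$, the only source of non-injectivity being a turning critical point, see Subsection~\ref{s:lp}). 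Thus $C_\ell=\Comp_{c_\ell}f^{-m_\ell}(B(y_{k_{\ell-1}},r))$ is determined by $c_\ell$, $m_\ell$, $y_{k_{\ell-1}}$, and $y_{k_\ell}$ is one of at most $D$ preimages of $y_{k_{\ell-1}}$ inside $C_\ell$.

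Consequently the whole branch is reconstructed from the data $\big(L;\ k_1<\dots<k_L=k;\ c_1,\dots,c_L\in S';\ \iota_1,\dots,\iota_L\in\{1,\dots,D\}\big)$: from $y_{k_{\ell-1}}$, the pair $(c_\ell,m_\ell)$ fixes $C_\ell$ and hence the point $y_{k_\ell-1}$ (the unique preimage of $y_{k_{\ell-1}}$ in $f(C_\ell)$ under the univalent branch $f^{m_\ell-1}$), after which $\iota_\ell$ selects $y_{k_\ell}$. Distinct branches get distinct codes and each terminal pair is produced by at least one branch, so
\[
	N(y,n,r)\ \le\ \sum_{L\ge1}\binom{n}{L}\,\big(D\,\#S'\big)^{L}.
\]

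The crux is to prove that $L$ is forced to be of order $n/\varkappa$ rather than $n$. Call $k_\ell$ \emph{long} if $m_\ell\ge\varkappa$; there are at most $n/\varkappa$ long critical times. For $\ell\ge2$ the base point $y_{k_{\ell-1}}$ lies in $C_{\ell-1}=\Comp_{c_{\ell-1}}f^{-m_{\ell-1}}(B(y_{k_{\ell-2}},r))$, a set of diameter $<\delta_0$ containing $c_{\ell-1}$, so $|y_{k_{\ell-1}}-c_{\ell-1}|<\delta_0$; if in addition $k_\ell$ is not long then $1\le m_\ell<\varkappa$ and $f^{m_\ell}(c_\ell)\in B(y_{k_{\ell-1}},r)$ lies within $\delta_0+r$ of $c_{\ell-1}$, hence $f^{m_\ell}(c_\ell)=c_{\ell-1}$ by the choice of $\delta_0$. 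Therefore any run of consecutive non-long critical times $k_\ell,\dots,k_{\ell'}$ with $\ell\ge2$ produces a forward chain $c_{\ell'}\mapsto c_{\ell'-1}\mapsto\dots\mapsto c_{\ell-1}$ of points of $S'$; as $S'$ contains no periodic points (Subsection~\ref{pullback}) these points are pairwise distinct, so the run has length $<\#S'$. Each such run is immediately preceded by a long critical time or by $k_1$, whence $L\le c_0\, n/\varkappa$ for all large $n$, with $c_0$ depending only on $\#S'$. (If $S'$ happened to contain a repelling periodic point one would replace the last step by exponential contraction of pull-backs near a repelling periodic orbit, bounding how long a branch may linger there.)

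Finally, with $s:=\#S'$ and $L_{\max}:=c_0 n/\varkappa$, for $\varkappa$ large the map $L\mapsto(Dsn)^L/L!$ is increasing on $[1,L_{\max}]$, so using $\binom{n}{L}(Ds)^L\le (Dsn)^L/L!\le (eDsn/L)^L$ at $L=L_{\max}$,
\[
	N(y,n,r)\ \le\ (L_{\max}+1)\Big(\frac{eDs\varkappa}{c_0}\Big)^{L_{\max}}
	\ =\ (L_{\max}+1)\exp\!\big(n\,\theta(\varkappa)\big),\qquad
	\theta(\varkappa):=\frac{c_0}{\varkappa}\log\frac{eDs\varkappa}{c_0}.
\]
Since $\theta(\varkappa)\to0$ as $\varkappa\to\infty$ and $L_{\max}+1\le n+1$, it suffices to fix $\varkappa$ (and then $R_2$) so large that $\theta(\varkappa)<\varepsilon/2$, and to absorb the polynomial prefactor and the finitely many values $n<\varkappa$ into a constant $A_2=A_2(\varepsilon)$; this yields $N(y,n,r)<A_2e^{n\varepsilon}$ for all $n\ge1$ and all $r<R_2$. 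The main obstacle is the block-length estimate of the previous paragraph together with the interlocked choice of $\varkappa$, $\delta_0$ and $R_2$; the rest is bookkeeping, as in the complex setting (cf.\ \cite{GelPrzRam:10}).
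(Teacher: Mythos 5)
Your argument is essentially sound and reaches the stated bound, but its bookkeeping differs from the paper's. The paper fixes a window length $N$ with $M_1\log((N+1)M_2)/N\le\varepsilon$ (where $M_1=\#S'$ and $M_2=\#S'+\sum_c(d(c)-1)$) and exploits the absence of cycles in $S'$ quantitatively: with $r$ so small that pull-backs of order at most $N$ are shorter than half of $\min_{x\in S'}\min_{1\le k\le N}|f^k(x)-x|$, the \emph{same} singular point cannot generate two singular times within $N$ steps, so each window of length $N$ carries at most $M_1$ singular times and at most $M_2^{M_1}$ branch choices, and multiplying over $\lceil n/N\rceil$ windows gives $A_2e^{n\varepsilon}$. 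You instead code a branch by its singular data and bound the \emph{total} number of singular times by roughly $\#S'\,n/\varkappa$ via the long/short-gap dichotomy, where consecutive short gaps force $f^{m_\ell}(c_\ell)=c_{\ell-1}$ and hence an injective forward chain in $S'$ (again no cycles), after which a binomial/entropy count finishes the job. Your explicit reconstruction of the singular points from the code is in fact a cleaner justification of the paper's terse ``at most $M_2$ branches per singular time''.

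Two statements need repair, though neither is fatal. First, ``the whole branch is reconstructed from the data'' and ``distinct branches get distinct codes'' are false: between singular times a branch may still choose among several preimages, each lying in a different component disjoint from $S'$, so branches are exponentially many and the code map is far from injective (also ``onto $B(y_{k_{\ell-1}},r)$'' may fail by truncation, but only injectivity is used). What your reconstruction actually shows---and all that is needed---is that the code determines the singular points $y_{k_1},\dots,y_{k_L}$, hence the terminal pair $(y_k,k)$, so $N(y,n,r)$ is bounded by the number of codes; state it that way. Second, the blanket claim that $R_2$ can be chosen so that \emph{all} pull-backs of \emph{all} balls $B(y,r)$, $y\in K$, have diameter $<\delta_0$ is not justified when $K$ contains indifferent periodic orbits: BaShrink applies only to intervals disjoint from $\Indiff(f)$, and pull-backs of a small ball around an indifferent point need not shrink. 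This is precisely what the standing assumptions of the construction in Subsection~\ref{pullback} (no indifferent orbits, or condition ($\ast$)) are for: the restart balls $B(y_{k_\ell},r)$, $\ell\ge1$, are centered within $R'$ of $S'$, hence away from $\Indiff(f)$, so BaShrink yields diameter $<\delta_0$ for all components except possibly those of the initial stretch, whose diameter is only known to be $<R'$ under ($\ast$); this merely enlarges the branching factor at the first singular time and shortens your chains by one link, i.e.\ it costs only constants. (Minor: take $\delta_0$ below \emph{half} the minimal separation, since your estimate is $r+\delta_0$.)
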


\begin{proof}
Given $\varepsilon>0$, let us choose a positive integer $N$ large enough such
\begin{equation}\label{c.cond}
	\frac {M_1\ln((N+1)\,M_2)} {N} \le \varepsilon\,.	
\end{equation}

Observe that there are no cycles in $S'$ that is, $f^k(x)\ne x$ for every $k\ge1$ and every $x\in S'$,
see \cite[Lemma 2.2]{PrzRiv:13}. Though, observe that a point in $S'$ may be mapped to another point in this set. So, given $N$, there is
\[
	\delta'(N)\eqdef \min_{x\in S'}\min_{1\le k\le N} |f^k(x)-x|>0.
\]
Moreover, let $\delta(N)$ be sufficiently small that for any interval $I$ of length $\delta(N)$ and for any $1\le k\leq N$ the size of each component of $f^{-k}(I)$ is smaller than $\delta'(N)/2$.

Let
\[
	M_1\eqdef\# S',\quad
	M_2\eqdef \#S' + \sum_{c\in\Crit(f)}(d(c)-1)\,.
\]

Let us consider an integer $n>N\varepsilon^{-1}$.

Let $\delta\in(0,\delta(N))$. Given a point $y\in K$ and $r\in(0,\delta)$, consider the ball $B=B(y,r)$. By definition of the number $N(y,n,\delta)$, every backward branch of $y$ determines a sequence of numbers $(k_\ell)_\ell$.
The definition of $\delta(N)$ implies that if in our construction along some backward trajectory $(y_i)_{i=1}^n$ of $y$ there are two `singular times' $k_\ell, k_m$, $1\le k_\ell<k_m\le n$, corresponding to the \emph{same} singular point $x\in S'$ then $k_m-k_\ell>N$.
As a consequence, given a stretch of this backward trajectory with times in $\{j+1,\ldots,j+N\}$, there are at most $M_1$ `singular times' $k_\ell$ corresponding to a point in $S'$.

Finally, given $1\le i\le n$, considering all possible backward trajectories starting in $y$,  at most $M_2$ of them have a \emph{smallest} `singular time' $k_1=i$, that is, the corresponding component of $f^{-i}(B)$ will intersect $S'$ for the first time exactly after $i$ backward iterates. Observe that any of those components will be disjoint. This implies, in particular, that at most $M_2$ of them can intersect $S'$. The same argument will apply not only for backward branches starting at $y$, but also the branches starting at $f^{-k_i}(y)$ that we use in the construction.

Lets summarize the construction in the statement of the lemma. When we divide the time interval $\{1,\ldots, n\}$ into intervals $I_k=\{kN+1, kN+1,\ldots, (k+1)N\}$, for any backward trajectory of $y$ at most $M_1$ singular times can be in $I_k$. Hence, there are at most $(N+1)^{M_1}$ possibilities for $\{k_j\}\cap I_k$. Moreover, given the sequence $\{k_i\} \cap \{1,\ldots,  kN\}$ each of possible subsets $\{k_j\}\cap I_k$ can be realized for at most $M_2^{M_1}$ subbranches. This lets us estimate the number of backward branches for which there is a singular time in $I_k$ by

\[
[(N+1)M_2]^{(k+1)M_1}
\]
and the number of all singular branches up to a time $n$

\[
N(y, n, r) \leq \sum_{k=0}^{\lceil n/N\rceil} [(N+1)M_2]^{(k+1)M_1} \leq A_2(\varepsilon) e^{n\varepsilon}
\]
This proves the proposition.
\end{proof}

\subsection{Conical points}\label{ss:conical points}

After~\cite{DenMauNitUrb:98}, for $(f,K,U_K)\in\sA$, a point $x\in K$ is said to be {\it conical} if there exist a number $r>0$, a sequence of positive integers $n_i\nearrow \infty$, and a sequence $(U_i)_i$ of neighborhoods $U_i\subset U_K$ of $x$, such that $f^{n_i}$ is defined on $U_i$ and $f^{n_i}(U_i)=B(f^{n_i}(x),r)$
and that $\dist f^{n_i}|_{U_i}$ is bounded uniformly in $i$. We say that $x$ is \emph{$S'$-conical} if additionally for all $i$ and $0\le k<n_i$ we have $f^k(U_i)\cap S'=\emptyset$.

Notice that $|U_i|\to 0$ as $n_i \to\infty$, otherwise there would exist a wandering interval or an attracting (or indifferent attracting from the side of $K$) periodic orbit in $K$, which is not possible, see \cite[Chapter 4, Theorem A]{deMvanS:}, compare \cite[Subsection 2.3]{PrzRiv:13}.

\begin{remark}
 	We observe that the concept of conical points is in fact very much related to the concept of points that are `seen' by an inducing scheme (see, for example~\cite{IomTod:11} for an application of this technique to questions similar to those discussed here). Similarly to Proposition~\ref{prop:wander} below, in~\cite[Theorem 3.3 c)]{IomTod:11} the authors show that the inducing scheme captures all but a set of points of zero Hausdorff dimension. See also \cite{PrzRiv:07} in the holomorphic case.
\end{remark}

\begin{proposition}[Non-conical hyperbolic points]\label{prop:wander}	
	The set of points $x\in K$ that are not $S'$-conical and satisfy $\overline\chi(x)>0$ has Hausdorff dimension zero.
\end{proposition}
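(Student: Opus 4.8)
The plan is to play the exponential contraction of dynamical pull-backs along Pliss hyperbolic times (Proposition~\ref{p.telescope}) against the sub-exponential bound on the number of pull-back branches that meet the restricted singular set $S'$ (Proposition~\ref{p.pullback}), inside a covering argument for the Hausdorff measure. Since
\[
\{x\in K\colon \overline\chi(x)>0\}=\bigcup_{k\ge 1}\{x\in K\colon \overline\chi(x)>1/k\}
\]
and a countable union of zero-dimensional sets is zero-dimensional, it is enough to fix $\sigma>0$ and show that
\[
\mathcal N_\sigma:=\{x\in K\colon \overline\chi(x)>\sigma,\ x \text{ is not } S'\text{-conical}\}
\]
has $\dim_{\rm H}\mathcal N_\sigma=0$. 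I would fix a small $\varepsilon\in(0,\sigma)$, prove $\dim_{\rm H}\mathcal N_\sigma\le\varepsilon/(\sigma-\varepsilon)$, and then let $\varepsilon\to0$. Choose $r>0$ small enough that Propositions~\ref{p.telescope} and~\ref{p.pullback} apply with radius $3r$, that $r$ is below the threshold of the bounded distortion condition, and that (by Proposition~\ref{p.telescope} and Remark~\ref{rem:pulllback}) all pull-backs along any orbit piece ending at a Pliss time, of intervals of length $\le 3r$, are un-truncated; fix a finite $r$-net $\xi_1,\dots,\xi_{N_r}$ of $K$.

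The first step is the dichotomy: \emph{for every $x\in\mathcal N_\sigma$ there is $N_1(x)$ so that, for every Pliss hyperbolic time $n\ge N_1(x)$ for $x$ with exponent $\sigma$, the pull-back $W_n(x):=\Comp_x f^{-n}(B(f^n(x),r))$ meets $S'$ along its backward orbit}, i.e.\ $f^j(W_n(x))\cap S'\ne\emptyset$ for some $0\le j<n$. Suppose not, so for some $x$ there are infinitely many Pliss times $n$ with $f^j(W_n(x))\cap S'=\emptyset$ for all $0\le j<n$. For each such $n$ the images $f^j(W_n(x))$ are short and un-truncated and contain no critical point, so $f^n$ maps $W_n(x)$ diffeomorphically onto $B(f^n(x),r)$; as $B(f^n(x),r)$ is the $\tfrac12$-scaled neighborhood of $B(f^n(x),r/2)$, the bounded distortion condition gives $\dist f^n|_{\Comp_x f^{-n}(B(f^n(x),r/2))}\le C(\tfrac12)$, a bound independent of $n$. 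Carrying this out for the infinitely many such $n$ exhibits $x$ as $S'$-conical (with radius $r/2$), a contradiction. (In the bare $C^2$ setting one uses the real Koebe principle instead of the bounded distortion condition.) By the Pliss Lemma~\ref{lemma-pliss}, $x$ has infinitely many — indeed a positive upper density of — Pliss hyperbolic times, so in particular arbitrarily large ones past $N_1(x)$.

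For the covering, write $\mathcal N_\sigma=\bigcup_{L\ge1}\mathcal N_\sigma^L$, $\mathcal N_\sigma^L:=\{x\in\mathcal N_\sigma\colon N_1(x)\le L\}$; it suffices to bound $\dim_{\rm H}\mathcal N_\sigma^L$ for each $L$. Fix $L$ and $N\ge L$. For $x\in\mathcal N_\sigma^L$ pick a Pliss time $n=n(x)\ge N$; then $W_n(x)\ni x$ meets $S'$ along its backward orbit, and by Proposition~\ref{p.telescope}, $\diam W_n(x)\le rA_1 e^{-n(\sigma-\varepsilon)}$. Let $\xi_i$ be a net point with $f^n(x)\in B(\xi_i,r)$, and set $\widetilde W_n(x):=\Comp_x f^{-n}(B(\xi_i,2r))\supseteq W_n(x)$; enlarging the ball cannot destroy an encounter with $S'$, so $\widetilde W_n(x)$ again meets $S'$ along its backward orbit, while $\widetilde W_n(x)\subseteq\Comp_x f^{-n}(B(f^n(x),3r))$ gives $\diam\widetilde W_n(x)\le 3rA_1 e^{-n(\sigma-\varepsilon)}$ by Proposition~\ref{p.telescope}. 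The essential input is that, by Proposition~\ref{p.pullback}, the number of generation-$n$ pull-backs of $B(\xi_i,2r)$ that meet $S'$ along their backward orbit is controlled — up to a sub-exponential factor absorbed by shrinking $\varepsilon$ — by $N(\xi_i,n,2r)\le A_2 e^{n\varepsilon}$, hence by $N_r A_2 e^{n\varepsilon}$ over all net points. Thus the intervals $\widetilde W_{n(x)}(x)$, $x\in\mathcal N_\sigma^L$, cover $\mathcal N_\sigma^L$, have diameter $\le 3rA_1 e^{-N(\sigma-\varepsilon)}$, and at each generation $n\ge N$ there are at most $N_r A_2 e^{n\varepsilon}$ distinct ones. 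Denoting the distinct intervals of the cover by $V_1,V_2,\dots$ and grouping by generation, for any $s>0$
\[
\sum_j |V_j|^s\ \le\ N_r A_2 (3rA_1)^s\sum_{n\ge N} e^{n\left(\varepsilon-s(\sigma-\varepsilon)\right)},
\]
a convergent geometric series whose tail tends to $0$ as $N\to\infty$ once $s(\sigma-\varepsilon)>\varepsilon$. Hence $\mathcal H^s(\mathcal N_\sigma^L)=0$ for every $s>\varepsilon/(\sigma-\varepsilon)$, so $\dim_{\rm H}\mathcal N_\sigma^L\le\varepsilon/(\sigma-\varepsilon)$; taking unions over $L$, letting $\varepsilon\to0$, and taking the union over $\sigma=1/k$ finishes the proof.

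The step I expect to be the main obstacle is the bookkeeping connecting the cover to Proposition~\ref{p.pullback}: one must verify that the generation-$n$ pull-backs entering the cover are counted, up to a genuinely sub-exponential factor, by the quantity $N(\cdot,n,\cdot)$ of that proposition — correctly handling the re-centerings at points of $S'$ and the clean continuations after the last singular time — and one must keep every pull-back un-truncated, which near indifferent periodic orbits requires the care of condition $(\ast)$ of the pull-back construction (and is automatic at Pliss times by Proposition~\ref{p.telescope} and Remark~\ref{rem:pulllback}). A secondary technical point is the distortion estimate in the dichotomy, where the bounded distortion condition is replaced by the real Koebe principle in the purely $C^2$ case.
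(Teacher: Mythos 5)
Your overall architecture matches the paper's: reduction to a fixed exponent $\sigma$, the dichotomy (a non-$S'$-conical point has only ``singular'' pull-backs at all sufficiently large Pliss hyperbolic times, proved exactly as in the paper's final step via Proposition~\ref{p.telescope} and the bounded distortion condition), stratification by the threshold $L=N_1(x)$, and covering the stratum at each Pliss time $n\ge N$ by re-centered pull-backs of diameter at most $3rA_1e^{-n(\sigma-\varepsilon)}$. The gap is the counting step, which you yourself flag as the expected obstacle: it is not true that the number of generation-$n$ components of $f^{-n}(B(\xi_i,2r))$ whose backward orbit meets $S'$ is controlled by $N(\xi_i,n,2r)\le A_2e^{n\varepsilon}$ up to a sub-exponential factor. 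Proposition~\ref{p.pullback} counts only the pairs $(y_k,k)$, that is, the possible last re-centering points and last singular times; after the last singular time the branch continues without meeting $S'$, but there can be up to $\deg f$ clean continuations per backward step, hence up to $(\deg f)^{\,n-k}$ distinct generation-$n$ components sitting over a single pair $(y_k,k)$. If the last encounter with $S'$ happens at bounded depth (for instance a critical point in $K$ is met after a few backward steps and then all of its further preimages are taken), the number of ``singular'' components can grow exponentially in $n$, at a rate up to $\log\deg f$, which cannot be absorbed by shrinking $\varepsilon$. So the assertion ``at most $N_rA_2e^{n\varepsilon}$ distinct cover elements at generation $n$'', on which your series estimate rests, is unjustified and in general false.

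The paper closes exactly this hole by feeding the defining property of the bad stratum into the count: for $x\in G(m,\sigma)$ (your $\mathcal N_\sigma^L$ with $m=L$) and a Pliss time $n$, the clean stretch after the last singular time $k$ of the pull-back construction based at the net point must be short, $n-k\le m$ --- otherwise the clean tail would exhibit, at a time larger than $m$, a pull-back of $B(f^{n-k}(x),2r)$ containing $x$ and disjoint from $S'$, contradicting membership in the bad set. Consequently the clean continuations contribute only the constant factor $(\deg f)^m$, and the number of admissible components is at most $A_2e^{n\varepsilon}(\deg f)^m$ per net ball, which for fixed $m=L$ is harmless and yields $\dim_{\rm H}\le\varepsilon/(\sigma-\varepsilon)$ exactly as in your computation. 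Your stratification by $L$ is precisely the tool needed for this, but your covering argument never uses it beyond guaranteeing that the pull-back at time $n$ is singular; to repair the proof you must carry the bound $n-k\le L$ into the multiplicity count (and formulate the dichotomy so that it can be applied to the tail time $n-k$), which is the one genuinely new ingredient of the paper's proof relative to your proposal.
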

\begin{proof}

Let us choose some numbers $\sigma >0$ and $\varepsilon>0$. Let
\[
	r\eqdef\frac 1 {2(A_1+2)}\min\{R_1,R_2\},
\]
where $A_1$ and $R_1$ are constants given by Proposition~\ref{p.telescope}
and where $R_2$ is given by Proposition~\ref{p.pullback}. Let $R\eqdef2^{-1}\min\{R_1,R_2\}$.

We fix a finite family of balls $\{B_i\}_{i=1}^N$ of radius $3r$ such that any ball of radius $2r$ which intersects $K$ must be contained in one of the balls in this family.

Given $m\ge1$ and $\sigma>0$, let $G(m,\sigma)$ be the set of points $x\in K$ with $\overline\chi(x)>\sigma$ for which for all $\ell>m$ being a Pliss hyperbolic time with exponent $\sigma$, the pull-back of  $B(f^\ell(x),2r)$ for $f^{\ell}$ containing $x$ meets a  point in $S'$, that is, we have
\begin{equation}\label{def:bbran}
	\Comp_{f^k(x)}f^{-(\ell-k)}(B(f^n(x), 2r))\cap S' \ne\emptyset
\end{equation}
for some $k\in\{0,\ldots, \ell-1\}$.
Since $\ell$ is a Pliss hyperbolic time, by Proposition~\ref{p.telescope}, all the above pull-backs are small, and in particular well inside $U_K$. Hence the assumption ($\ast$) enabling the pull-back construction holds. Compare Remark~\ref{rem:pulllback}.

We claim that $\dim_{\rm H}G(m,\sigma)=0$ for any $m\ge1$ and $\sigma>0$.
Indeed, given $n>m$, let us denote by $G(m,\sigma, n)$ the set of points in $G(m,\sigma)$ for which $n$ is a Pliss hyperbolic time with exponent $\sigma$. Recall that
$\overline\chi(x)>\sigma>0$ implies that there exist infinitely
many Pliss hyperbolic times for $x$ with exponent $\sigma$. Hence, we
have
\begin{equation} \label{eqn:prs}
G(m,\sigma) = \bigcap_{k\geq m} \bigcup_{n>k} G(m, \sigma, n).
\end{equation}
Let $x\in G(m, \sigma)$. Then $x\in G(m,\sigma,n)$ for  some $n>m$ arbitrarily large.
Let $B_j$ be a ball from the above chosen family which contains $B(f^n(x),2r)$ and  pick $y$ such that $B_j=B(y,3r)$. We will apply the pull-back
construction of Proposition~\ref{p.pullback} to the point $y$, the number
$n$, the radius $R$, and $f^{-n}_x$. Let $(y_k,k)$ be given by the pull-back construction.

We first note that Proposition~\ref{p.telescope} and $\lvert f^n(x)-y\rvert \leq
r\le R_1$ imply that
\[
	\lvert (f^{n-k}(x))-y_k\rvert \leq
	r\,A_1e^{-k(\sigma-\varepsilon)}
	< r\,A_1.
\]
By our choice of constants we have $r\,A_1+2r=R$, which implies
\[
	B(f^{n-k}(x), 2r)\subset B(y_k,R).
\]	
Since $x\in G(m,\sigma)$, the pull-back of $B(f^\ell(x),2r)$ must meet a  point in $S'$ whenever $\ell>m$. On the other hand $k$ is the maximal element and hence the pull-back of $B(y_k,R)$ does not meet any critical point. Thus, we conclude $m\ge n-k$.
Thus, fixed $n$ and $B_j$, by Proposition~\ref{p.pullback} the point $x$ must belong to one of at most $A_2 e^{n\varepsilon} (\deg f)^m$ preimages of $B_j$.

Also observe that $B_j=B(y,3r)\subset B(f^n(x),4r)$ and $4r<R_1$. Since $n$ is a Pliss hyperbolic time for $x$ with exponent $\sigma$, by Proposition~\ref{p.telescope}
this pre-image of $B_j$ has diameter not greater than $4r\,A_1
e^{-n(\sigma - \varepsilon)}$.

Hence we showed that every point in $G(m,\sigma,n)$ belongs to
the $n$th pre-image of some ball $B_j$ along a backward branch of $y$ and the maximal element $(y_k,k)$ as defined in the pull-back construction satisfies $k\ge m-n$.
Thus, by Proposition~\ref{p.telescope}, the set $G(m,\sigma, n)$ is contained in a union of at most $A_2e^{n\varepsilon} (\deg f)^m$
sets of diameter not greater than $4r\,A_1 e^{-n(\sigma - \varepsilon)}$. We apply $G(m, \sigma) \subset \bigcup_{n>m}G(m, \sigma,n)$ to obtain

\[
\dim_{\rm H}G(m,\sigma)
\leq \frac{ \varepsilon }{\sigma -\varepsilon} ,
\]
recall~(\ref{eqn:prs}).
As $\varepsilon$ can be chosen arbitrarily small, this proves $\dim_{\rm H}G(m,\sigma)=0$.

Finally let us consider the set
\[
	G\eqdef\bigcup_{m=1}^\infty\bigcup_{n=1}^\infty G\big(m,\frac 1 n\big).
\]
By the above we have $\dim_{\rm H}G=0$.
Let $x\in K\setminus G$ be a point with $\overline\chi(x)>0$. Let us now show that $x$ must be $S'$-conical. By the above, there is a sequence of Pliss hyperbolic times $(n_i)_i$ such that $\Comp_{f^k(x)}f^{-(n_i-k)}(B(f^{n_i}(x),2r))$ does not intersect $S'$ for any $k=0,\ldots,n_i-1$. Hence, in particular $f^{n_i}$ maps $I_1^i=\Comp_{f^k(x)}f^{-(n_i-k)}(B(f^{n_i}(x),2r))$ diffeomorphically onto $I_2^i=B(f^{n_i}(x),2r)$. Note that $(1-\varepsilon)I_2^i\subset B(f^{n_i}(x),r)$. Hence, as $f$ satisfies bounded distortion near $K$ there exist $C(\varepsilon)>1$ such that for every $i\ge1$
\[
	\dist g|_{B(f^{n_i}(x),r)}\le C(\varepsilon),\quad\text{ where }
	g=(f^{n_i}|_{I_1^i})^{-1}.
\]
Thus, $x$ is $S'$-conical with $r$, $(n_i)_i$, and $U_i=I_1^i$.
This finishes the proof.
\end{proof}

\section{Proof of Theorem \ref{main2}}\label{sectionofthetheorem}

In this section we shall prove Theorem~\ref{main2}, except for \eqref{empty1} which is postponed to Section~\ref{s:completeness}.

\subsection{No conical points with small exponent}\label{ss: no small exponent}

We start with a corollary, which, together with Proposition~\ref{prop:wander} in the previous section, proves in particular~\eqref{dim0}  in Theorem~\ref{main2}
(compare also \cite[Remark 4.1]{IomTod:11}).
In the next sections we shall provide improvements, skipping the assumption points are conical, in particular proving \eqref{empty1} and its generalizations.

\begin{corollary}\label{cor:conicalupperlower}
	Let $(f,K)\in \sA^{\BD}_+$ be non-exceptional and satisfy the weak isolation condition. Then the set
$\{x\colon x  \text{ is conical}, \,\,\overline{\chi}(x)<\chi_{\inf}\}$ is empty.

\end{corollary}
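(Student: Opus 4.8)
The plan is to argue by contradiction: suppose $x \in K$ is conical with $\overline\chi(x) < \chi_{\inf}$. Recall that since $(f,K)\in\sA^{\BD}_+$ satisfies LyapHyp iff $\chi_{\inf}>0$; here we know $\chi_{\inf}>0$ (otherwise there is nothing to prove, as $\overline\chi(x)\ge 0$ always and the interval $[0,\chi_{\inf})=\emptyset$), so we are in the Lyapunov hyperbolic regime. The strategy is to extract from the conical structure at $x$ a uniformly expanding Cantor repeller $X\subset K$ (or, more simply, an invariant measure supported near the backward orbit of $x$) whose Lyapunov exponent is close to $\overline\chi(x)<\chi_{\inf}$, contradicting the very definition of $\chi_{\inf}$ as the infimum of exponents of invariant measures.

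First I would unwind the definition of conical point: there are $r>0$, times $n_i\nearrow\infty$, and neighborhoods $U_i\ni x$ with $f^{n_i}\colon U_i\to B(f^{n_i}(x),r)$ having distortion bounded by some $C$ uniform in $i$. Using bounded distortion (Remark~\ref{unifyingdistortion}) together with the distortion bound on $f^{n_i}|_{U_i}$, one gets
\[
\frac{1}{n_i}\log\lvert (f^{n_i})'(x)\rvert
= \frac{1}{n_i}\log\frac{\lvert B(f^{n_i}(x),r)\rvert}{\lvert U_i\rvert} + O\!\left(\frac{1}{n_i}\right),
\]
so along the sequence $n_i$ we control the derivative growth by the ratio $\lvert U_i\rvert / r$. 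Since $\overline\chi(x)<\chi_{\inf}$ we may pick $\sigma$ with $\overline\chi(x)<\sigma<\chi_{\inf}$, and then for all large $i$ we have $\lvert (f^{n_i})'(x)\rvert \le e^{\sigma n_i}$ (using $\limsup$ rather than a subsequence would be cleaner; but along the $n_i$ we get $\lvert (f^{n_i})'(x)\rvert\le e^{(\overline\chi(x)+\e)n_i}$ which suffices).

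Next I would build a hyperbolic repeller out of this data. Fix a small $\rho<r$ and note $f^{n_i}$ maps the pull-back $\Comp_x f^{-n_i}(B(f^{n_i}(x),\rho))\subset U_i$ diffeomorphically onto $B(f^{n_i}(x),\rho)$ with bounded distortion. Because $f|_K$ is topologically transitive (strongly, by Subsection~\ref{toprec}), the point $f^{n_i}(x)$ can be mapped back near $x$ in a bounded number $m_c$ of steps through $K$ avoiding nothing essential; composing we get, for suitable $i$, a branch of $f^{-(n_i+m)}$ with $m\le m_c$ sending a neighborhood of $x$ strictly inside itself, with derivative of $f^{n_i+m}$ bounded below by $e^{\sigma n_i}\cdot c_0$ where $c_0$ absorbs the bounded factor from the $m$ returning steps — hence still $\le e^{\sigma' n_i}$ for $\sigma'$ slightly larger than $\sigma$ but still $<\chi_{\inf}$. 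This produces a periodic-like expanding invariant set, or more directly a periodic point $p$ near $x$ of period $n_i+m$ with $\frac{1}{n_i+m}\log\lvert(f^{n_i+m})'(p)\rvert<\chi_{\inf}$. By the weak isolation condition this periodic orbit lies in $K$, so the atomic invariant measure on $\cO(p)$ belongs to $\cM(K)$ and has exponent below $\chi_{\inf}$ — contradiction.

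The main obstacle is the composition/closing step: making rigorous that the conical data at $x$, which a priori only controls forward orbits from $x$, can be closed up into a genuine $f$-invariant object inside $K$ with a controlled exponent. One must handle the returning segment of length $\le m_c$ carefully (its derivative contribution is bounded but could in principle be very small near critical points of $f$), and one must ensure the pull-back branch is well-defined and contracts — here I would invoke the bounded distortion condition and the $K$-homeomorphism lemmas (Lemma~\ref{l:K-homeo}, Corollary~\ref{cor:no-truncation}) to keep the construction inside $U_K$ and compatible with $K$, and the weak isolation hypothesis to conclude the resulting periodic orbit sits in $K$. An alternative, perhaps cleaner, route avoiding the closing lemma: take weak$^*$ limits of the empirical measures $\frac{1}{n_i}\sum_{j=0}^{n_i-1}\delta_{f^j(x)}$ to obtain $\mu\in\cM(K)$ with $\chi(\mu)\le\overline\chi(x)<\chi_{\inf}$ by upper semicontinuity of $\log\lvert f'\rvert$ along pieces bounded away from $\Crit(f)$ (which the conical/telescope estimates guarantee), again contradicting the definition of $\chi_{\inf}$. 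Either way the contradiction is immediate once the invariant object is in hand, so the only real work is that construction.
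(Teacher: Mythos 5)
Your main argument is essentially the paper's own proof (which the paper itself only sketches): use the conical structure together with strong topological transitivity to close up a shadowing periodic orbit of period $n_i+m$, $m\le m_c$, bound its exponent by $\overline\chi(x)+\e$ via the distortion of $f^{n_i}|_{U_i}$ and a Lipschitz bound on the returning leg (critical points met there only lower the exponent, which is harmless), place the orbit in $K$ by weak isolation, and contradict the definition of $\chi_{\inf}$; this matches the paper, including the remark about critical points, and you correctly flag the closing step as the only real work (note the small slip: the derivative of $f^{n_i+m}$ along the closing branch is bounded \emph{above}, not below, by $c_0e^{\sigma n_i}$). One caution: the ``alternative, perhaps cleaner, route'' via weak$^*$ limits of the empirical measures $\frac1{n_i}\sum_{j<n_i}\delta_{f^j(x)}$ does not work as stated, because $\log\lvert f'\rvert$ is only upper semicontinuous (equal to $-\infty$ on $\Crit(f)$), so for a weak$^*$ limit $\mu$ one only gets $\chi(\mu)\ge\limsup_i\frac1{n_i}\log\lvert(f^{n_i})'(x)\rvert$, the opposite of the inequality you need, and conicality does not keep the intermediate orbit points $f^j(x)$ away from $\Crit(f)$; so the periodic-orbit closing should be kept as the actual proof.
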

In particular we can write above $S'$-conical, which together with Proposition~\ref{prop:wander} is also sufficient to prove (\ref{dim0}). Moreover, we point out that the assumption of weak isolation is not needed in this proof.

\smallskip

Let us summarize that we get a decomposition of $K$ into three sets:
\begin{itemize}
\item points $x$ being non-$S'$-conical with $\overline\chi(x)\le 0$,
\item points $x$ being non-$S'$-conical with $\overline\chi(x) > 0$, and
\item $x$ being $S'$-conical.
\end{itemize}
We do not know much about the first set. By Proposition~\ref{prop:wander}, the second one has Hausdorff dimension $0$. Any $x$ in the third set satisfies
$\overline{\chi}(x)\ge\chi_{\inf}$ by Corollary~\ref{cor:conicalupperlower}.

\begin{proof}[Sketch of Proof of Corollary~\ref{cor:conicalupperlower}]
For $x$ conical  we find an arbitrarily small neighborhood $U$ of $x$ and $n$ such that $f^n$ maps $U$ onto $B(f^n(x),r)$ with bounded distortion. By strong transitivity $f^{-m}(x)$ is $r/2$-dense for sufficiently large $m$.  As $\lvert U\rvert$ is small its respective pull-back $U'$ is in $B(f^n(x),r)$.
Then one finds a shadowing periodic orbit and distributes an invariant measure $\mu$ on it. We obtain $\chi(\mu)\le \overline\chi(x)+\e$ with $\e$ arbitrarily close to 0. Observe that capturing of critical points when going forward from $U'$ to $U$ does not hurt as small derivative of $f$ close to them works to our advantage.
\end{proof}

\subsection{Upper bound}\label{ss:upper_bound}

The content of this subsection, like Sections~\ref{ss: no small exponent} and~\ref{ss: lower bound interior}, is very similar to the complex case in \cite{GelPrzRam:10}. However, the modified versions of the proof of Proposition \ref{prop:conicalupper} will be considered in Sections \ref{s:completeness} and \ref{strong}, so we include the details.

Thus, a part of Theorem~\ref{main2} consists of

\begin{proposition}\label{prop:conicalupper}
Given numbers $\alpha\le\beta\le \chi_{\rm sup}$ with $\beta>0$, and additionally assuming $\alpha>0$ if $\chi_{\inf}=0$, we have
\[
	\dim_{\rm H}\cL(\alpha,\beta) \le
		\max\Big\{0,\max_{\alpha\le q\le\beta}F(q)\Big\}\,.
	\]
\end{proposition}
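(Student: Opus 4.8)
The plan is to reduce the problem to a cover-counting argument for conical points, using the preliminary constructions of Section~\ref{Preliminary Constructions} together with the $\varphi_t$-conformal (or CaS) measures $\mu^\ast_t$ from Subsection~\ref{ss:conformal}. First I would dispose of the non-conical part: by Proposition~\ref{prop:wander}, the set of points $x$ with $\overline\chi(x)>0$ that are not $S'$-conical has Hausdorff dimension $0$, and $\cL(\alpha,\beta)$ with $\beta>0$ consists of points with $\overline\chi(x)=\beta>0$; hence $\dim_{\rm H}\cL(\alpha,\beta)=\dim_{\rm H}(\cL(\alpha,\beta)\cap\{S'\text{-conical}\})$, so it suffices to bound the dimension of the $S'$-conical points in $\cL(\alpha,\beta)$. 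By~\eqref{prop:FFF} and concavity-type monotonicity of $F$ on $[\chi_{\inf},\chi^\ast]$ and $[\chi^\ast,\chi_{\sup}]$, $\max_{\alpha\le q\le\beta}F(q)$ equals $F(\alpha)$, $F(\beta)$, or $t_0$ (attained at $\chi^\ast$ if $\chi^\ast\in[\alpha,\beta]$); I would treat the relevant subinterval of exponents and use the corresponding value of $t$, i.e.\ a $t$ realizing the infimum in~\eqref{def:Fa} for the chosen $q$.

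Next comes the main estimate. Fix $q\in[\alpha,\beta]$ and a $t=t(q)$ essentially realizing $F(q)=\frac1{|q|}\inf_t(P(t)+qt)$, so that $P(t)+qt$ is close to $|q|F(q)$; WLOG $t$ can be taken so that this near-infimum holds, and the CaS/conformal measure $\mu^\ast_t$ with Jacobian $e^{P(t)}|f'|^t$ exists (here we need $t<t_+$, which is automatic in the ranges considered, $\alpha>0$ handled when $\chi_{\inf}=0$). For an $S'$-conical point $x\in\cL(\alpha,\beta)$ with Pliss hyperbolic times, the Telescope (Proposition~\ref{p.telescope}) and the Pull-back counting (Proposition~\ref{p.pullback}) give, for suitable scale $r$, a controlled cover: $x$ lies in a pull-back $W$ of a fixed ball $B(f^n(x),r)$ under $f^n$ with bounded distortion and diameter comparable to $r|(f^n)'(x)|^{-1}\approx r e^{-nq}$ (up to $e^{n\varepsilon}$ factors, using $\underline\chi(x)=\alpha$, $\overline\chi(x)=\beta$ to pin the relevant $n$), and by conformality $\mu^\ast_t(W)\asymp e^{-nP(t)}|(f^n)'(x)|^{-t}\approx e^{-n(P(t)+qt)}$. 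Comparing $\mu^\ast_t(W)$ with $(\diam W)^{s}$ and summing over the (at most $A_2 e^{n\varepsilon}$, by Proposition~\ref{p.pullback}) branches, a standard mass-distribution / covering argument yields $\dim_{\rm H}(\text{conical part of }\cL(\alpha,\beta))\le \frac{P(t)+qt}{q}+O(\varepsilon)=F(q)+O(\varepsilon)$. Letting $\varepsilon\to0$ and optimizing over $q\in[\alpha,\beta]$ gives the bound $\max_{\alpha\le q\le\beta}F(q)$, and the outer $\max\{0,\cdot\}$ is because Hausdorff dimension is $\ge0$.

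The main obstacle, as in the complex case of \cite{GelPrzRam:10}, is making the covering bookkeeping genuinely uniform: one must choose a single scale $r$ and a finite family of reference balls, control the distortion along pull-backs that pass near critical points of $S'$ via the Telescope estimates, and—crucially—ensure the $e^{n\varepsilon}$ loss from Proposition~\ref{p.pullback} does not overwhelm the exponential gain $e^{-n(P(t)+qt)}$ versus $(\diam W)^s\approx e^{-nqs}$; this forces $s>\frac{P(t)+qt}{q}$ only after $\varepsilon$ is sent to $0$. A secondary delicate point is the case $\chi^\ast\in[\alpha,\beta]$, where the maximizing value is $t_0$ and one uses $t=t_0$, $P(t_0)=0$ (in the $\chi_{\inf}=0$ situation $P\equiv0$ beyond $t_0$, handled by the hypothesis $\alpha>0$), so that the estimate reads $\dim_{\rm H}\le t_0=F(\chi^\ast)$; one must check the conical-covering argument still applies at this $t$, which it does since $t_0<t_+$. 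Finally, the reduction using $\underline\chi(x)=\alpha$ rather than just $\overline\chi(x)=\beta$ is needed precisely to guarantee infinitely many Pliss times $n$ at which simultaneously $|(f^n)'(x)|\approx e^{nq}$ for the chosen $q$; I would extract these via the Pliss lemma applied with exponent slightly below $\beta$ and the lim inf condition giving infinitely many $n$ with derivative not much larger than $e^{n\alpha}$, interpolating to obtain times with the prescribed rate $q$.
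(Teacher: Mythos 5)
Your overall strategy is the paper's: reduce to $S'$-conical points via Proposition~\ref{prop:wander}, use the CaS measure $\mu^*_t$ along pull-backs, and conclude by a Frostman-type argument, optimizing over $q\in[\alpha,\beta]$. But two steps, as you state them, would not work. First, Proposition~\ref{p.pullback} does not bound the number of branches you need to sum over: it counts backward branches according to their last ``singular'' encounter and is used in the paper only inside the proof of Proposition~\ref{prop:wander} (where the last singular time is within $m$ of $n$, whence the extra $(\deg f)^m$ factor). The pull-backs of a reference ball along branches that avoid $S'$ altogether can be as numerous as $e^{nh_{\rm top}}$, so a count-times-diameter covering sum cannot give $F(q)$ -- and your figure ``at most $A_2e^{n\varepsilon}$ branches'' would absurdly force dimension $0$. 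The counting that works is precisely the weighted one the conformal measure provides: for an $S'$-conical point the pull-back $U_i$ of $B(f^{n_i}(x),r)$ avoids $S'$ (avoids, not merely ``passes near'' -- this is what legitimizes the CaS conformality relation and the case $t<0$), has bounded distortion by the conical property itself, and so $\mu^*_t(U_i)\ge C^{-1}c^{-1}\lvert(f^{n_i})'(x)\rvert^{-t}e^{-n_i(P(t)+\varepsilon)}$; the paper reads this as the bound $t+P(t)/q$ on the lower local dimension of $\mu^*_t$ at $x$ and then applies the Frostman lemma, with no branch count at all.

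Second, you should not fix $q$ in advance and then try to ``interpolate'' to Pliss/conical times with prescribed rate $q$: producing good times at which the derivative has a prescribed exponential rate is exactly the delicate content of Lemma~\ref{Levin} in Section~\ref{strong}, and it is available only for $q\ge\alpha^\sharp$, not for arbitrary $q\in[\alpha,\beta]$. For the present upper bound it is also unnecessary: take the conical times $n_i$ furnished by Proposition~\ref{prop:wander} and pass to a subsequence along which $\frac1{n_i}\log\lvert(f^{n_i})'(x)\rvert$ converges to some $q$, automatically in $[\alpha,\beta]$; then choose $t=t(q)<t_+$ nearly realizing the infimum in~\eqref{def:Fa} and bound by $F(q)\le\max_{\alpha\le q\le\beta}F(q)$. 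Finally, your plan does not address why $q>0$ when $\alpha\le 0$ but $\chi_{\inf}>0$: the paper gets $q\ge\chi_{\inf}$ by shadowing with a periodic orbit as in Corollary~\ref{cor:conicalupperlower}, and only when $\chi_{\inf}=0$ does it invoke the hypothesis $\alpha>0$.
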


The possibility to restrict to $S'$-conical points due to Proposition~\ref{prop:wander} makes the proof easier than the proofs in Sections~\ref{ss:TCE} and \ref{strong}, where one deals with finite criticality (TCE) and has to deal with true conformal measures.

\begin{proof}
	Let $\alpha$ and $\beta$ be as in the assumption and $x\in\cL(\alpha,\beta)$. By Proposition~\ref{prop:wander} it is enough to assume that $x$ is $S'$-conical with a corresponding number $r>0$, a sequence $n_i\nearrow\infty$, and a sequence $(U_i)_i$ of neighborhoods of $x$
such that $f^{n_i}(U_i)= B(f^{n_i}(x),r)$ ($U_i$'s are pull-backs in another terminology) and that $\dist f^{n_i}|_{U_i}$ is bounded uniformly in $i$ by some constant $C$. By passing to a subsequence we can assume that $\frac 1 {n_i} \log |(f^{n_i})'(x)|$ has a limit $q\in [\alpha, \beta]$. If $\chi_{\inf}>0$ then
as in the proof of Corollary~\ref{cor:conicalupperlower} we can find a periodic point $x$ with $\chi(x)$ less than $q+\e$ for $\e$ arbitrarily close to 0, for $n_i$ respectively large.
Hence $q\ge \chi_{\inf}>0$. If $\chi_{\inf}=0$ we use the assumption $\alpha>0$ to assure that $q>0$. 	
	
\smallskip

Every $f^k(U_i)$ and thus $f^{k-n_i}(B(f^{n_i}(x),r))$, for every $k=0,\ldots,n_i-1$, are disjoint from $S'$.
So when we consider $\mu^*_t$, the measure defined in Subsection~\ref{ss:conformal} with Jacobian $\lambda\lvert f'\rvert^{t}$ for $\lambda=e^{P(t)}$, applying the conformality relation for $f$ repeated times,  we have, due to Lemma~\ref{l:K-homeo},
\[
	 \int_{f_x^{-n_i}(B(f^{n_i}(x),r))}  \lvert (f^{n_i})'\rvert^{t}
			\,d\mu^*_t
	= \mu^*_t\big(B(f^{n_i}(x),r)\big)\,\lambda^{-n_i}	\,.
\]	
Given $r$, there is a constant $c=c(r,t)>1$ such that $c^{-1}\le \mu^*_t(B(y,r))\le c$ for every $y\in K$, see Subsection~\ref{toprec} (where the notation $\Upsilon$ was used).
This, together with the distortion results, implies
\begin{equation}\label{eq:distoconfmeau}
	\mu^*_t(f_x^{-n_i}(B(f^{n_i}(x),r)))
	\ge
	\lvert (f^{n_i})'(x)\rvert^{-t} C^{-1}c^{-1} e^{-n_i(P(t)+\varepsilon)}\,.
\end{equation}
Now we conclude as in the proof of~\cite[Proposition 2]{GelPrzRam:10}, see also the end of Section~\ref{strong}, that the lower local dimension of $\mu^*_t$ is bounded from above by $t+(P(t)/q$  at $x$. Then, continuing as in this proof, and applying the Frostman lemma, we obtain
\begin{equation}\label{confestim}
	\dim_{\rm H}\cL(\alpha,\beta)\le
	\max\Big\{0,
		\max_{\alpha\le q\le\beta}F(q)
	\Big\}\,.
\end{equation}
In the case $\chi_{\inf}>0$ we have got a formally better result, with $ \max\{\alpha,\chi_{\inf}\}\le q \le \beta$ in the above estimate, but this does not make difference since $F(\alpha)=-\infty$ for $\alpha<\chi_{\inf}$.
This finishes the proof.
\end{proof}

Notice, that in particular for $0< q\le \beta < \chi_{\inf}$
for $t$ large enough we have $P(t)\le P(t)<-q\,t$. Hence, for $\varepsilon$ small enough the measures in \eqref{eq:distoconfmeau} diverge. So such $S'$-conical point $x$ cannot exist.
Observe that this way we have obtained an alternative proof of the partial result in Corollary~\ref{cor:conicalupperlower}  stating that the set of $S'$-conical points is empty (the emptyness of the  conical set does not follow, since to use CaS measure we need to omit $S'$).

Summarizing, Corollary~\ref{cor:conicalupperlower} together with Proposition~\ref{prop:wander} prove~\eqref{dim0}.
This approach will be used again in Section~\ref{strong}.

\subsection{Lower bound -- interior of the spectrum}\label{ss: lower bound interior}

The proof of the lower bound for the regular spectrum in Theorem \ref{main2} goes exactly like in \cite[Section 5]{GelPrzRam:10}, with the only difference that we use $S'$ instead of the set of critical points.
The main idea is to construct ECRs and use bridges, as recalled in Subsection~\ref{ss:bridges}, and to apply Proposition~\ref{decaf}.

We use the formula for the Lyapunov spectrum on a uniformly hyperbolic repeller $X_m$ (see for example \cite{Pes:97})
which gives in particular
$$
\dim_{\rm H} \{x\in X_m\colon \chi(x)=\alpha\} \ge F_m(\alpha).
 $$
Hence, using \eqref{fcon} for any $\alpha\in (\chi_{\inf},\chi_{\sup})$ we get
$$
\dim_{\rm H} \{x\in K\colon \chi(x)=\alpha\} \ge \sup_m F_m(\alpha)=F(\alpha)\,,
$$
which is the lower bound asserted in Theorem~\ref{main2}.

\subsection{Lower bound -- boundary and irregular part}

It is sufficient to have the following result.

\begin{theorem} \label{closing}
	Let  $(X_i)_i$ be a sequence of ECRs invariant for $f^{a_i}$, in $K$ (Cantor sets, we do not allow individual periodic orbits here), disjoint from $\partial \hat I_K$.
Let $(\phi_i)_i$ be a sequence of H\"older continuous potentials and let $(\mu_i)_i$ be  a sequence of equilibrium states for $\phi_i$ with respect to $f^{a_i}|_{X_i}$.
    Then we get
    \[
    \dim_{\rm H}\big\{x\in K\colon
    \underline\chi(x)=\liminf_{i\to\infty}\chi(\mu_i),\,
    \overline\chi(x) = \limsup_{i\to\infty}\chi(\mu_i)\big\}
    \ge \liminf_{i\to\infty} \dim_{\rm H}\mu_i
    \]
    and for the packing dimension
    \[
    \dim_{\rm P}\big\{x\in K\colon
    \underline\chi(x)=\liminf_{i\to\infty}\chi(\mu_i),\,
    \overline\chi(x) = \limsup_{i\to\infty}\chi(\mu_i)\big\}
    \ge \limsup_{i\to\infty} \dim_{\rm H}\mu_i.
    \]
\end{theorem}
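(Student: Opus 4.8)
The plan is to construct inside $K$ a Moran-like Cantor set $\Lambda$ by concatenating longer and longer orbit pieces running successively through $X_1,X_2,X_3,\dots$, glued together by the bridges of Definition~\ref{defi:bridge} and Lemma~\ref{bridge} (whose existence uses that $f$ is non-exceptional), and then to estimate its Hausdorff and packing dimensions by a mass-distribution argument; the whole scheme is that of \cite[Section~5]{GelPrzRam:10}, with $S'$ in the role of the critical set. First I would reduce to the case $a_i=1$ by the remark following Proposition~\ref{decaf} (replace $X_i$ by a nearby $f$-invariant transitive expanding repeller in $K$ containing $\bigcup_{j<a_i}f^j(X_i)$; then $\mu_i$ passes to an $f$-invariant measure with the same Lyapunov exponent and the same Hausdorff dimension). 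Passing to a subsequence of the $X_i$ is harmless: it can only raise a $\liminf$, and by interleaving suitable subsequences one can still realise $\liminf_i\chi(\mu_i)$, $\limsup_i\chi(\mu_i)$, $\liminf_i\dim_{\rm H}\mu_i$ and $\limsup_i\dim_{\rm H}\mu_i$ simultaneously. Since each $X_i$ is topologically transitive (as in the applications) and $\phi_i$ is H\"older, $\mu_i$ is the ergodic Gibbs equilibrium state, with $\dim_{\rm H}\mu_i=h_{\mu_i}(f)/\chi(\mu_i)$, so the Birkhoff and Shannon--McMillan--Breiman theorems apply to it.

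Fix $\varepsilon_i\downarrow0$. For each $i$ and all large $N$, ergodicity of $\mu_i$ yields a family $\mathcal{G}_i(N)$ of generation-$N$ cylinders of the GDS defining $X_i$ with $\mu_i\big(\bigcup\mathcal{G}_i(N)\big)\ge1-\varepsilon_i$, such that on every $C\in\mathcal{G}_i(N)$ and every intermediate time $k\in[\sqrt N,N]$ one has $\big|\tfrac1k\log\lvert(f^k)'\rvert-\chi(\mu_i)\big|<\varepsilon_i$ and $\big|-\tfrac1N\log\mu_i(C)-h_{\mu_i}(f)\big|<\varepsilon_i$; by the strong separation condition and the bounded distortion of the GDS each such $C$ lies in a GDS-interval of diameter $e^{-N(\chi(\mu_i)+o(1))}$, whence $\mu_i(C)=(\diam C)^{\dim_{\rm H}\mu_i+o(1)}$ as $N\to\infty$. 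Now choose a rapidly growing sequence $N_i$ (so that $N_i$ dominates $N_1+\dots+N_{i-1}$ and the lengths of all bridges and transition words used), and build $\Lambda$: a point of $\Lambda$ is determined by a choice of $C_1\in\mathcal{G}_1(N_1)$, then a bridge word from $X_1$ to $X_2$ (a backward branch in $K$ avoiding $S'$, of length depending only on the pair $(X_1,X_2)$, existing by Lemma~\ref{bridge}, followed by a transition word inside $X_2$ coming from transitivity so as to land in the interval starting the $X_2$-block), then $C_2\in\mathcal{G}_2(N_2)$, then a bridge to $X_3$, and so on. Because the $X_i$ are disjoint from $\partial\hat I_K$, all maps occurring along these pull-backs are $K$-homeomorphisms with uniformly bounded distortion (Lemma~\ref{l:K-homeo}, Corollary~\ref{cor:no-truncation}), so the corresponding pull-back intervals are well defined, nested, and of controlled shape, and $\Lambda$ is a genuine Cantor subset of $K$.

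For $x\in\Lambda$, at the time $n$ ending the $i$-th $X$-block the average $\tfrac1n\log\lvert(f^n)'(x)\rvert$ is a weighted average of $\chi(\mu_1),\dots,\chi(\mu_i)$ up to a vanishing bridge contribution, the weight of $\chi(\mu_i)$ tending to $1$; throughout the $i$-th block, after its first $\sqrt{N_i}$ steps, this running average stays within $\varepsilon_i$ of $\chi(\mu_i)$, while over the short transitional stretches it stays between $\chi(\mu_{i-1})$ and $\chi(\mu_i)$. Choosing the $N_i$ to grow fast enough one concludes $\underline\chi(x)=\liminf_i\chi(\mu_i)$ and $\overline\chi(x)=\limsup_i\chi(\mu_i)$ for every $x\in\Lambda$, exactly as in \cite[Section~5]{GelPrzRam:10}. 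For the dimension, put on $\Lambda$ the probability measure $\nu$ which at each step splits the mass of a surviving pull-back interval among its admissible continuations in proportion to $\mu_i(C_i)$ (and uniformly among the finitely many bridge/transition continuations). Given small $\rho>0$ and $x\in\Lambda$, comparing $\rho$ with the diameters of the nested pull-back intervals through $x$ identifies the generation that $B(x,\rho)$ resolves; if it lies in the $i$-th block then, by bounded distortion and $\mu_i(C)=(\diam C)^{\dim_{\rm H}\mu_i+o(1)}$, one gets $\nu(B(x,\rho))\le\rho^{\dim_{\rm H}\mu_i-o(1)}$, hence
\[
\liminf_{\rho\to0}\frac{\log\nu(B(x,\rho))}{\log\rho}\ \ge\ \liminf_i\dim_{\rm H}\mu_i
\]
and
\[
\limsup_{\rho\to0}\frac{\log\nu(B(x,\rho))}{\log\rho}\ \ge\ \limsup_i\dim_{\rm H}\mu_i
\]
for $\nu$-a.e.\ $x$. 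The mass-distribution (Frostman) principle then gives $\dim_{\rm H}\Lambda\ge\liminf_i\dim_{\rm H}\mu_i$, and its analogue for packing dimension (a measure whose upper pointwise dimension on a set is $\ge s$ forces packing dimension $\ge s$) gives $\dim_{\rm P}\Lambda\ge\limsup_i\dim_{\rm H}\mu_i$, which is the assertion.

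The step I expect to be the main obstacle is the combinatorial bookkeeping that makes all of this consistent: one must tune the growth rate of the $N_i$ so that the block ends govern $\underline\chi$, $\overline\chi$ and the pointwise dimensions of $\nu$, while the bridges and transition words---of fixed finite length---contribute asymptotically nothing to exponent, entropy or dimension; and at the same time one must keep every map used in the pull-backs a $K$-homeomorphism with uniformly bounded distortion, which is precisely why the $X_i$ are assumed disjoint from $\partial\hat I_K$ (so that Lemma~\ref{l:K-homeo} and Corollary~\ref{cor:no-truncation} apply) and why Lemma~\ref{bridge} is invoked rather than an ad hoc connecting orbit. Once these points are settled, the remaining estimates are routine and follow \cite[Section~5]{GelPrzRam:10} essentially verbatim.
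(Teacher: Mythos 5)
Your construction is the paper's: for Theorem~\ref{closing} the authors simply refer to \cite[Theorem 3]{GelPrzRam:10} for the block--bridge Moran construction, the exponent bookkeeping, and the mass-distribution estimates for both Hausdorff and packing dimension, and your sketch reproduces all of that faithfully. The only point the paper singles out as genuinely new in the interval setting is exactly the step you compress into one clause: why the limit Cantor set lies in $K$ at all (here $K$ is only forward invariant, so pull-backs of $K$-points need not stay in $K$). The paper's device is to use non-atomicity of the $\mu_i$ to start the bridges at points avoiding the countable union of images of $\partial\hat I_K$, so the bridges miss $\partial\hat I_K$; then, taking the GDS components small, every forward image of the constructed set $C$ stays in $\hat I_K$, and maximality of $K$ in $\hat I_K$ (Subsection~\ref{ss:generalized}) gives $C\subset K$.

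Your appeal to Lemma~\ref{l:K-homeo} can be made into a proof of the same fact, but not via ``well defined, nested, of controlled shape'': shape control says nothing about membership in $K$. What is needed is the induction that every nested pull-back interval of the construction meets $K$: along a bridge the ambient pull-backs avoid $S'$ by Definition~\ref{defi:bridge} and contain the bridge points of $K$, and along a block the cylinders meet $X_i$ and avoid $S'$ once the $U_{i,k}$ are short and disjoint from $\partial\hat I_K$, so \eqref{K-homeo} propagates a point of $K$ into each pull-back; since diameters shrink and $K$ is closed, the limit point lies in $K$. (Corollary~\ref{cor:no-truncation} is not the right tool here; its hypotheses concern pull-backs along orbits of non-weakly-exceptional points for large $n$.) With that step written out---or replaced by the paper's non-atomicity-plus-maximality argument---your proof is correct and coincides with the paper's route, including the reduction to $a_i=1$ after Proposition~\ref{decaf} and the use of Lemma~\ref{bridge}.
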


\begin{proof}
This proof goes exactly like the proof of~\cite[Theorem 3]{GelPrzRam:10}.
However in our setting, even assuming the weak isolation condition, it is not {\it a priori} clear that the Cantor set $C$ of point $x$ constructed in~\cite{GelPrzRam:10} is in  $K$.
So, we should care about this in the construction.
The points in $C$ are the points whose forward trajectory spends time $n_1$ close to a trajectory in $X_1$ Birkhoff for $\phi_1$ (i.e. with finite time Birkhoff average close to $\int \phi_1\,d\mu_1$), next it goes along  a bridge which joins $X_2$ to $X_1$ (in the forward direction), then it spends time $n_2$ close to a  trajectory in $X_2$ Birkhoff for $\phi_2$, next goes to $X_3$, etc.. The sequence $(n_i)_i$ growths sufficiently fast to guarantee the required properties.

Since the measures $\mu_i$ are non-atomic, the (countable) set $\bigcup_{m=0}^\infty
(\partial \hat I_K)$ has measure 0. Hence, we can start
bridges (backward) from points not in this set and hence these bridges are disjoint from
$\partial \hat I_K$, If
$\cU_i$ consist of  components of length small enough, then
$\bigcup_{m=0}^\infty f^m(C)\subset \hat I_K$. Hence, by the maximality of $K$ in
$\hat I_K$, we have $C\subset K$ , see Subsection~\ref{ss:generalized}.
\end{proof}

To prove lower bounds for, say, $\dim_{\rm H} \cL(\alpha,\beta)$ with $\alpha,\beta \in (\chi_{\inf},\chi_{\sup})$ we consider $X_i$ as in
Proposition~\ref{decaf} (indexed $X_m$ there). Consider also
\[
	\phi_i:=
	\phi_{t_i}|_{X_i}
	\text{ where }t_i \text{ is such that }
	\frac{\partial P(f|_{X_i},\phi_t)}{\partial t}|_{t_i}
	=\begin{cases}\alpha&i\text{ odd},\\\beta&i\text{ even},\end{cases}
\]	
and let $\mu_i$ be the equilibrium for $\phi_i$ with respect to $f|_{X_i}$.

Finally notice that $X_i$ are Cantor sets what is asserted already in Proposition~\ref{decaf} (they cannot be individual periodic orbits because they contain sets corresponding to equilibria $\mu$
in Lemma~\ref{lem:Katok}
of positive entropy).

Now we can subtract from $X_i$ a finite set of small intervals covering $\partial\hat I_K$ which, for example, can be taken to be small
`cylinders' of the form $g_{k_1k_2}\circ
	g_{k_2k_3}\circ \cdots \circ g_{k_{n-1}k_n} (U_{k_n})$, see Subsection~\ref{ss:bridges}. This will not change the topological pressures on $X_i$ much.

\smallskip

If at least one of the numbers $\alpha,\beta$ is in the boundary of the interval $[\chi_{\inf},\chi_{\sup}]$, then
we consider $\mu_i$ for a sequence of numbers $t_i$ corresponding to $\gamma_i\in (\chi_{\inf},\chi_{\sup})$ which accumulates at $\beta$ and at $\alpha$. This proves in particular $\dim_{\rm H} \cL(0) \ge F(0)$ in Theorem~\ref{main2}.

\section{On the completeness of the spectrum}\label{s:completeness}
Analogously to \cite[Section 3]{GelPrzRam:10} we investigate which numbers can occur at all as upper/lower Lyapunov exponent. The proof of (\ref{empty2}) that
\[
 \left\{ x\in K\colon \overline{\chi}(x)>\chi_{\sup}\right\}
 = \emptyset
\]
in Theorem \ref{main2} is the same as in \cite[Proof of Lemma 5]{GelPrzRam:10}.
The proof of (\ref{empty1}) that
\[
\left\{ x\in K\colon -\infty<\chi(x)<\chi_{\inf}\right\} = \emptyset
 \]
  is more difficult and in part different from the one in \cite{GelPrzRam:10}. This section is devoted to this proof.

\subsection{No points with exponent less than $\chi_{\inf}$}\label{nosmallexp}

We prove the completeness part of Theorem \ref{main2}, promised at the beginning of this section.

\begin{theorem}\label{chi}
Let $(f,K)\in\sA_+^{\BD}$ and satisfy the weak isolation condition.
Then for every $x\in K$ such that finite $\chi(x)$ exists, we have $\chi(x)\ge\chi_{\inf}$.
\end{theorem}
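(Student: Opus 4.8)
The plan is to argue by contradiction: suppose $x\in K$ has $\chi(x)=\alpha$ finite with $\alpha<\chi_{\inf}$, and produce an $f$‑invariant probability measure on $K$ with Lyapunov exponent $<\chi_{\inf}$, contradicting the definition of $\chi_{\inf}$. Observe first that $\alpha<\chi_{\inf}$ forces $\chi_{\inf}>0$, i.e. $f$ has the LyapHyp property; since $(f,K)\in\sA^{\BD}_+$ satisfies the weak isolation condition, \cite[Theorem~C]{PrzRiv:13} then yields $\Indiff(f)=\emptyset$. Finiteness of $\chi(x)$ means the forward orbit of $x$ avoids $\Crit(f)$, and if this orbit met $\partial\hat I_K$ infinitely often then, $\partial\hat I_K$ being finite, $x$ would be pre‑periodic with periodic part a repelling cycle in $K$, whence $\chi(x)\ge\chi_{\inf}$; thus, replacing $x$ by a forward iterate, I may assume the orbit of $x$ avoids the restricted singular set $S'=\Crit(f)\cup NO(f,K)\subset\Crit(f)\cup\partial\hat I_K$.

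Next I would apply the pull‑back construction of Subsection~\ref{pullback} — unconditional for small $R$ since $\Indiff(f)=\emptyset$ — along the backward branch through $x$ of the balls $B(f^n(x),R)$, with $n$ ranging over an infinite set $N_x$ along which $f^n(x)$ stays inside a fixed ball; let $k_{\max}(n)\le n$ be the resulting last singular time and $m_n:=n-k_{\max}(n)$. The tail component $U_n:=\Comp_x f^{-m_n}(B(f^{m_n}(x),R))$ is then mapped by $f^{m_n}$ univalently onto $B(f^{m_n}(x),R)$, missing $S'$ on the way, and — using the bounded distortion condition applied to a slightly larger ball fixed at the outset — with $\dist f^{m_n}|_{U_n}\le C_1$ uniformly in $n$.

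If $\limsup_{n\in N_x}m_n=\infty$, then along a subsequence $x$ is an $S'$‑conical point, and I would close up the orbit as in the proof of Corollary~\ref{cor:conicalupperlower} — not quoting it, since it assumes $f$ non‑exceptional, but repeating its construction, for which this hypothesis is not needed in the closing step. Concretely: strong topological transitivity (Definition~\ref{def:strtoptra}) gives a uniformly bounded $p$ and $w\in B(f^{m_{n_j}}(x),R/2)\cap K$ with $f^p(w)=x$; BaShrink places the pull‑back of a small ball about $x$ inside $B(f^{m_{n_j}}(x),R)=f^{m_{n_j}}(U_{n_j})$; and an intermediate‑value argument produces a periodic point $\pi$ whose orbit shadows that of $x$ for $m_{n_j}$ steps and then returns. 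Weak isolation gives $\cO(\pi)\subset K$, and the bound $\dist f^{m_{n_j}}|_{U_{n_j}}\le C_1$ together with $\tfrac1{m_{n_j}}\log|(f^{m_{n_j}})'(x)|\to\alpha$ yields $\chi(\pi)\le\alpha+\varepsilon+o(1)$; for $\varepsilon<\chi_{\inf}-\alpha$ and $j$ large, $\tfrac1{\#\cO(\pi)}\sum_{\cO(\pi)}\delta$ is an invariant measure of exponent $<\chi_{\inf}$, a contradiction.

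The remaining case — $m_n$ bounded on $N_x$, i.e. $x$ is \emph{not} $S'$‑conical — is, as in the passage from the holomorphic setting of~\cite{GelPrzRam:10}, where the genuinely new argument is required, and I expect it to be the main obstacle. Here the pull‑back of $B(f^n(x),R)$ along $x$ meets $S'$ only within boundedly many steps of $x$; since by BaShrink these pull‑backs contract to points, one wants to conclude that $f^m(x)$ coincides with a point of $S'$ for a bounded $m$, which — the orbit avoiding $\partial\hat I_K$ — is critical, forcing $\chi(x)=-\infty$ and contradicting finiteness. When $\overline\chi(x)>0$ one moreover has Pliss hyperbolic times (Lemma~\ref{lemma-pliss}), along which the Telescope estimate (Proposition~\ref{p.telescope}) controls the sizes of all pull‑back pieces and caps the number of critical captures at $O(\varepsilon n)$; combining this with the fact that $\chi(x)$ is an honest \emph{limit} and that no invariant measure charges $\Crit(f)$ (exponents being $\ge0$), so the orbit of $x$ visits a prescribed neighbourhood of $\Crit(f)$ only with small frequency, one aims — handling the critical captures via Lemma~\ref{l:crit} and using bounded distortion elsewhere — to build a periodic orbit in $K$ of exponent $\le\alpha+o(1)$. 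Making the contributions of the critical captures cancel correctly, so that the constructed orbit genuinely has small exponent, is the delicate point, and is precisely where strictly more than $\overline\chi(x)=\alpha$ must be exploited.
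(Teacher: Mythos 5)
Your argument splits into two cases according to whether the last singular time along the backward branch through $x$ is far from the bottom ($m_n$ unbounded) or not, and the first case — where $x$ is effectively $S'$-conical along a subsequence — is handled correctly and in essentially the same way as the paper does for conical points (shadowing by a periodic orbit via strong transitivity, BaShrink and weak isolation, with bounded distortion on the $S'$-free tail giving $\chi(\pi)\le\alpha+\varepsilon$). The genuine gap is the second case, which you yourself flag as "the main obstacle" but do not close, and the route you sketch for it does not work. Boundedness of $m_n$ does \emph{not} imply that $f^m(x)$ hits, or even converges to, $S'$: in the pull-back construction the balls are restarted at radius $R$ at each singular time, so the component containing $f^{m_n}(x)$ that meets $S'$ is a pull-back of a radius-$R$ ball over possibly very few steps; BaShrink gives no shrinking there, so $f^{m_n}(x)$ is only within a scale-$R$ interval of $S'$, and no contradiction with finiteness of $\chi(x)$ follows. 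Likewise, the assertion that "no invariant measure charges $\Crit(f)$, so the orbit of $x$ visits a prescribed neighbourhood of $\Crit(f)$ only with small frequency" is unjustified: $x$ is a single, possibly non-generic point, and nothing forces its visit frequency to small neighbourhoods of $\Crit(f)$ to be small. So for non-$S'$-conical points (which can exist even with $\overline\chi(x)>0$; Proposition~\ref{prop:wander} only says they form a set of Hausdorff dimension zero) your proof produces no shadowing periodic orbit and hence no contradiction.

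For comparison, the paper's proof does not attempt to make the final stretch $S'$-free at all. Since $\alpha<\chi_{\inf}$ forces LyapHyp, it invokes the equivalence with TCE (under weak isolation) to get, for \emph{every} $x\in K$, a positive-density sequence of times $n_j$ at which the pull-back $W_{n_j}$ of $B(f^{n_j}(x),r)$ captures at most $M$ critical points; the bounded number of captures is then absorbed in the derivative comparison \eqref{dist-est} using the Claim ($\tau$-scaled neighbourhoods survive the pull-back) and $\frac1n\log\lvert f'(f^n(x))\rvert\to0$. The genuinely new (interval-specific) step is property \eqref{a}: since $f^{n}(W_n)$ need not be the full ball, one must find, along a (possibly corrected) subsequence of TCE times, a point of $K$ well inside $f^n(W_n)$; this is done by analysing the set $\partial_{r/3}$ of endpoints of large gaps of $K$ and showing that if the orbit of $x$ spends asymptotically full density of time near $\partial_{r/3}$, then it shadows eventually periodic points of that finite set, which lets one adjust the times (and replace $M$ by $M'$) so that both TCE and \eqref{a} hold simultaneously. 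None of this mechanism — neither the use of TCE times with bounded criticality in place of $S'$-free tails, nor the large-scale-within-$K$ argument via $\partial_{r/3}$ — appears in your proposal, so the theorem is not proved for the non-conical points, which is precisely the case the theorem is about beyond what is already contained in Corollary~\ref{cor:conicalupperlower}.
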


\begin{proof}
The proof of $\chi(x)\ge 0$ under the hypothesis $\lim_{n\to\infty} \log \,\lvert f'(f^n(x))\rvert=0$ is the same as in~\cite[Lemma 6]{GelPrzRam:10}. However, to prove $\chi(x)\notin [0,\chi_{\inf})$ we cannot use complex methods as in \cite{GelPrzRam:10}, so we provide an alternative proof.

Let us suppose that $\chi(x)\ge 0$.
We can also freely assume that $x$ is not eventually periodic. Indeed, otherwise $\chi(x)=\chi(p)$ for some periodic $p$ and $\chi(p)\ge\chi_{\inf}$ because we can distribute an invariant measure on the orbit of $p$.
If $\chi_{\inf}=0$ then automatically $\chi(x)\ge \chi_{\inf}$ and the assertion holds. Thus, it remains to consider the case $\chi_{\inf}>0$. By the properties listed in Subsection \ref{ss:TCE}, we know that $(f,K)$ satisfies TCE and lacks indifferent periodic points in $K$.
\smallskip

Fix any $n=n_j$ from the sequence $(n_j)_j$ in TCE and consider the intervals
\[
	\widehat W=B(f^n(x),r)
	\quad\text{ and }\quad
	W\eqdef\frac12\diamond \widehat W=B(f^n(x),r/2),
\]	
that is, $\widehat W$ is the $1/2$-scaled neighborhood of $ W$ (recall~\eqref{notescaled}).
Given $k=0,\ldots,n$, denote by $\widehat W_k$  the pull-back of $\widehat W$ for $f^k$ which contains $f^{n-k}(x)$,
\[
	\widehat W_k\eqdef \Comp_{f^{n-k}(x)}f^{-k}(\widehat W),
\]
 and denote by $W_k$ the corresponding pull-back of $W$. Denote by
\[
	\widehat W_k\setminus W_k=L_k\cup R_k
\]	
the union of the left and right interval (margin) in this difference.

We shall now prove  the following fact.

\begin{claim}
	There exists a number $\tau>0$ (which depends only on $f$, but not on $x,n$) such that each $\widehat W_k$ contains an $\tau$-scaled neighborhood of $W_k$.
\end{claim}	
Compare for example \cite[Lemma 1.4]{Prz:98} in the complex case.

\begin{proof}
	Recall the choice of $\delta$ in~\eqref{disttt}. Assume that in TCE the number $r$ is small enough so that in BaShrink all pull-backs of $\widehat W$ for iterates of $f$ have lengths less than $\delta$.
	
For $k\in\{0,\ldots,n\}$ denote by $\tau_k$ the maximal number, for which the claim still holds. By construction, $\tau_0 \geq 1/2$.

Let $0\le k_1<\ldots<k_N\le n-1$  be the subsequence of all consecutive integers $k$ such that $\widehat W_{k}\cap\Crit(f)\ne\emptyset$. By TCE, we have $N\le M$. Note that $M$  depends neither on $x$ nor on $n$.

Let us divide the interval $\{0,\ldots,n-1\}$ into critical times $\{k_i\}$ and intercritical intervals $\{k_i+1,\ldots, k_{i+1}-1\}$, $i=1,\ldots,N-1$. By \eqref{e:slowl}, at any critical time the number $\tau_j$ can decrease at most by a constant factor. By \eqref{disttt}, in an intercritical interval the number $\tau_j$ can decrease at most by a constant factor. As the number of critical times and intercritical intervals are bounded by $M$ and $M+1$, respectively,   the claim follows.
\end{proof}

Notice that $f^n(\partial W_n)\subset \partial W$ (it can happen that both ends of $W_n$ are mapped to one end of $W$). Since $f^n( W_n)$ contains also the point $f^n(x)$, we get $\lvert f^n(W_n)\rvert \ge r/2$.

We will show in the following that there is a constant $a\in(0,1)$ such that
\begin{equation}\label{a}
	a\diamond(f^n(W_n))\cap K \not=\emptyset\,.
\end{equation}
Observe that this condition says that there is a point belonging to K which is ``well inside'' $f^n(W_n)$.
We will show this fact not for all $n_j$ but for some subsequence, slightly corrected if necessary, satisfying \eqref{TCE} (with the same $r$, but the constant $M$ may be altered), and else assuming that $x$ is not pre-periodic.  But let us postpone the proof for a while.

\medskip
\noindent\textbf{1. Proof $\chi(x)\ge \chi_{\rm inf}$ assuming~\eqref{a}.}

\smallskip
\noindent\textbf{1.a) Capture a shadowing periodic orbit.}
Let $a'=(a+1)/2$. Pick some $z\in a\diamond(f^n(W_n))\cap K$.
Recall the function $m_c$ in Definition~\ref{def:strtoptra}. There exists some $m\in\{0,\ldots, m_{c}((a'-a)r/4))\}$ and $y\in f^{-m}(x)$ such that $\lvert y-z\rvert < (a'-a)r/4$. Hence, as $\lvert f^n(W_n)\rvert \ge r/2$, we have $y\in a'\diamond(f^n(W_n))$.

Hence, due to BaShrink,  the pull-back interval $W_{n+m}$ of $W_n$ for $f^m$ which contains $y$, has length smaller than $(1-a')r/4$, for $n$ large enough. Hence, it is strictly contained in $f^n(W_n)$. Therefore, there exists a periodic point $p\in W_{n+m}$ of period $n+m$. By the weak isolation condition, $p\in K$.

\begin{figure}
\begin{minipage}[c]{\linewidth}
\centering
\begin{overpic}[scale=.60]{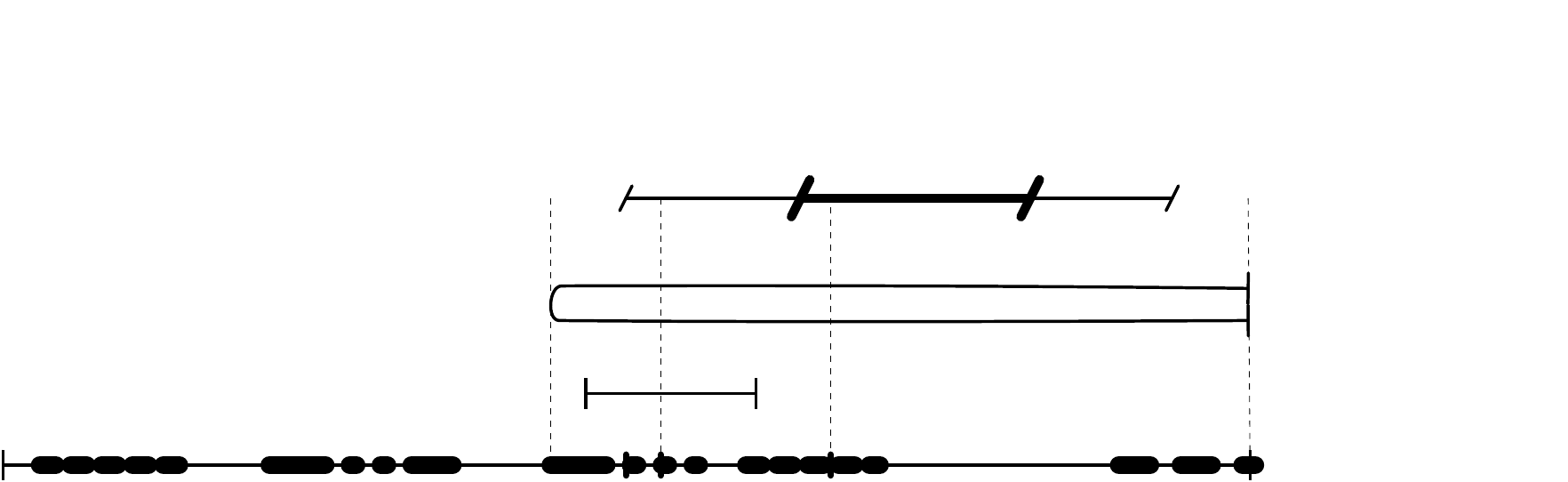}
        \put(3,-4){$K$}
        \put(51,21){$\overbrace{\hspace{1.5cm}}^{} $}
        \put(31,-4){\tiny $f^n(x)$}
        \put(41.5,-4){\tiny $y$}
        \put(52.5,-4){\tiny $z$}
        \put(90,10){\small$f^n(W_n)$}
        \put(90,5){\small$W_{n+m}$}
        \put(90,17){\small$a'\diamond f^n(W_n)$}
        \put(52,26){$a\diamond f^n(W_n)$}
 \end{overpic}
\caption{}
\label{large-scale}
\end{minipage}
\end{figure}

\smallskip
\noindent\textbf{1.b) Comparison of derivatives.}
Let us now estimate $|(f^{n+m})'(p)|$  from above.  Denote by $\Lip$ the Lipschitz constant of $f|_K$.
First observe  existence and finiteness of $\chi(x)$ imply that $ \lim_{n\to\infty}\frac1n \log |f'(f^n(x))|=0$. Hence, given $\xi>0$, for large enough $n$ we have $\lvert (f^n)'(x)\rvert\le \exp(\xi Nn)$.
For $n$ large enough, we have
\begin{multline*}
	|(f^{n+m})'(p)|=\\
	|(f^{m})'(p)|\cdot \prod_{i=1}^N |(f^{k_{i}-k_{i-1}-1})'(f^{m+n-k_i+1}(p))
	\cdot	\prod_{i=1}^N |f'(f^{m+n-k_i}(p)| \,.
\end{multline*}
Let now $\tau$ be as in the Claim and $C(\tau)$ as in~\eqref{disttt}. We can further estimate
\begin{equation}\label{dist-est}\begin{split}
|(f^{n+m})'(p)|&\le
\Lip^{m+N} C(\tau)^N  \cdot \prod_{i=1}^N |(f^{k_{i}-k_{i-1}-1})'(f^{n-k_i+1}(x))|          \\
&\le \Lip^{m+N} C(\tau)^N  \cdot |(f^n)'(x)|\exp (\xi N n)\,.
\end{split}\end{equation}
We conclude
$$
\frac1{n+m}\log |(f^{n+m})'(p)|\le N\xi + \lim_{n\to\infty} \frac1n \log |(f^n)'(x)|
	+O(\frac1n)\,.
$$
Considering the $f$-invariant probability measure
\begin{equation}\label{equidistribute}
\mu_{n,\xi}
:=\frac{1}{n+m}\sum_{k=0}^{n+m-1} \delta_{f^k(p)},
\end{equation}
its Lyapunov exponent is $\chi(p)$.
Hence, letting $\xi\to 0$ and $n\to\infty$, we obtain a sequence of measures $\mu_{n,\xi}$ such that $\inf_{n,\xi}\chi(\mu_{n,\xi})\le \chi(x)$.
This proves the assertion  assuming that~\eqref{a} holds.

\medskip

What remains to prove is~\eqref{a}.
\medskip

\noindent
\textbf{2. Proof of~\eqref{a}, changing $n$ and $M$ if necessary.}
Now let us prove the postponed (\ref{a}). We shall specify the constant $a$ later on.
For any $\rho>0$ define
\[
	\partial_\rho
	:=\{z\in K\colon (z,z+\rho)\cap K=\emptyset \text{ or }
			(z-\rho,z)\cap K=\emptyset\}\,.
\]	

Considering $a>1/2$, set $b=2a-1$ (hence $1-b=2(1-a)$) and $\rho=br/2$. Assume first that
\begin{equation}\label{ball}
f^n(x)\notin B(\partial_\rho, (1-b)r/4).
\end{equation}
In words this means that $f^n(x)\in K$ is not closer than $(1-b)r/4$ to a ``big hole in K" which is of length at least $\rho$. Then $K$ intersects both intervals
$R=b\diamond (f^n(x),f^n(x)+ r/2)$ and
$L=b \diamond (f^n(x)-r/2,f^n(x))$.
Since $f^n(W_n)$ covers either $(f^n(x),f^n(x)+ r/2)$ or $(f^n(x)-r/2,f^n(x))$, then (\ref{a})
follows. Indeed, notice that $(f^n(x), f^n(x)+r/2)$ is a $(1-b)r/4$-scaled neighborhood of $R$ (analogously for $L$), so
$f^n(W_n)$ contains a $(1-a)r/4$-scaled neighborhood of $R$ (or $L$).

\medskip

Notice that if $0<r_1<r_2$ then $\partial_{r_2}\subset \partial_{r_1}$.
If $r/3\le \rho$ and thus $b\ge 2/3$, then~\eqref{ball} and thus~\eqref{a}
  hold for all $n$ such that
\begin{equation}\label{ball2}
f^n(x)\notin B(\partial_{r/3}, (1-b)r/4)=:B.
\end{equation}
Notice that our subscript at $\partial$ is independent of $b$ now.
Notice that the set $\partial_{r/3}$ is finite, with the number of elements depending only on $r$. This set can contain periodic points, which must be hyperbolic repelling. Let $\theta>0$ be a constant such that for each periodic point $q\in\partial_{r/3}$ with period $m(q)$, for every $y\in B(q,\theta)$ we have $|(f^{m(q)})'(y)|>1$.

\medskip

Let us consider the upper density
\[
	\overline d:=\limsup_{m\to\infty} \frac1m\#\{0< k \le m\colon f^k(x)\notin B\}\,.
\]	

\medskip
\noindent
\textbf{2.a) The case  $\overline d>0$.}
Taking $P$ in the definition of TCE small enough that $1/P + \overline d>1$, possibly passing to a subsequence, we can assume that  there is a strictly increasing sequence $(n_j)_j$ such that TCE and~\eqref{ball2} hold. Hence, this shows~\eqref{a} and finishes the proof of Theorem \ref{chi} in this case.

\medskip
\noindent
\textbf{2.b) The case  $\overline d=0$.}
Set
\[
	\alpha:={\rm{dist}}( f(\partial_{r/3})\setminus \partial_{r/3}, \partial_{r/3})>0\,.
\]	
Then for each $z\in \partial_{r/3}\setminus f^{-1}(\partial_{r/3})$
\begin{equation}
	f\big(B(z,\Lip^{-1}\alpha/2)\big)\cap B(\partial_{r/3},\alpha/2)=\emptyset\,.
\end{equation}
Recall that $\Lip$ denotes the Lipschitz constant of $f|_K$.
Denote
\[
	\Theta:=\min\{|w-w'|\colon w\not=w', \, w,w'\in  \partial_{r/3}\}\,.
\]	
We set
\[
	\alpha':=\min\Big\{\frac{\alpha}{2\Lip}, \frac\Theta2, \frac\theta2\Big\}\,.
\]	
Let $b$ be defined by $(1-b)r/4=\alpha'$. This in turn eventually specifies $a$.
Finally define, recalling the definition of $\varkappa(\cdot)$ in BaShrink,
\begin{equation}\label{M'}
	M':= M + \#\partial_{r/3}+\varkappa(2r,\alpha').
\end{equation}

Recall again that $f^{k}(x)\notin B$ implies (\ref{ball2}) and hence (\ref{a}). As we have $\overline d=0$, for all $m$ large enough an arbitrarily large proportion of $\{1,2,\ldots,m\}$ is covered by  blocks $A_j$ of consecutive integers $k$ so that $f^k(x)\in B(z_k,\alpha')$ for an apropriate $z_k\in \partial_{r/3}$, such
$z_k$ is unique by the choice of $\alpha'$. Moreover, it also follows from the choice of $\alpha'$ that for integers $k,k+1$ belonging to one block $ A_j$ we have
\begin{equation}\label{end-orbit}
f(z_k)=z_{k+1}.
\end{equation}
Hence for each block $A_j=\{\ell_j,\ell_j+1,\ldots,\ell'_j\}$, if $\#A_j>\#\partial_{r/3}$ then the point $z_{\ell_j}$ is eventually periodic, namely there exists a periodic point $q\in \partial_{r/3}$ with period not greater than $\#A_j-1$, which we denote by $t_j$.\footnote{Notice that we can assume that $z_{\ell_j}\in f^{\ell_j}(W_{\ell_j})$. Otherwise $f^{\ell_j}(W_{\ell_j})$ would cover the other side of $f^{\ell_j}(x)$ than the one containing $z_{\ell_j}$ and hence~\eqref{a} would hold immediately.
However we will not use this observation and assumption later on.}

Note that we can exclude the case $f^{\ell_j}(x)=z_{\ell_j}$, because then $x$ would be eventually periodic (we considered this case in the beginning of the proof of Theorem~\ref{chi}).
Increasing $m$ if necessary, we can assume that the last block ends at $m$ and that all the  blocks $A_j$ have been chosen maximal, that is, $f^{\ell'_j+1}(x)\notin B$ for all $j$. Observe that each block must indeed be finite by definition of $\alpha'$.

So (\ref{ball2}) and hence (\ref{a}) holds for $n_j=\ell'_j+1$ for each $j$. What remains to be checked is that this  number satisfies also (\ref{TCE}) in TCE.

We will prove that~\eqref{TCE} holds for $n_j=\ell'_j+1$ with the constant $M'$, provided it holds for some $m_j \in A_j$ with the constant $M$.
Indeed,  the pull-backs $\Comp_{f^k(x)} f^{-(\ell'_j+1-k)}  B(f^{\ell'_j+1}(x),r)$ have lengths smaller than $\alpha'$ for all $k\ge \varkappa(2r,\alpha')$ by the definition of the latter constant in BaShrink, and hence their consecutive pull-backs do not capture critical points for $\ell_j+ t_j\le k \le \ell'_j-\varkappa(2r,\alpha')$;
they can capture critical points only for $k\in\{m_j,\ldots, m_j + t_j-1\}$ and the number of such $k$'s is bounded by $\#\partial_{r/3}$.
In conclusion, when passing from $m_j$ to $\ell'_j+1$, TCE still holds with $M$ replaced by $M'$ defined in \eqref{M'}.

Finally recall that the union of the blocks $A_j$ covers an arbitrarily large proportion of $\{0,\ldots,m\}$ for $m$ large enough,
hence there exist arbitrarily large $m_j$ belonging to some $A_j$ satisfying also TCE, hence there exist arbitrarily large $n_j=\ell'_j+1$ satisfying TCE with $M'$, and simultaneously satisfying (\ref{a}).
\smallskip

This finishes the proof of the theorem.
\end{proof}

\section{Strong upper bound for irregular part}\label{strong}

We now shall prove Theorem \ref{main3} and Theorem~\ref{strangestrange} from Subsection~\ref{main}. We start with three easy technical lemmas.

\begin{lemma}\label{geometric1}
	Let $\Phi\colon\R_+\to \R$ be a continuous function, and $L>0$ such that for each $0\le t_1<t_2$ we have $\Phi(t_2)-\Phi(t_1)\le L(t_2-t_1)$. Let $0\le\sigma\le q_1<q_2$ be real numbers. Let $C:=L^{-1}(q_2-q_1)$ and denote
\[	
	\Phi^\sigma(t)\eqdef \sup_{s<t} \left(\Phi(s)+\sigma (t-s)\right)
\]
and
\[
	H_{\R}:=\{x\in \R_+\colon \Phi(x)=\Phi^\sigma(x)\}
	\quad \text{and}\quad
	H_{\N}:=\{n\in \N\colon \Phi(n)=\Phi^\sigma(n)\}\,.
\]
Let $\tau_1<\tau_2$ be positive numbers  satisfying $\Phi^\sigma(\tau_1)=q_1\tau_1$, $\Phi^\sigma(\tau_2)=q_2\tau_2$.
 Then, with $\ell$ denoting the length measure, we have
\begin{equation}\label{denseR}
\frac{1}{\tau_2}\ell(H_{\R}\cap [\tau_1,\tau_2]) \ge C\,.
\end{equation}
If $\Phi$ is affine between each pair of consecutive integers and $\tau_2$ is an integer then
\begin{equation}\label{denseN}
\frac{1}{\tau_2}\#(H_{\N}\cap [\tau_1,\tau_2]) \ge C.
\end{equation}
\end{lemma}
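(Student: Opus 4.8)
The plan is to read this as a quantitative Pliss lemma and run the standard ``running maximum'' argument. Introduce the auxiliary function $\psi(t)\eqdef\Phi(t)-\sigma t$ and its running maximum $m(t)\eqdef\max_{s\in[0,t]}\psi(s)$, which is well defined and continuous because $\Phi$, hence $\psi$, is continuous. Since
\[
\Phi^\sigma(t)=\sigma t+\sup_{0\le s<t}\psi(s)=\sigma t+m(t),
\]
the last equality holding by continuity of $\psi$, one gets $H_{\R}=\{x:\psi(x)=m(x)\}$, the set of points at which $\psi$ attains a new running record, and similarly $H_{\N}$ is the set of such integer records. The hypotheses $\Phi^\sigma(\tau_i)=q_i\tau_i$ become $m(\tau_i)=(q_i-\sigma)\tau_i$ for $i=1,2$, and since $q_1\ge\sigma$ and $0<\tau_1<\tau_2$ this gives the basic inequality
\[
m(\tau_2)-m(\tau_1)=(q_2-\sigma)\tau_2-(q_1-\sigma)\tau_1\ge (q_2-\sigma)\tau_2-(q_1-\sigma)\tau_2=(q_2-q_1)\tau_2.
\]

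Next I would record two properties of $m$. First, $m$ is nondecreasing and $L$-Lipschitz: for $x<y$ the increment bound on $\Phi$ yields $\psi(s)\le\psi(x)+(L-\sigma)(s-x)\le m(x)+(L-\sigma)^+(y-x)$ for all $s\in[x,y]$, where $(\cdot)^+$ denotes positive part, hence $0\le m(y)-m(x)\le(L-\sigma)^+(y-x)\le L(y-x)$ because $\sigma\ge0$; in particular $m$ is absolutely continuous with $0\le m'\le L$ a.e. Second, $m$ is locally constant on the open set $\R_+\setminus H_{\R}$: if $a<b$ and $m(b)>m(a)$, then $\max_{[0,b]}\psi$ is attained at some $c\in(a,b]$ (it cannot be attained on $[0,a]$, where $\psi\le m(a)<m(b)$), and there $\psi(c)=m(b)\ge m(c)\ge\psi(c)$ forces $c\in H_{\R}$; so no subinterval of $\R_+\setminus H_{\R}$ can support an increase of $m$, and therefore $m'=0$ a.e. on $[\tau_1,\tau_2]\setminus H_{\R}$.

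With this in hand, \eqref{denseR} is immediate:
\[
(q_2-q_1)\tau_2\le m(\tau_2)-m(\tau_1)=\int_{\tau_1}^{\tau_2}m'(x)\,dx=\int_{H_{\R}\cap[\tau_1,\tau_2]}m'(x)\,dx\le L\,\ell\big(H_{\R}\cap[\tau_1,\tau_2]\big),
\]
and dividing by $L\tau_2$ gives $\tau_2^{-1}\ell(H_{\R}\cap[\tau_1,\tau_2])\ge L^{-1}(q_2-q_1)=C$. For \eqref{denseN} I would use that when $\Phi$ is affine on each $[k,k+1]$ the function $\psi$ is piecewise affine, so $\max_{[0,n]}\psi$ is attained at an integer and $m(n)=\max_{0\le k\le n}\psi(k)$; moreover $m(n+1)-m(n)\in[0,L]$ (the same increment bound over a single unit step) and $m(n+1)>m(n)$ forces $\psi(n+1)=m(n+1)$, i.e.\ $n+1\in H_{\N}$. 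Telescoping over integers $n$ from $n_0\eqdef\lfloor\tau_1\rfloor$ up to $\tau_2-1$ (recall $\tau_2\in\N$) and using $m(n_0)\le m(\tau_1)=(q_1-\sigma)\tau_1$, which preserves the lower bound $m(\tau_2)-m(n_0)\ge(q_2-q_1)\tau_2$ exactly as before, one obtains
\[
(q_2-q_1)\tau_2\le m(\tau_2)-m(n_0)\le L\cdot\#\{k\in(n_0,\tau_2]\cap\N:\ k\in H_{\N}\}\le L\cdot\#\big(H_{\N}\cap[\tau_1,\tau_2]\big),
\]
where the last inclusion uses $\lfloor\tau_1\rfloor+1\ge\tau_1$; dividing by $L\tau_2$ proves \eqref{denseN}.

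The only steps that are not pure bookkeeping are the local-constancy property of $m$ (which is exactly where the ``record'' structure enters) and, in the discrete case, the choice of $n_0=\lfloor\tau_1\rfloor$ rather than $\lceil\tau_1\rceil$ — taking the ceiling would cost an additive $O(1)$ and break the sharp bound $\ge C$. One should also note that the degenerate case $L\le\sigma$ does not arise, since it would force $m$ constant and hence $(q_2-q_1)\tau_2\le0$, contradicting $q_1<q_2$ and $\tau_2>0$; in any event the inequality $0\le m'\le L$ a.e.\ on which the argument relies holds unconditionally.
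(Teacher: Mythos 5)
Your proof is correct and is essentially the paper's argument in different clothing: your running maximum $m(t)=\Phi^\sigma(t)-\sigma t$ being constant off $H_{\R}$ and $L$-Lipschitz is exactly the paper's observation that $\Phi^\sigma$ has slope $\sigma$ off $H_{\R}$ and Lipschitz constant $L$, and your unit-interval telescoping (an increase of $m$ on $[n,n+1]$ forces $n+1\in H_{\N}$) is the paper's remark that $t\in H_{\R}$ implies $\lceil t\rceil\in H_{\N}$. The differences are only presentational, e.g.\ your explicit a.e.\ differentiation/FTC step and the floor-versus-ceiling bookkeeping.
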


\begin{proof}
Notice that the function $\Phi^\sigma$ is affine on certain open intervals, on each such interval its graph has slope $\sigma$, and these intervals are all in the complement of $H_\bR$.
Since by our assumption $\sigma<L$,  the function $\Phi^\sigma$ is monotone increasing Lipschitz continuous with Lipschitz constant $L$. Hence
\[
	\Phi^\sigma(\tau_2) - \Phi^\sigma(\tau_1)
	\le L \cdot \ell \big(H_{\R}\cap [\tau_1,\tau_2]\big)
		+ \sigma \cdot \ell \big([\tau_1,\tau_2]\setminus H_{\R}\big).
\]
On the other hand
\[
	\Phi^\sigma(\tau_2) - \Phi^\sigma(\tau_1)
	= q_2\tau_2 - q_1\tau_1 = (q_2 - q_1)\tau_2 + q_1 (\tau_2 - \tau_1).
\]
From $\sigma\le q_1$ we get
\[
 	(q_2 - q_1)\tau_2 \le L \cdot \ell \big(H_{\R}\cap [\tau_1,\tau_2]\big)
\]
 and (\ref{denseR}) follows.

Finally notice that if $\Phi$ is affine between each two consecutive integers, then $t\in H_{\R}$ implies $\lceil t\rceil\in H_{\R}$ where $\lceil t\rceil$ denotes the smallest  integer $\ge t$.
Hence $\#(H_{\N}\cap [\tau_1,\tau_2]$ for integer $\tau_2$ is not less than the number of intervals of the form $(m,m+1]$ intersecting $H_{\R}\cap [\tau_1,\tau_2]$. Hence
\[
	\#(H_{\N}\cap [\tau_1,\tau_2])
	\ge \ell (H_{\R}\cap (\tau_1,\tau_2))\ge L^{-1}(q_2-q_1)\tau_2\,,
\]
which proves~\eqref{denseN}.
\end{proof}

Below we will apply this lemma to numbers $0\le \sigma \le q_1<q_2<L$ so that
\[
	\Phi^\sigma(\tau_1) = q_1 \tau_1,\quad
	q_1 t \le \Phi^\sigma(t) \le q_2t \quad\forall t\in [\tau_1,\tau_2],\quad
	\Phi^\sigma(\tau_2) = q_2 \tau_2\,.
\]	
We call such an interval $[\tau_1,\tau_2]$ a $(q_1,q_2)$-{\it interval} for $\Phi^\sigma$. If it is the minimal interval with this property, we call it a \emph{$(q_1,q_2)$-crossing interval} (compare Figure~\ref{crossing}).

\begin{figure}
\begin{minipage}[c]{\linewidth}
\centering
\begin{overpic}[scale=.5]{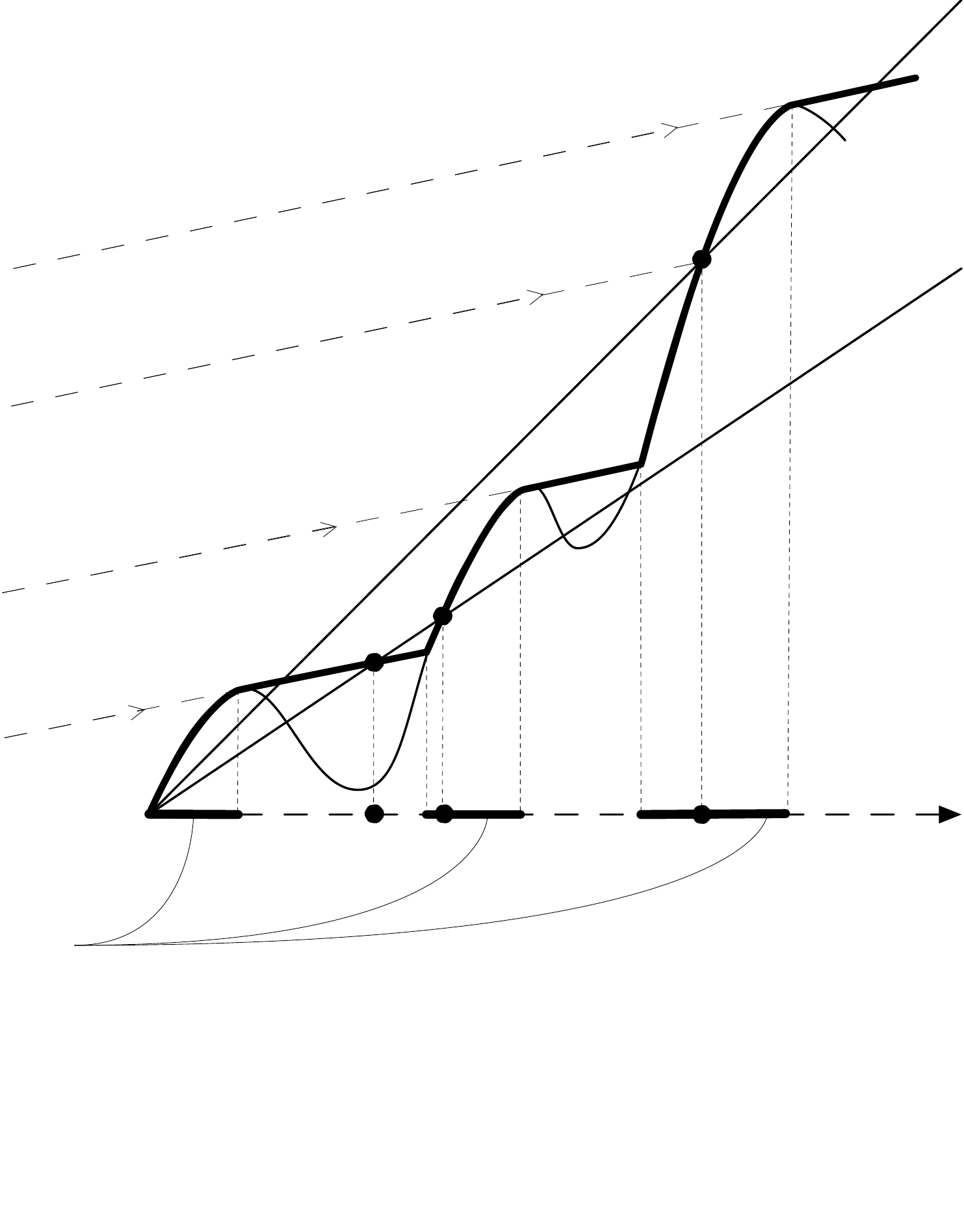}
        \put(56,30.5){\small $\tau_2$}
        \put(35,30.5){\small $\tau_1$}
        \put(29,30.5){\small $\tau_1'$}
        \put(76,30.5){\small $t$}
        \put(80,99.5){\small $q_2$}
        \put(80,78){\small $q_1$}
        \put(76,93){\small $\Phi^\sigma$}
        \put(69,87){\small $\Phi$}
        \put(1,80){\small $\sigma$}
        \put(1,69){\small $\sigma$}
        \put(1,54){\small $\sigma$}
        \put(1,42){\small $\sigma$}
        \put(0,22){$H_{\mathbb R}$}
        \put(36,22){$\underbrace{\hspace{2cm}}^{}$}
        \put(37,17){$(q_1,q_2)$-crossing interval}
        \put(30.5,10){$\underbrace{\hspace{2.6cm}}^{}$}
        \put(40,5){$(q_1,q_2)$-interval}
 \end{overpic}
\caption{$(q_1,q_2)$- and crossing intervals}
\label{crossing}
\end{minipage}
\end{figure}

\begin{lemma}
\label{lemmaalphasharp}
	Let $\Phi, L$ be as in Lemma~\ref{geometric1}, $\beta=\limsup_{t\to\infty}\Phi(t)/t$, and $\alpha=\liminf_{t\to\infty}\Phi(t)/t$ (we allow $\alpha$ to be negative). Let
\[
L_0 := \limsup_{N\to \infty} \sup_{k} \frac 1N \lvert\Phi(k+N)-\Phi(k)\rvert.
\]
Then for $\sigma\in[0,\beta)$ we have
\begin{equation}
	\liminf_{t\to\infty}\frac{\Phi^\sigma(t)}{t}
	\le \frac{\beta+\sigma(\beta-\alpha)/(L_0-\sigma)}{1+(\beta-\alpha)/(L_0-\sigma)}.
\end{equation}
In particular, for $\sigma=0$ we have
\[
	\liminf_{t\to\infty}\frac{\Phi^0(t)}{t}
	\le \frac{\beta}{1+(\beta-\alpha)/L_0}\,.
\]	
\end{lemma}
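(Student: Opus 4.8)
The plan is to evaluate $\Phi^\sigma$ not at the ``valleys'' of $\Phi$ (where $\Phi^\sigma$ may still be large, having been pulled up by an earlier peak) but at the later ``recovery times'', where the ratio $\Phi^\sigma(t)/t$ has already decayed to roughly the claimed value. First I would dispose of trivialities. Since $\Phi^\sigma$ is non-decreasing, $\Phi^\sigma(t)\ge\Phi(0)+\sigma t$, and for $t$ large $\Phi(t)\le(\beta+\epsilon)t$ forces $\Phi^\sigma(t)\le(\beta+\epsilon)t$ (when $\sigma<\beta$ the function $s\mapsto\Phi(s)+\sigma(t-s)$ is asymptotically maximized near $s=t$), we get $\sigma\le\gamma:=\liminf_{t\to\infty}\Phi^\sigma(t)/t\le\beta$; this already settles $\alpha=\beta$, where the right-hand side equals $\beta$, and also $L_0=\infty$, where $\rho=0$ and the bound is again $\beta$. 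So assume $\alpha<\beta$, $L_0<\infty$ and $\gamma>\sigma$; note $L_0\ge\beta>\sigma$ (test the window definition with $k=0$ and use continuity of $\Phi$), so $L_0-\sigma$ and $\rho:=(\beta-\alpha)/(L_0-\sigma)$ are positive, and elementary algebra gives
\[
q^\ast:=\frac{\beta+\sigma\rho}{1+\rho}
 =\frac{\beta L_0-\sigma\alpha}{L_0+\beta-\sigma-\alpha}
 =\sigma+\frac{(\beta-\sigma)(L_0-\sigma)}{(\beta-\sigma)+(L_0-\alpha)},
\qquad \max\{\sigma,\alpha\}\le q^\ast\le\beta .
\]

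Fix $\epsilon>0$, choose $N_0$ with $|\Phi(k+N)-\Phi(k)|\le(L_0+\epsilon)N$ for all $N\ge N_0$, $k\ge0$, and $t_0$ with $\Phi(t)\le(\beta+\epsilon)t$ for $t\ge t_0$. Since $\liminf\Phi(t)/t=\alpha$, pick an arbitrarily large valley $v$ with $\Phi(v)\le(\alpha+\epsilon)v$. Write $g(t):=\Phi(t)-\sigma t$ with running maximum $g^\ast$; since $\limsup g(t)/t=\beta-\sigma>0$ we have $g^\ast(t)>0$ for $t$ large, and $\Phi^\sigma(t)=\sigma t+g^\ast(t)$. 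Let $w\le v$ realize $g^\ast(v)$, put $P:=g(w)=g^\ast(v)>0$, and let $t'>v$ be the first time $g$ exceeds $P$ (finite, since $g$ is unbounded). On $[v,t']$ the supremum defining $\Phi^\sigma$ is still attained at $w$, so $\Phi^\sigma(t)=\sigma t+P$ there; hence $\Phi^\sigma(t)/t=\sigma+P/t$ is decreasing and $\Phi^\sigma(t')/t'=\sigma+P/t'$. By continuity $g(t')=P$, i.e. $\Phi(t')=\sigma t'+P$, so if $t'-v\ge N_0$ the window bound yields $P+\sigma t'-\Phi(v)=\Phi(t')-\Phi(v)\le(L_0+\epsilon)(t'-v)$, whence
\[
t'\ \ge\ \frac{P-\Phi(v)+(L_0+\epsilon)v}{L_0+\epsilon-\sigma}\ \ge\ \frac{P+(L_0-\alpha)v}{L_0+\epsilon-\sigma}.
\]
Using $0<P=\Phi(w)-\sigma w\le(\beta-\sigma+\epsilon)w\le(\beta-\sigma+\epsilon)v$ (valid when $w\ge t_0$) and that $P\mapsto(L_0+\epsilon-\sigma)P/(P+(L_0-\alpha)v)$ is increasing, we obtain
\[
\frac{\Phi^\sigma(t')}{t'}=\sigma+\frac{P}{t'}\ \le\ \sigma+\frac{(\beta-\sigma+\epsilon)(L_0+\epsilon-\sigma)}{(\beta-\sigma+\epsilon)+(L_0-\alpha)},
\]
which tends to $q^\ast$ as $\epsilon\to0$. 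Since the times $t'$ so produced are arbitrarily large, $\gamma\le q^\ast$; specializing $\sigma=0$ gives the displayed ``in particular'' case.

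It then remains to absorb the two degenerate possibilities that were excluded above. If $w<t_0$, then $P$ is bounded by a constant, so $\Phi^\sigma(t')/t'=\sigma+P/t'\le\sigma+o_v(1)$. If instead $t'-v<N_0$, the one-sided bound $\Phi(t')-\Phi(v)\le L(t'-v)<LN_0$ gives $P\le LN_0+(\alpha+\epsilon-\sigma)v$, and since $t'>v$ this yields $\Phi^\sigma(t')/t'=\sigma+P/t'\le\alpha+\epsilon+o_v(1)$ (and $\le\sigma+o_v(1)$ if $\sigma\ge\alpha+\epsilon$). Because $\max\{\sigma,\alpha\}\le q^\ast$, in every case the bound is $\le q^\ast+2\epsilon$ for $v$ large; hence for each $\epsilon>0$ there are arbitrarily large $t$ with $\Phi^\sigma(t)/t\le q^\ast+2\epsilon$, and $\epsilon\to0$ finishes the proof. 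The main obstacle I anticipate is not any single estimate but this last bookkeeping: one must be disciplined about evaluating $\Phi^\sigma$ at the recovery time rather than at the valley, verify that the plateau of $\Phi^\sigma$ on $[v,t']$ really has slope only $\sigma$, and keep clean track of the regimes where the relevant quantities stay bounded or where $\Phi$ recovers within a fixed number of steps — these give the weaker but sufficient bounds $\gamma\le\sigma$ or $\gamma\le\alpha$.
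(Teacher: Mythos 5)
Your proof is correct and is essentially the paper's argument: both evaluate $\Phi^\sigma$ at the right endpoint of the slope-$\sigma$ plateau containing a valley $v$ (your recovery time $t'$ is the paper's $\ell_i$, your $w$ its $k_i$), and both combine the bounds $\Phi(w)\le(\beta+\epsilon)w$, $\Phi(v)\le(\alpha+\epsilon)v$, the window constant $L_0+\epsilon$ on $[v,t']$, and the same monotonicity/algebra to reach $q^\ast$. The only difference is organizational: you treat the regimes $w<t_0$ and $t'-v<N_0$ as explicit degenerate cases (bounded by $\sigma$ resp.\ $\max\{\sigma,\alpha\}\le q^\ast$), where the paper instead reduces via ``otherwise the assertion follows'' and a ratio estimate using the global Lipschitz constant $L$ — a slightly more careful bookkeeping of the same proof.
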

Applying this lemma we shall consider $\sigma>0$ arbitrarily close to 0, so we will be allowed to use the latter formula.

\begin{proof}
We always have
\begin{equation} \label{eqn:help}
\alpha < \alpha^\sharp(\sigma) := \frac {\beta+\sigma(\beta-\alpha)/(L_0-\sigma)} {1+(\beta-\alpha)/(L_0-\sigma)}.
\end{equation}
Fix a small $\varepsilon < \min\{\beta - \sigma, (\alpha^\sharp(\sigma)-\alpha)/2\}$.
Let $(n_i)_i$ be a sequence of times for which $\Phi(n_i) \leq (\alpha + \varepsilon)n_i$. We can freely assume (passing to a subsequence if necessarily) that $\Phi^\sigma(n_i) \geq \Phi(n_i)$ for every $i$, otherwise the assertion follows immediately. Hence, each $n_i$ lies on plateaux of $\Phi^\sigma$, denote by $k_i$ and $\ell_i$ the beginning and the end of this plateaux. We have
\[
	\Phi(k_i) + (\ell_i - k_i)\sigma = \Phi(\ell_i)
\]
and, for $i$ large enough,
\[
	\Phi(k_i) \leq (\beta + \varepsilon) k_i\,.
\]

We can freely assume that $\Phi^\sigma(\ell_i) \geq (\alpha^\sharp(\sigma) - \varepsilon)\ell_i$, otherwise the assertion follows. As
\[
	\Phi(\ell_i) \leq \Phi(n_i) + (\ell_i-n_i)L,
\]
this implies that
\[
	\frac {\ell_i}  {n_i} \geq
	\frac {L - \alpha - \varepsilon} {L - \alpha^\sharp(\sigma)+\varepsilon} >1,
\]
and hence for $i$ large enough
\[
	\Phi(\ell_i) \leq
	\Phi(n_i) + (L_0 + \varepsilon)(\ell_i-n_i) \leq
	(\alpha + \varepsilon)k_i + L_0(\ell_i-k_i).
\]
Hence,
\[
	\frac {\ell_i} {k_i}
	\geq 1+ \frac 1 {L_0+\varepsilon-\sigma}
		\left(\frac {\Phi(k_i)} {k_i} - (\alpha+\varepsilon)\right)
\]
and
\[
	\frac {\Phi(\ell_i)} {k_i} - \sigma \frac  {\ell_i} {k_i} = \frac {\Phi(k_i)} {k_i} - \sigma.
\]

Dividing side by side we get

\[
\frac {\Phi(\ell_i)} {\ell_i} - \sigma \leq \frac {\left(\frac {\Phi(k_i)} {k_i} - \sigma\right)(L_0+\varepsilon-\sigma)} {\frac {\Phi(k_i)} {k_i} + L_0 - \sigma - \alpha}.
\]
The function $x\to (x-\sigma)/(x-\sigma + L_0-\alpha)$ is increasing. As $\Phi(k_i)/k_i \leq \beta + \varepsilon$,

\[
\frac {\Phi(\ell_i)} {\ell_i} \leq \sigma + \frac {(\beta + \varepsilon - \sigma)(L_0+\varepsilon-\sigma)} {L_0-\sigma+\beta - \alpha+\varepsilon}
\]
Passing with $\varepsilon$ to 0 we get the assertion.
\end{proof}

In the remaining section we consider the following piecewise affine function
\begin{equation}\label{def:Phii}
	\Phi(t):=\begin{cases}
		\log\,\lvert (f^\ell)'(x)\rvert&\text{ if }t=\ell\in \bN,\\
		(\ell+1-t)\cdot\Phi(\ell)+ (t-\ell)\cdot \Phi(\ell+1)
		& \text{ if }\ell<t<\ell+1.
		\end{cases}
\end{equation}
Observe that by~\cite[Proposition 4.7]{PrzRiv:13} when considering the orbit of a point $x$ and the above defined function then we have $L_0\le\chi_{\sup}$.
Hence, in the following for this function we let
\[
	\alpha^\sharp=\alpha^\sharp(\beta) := \frac{\beta}{1+(\beta-\alpha)/\chi_{\sup}}\,.
\]

\begin{figure}
\begin{minipage}[c]{\linewidth}
\centering
\begin{overpic}[scale=.55]{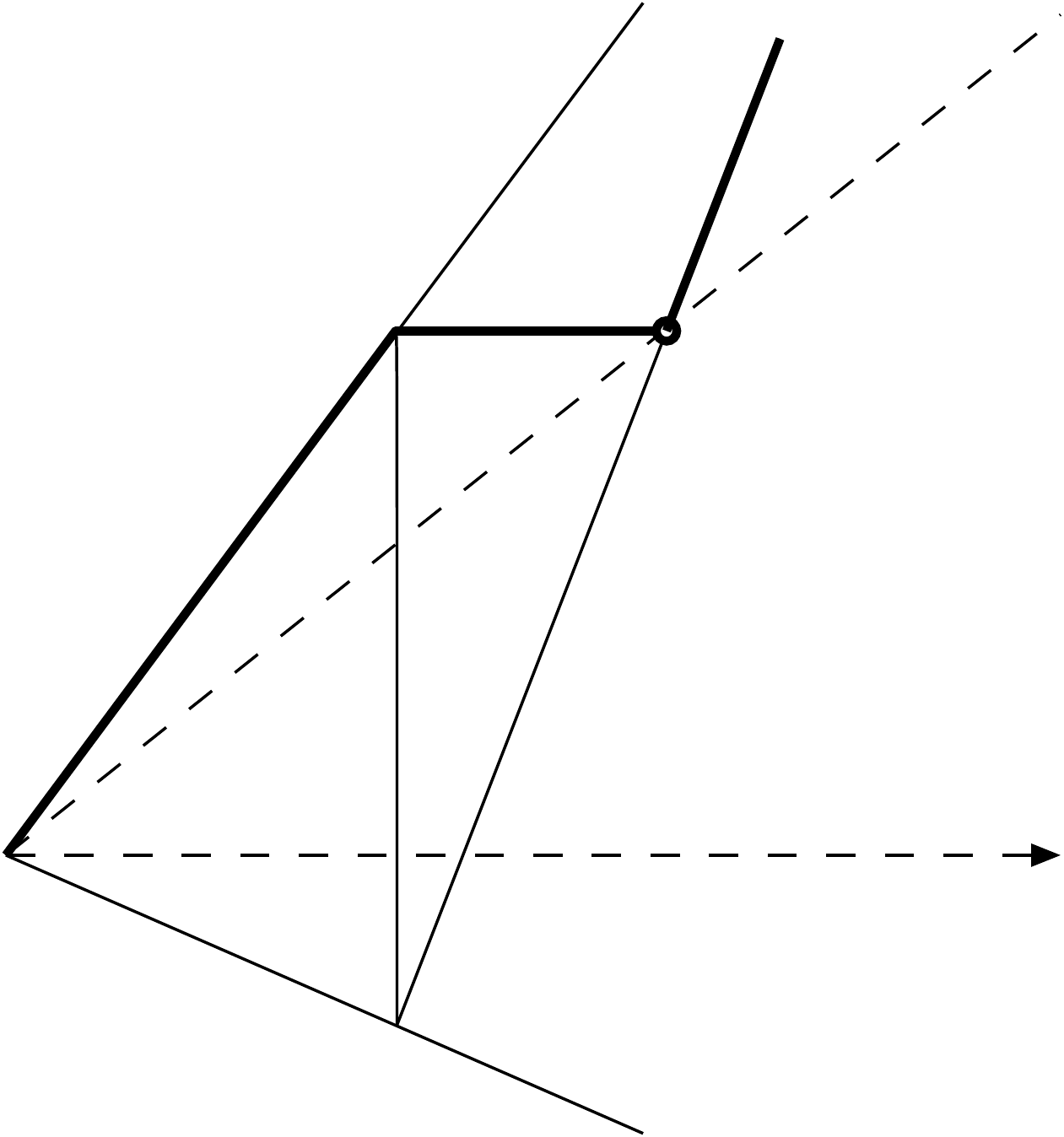}
        \put(90,20){\small $t$}
        \put(30,21){\small $\tau_1$}
        \put(50,5){$\alpha$}
        \put(88,90){$\alpha^\sharp$}
        \put(47,93){$\beta$}
        \put(71,95){$\Phi$}
        \put(40,18){$L_0$}
        \put(45,73){$\Phi^0$}
 \end{overpic}
\caption{Sketch of the proof of Lemma~\ref{lemmaalphasharp} ($\sigma=0$)}
\label{alphasharp}
\end{minipage}
\end{figure}

\begin{lemma}\label{combinatorial}
For every $d\in(0,1]$ and every positive integer $k$ there exist an integer $m=m(d,k)$ and a number $n_0$ such that for every $n\ge n_0$ and every $J\subset \{1,\ldots,n\}$ such that $\#J \ge d\,n$ there exist numbers  $a_1<a_2<\cdots<a_k$ in $J$ such that $a_k-a_1\le m$.
\end{lemma}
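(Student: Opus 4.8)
The plan is a straightforward pigeonhole argument applied to a partition of $\{1,\dots,n\}$ into consecutive blocks of bounded length. First I would fix the window size by setting
\[
	m=m(d,k):=\Big\lfloor \frac{k-1}{d}\Big\rfloor+1,
\]
so that $m>(k-1)/d$ and hence $c:=d-(k-1)/m>0$ is a positive quantity depending only on $d$ and $k$. I would then put $n_0:=\lceil (k-1)/c\rceil+1$ (when $k=1$ the statement is trivial, since any nonempty $J$ works and $J\neq\emptyset$ as soon as $dn\ge1$, so one may assume $k\ge2$).

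Given $n\ge n_0$ and $J\subset\{1,\dots,n\}$ with $\#J\ge dn$, partition $\{1,\dots,n\}$ into $N:=\lceil n/m\rceil$ consecutive blocks $B_1,\dots,B_N$, where $B_j=\{(j-1)m+1,\dots,jm\}\cap\{1,\dots,n\}$; each $B_j$ consists of at most $m$ consecutive integers and $N\le n/m+1$. The key step is to show that at least one block contains at least $k$ elements of $J$. If, on the contrary, every block contained at most $k-1$ elements of $J$, then $\#J\le N(k-1)\le (n/m+1)(k-1)$, which forces $dn\le n(k-1)/m+(k-1)$, i.e. $n\,c\le k-1$, contradicting $n\ge n_0>(k-1)/c$.

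Finally, I would pick a block $B_{j_0}$ with $\#(B_{j_0}\cap J)\ge k$ and choose any $k$ elements $a_1<a_2<\dots<a_k$ of $B_{j_0}\cap J$; since they all lie among $m$ consecutive integers, $a_k-a_1\le m-1\le m$, which is exactly what is claimed. There is no real obstacle here; the only things to keep an eye on are that $m$ and $n_0$ are made to depend on $d$ and $k$ alone (which they visibly do), the degenerate case $k=1$, and the crude but sufficient bound $N\le n/m+1$ on the number of blocks.
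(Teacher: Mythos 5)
Your proof is correct, and all the quantifier bookkeeping works: $m=\lfloor (k-1)/d\rfloor+1$ and $n_0$ depend only on $d$ and $k$, the bound $N\le n/m+1$ on the number of blocks is enough, the degenerate case $k=1$ is disposed of correctly, and elements of one block of at most $m$ consecutive integers indeed satisfy $a_k-a_1\le m-1\le m$.

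The route differs slightly from the paper's in the choice of decomposition. You partition $\{1,\ldots,n\}$ into disjoint blocks of length $m$ and apply the pigeonhole principle directly: if every block met $J$ in at most $k-1$ points, $\#J$ would be too small. The paper instead double counts over overlapping sliding windows: it sets $m=\lfloor 2k/d\rfloor+1$, estimates $\sum_{j\in J}\lvert [j,j+m]\cap[1,n]\rvert\ge \frac12 mnd$ for $n$ large, and pigeonholes on positions $i$ to find some $i$ covered by at least $\frac12 md\ge k$ windows, whose left endpoints $j\in J$ then all lie in an interval of length $m$. Both are elementary counting arguments playing the same role (and both need an implicit ``$n$ large enough'', which you make explicit via $n_0$); your disjoint-block version is marginally simpler and yields a smaller admissible $m$ (roughly $k/d$ instead of $2k/d$), though the precise value of $m(d,k)$ is immaterial for the application in Lemma~\ref{Levin}.
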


\begin{proof}
Let $m= \lfloor 2k/d \rfloor +1$, where $\lfloor \cdot \rfloor$ denotes the integer part.
Then for $n$ large enough
\[
	\sum_{j\in J}\big\lvert [j,j+m]\cap [1,n]\big\rvert
	\ge m(\#J-m)
	\ge m d n - m^2\ge \frac12 m n d.
\]
Hence, there exists $i\in \{0,\ldots,n\}$ such that
\[
	A(i):=\big\{j\in J\colon i\in [j,j+m]\cap[1,n]\big\}
	\quad\text{ satisfies }\quad
	\#A(i)\ge \frac12m d\,.
\]
We now let $a_1<a_2<\cdots<a_k$ be the consecutive indices $j$ in $A(i)$.
\end{proof}

The following lemma will provide a substantial technical ingredient to prove Theorem~\ref{main3}

\begin{lemma}\label{Levin}
	Let $(f,K)\in \sA$. Given $\alpha\le\beta$ with $\beta>0$, for every $q>\sigma>0$ such that $\alpha^\sharp\le q\le \beta$ and every $\e>0$, there exist $M\ge 0$, $r>0$, $\e'>0$ such that for every $x\in \cL(\alpha,\beta)$, there exists a subset $H$  of $\N$ of  upper density
 \[
 	\overline d(H):=\limsup_{n\to 0}\frac{\# (H\cap [1,n])}{n} > 0
\]	
such that for every $n\in H$ and every $k=0,\ldots,n$ for the pull-back $W_k$ of $B(f^n(x),r)$ for $f^k$ which contains $f^{n-k}(x)$ the following properties hold
\begin{equation}\label{Shrink1a}
	\lvert (f^k)'(f^{n-k}(x))\rvert^{-1}\le e^{-k\sigma},
\end{equation}
\begin{equation}\label{Shrink1}
	\lvert W_k\rvert\le e^{-k(\sigma-\e)},
\end{equation}
\begin{equation}\label{Shrink1b}
	n(q-\e)\le \log \,\lvert (f^n)'(x)\rvert \le n(q+\e),
\end{equation}
\begin{equation}\label{Shrink2}
        \#\{k\colon 0 \le k < n, W_k \cap\Crit(f) \not= \emptyset \} \le M.
\end{equation}

Moreover, for $\Phi$ defined in~\eqref{def:Phii}, if $\alpha<\beta$ then there exists $n^{\rm up}>n$ such that
\begin{equation}\label{auxiliary}
	\frac{\Phi(n^{\rm up})}{n^{\rm up}}
	\ge \frac{\Phi(n)}{n}+\e'
	\,\,\text{ and }\quad
	\frac{\Phi(t)}{t}\ge \frac{\Phi(n)}{n}-\e
	\,\,\text{ for all }
	n<t<n^{\rm up}.
\end{equation}
If $\alpha=\beta$ then only the second inequality holds for all $t>n$.
\end{lemma}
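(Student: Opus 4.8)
The plan is to run the abstract Lemmas~\ref{geometric1} and~\ref{lemmaalphasharp} on the piecewise affine function $\Phi$ of~\eqref{def:Phii}, read off from the plateaux of its transform $\Phi^\sigma$ a positive‑density set of restricted Pliss hyperbolic times, and then upgrade those times using the Telescope (Proposition~\ref{p.telescope}) and the Pull‑back count (Proposition~\ref{p.pullback}). Fix $x\in\cL(\alpha,\beta)$ and let $\Phi$ be as in~\eqref{def:Phii}, so $\liminf_t\Phi(t)/t=\alpha$, $\limsup_t\Phi(t)/t=\beta$, the slopes of $\Phi$ are at most $L:=\log\sup_{\hat I_K}\lvert f'\rvert$, and by~\cite[Proposition 4.7]{PrzRiv:13} the long‑range slope satisfies $L_0\le\chi_{\sup}$. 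We take $\sigma$ small enough that $\alpha^\sharp(\sigma)<q$, which is legitimate since $\alpha^\sharp(\sigma)\to\frac{\beta}{1+(\beta-\alpha)/L_0}\le\alpha^\sharp\le q$ as $\sigma\to0$ (the degenerate boundary configuration $q=\alpha^\sharp=\alpha^\sharp(0^+)$ being reached by continuity in $q$; the endpoints $q=\alpha^\sharp$, $q=\beta$ are otherwise covered by the choice of $q_1,q_2$ below, since then one half of~\eqref{Shrink1b} is automatic). By Lemma~\ref{lemmaalphasharp}, $\liminf_t\Phi^\sigma(t)/t\le\alpha^\sharp(\sigma)<q$, while $\Phi^\sigma\ge\Phi$ and $\Phi^\sigma(t)\le(\beta+o(1))t$ (as $\sigma<\beta$) give $\limsup_t\Phi^\sigma(t)/t=\beta>q$. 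Fixing $q_1<q<q_2$ with $q_2-q_1<\e$, $\alpha^\sharp(\sigma)<q_1$, $q_2<\beta$, the continuous function $t\mapsto\Phi^\sigma(t)/t$ has infinitely many $(q_1,q_2)$-crossing intervals $[\tau_1^{(j)},\tau_2^{(j)}]$ with $\tau_2^{(j)}\in\bN$ and $\tau_1^{(j)}\to\infty$; since $\Phi$ is affine between consecutive integers, Lemma~\ref{geometric1} gives $\#\big(H_{\bN}\cap[\tau_1^{(j)},\tau_2^{(j)}]\big)\ge C\tau_2^{(j)}$ with $C:=L^{-1}(q_2-q_1)>0$ and $H_{\bN}=\{n\colon\Phi(n)=\Phi^\sigma(n)\}$. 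Hence $H_0:=\bigcup_j H_{\bN}\cap[\tau_1^{(j)},\tau_2^{(j)}]$ has upper density $\ge C$.

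For $n\in H_0$ the record identity $\Phi(n)=\Phi^\sigma(n)\ge\Phi(n-k)+\sigma k$ $(0\le k\le n)$ yields $\lvert(f^k)'(f^{n-k}(x))\rvert\ge e^{\sigma k}$, which is~\eqref{Shrink1a} and says $n$ is a Pliss hyperbolic time with exponent $\sigma<\beta=\overline\chi(x)$; Proposition~\ref{p.telescope} then gives $\lvert W_k\rvert\le rA_1e^{k\e}\lvert(f^k)'(f^{n-k}(x))\rvert^{-1}\le rA_1e^{-k(\sigma-\e)}$, which is~\eqref{Shrink1} once $r$ is taken below the constants of Propositions~\ref{p.telescope}--\ref{p.pullback} and so small that $rA_1\le1$ (enlarging $\e$ infinitesimally). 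Moreover $\Phi(n)/n=\Phi^\sigma(n)/n\in[q_1,q_2]\subset(q-\e,q+\e)$, i.e.~\eqref{Shrink1b}.

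The heart of the matter is~\eqref{Shrink2}. By~\eqref{Shrink1}, $W_k\cap\Crit(f)\neq\emptyset$ forces $f^{n-k}(x)$ within $rA_1e^{-k(\sigma-\e)}$ of $\Crit(f)$, so a fixed forward time $j<n$ produces a critical pull‑back at level $k=n-j$ only for at most $(\sigma-\e)^{-1}\log^+\!\big(rA_1/{\rm dist}(f^j(x),\Crit(f))\big)+O(1)$ integers $n$. Summing over $j\le\tau_2$, the number of pairs $(n,\text{critical pull‑back})$ with $n\in H_{\bN}\cap[\tau_1,\tau_2]$ is $\le(\sigma-\e)^{-1}\sum_{j\le\tau_2}\log^+\!\big(rA_1/{\rm dist}(f^j(x),\Crit(f))\big)+O(\tau_2)$; since $\log\lvert f'\rvert\le L$ while $\sum_{j\le n}\log\lvert f'(f^j(x))\rvert=\Phi(n)\ge(\alpha-o(1))n$ (because $\underline\chi(x)=\alpha$), and $-\log\lvert f'(\cdot)\rvert\asymp\log^+\!\big(R_0/{\rm dist}(\cdot,\Crit(f))\big)$ near $\Crit(f)$ by~\eqref{e:slowl}, this sum is $\le A'n$ for a constant $A'$ depending only on $f,\alpha,r$. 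Hence there are $\le A''\tau_2$ such incidence pairs, and distributing them over the $\ge C\tau_2$ points of $H_{\bN}\cap[\tau_1,\tau_2]$ (Markov's inequality) leaves $\ge\tfrac12C\tau_2$ of them carrying at most $M:=\lceil2A''/C\rceil$ critical pull‑backs; the union $H_1\subset H_0$ of these has upper density $\ge\tfrac12C$ and satisfies~\eqref{Shrink2}. (Alternatively the incidence bound is read off Proposition~\ref{p.pullback} together with the clustering Lemma~\ref{combinatorial}.)

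Finally~\eqref{auxiliary}. If $\alpha=\beta$ then $x$ is Lyapunov regular with $\Phi(t)/t\to\alpha=q$, so $\Phi(t)/t\ge\Phi(n)/n-\e$ holds automatically for all large $n$ and all $t>n$; put $H=H_1$ restricted to large elements. If $\alpha<\beta$, each $n\in H_1$ has $\Phi(n)/n\le q_2<\beta=\limsup_t\Phi(t)/t$, so with $\e':=\min\{\e,\beta-q_2,q_1-\alpha^\sharp(\sigma)\}$ some $t>n$ satisfies $\Phi(t)/t\ge\Phi(n)/n+\e'$; choose $n^{\rm up}$ to be such a $t$ that can be reached without $\Phi(s)/s$ first dropping below $\Phi(n)/n-\e$. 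That this is possible for a positive‑density subset $H\subset H_1$ is the remaining point: an $n\in H_1$ for which it fails is immediately followed by an excursion of $\Phi(t)/t$ staying below $\Phi(n)/n+\e'$ and then dropping below $\Phi(n)/n-\e<q_1-\e$, i.e.\ producing a fresh low value of $\Phi^\sigma(t)/t$ before any climb; using that $\alpha<\beta$ forces genuine oscillation of $\Phi(t)/t$ together with a counting estimate in the spirit of the previous paragraph, such $n$ have density $0$ in $H_1$, and discarding them leaves the required $H$, of positive upper density, on which~\eqref{Shrink1a}--\eqref{auxiliary} all hold. I expect the two genuinely delicate points to be exactly this density bookkeeping for~\eqref{auxiliary} and the incidence count for~\eqref{Shrink2}; the remaining inequalities are immediate from the Telescope and Lemmas~\ref{geometric1}--\ref{lemmaalphasharp}.
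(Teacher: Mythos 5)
Your construction of the positive-density set via the $(q_1,q_2)$-crossing intervals of $\Phi^\sigma$ (Lemmas~\ref{geometric1} and~\ref{lemmaalphasharp}), the identification of its points as restricted Pliss times giving \eqref{Shrink1a}, the use of Proposition~\ref{p.telescope} for \eqref{Shrink1}, and the record identity $\Phi(n)=\Phi^\sigma(n)\in[q_1 n,q_2 n]$ for \eqref{Shrink1b} follow the paper's proof closely. For \eqref{Shrink2} you deviate in an interesting way: the paper bounds the total ``shadow'' length by the uniform estimate $\sum' -\log\lvert f^j(x)-c\rvert\le Qn$ of \cite[Lemma 3.1]{PrzRiv:13}, valid for every $x\in K$, and then removes the integers covered by more than $M$ shadows; you instead bound $\sum_j\log^+\bigl(1/{\rm dist}(f^j(x),\Crit(f))\bigr)$ elementarily from \eqref{e:slowl} and the finiteness of $\underline\chi(x)=\alpha$, and run the same incidence/Markov count. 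Since the lemma only quantifies over $x\in\cL(\alpha,\beta)$ with $\alpha$ fixed, this per-point substitute is legitimate and avoids citing the external estimate, at the (harmless) price of constants depending on $\alpha$.

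The genuine gap is \eqref{auxiliary} in the case $\alpha<\beta$. This is a stated conclusion of the lemma and is exactly what is used later in Case 2 of the proof of Proposition~\ref{upper} (to pass from the restricted hyperbolic time $n$ to the exit time $\ell$ from a neighborhood of a slow periodic point); it cannot be waived. The paper obtains it for every $n$ in the crossing-interval records from the choice $q_1<q_2<q_2'$, $\e'=q_2'-q_2$ and the crossing structure itself, without discarding any further points. You, by contrast, concede that \eqref{auxiliary} may fail for some $n\in H_1$ and assert that such $n$ have density zero ``using that $\alpha<\beta$ forces genuine oscillation together with a counting estimate in the spirit of the previous paragraph.'' No such estimate is given, and the analogy does not carry over: the previous paragraph's count concerns approaches of the orbit to $\Crit(f)$, which controls how often pull-backs meet critical points, whereas the failure of \eqref{auxiliary} concerns the oscillation pattern of $t\mapsto\Phi(t)/t$ after a record time, an entirely different phenomenon; there is no incidence bound to distribute. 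As it stands, the ``Moreover'' part of the lemma is unproved in your proposal, so the argument is incomplete precisely at the point that constitutes the lemma's technical novelty. (A secondary slip: in the case $\alpha=\beta$ there are no crossing intervals, so the set $H_1$ you invoke there must be replaced by the plain Pliss times with the same shadow count, as in the paper; this is easy to repair.)
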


Observe that~\eqref{Shrink2} is as in~\eqref{TCE} in TCE, however we conclude this property for the subset $\cL(\alpha,\beta)\subset K$ only.

The property (\ref{auxiliary}) is technical, not needed in the case $K=\hat I_K$.
The possibility to find a positive upper density set $H$ of integers for which Pliss' type properties
(\ref{Shrink1a}), (\ref{Shrink1}) and simultaneously (\ref{Shrink2}) hold,  has been recently noticed in \cite{Levin}, using `shadows' from \cite{PrzRo:98}.

\begin{proof}
\textbf{Step 1.}
Let $H_1$ be the set of all Pliss hyperbolic times for $x$ with exponent $\sigma$. As $\sigma<\overline\chi(x)=\beta$, by the Pliss Lemma~\ref{lemma-pliss} (see Subsection~\ref{Telescope constructions}) $H_1$ has positive upper density $\overline d(H_1)$. Now the property~\eqref{Shrink1a} follows by definition of the Pliss hyperbolic times and~\eqref{Shrink1} holds by Telescope Lemma~\ref{p.telescope}.

To prove (\ref{Shrink1b}) and (\ref{auxiliary}) we can assume $\alpha<\beta$, since for $\alpha=q=\beta$ these properties hold immediately.

Let $q_1<q'_2$ and $[q_1,q'_2]\subset (q-\e,q+\e)\cap (\alpha^\sharp,\beta)$. Assume also $\sigma<q_1$. Let $q_2=\frac{q_1+q'_2}{2}$ (hence $q_2-q_1\le\e$). We define
$\e':=q'_2-q_2$.

To apply Lemma~\ref{geometric1}, let us consider the piecewise affine function $\Phi$ defined in~\eqref{def:Phii} which extends $\Phi$ defined above to $\bR^+$.
Now let $H_{\N}$ be the set from Lemma~\ref{geometric1} applied to this function.
Recall that every integer $n\in H_\bN$ satisfies $\Phi^\sigma(n)=\Phi(n)$ and is a Pliss hyperbolic time for $x$ with exponent $\sigma$, that is, $H_\bN\subset H_1$.
Continue to consider the set $H_2$ being the set $H_\bN$ intersected with the union of all the $(q_1,q_2)$-crossing intervals (recall the definition right after the proof of Lemma~\ref{geometric1}).
If $[\tau_1,\tau_2]$ is now such a crossing interval then for $n=\tau_2$ we have
\[
	\frac{\#(H_{\N}\cap [\tau_1,n])}{n}\ge C>0\,.
\]	
Moreover, by
\[
	\liminf_{n\to\infty} \frac1n \log\,\lvert(f^n)'(x)\rvert \le \alpha^\sharp <q_1
	\,\text{ and }\,
	q'_2<\limsup_{n\to\infty}\frac1n\log\,\lvert(f^n)'(x)\rvert=\beta
\]	
there are infinitely many crossing intervals, hence infinitely many such $n$'s. Thus, the set $H_2$ has positive upper density and also satisfies~\eqref{Shrink1a} and~\eqref{Shrink1b}.
The crossing property implies easily (\ref{auxiliary}).
\medskip

\noindent\textbf{Step 2.}
Now we shall concentrate on proving (\ref{Shrink2}), further restricting $H_2$.
By \cite[Lemma 3.1]{PrzRiv:13} (see also \cite{NowPrz:98}, or~\cite{DPU:96} in the holomorphic case) there is a constant $Q>0$ such that for every $x\in K$ for every $n$ and an arbitrary $c\in\Crit(f)$ we have
\begin{equation}\label{average distance}
	{\sum_{j=0}^n}{}'  -\log\,\lvert f^j(x)-c\rvert \le Qn,
\end{equation}
here $\Sigma'$ means that in the sum we omit  an index $j$ of smallest distance  $\lvert f^j(x)-c\rvert$ (only one if there is more than one such $j$).
For normalization (to have only non-negative numbers in
\eqref{average distance}) we assume $K\subset [0,1]$; this influences only the constant $Q$.

We continue, using the ideas from \cite{PrzRo:98} following the proof of the fact that ExpShrink implies TCE (see also \cite{NowPrz:98}). Given $j\ge1$, let
\[
	a(j):=-\log {\rm{dist}}(f^j(x),\Crit)\,.
\]	
Fix a number $\varkappa>1/(\sigma-\e)$.
For each $j$ consider the interval (``shadow'') $S_j:=(j, j+\varkappa a(j)]$.
Denote by $\mathds 1_{S_j}$ the characteristic function of $S_j$.
Then, by~\eqref{average distance} we have
\[
\sum_{k=1}^n  \sum_{j=0}^n  \mathds 1_{S_j}(k)
\le\sum_{j=0}^n \varkappa a(j)
\le \#\Crit \cdot n +  \varkappa Qn
= n(\#\Crit+\varkappa Q )=:nQ_\varkappa,
\]
where in the term $\#\Crit\cdot n$ takes care of the indices which have been omitted in the sum in~\eqref{average distance}.
So, given an arbitrary $M>0$ the cardinality $n_M$ of the set
of $k\in\{1,\ldots, n\}$ covered by more than $M$ shadows is
less than $n(\#\Crit+\varkappa Q )/M$.
Hence, there is $\eta=\eta(M,\varkappa,Q)$ such that
$$
1-n_M/n \ge 1-(\#\Crit+\varkappa Q )/M=\eta.
$$
In particular the lower density satisfies
$$
\liminf_{n\to\infty} \big(1- \frac{n_M}{n}\big) \ge \eta,
$$
and choosing $M$ sufficiently large $\eta$ is arbitrarily close to 1.

Let now $H$ be the set of numbers in $H_2$ which do not belong to more than $M$ shadows.
Thus, its upper density is still positive for sufficiently large $M$.

What remains to show is that every $n\in H$ satisfies~\eqref{Shrink2}.
If $n\in H$ and $n\notin S_j$ for $j<n$ then, by definition $n-j> \varkappa a_j$ and hence
\[
	\exp( -(n-j)/\varkappa) < {\rm{dist}}(f^j(x),\Crit)\,.
\]
Thus, by~\eqref{Shrink1} we obtain $\lvert W_j\rvert\le e^{-(n-j)(\sigma-\e)}$. Since $\varkappa>1/(\sigma-\e)$ we conclude that $W_j$ is disjoint from $\Crit$. Thus the set $H$ defined above satisfies (\ref{Shrink2}).
\end{proof}

We remark that the used term ``shadow'' origins in its analogy to the ``shadows'' in the proof of existence of Pliss hyperbolic times. Indeed, compare Figure~\ref{crossing} where the shadows correspond to the lines with slope $\sigma$ to catch hyperbolic times with such exponent.

The following proposition is the key step in the proof of Theorem \ref{main3}.
In the remainder of the section we will assume that $(f,K)\in \sA^{\BD}_+$ is non-exceptional and we will consider the following family of measures $\mu_t$.
In the case that there are no indifferent periodic orbits in $K$, then for every $t<t_+$ we  denote by $\mu_{t}$ the unique (nonatomic) $e^{P(t)}\lvert f'\rvert^t$-conformal measure which is well-defined by \cite[Theorem A]{PrzRiv:13}.
For $t<0$ if such measure does not exist we consider a CaS measure $\mu^*_{t}$ instead, positive on open sets (see Subsection~\ref{ss:conformal} and Subsection~\ref{ss:upper_bound}) and denote it also by $\mu_t$.

\begin{proposition}\label{upper}
Let $(f,K)\in \sA^{\BD}_+$  satisfy the weak isolation condition and be non-exceptional and without indifferent periodic orbits.
Given numbers $\alpha\le\beta$  and $q$ such that $\alpha^\sharp\le q \le \beta$,
for every $x\in \cL(\alpha,\beta)$ not weakly $S'$-exceptional and
there exist a sequence of integers $n_i\to\infty$, a sequence $(q_i)_i$ of positive numbers $q_i\in [q,\beta]$, and $r>0$ such that for every
$\varepsilon>0$ and $i\ge1$ we have
\begin{equation}\label{Shrink3}
 	\diam\Comp_x f^{-n_i}(B(f^{n_i}(x),r))\le e^{-n_i(q_i-\varepsilon)}.
\end{equation}

For every $t<t_+$, the measure $\mu_t$ satisfies
\begin{equation}\label{Shrink4}
 	\mu_t \bigr(\Comp_x f^{-n_i}(B(f^{n_i}(x),r))\bigl)
	\ge e^{ -n_i P(t)}
		e^{-n_iq_i-n_i\varepsilon\lvert t\rvert}\Upsilon(t),
\end{equation}
where $\Upsilon(t):=\Upsilon(\mu_t,\Delta)$ defined in (\ref{Upsilon}), for $\Delta$ depending only on $r$, $f$, and $\text{sgn}(t)$. In the case $K=\hat I_K$, one can take $q_i=q$ for all $i$.
\end{proposition}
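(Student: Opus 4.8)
The plan is to extract the pull-back $W_{n_i}$ from Lemma~\ref{Levin}, read off \eqref{Shrink3} from the Telescope Proposition, and derive \eqref{Shrink4} by iterating the (weak) conformality of $\mu_t$ along that pull-back. Concretely, fix $\sigma\in(0,q)$ and apply Lemma~\ref{Levin} with this $\sigma$, the given $q\in[\alpha^\sharp,\beta]$, and a prescribed $\e>0$; it yields $M,r,\e'$ and, for each $x\in\cL(\alpha,\beta)$, a set $H=H(x)\subset\bN$ of positive upper density along which \eqref{Shrink1a}--\eqref{Shrink2}, and \eqref{auxiliary} when $\alpha<\beta$, hold for every $k=0,\dots,n$, where $W_k=\Comp_{f^{n-k}(x)}f^{-k}(B(f^n(x),r))$. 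Enumerate $H$ as $n_1<n_2<\cdots$ and set $q_i:=\tfrac1{n_i}\log\,\lvert(f^{n_i})'(x)\rvert$; by \eqref{Shrink1b}, $q_i\in[q-\e,q+\e]$. Since $n_i$ is a Pliss hyperbolic time for $x$ with exponent $\sigma$, Proposition~\ref{p.telescope} applied to the pull-back of $B(f^{n_i}(x),r)$ gives $\diam W_{n_i}\le r\,A_1\,e^{n_i\e}\lvert(f^{n_i})'(x)\rvert^{-1}=r\,A_1\,e^{-n_i(q_i-\e)}$, which is \eqref{Shrink3} after absorbing $rA_1$ and relabelling $\e$.

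\emph{The measure bound \eqref{Shrink4}.} We push $W_0=B(f^{n_i}(x),r)$ back along the fixed branch of $f^{-n_i}$, applying the conformality identity \eqref{def:conff} for $\mu_t$ with Jacobian $e^{P(t)}\lvert f'\rvert^t$ (for $t<0$, in the absence of a true conformal measure, $\mu_t$ is a CaS measure, obeying \eqref{def:conff} for sets disjoint from $S'$). By \eqref{Shrink2} there are at most $M$ times $k$ at which $W_k$ meets $\Crit(f)$, splitting the pull-back into at most $M+1$ critical-free blocks. On each block $f$ is a $K$-homeomorphism on the relevant $\interior W_k$ by Corollary~\ref{cor:no-truncation} (using that $x$ is not weakly $S'$-exceptional, $r$ small, $n_i$ large), so \eqref{def:conff} telescopes there; since the $W_k$ are pull-backs of $B(f^{n_i}(x),r)$ sitting inside the still critical-free, still shrinking pull-backs of $B(f^{n_i}(x),2r)$, the bounded distortion condition \eqref{disttt} lets us replace the product of $\sup/\inf$ of $\lvert f'\rvert$ over the block by the corresponding product of $\lvert f'\rvert$ along the orbit of $x$, up to a uniform constant (as in the remark after Proposition~\ref{p.telescope}). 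At each of the $\le M$ critical pull-backs the folded image still equals $W_{k-1}$ by Corollary~\ref{cor:no-truncation}; decomposing $W_k$ into dyadic annuli around the critical point and using the non-flat form \eqref{e:slowl} (equivalently Lemma~\ref{l:crit}), one transfers $\mu_t$-mass between $W_k$ and $f(W_k)$ at the price of a factor comparable to $\lvert f'(f^{n_i-k}(x))\rvert^{-t}$ and a uniform constant. Telescoping the blocks and critical pull-backs, then using $\mu_t(B(f^{n_i}(x),\Delta))\ge\Upsilon(t)$ (valid because $B(f^{n_i}(x),\Delta)$ is a $\Delta$-ball around a point of $K$ and $\Upsilon(\mu_t,\Delta)>0$ by strong topological transitivity, Subsection~\ref{toprec}), we obtain
\[
\mu_t(W_{n_i})\ \ge\ e^{-n_iP(t)}\,\lvert(f^{n_i})'(x)\rvert^{-t}\,C^{-1}\,\Upsilon(t),
\]
with $C$ and $\Delta$ depending only on $r$, $f$, $\mathrm{sgn}(t)$, which together with \eqref{Shrink1b} is \eqref{Shrink4}. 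If $K=\hat I_K$, the ball $B(f^{n_i}(x),\Delta)$ lies in $K$ for $\Delta$ small and no further adjustment is needed, so one may keep $q_i=q$; in general a ``large scale'' correction (see the next paragraph) is required and $q_i$ is then only known to lie in $[q,\beta]$.

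\emph{Main obstacle.} The delicate point is the passage through the $\le M$ critical pull-backs: one must control the transfer of $\mu_t$-mass across a fold using only the non-flatness of $f$ and, when $t<0$, the mere CaS property of $\mu_t$, so that the singularity of $\lvert f'\rvert^t$ near $\Crit(f)$ is absorbed into the $e^{-n_i\e\lvert t\rvert}$ error rather than destroying the estimate. A second point, needed only when $K\ne\hat I_K$, is to guarantee that the chosen branch of $f^{-n_i}$ lands at a genuinely large scale around a point of $K$ — the analogue of \eqref{a} in the proof of Theorem~\ref{chi}; this is exactly where the auxiliary property \eqref{auxiliary} enters, allowing one to slide $n_i$ forward to a time whose exponent still lies in $[q,\beta]$, which is the reason $q_i$ need not equal $q$ in general.
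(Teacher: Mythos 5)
Your setup is right as far as it goes: Lemma~\ref{Levin} plus Proposition~\ref{p.telescope} does give \eqref{Shrink3}, and your telescoping of the conformality relation through the at most $M$ critical pull-backs, with Corollary~\ref{cor:no-truncation}, \eqref{disttt} and Lemma~\ref{l:crit}, is essentially the paper's argument for $t\ge0$ \emph{in the case where the image $f^{n_i}(W_{n_i})$ reaches large scale} (in particular when $K=\hat I_K$). But the two points you relegate to the ``main obstacle'' paragraph are not side issues: they are the substance of the proposition, and one of your proposed fixes would not work. For $t<0$, transferring mass across a critical fold by dyadic annuli fails: conformality gives $\mu_t(W_k)\ge e^{-P(t)}\int_{f(W_k\text{-branch})}\lvert g'\rvert^t\,d\mu_t$ with $g$ the inverse branch, and for $t<0$ the integrand behaves like $\lvert y-f(c)\rvert^{\lvert t\rvert(1-1/d)}$, so it vanishes at the critical value; since nothing prevents $\mu_t$ from concentrating the mass of $W_{k-1}$ arbitrarily close to $f(c)$, restricting to annuli away from $f(c)$ loses all control of the measure, and this loss would compound over up to $M$ critical passages. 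The paper avoids the problem entirely: for $t<0$ it uses Lemma~\ref{bunch} to choose a sub-ball $B(z,r'/2)$ centered at a point of $K$ inside $B(f^{n_i}(x),r/2)$ and disjoint from the finitely many forward images of captured critical points, so that the whole pull-back is diffeomorphic with bounded distortion, and \eqref{Shrink4} follows from the derivative comparison together with $\Upsilon(\mu_t,r'/2)$. Some device of this kind is needed; your sketch does not supply one.

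The second gap is the general case $K\ne\hat I_K$, which is exactly where the clause ``$q_i\in[q,\beta]$, and $q_i=q$ when $K=\hat I_K$'' comes from; naming \eqref{auxiliary} is not a proof. When $f^{n}(W_{n})$ need not contain a definite ball around a point of $K$, one cannot invoke $\Upsilon$ at all. The paper's resolution is a concrete construction: Lemma~\ref{combinatorial} produces $\#\partial_{r/3}+1$ restricted Pliss hyperbolic times $a_1<\dots<a_k$ within a window of bounded length $m$; either some $f^{a_i}(x)$ lies outside the neighborhood $B$ of $\partial_{r/3}$, in which case the analogue of \eqref{a} from the proof of Theorem~\ref{chi} gives large scale at $n=a_i$ with $q_i$ close to $q$, or else all these iterates lie within $\alpha'_m$ of $\partial_{r/3}$, which by pigeonhole forces $f^{a_i}(x)$ to shadow a hyperbolic repelling periodic point $p\in\partial_{r/3}$ of period at most $m$; one then replaces $n$ by the escape time $\ell$ from $B$, at which large scale is recovered, and a two-case comparison of $\chi(p)$ with $q-2\e$, using \eqref{auxiliary}, shows $q(\ell)\ge q-3\e$ while $q(\ell)\le\beta$ up to $\e$ --- this is precisely why the exponents $q_i$ can drift upward inside $[q,\beta]$. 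None of this escape-time and exponent-control argument appears in your proposal, so as written it proves the proposition only for $t\ge0$ and $K=\hat I_K$ (or when large scale is reached along the original times), not in the generality stated.
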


\begin{proof}
 The property (\ref{Shrink3}) for $n=n_i\in H$ and $q_i\equiv q$ follows immediately from (\ref{Shrink1a}), (\ref{Shrink1b}), and
Proposition \ref{p.telescope}. (Though later on there will occur a case we need to work with different choice of $q_i$.)
Therefore the rest of the proof will be devoted to (\ref{Shrink4}).

\smallskip
We start by repeating the construction in the proof of Theorem~\ref{chi}. Consider $\widehat W$ and $W=1/2\diamond \widehat W=B(f^n(x),r/2)$ and denote by $\widehat W_k$ the pull-back of $\widehat W$ for $f^k$ which contains $f^{n-k}(x)$.
\smallskip

The general strategy to prove (\ref{Shrink4}) is the same as Proposition~\ref{prop:conicalupper} using the conformal measure $\mu=\mu_t$. We will start considering a particular case.

\smallskip
\noindent\textbf{1. Assume $f^n(W_n)=W$.}\footnote{This is so in the iteration of a rational function case.
Hence for rational maps the same proof will give us (\ref{Shrink4}), see Appendix A.}
\smallskip

\noindent\textbf{1a. The case $t\ge 0$.}
We will pay particular attention to the encounter of critical points. As in the proof of Theorem~\ref{chi}, let $0\le k_1<\ldots<k_{M'}\le n-1$ be the subsequence of all consecutive integers $k$ such that $\widehat W_k\cap\Crit\ne\emptyset$.
Here, by Lemma~\ref{Levin} property~\eqref{Shrink2} we capture critical points only $M'\le M$ times.

We pull back $W=B(f^n(x),r/2)$ in place of $B(f^n(x),r)$ and use bounded distortion for $f^{k_{i}-k_{i-1}-1}$ on the pull-back $W_{k_i-1}\ni f^{n-(k_i-1)}(x)$ for $f^{k_i-1}$ as in proof of Theorem~\ref{chi} (the formula (\ref{dist-est})).

\smallskip

In the proof of Theorem~\ref{chi} in the part 1.b) we only needed to compare derivatives.
Here however we pull back $\mu_t$ on $K$ so we must be careful.
Let us explain it in detail.
As in Lemma~\ref{no-truncation} we use the notation $W_n^j=f^j(W_n)$.
By Corollary~\ref{cor:no-truncation} the map $f^{k_i-k_{i-1}-1}$ is a $K$-homeomorphism of $W_n^{n-k_i+1}$ onto $W_n^{n-k_{i-1}}$. These are in fact  diffeomorphisms with bounded distortion, since we take into account, while defining $k_i$, also captures of inflection critical points.
Therefore, there exists some constant $C$ coming from the distortion relation~\eqref{disttt} so that
\begin{equation}\label{noncriticalpullback}
	\frac{\mu_t(W_n^{n-k_i+1})}{ \mu_t(W_n^{n-k_{i-1}})}
	\ge C^t e^{-(k_i-k_{i-1}-1)P(t)}
		\Bigl(\frac {|W_n^{n-k_i+1}|}{ |W_n^{n-k_{i-1}}|}\Bigr)^t.
\end{equation}

If $W_n^{n-k_i}$ contains an inflection critical point $c$, \
$f$ is a $K$-homeomorphism onto
$W_n^{n-k_i +1}$ again by Corollary~\ref{cor:no-truncation} and denoting  $(f|_{W_n^{n-k_i}})^{-1}$ by $g$
we have, (as $\mu_t$ is non-atomic, in particular not having an atom at $c$ we can write even equality in place of the first inequality)
\[\begin{split}
\mu_t(W_n^{n-k_i})
&\ge e^{-P(t)}\int_{W_n^{n-k_i +1}} |g'(y)|^t\,d\mu_t(y)\\
&\ge C^t e^{-P(t)}\Bigl(\frac{|W_n^{n-k_i}|}{|W_n^{n-k_i+1}|}\Bigr)^t \mu_t(W_n^{n-k_i+1}),
\end{split}\]
with a constant $C$ arising from Lemma~\ref{l:crit}, applied for every $y$ under the integral.
We again get \eqref{noncriticalpullback} (for $n=1$).

If $W_n^{n-k_i}$ contains a turning critical point $c$, then we apply the above
estimate by choosing as $g$ the branch of $f^{-1}$ on $W_n^{n-k_i +1}$ mapping it onto $W_n^{n-k_i}$ according to Corollary~\ref{cor:no-truncation}.

Notice finally that the case $W_n^{n-k_i}$ contains no critical points can happen, since
this set can be strictly contained in $W_{k_i}$. Then $f$ is a $K$-homeomorphism on its image and we also get the above estimate, referring to Remark~\ref{unifyingdistortion}.

Composing those inequalities we obtain
\begin{equation}\label{givemealabel}
\mu_t(W_n)\ge C^{t(2M'+1)}e^{-nP(t)}
	\Bigl(\frac{|W_n|}{|f^n(W_n)|}\Bigr)^t \mu_t(f^n(W_n)).
\end{equation}
This  ends the proof as we assumed $f^n(W_n)=W$ and hence
\[
	\mu_t(B(f^n(x),r/2))\ge \Upsilon(\mu_t, r/2)>0
\]	
see \eqref{Upsilon} and~\eqref{Shrink3}.

\medskip

\noindent\textbf{1b. The case $t< 0$.}
By Lemma~\ref{bunch} applied to $X=K$ there is some $r'=\varepsilon(r/2,M+1)/2$ and $z\in K\cap B(f^n(x),r/2)$ such that $B(z,r')$ is disjoint with $\bigcup_{j=1}^n f^j(\Crit\cap W_j)$.
Then $f^n$  has no critical points and has bounded distortion on the  pull-back $W'_n$ of $W'=B(z,r'/2)$ contained in $W_n$. Hence, by~\eqref{disttt} and the telescope estimate in Proposition~\ref{p.telescope}, since $B(z,r')$ is $1/2$-scaled neighborhood
of $B(z,r'/2)$, for every $y\in W'_n$
$$
|(f^n)'(y)| \ge \frac{1}{C(1/2)} \frac{r'}{|W'_n|}
	\ge \frac{C_1}{ |W_n|}
	\ge C_2 |(f^n)'(x)| e^{-n\varepsilon}
$$
for arbitrarily small $\varepsilon>0$ and some constants $C_1,C_2$. As $f^n$ is a $K$-homeomorphism on $W_n'$, by Corollary~\ref{cor:no-truncation}, for $t<0$ we get
\[\begin{split}
\mu_t \bigr(\Comp_x f^{-n}(B(f^{n}(x),r))\bigl)
&\ge\mu_t (W'_n) \\
&\ge C_2^{\lvert t\rvert}
e^{ -n P(t)}  |(f^n)'(x)|^{-t} e^{-n\lvert t\rvert\varepsilon}
\cdot\Upsilon(\mu_t,r'/2)
\end{split}\]
giving again \eqref{Shrink4}, with $\Upsilon(t)=\Upsilon(\mu_t,r'/2)$.

\smallskip

In the Steps 1a and 1b we showed the assertion under the assumption that  $f^n(W_n)=W$. However this equality, which is always true for rational maps, is not true in general. To prove the general case, we need to modify the construction.

The problem is, that when $f^n\colon W_n\to W$ is not onto, during the pull-back we can lose part of the conformal measure.
To have $f^n$ onto, we replace in this construction $W$ by
$f^n(W_n)$. Then however we lose the uniform bound $\mu_t(f^n(W_n))\ge c>0$.
(Notice that in Subsection \ref{ss:upper_bound} we did not have this trouble.)

A priori $\mu_t(f^n(W_n))$ (or its piece if $t<0$ as in 1b.) could be arbitrarily close to 0. Notice however that if there is a constant $\delta>0$ such that there is $y\in K$ with $B(y,\delta)\subset f^n(W_n)$ then as before $\mu_t(f^n(W_n))\ge\Upsilon(\mu_t,\delta)$ and we are done. (Compare \eqref{ball2} to be used also later on.)

 \smallskip

\noindent\textbf{2. Assume $f^n(W_n)\ne W$, but $K=\hat I_K$.}
In this case the above holds. Indeed, we assume that $r$ is smaller than the lengths of all gaps in $\bR\setminus \hat I_K$ and all components of $\hat I_K$. For $n$ large enough $|W_n|<r$ by BaShrink, see Subsection~\ref{ss:TCE}.  Hence, as $x\in K$, at least one end, say $z$,
 of $W_n$ is in $K$. Hence $f^n(z)\in K$. Since $\lvert f^n(x)-f^n(z)\rvert=r/2$ and $f^n(x)\in K$, so letting $y$ being the middle point between $f^n(x)$ and $f^n(z)$ and $\delta:=r/4$ we have $\mu_t(f^n(W_n))\ge\Upsilon(\mu_t,\delta)$ and we are done.

 \smallskip

\noindent\textbf{3. The general $(f,K)\in\sA^{\BD}_+$.}
We shall overcome the difficulty with estimating $\mu_t(f^n(W_n))$
from below by acting similarly as in the last part of the proof of Theorem~\ref{chi}. Unfortunately it can happen that we need to replace $q$ by some $q'>q$.

\smallskip

We first restrict our considerations to a particular subset of the Pliss hyperbolic times $H$, see Lemma~\ref{lemma-pliss}.
Fix a number $\hat n\in \N$ sufficiently large such that $\#(H\cap \{1,\ldots,\hat n\})\ge \overline d(H)/2$.
Then, for $d=\overline d(H)/4$ we have $\#(H \cap \{\lfloor \hat n\, d \rfloor, \ldots, \hat n\})\ge d \hat n$.
Applying now Lemma~\ref{combinatorial} to $d$, $k=\#\partial_{r/3}+1$ (see the notation in the proof of Theorem~\ref{chi}), $\hat n$, and the set $H_{\hat n}:= H\cap \{1,\ldots,\hat n\}$, we find a number  $m=m(d,k)>0$ and numbers $a_1<a_2<\cdots <a_k$ all in $H_{\hat n}$ such that $a_k-a_1\le m$.

Consider now an arbitrary  constant $b\in(0,1)$ (to be specified later on), close to 1.
Denote
\[
	B:=B(\delta_{r/3},(1-b)r/4)\,.
\]
We consider two cases.
\smallskip

\noindent\textbf{3a. Assume there exists  $n\in H_{\hat n}$ with $f^n(x)\notin B$.}
Then, comparing (\ref{ball2}) and the notation in the proof of Theorem~\ref{chi}, there is some $a\in(0,1)$ such that some point $x'\in K$ is well inside $f^n(W_n)$, that is, $x'\in a\diamond(f^n(W_n))\cap K$, and thus $f^n(W_n)$ contains the ball $B(x',(1-b)r/4)$. So, as above, we conclude
\[
	\mu_t(f^n(W_n))\ge \Upsilon(\mu_t,(1-b)r/4)>0\,.
\]
This proves the assertion in the case $t\ge0$.

For $t<0$ we proved already that there is some point $x''\in K$ such that $B(x'',(1-b)r/4)\subset f^n(W_n)$. As in Step 1b, applying Lemma~\ref{bunch} to $X=K$ there is $r''=\varepsilon((1-b)r/4,M+1)/2$  and $z\in K\cap B(x'',(1-b)r/4)$ such that the pull-back of $B(z,r'')$ along the trajectory of $x$ does not encounter critical points. As before we obtain
\[
		\mu_t(B(x'',(1-b)r/4))\ge \Upsilon(\mu_t,r'')>0\,.
\]
This proves the assertion in the case $t<0$.
\smallskip

\noindent\textbf{3b. Assume $f^n(x)\in B$ for all $n\in H_{\hat n}$.}
In particular, this holds for all $n=a_1<a_2<\cdots<a_k$ in $H_{\hat n}$ as chosen above.
For any integer $N>0$ define
\[
	\alpha_N:=
	{\rm{dist}}\Big(\bigcup_{j=1}^N f^j(\partial_{r/3})\setminus \partial_{r/3},
		\partial_{r/3}\Big)>0\,.
\]	
Then for each $j=1,\ldots,N$ we have for each $z\in \partial_{r/3}\setminus\bigcup_{j=1}^N f^{-j}(\partial_{r/3})$
\begin{equation}
	f^j(B(z,\Lip^{-N}\alpha_N/2))\cap B(\partial_{r/3},\alpha_N/2))=\emptyset,
\end{equation}
where $\Lip$ denotes the Lipschitz constant of $f|_K$. Hence, comparing the definition of $\alpha'$ preceding (\ref{M'}), for
$$
\alpha_N':=\min\Big\{ \frac{\alpha_N}{2\Lip^N}, \frac\Theta2, \frac\theta2\Big\},
$$
 for each $0<s_1<s_2$,  if $f^{s_1}(x)\in B(z_1,\alpha'_N)$ and
$f^{s_2}(x)\in B(z_2,\alpha'_N)$ for $z_1,z_2\in\partial_{r/3}$, then either $s_2-s_1>N$ or
$$
f^{s_2-s_1}(z_1)=z_2,
$$
compare (\ref{end-orbit}).

Now we fix $N:=m$ and let $s_1,s_2$ be any $a_i,a_j$ for $i<j$.  We fix $b$ such that $(1-b)r/4=\alpha'_m$.
Hence $f^{a_i}(x)\in B(\partial_{r/3},\alpha'_m)=B$ for all $i=1,\ldots,k$.

Therefore there is an index $i$ such that $y:=f^{a_i}(x)$ is
within the distance at most $\alpha'_m$ from a periodic point $p\in \partial_{r/3}$, of period at most $m$. Let us take $n:=a_i$.
Note that $p$ is hyperbolic repelling.\footnote{The remainder of the proof would also work if $p$ were  indifferent, repelling to one side, but in this proof we had to exclude indifferent periodic orbits to refer to  \cite{PrzRiv:13} for the existence of $\mu_t$.}

If $y=p$ then $\chi(x)=\chi(p)$, and in particular $\chi(x)$ exists, contrary to our assumption that $x\in  \cL(\alpha,\beta)$ for $\alpha\not=\beta$.
So we can assume $y\not=p$.
Let $\ell\ge n$ be the least integer, such that $f^{\ell}(x)\notin B$.
It exists by the definition of $\theta$.

Considering the pull-back $W_{\ell}$ of $B(f^{\ell}(x),r/2)$ containing $x$, we repeat the reasoning in Case 3a. This provides us lower bounds for $\mu_t$.

Unfortunately we do not know now whether $q(\ell):=\log\,\lvert(f^{\ell})'(x)\rvert/ \ell$ is in $[q-\e,q+\e]$. This depends on $\chi(p)$.
We deal with this difficulty using the item (\ref{auxiliary}) in Lemma~\ref{Levin}. For simplicity, we assume the period of $p$ to be equal to 1.

\smallskip
\noindent\textbf{Case $1$. $\chi(p)\ge q-2\e$.}
Then $q(\ell)\ge \min\{q-\e,\chi(p)-\e\} \ge q - 3\e$. We obtain (\ref{Shrink3}) and (\ref{Shrink4}) with $\ell$ in place of $n$, with $3\e$ in place of $\e$ and with  $q(\ell)\ge q(n)$.

\smallskip
\noindent\textbf{Case $2$. $\chi(p)< q-2\e$.}
In this case
\[
q(t) \leq \frac 1t (nq(n) + (t-n)(\chi(p)+\e)) < q(n)
\]
for $t\in (n,l]$. In particular, $n^{\rm up} >\ell$. Hence, by \eqref{auxiliary}, $q(\ell) \geq q(n) - \e \geq q - 3\e$.
Hence we obtain  (\ref{Shrink3}) and (\ref{Shrink4}) with $\ell$ in place of $n$, $3\e$ in place of $\e$ and with $q(\ell) \geq q - 3\e$.
\smallskip

To define the sequence $(n_i)_i$ in the proposition, we repeat the above steps infinitely many times, for some increasing sequence of restricted Pliss hyperbolic times obtaining  (\ref{Shrink3}) and (\ref{Shrink4}) for some numbers $n_i$ and $q_i:=q(n_i)\in[q,\beta]$.
\end{proof}

Now we can finally prove Theorems~\ref{main3} and~\ref{strangestrange}.
\begin{proof}[Proof of Theorem~\ref{main3}]\

\noindent\textbf{1. Special case.} Suppose that in Proposition~\ref{upper}  all $q_i$'s are equal to $q$, which occurs, for example,  if $K=\hat I_K$ (and also in the case of a rational function on the Riemann sphere as in Appendix A).

Let $x\in \cL(\alpha,\beta)$.
We can assume $x$ is not weakly $S'$-exceptional by replacing it by $f^n(x)$ for some $n>0$, see Lemma~\ref{lem:denseback}.
(
Indeed, for each $n$ we conside $X_n:=\{x\in\cL(\alpha,\beta)\colon f^n(x)\text{ is weakly regular}\}$ and any upper estimate of $\dim_{\rm{H}}(f^n(X_n))$ holds automatically for $X_n$. Notice that $x\in \cL(\alpha,\beta)$ implies that $x$ is not precritical, hence $f^n(x)\in\cL(\alpha,\beta)$.%
). For every $q\in [\alpha^\sharp,\beta]$, for every $t<t_+$ for the measure $\mu_t$ as in the proposition, the lower local dimension at $x$ satisfies
\[\begin{split}
	\underline{d}_{\mu_t}(x)
	&\le \liminf_{i\to\infty}\frac{\log \mu_t \bigr(\Comp_x f^{-n_i}(B(f^{n_i}(x),r))}
		{\log \diam\Comp_x f^{-n_i}(B(f^{n_i}(x),r))}\\
	&\le \liminf_{i\to\infty} \frac{ -n_i (P(t) +
		(q+\varepsilon)t)+\log\Upsilon(t)}{ -n_i(q-\varepsilon)}
	\le \frac{P(t)+qt}{q}+\e',
\end{split}\]
with $\e'>0$ arbitrarily close to 0 for $\e$ appropriately small.
We conclude that
\begin{equation}\label{local}
\underline{d}_{\mu_t}(x)\le \frac{P(t)+qt}{q}=\frac{P(t)}{q}+t\,.
\end{equation}
\smallskip

\noindent\textbf{2. General case.}
In general, Proposition \ref{upper} provides us only with a sequence of $q_i\in [q, \beta]$ instead of $q_i \equiv q$.
This leads to the estimate
\begin{equation} \label{local2}
\underline{d}_{\mu_t}(x)\le \max_{q_i \in [q,\beta]}\frac{P(t)}{q_i}+t\,.
\end{equation}
\smallskip

\noindent\textbf{3. Conclusion.}
To obtain the estimates~\eqref{eq:main3ineq} and~\eqref{eq:main3ineq2} we will use the Frostman Lemma.  To obtain~\eqref{eq:main3ineq} we consider $q=\beta$ and then~\eqref{local} and~\eqref{local2} coincide. We can choose $t<t_+$ such that the right hand side in~\eqref{eq:main3ineq} takes value arbitrarily close to $F(\beta)$, recall \eqref{def:Fa}, and the assertion follows from Frostman Lemma.

To obtain the additional upper bound for \eqref{eq:main3ineq2} (note that our assumptions for this inequality guarantee that we are in the special case here) we will use $q=\alpha^\sharp$. As we are in the special case, \eqref{local} holds and we can find $t<t_+$ such that the right hand side of \eqref{local} is arbitrarily close to $F(\alpha^\sharp)$, and then apply the Frostman Lemma.

This proves Theorem~\ref{main3}.
\end{proof}

\begin{proof}[Proof of Theorem~\ref{strangestrange}]
We are in the proof of Theorem \ref{main3}, in the special case. Let $\alpha$ and $\beta$ be such that $\alpha^\sharp(\alpha, \beta)<\chi_{\inf}$. Let $x\in \cL(\alpha, \beta)$. We have \eqref{local} for any $t<t_+$. If $\alpha^\sharp < \chi_{\inf}$ then $F(\alpha^\sharp)=-\infty$, hence there exist $t<t_+$ such that
\[
\frac {P(t)} t + t < 0\,.
\]
By \eqref{local}, this implies that the corresponding measure $\mu_t$ has negative local dimension at $x$ (that is, the measure $\mu_t(B(x,r))$ escapes to infinity as $r\to 0$), which is impossible. Hence, no such point $x$ can exist.
\end{proof}

\appendix
\section{Strong upper bound. Holomorphic case}
We consider a rational map $f\colon\overline\bC\to\overline\bC$ of degree at least 2. Denote by $J$ its Julia set (it corresponds to the set $K$ in our interval maps notation).
The sets considered below are subsets of $J$.

The following results provide the complex counterparts of our main results. The first one
is a slightly strengthened version of \cite[Theorem 2]{GelPrzRam:10} (observe that there we only assumed that $\alpha>0$). The other two are new. For their proofs we refer to the remarks and footnotes in Section~\ref{strong}.

\begin{theo-app}\label{Jmain2}
Let $f$ be a non-exceptional rational map of degree $\ge2$. For any $\alpha\le \beta \le \chi_{\sup}$ with $\beta>0$, and additionally with $\alpha>0$ if $\chi_{\inf}=0$,  we have
\begin{equation}\label{Jupper-weak}
\min\{F(\alpha), F(\beta)\}\le \dim_{\rm H} \cL(\alpha, \beta)\le
\max\big\{0,\max_{\alpha\leq q \leq \beta}F(q)\big\} .
\end{equation}
In particular, for any $\alpha\in [\chi_{\inf},\chi_{\sup}] \setminus \{0\}$ we have
\[
\dim_{\rm H} \cL(\alpha) = F(\alpha)
\quad\text{ and }\quad
\dim_{\rm H} \cL(0) \geq F(0)\,.
\]
Moreover,
\[
\left\{ x\in K\colon -\infty<\chi(x)<\chi_{\inf}\right\}
 = \left\{ x\in K\colon \overline{\chi}(x)>\chi_{\sup}\right\}
 = \emptyset
\]
and
\[
\dim_{\rm H}\left\{ x\in K\colon 0<\overline{\chi}(x)<\chi_{\inf}\right\} = 0\,.
\]
\end{theo-app}

\begin{theo-app}\label{Jmain3}
Let $f$  be a non-exceptional rational map of degree $\ge 2$ without indifferent periodic orbits.
For any $\alpha\le \beta \le \chi_{\sup}$ with $\beta>0$, we have
	\begin{equation}\label{Jeq:main3ineq2}
\dim_{\rm H}\cL(\alpha,\beta) =
\max\,\bigl\{0,	\min  \{F(\alpha^\sharp), F(\beta)\}\,\bigr\},
\end{equation}
where
\[
	\alpha^\sharp:=\frac{\beta}{1+(\beta-\alpha)/\chi_{\sup}}.
\]	
\end{theo-app}

\begin{theo-app}\label{Jstrangestrange}
Let $f$  be a non-exceptional rational map without indifferent periodic orbits.
 Assume $\chi_{\inf}>0$ ($f$ is Topological Collet-Eckmann). Then, for any $\alpha\le\beta\le\chi_{\rm sup}$ with $\beta>0$, if  $\alpha^\sharp<\chi_{\inf}$, then
 $\cL(\alpha,\beta)=\emptyset$.
\end{theo-app}

\section{On Rivera-Letelier's proof of $\chi(x)\ge\chi_{\inf}$}

After this paper was written, J. Rivera-Letelier explained to us that~$\chi(x)\ge \chi_{\inf}$, more
precisely \eqref{empty1} in Theorem~\ref{main2}, follows easily from his paper \cite{R-L:}.
We devote this Appendix to a brief explanation and comparison to our proof.

In \cite{R-L:} multimodal maps of the unit interval $I$ are considered with $K=J(f)$ being Julia set (see Subsection 1.2: Periodic orbits), of class $C^3$, with all periodic orbits in $K$ hyperbolic repelling and
$f|_K$ topologically exact (which is formally stronger than having positive entropy; see \cite{PrzRiv:13} for a discussion concerning a comparison of these notions). His theory is however working in our more general
setting of $(f,K)\in \sA^{\BD}$.

Define
\begin{equation}\label{ExpShrinkExp}
\chi_{\rm{ExpShrink}}:= -\limsup_{n\to\infty} \frac1n\log\bigl(\max\{|T_n|\}\bigr),
\end{equation}
where the maximum is taken over all connected components $T_n$ of $f^{-n}(T)$ intersecting $K$, over all open intervals $T\subset\bR$ of length at most $r$, not intersecting $G$ being the union of all components of $\bR \setminus K$ longer than some constant $r'$ (`large gaps'). The number $r>0$ is also a constant small enough, in particular not bigger
 than
$\delta$ in the definition of Backward Shrinking in Subsection~\ref{ss:TCE}. (Then $\chi_{\rm{ExpShrink}}$ occurs  independent of $r$ by a variant of the Telescope Proposition~\ref{p.telescope}.)

Then a part of the Main Theorem in \cite[Section 4]{R-L:} asserts (in our setting)
that
\begin{equation}\label{comparison}
\chi_{\rm{ExpShrink}}\ge \chi_{\inf}.
\end{equation}

\begin{corollary}
Let $(f,K)\in \sA^{\BD}_+$ satisfy the weak isolation condition. For every regular $x\in K$ (i.e. such that $\chi(x)$ exists), if $\chi(x)>-\infty$, then
\begin{equation}\label{111}
\chi(x)\ge \chi_{\inf}.
\end{equation}
\end{corollary}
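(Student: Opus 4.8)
The plan is to deduce the inequality from \eqref{comparison}, i.e. from $\chi_{\rm{ExpShrink}}\ge\chi_{\inf}$, by proving that for every Lyapunov regular $x\in K$ with $\chi(x)>-\infty$ one has $\chi_{\rm{ExpShrink}}\le\chi(x)$. First I would carry out the reductions. If $x$ is eventually periodic, then $\chi(x)=\chi(p)$ for a periodic point $p$, and distributing the normalized counting measure on $\cO(p)$ gives $\chi(x)=\chi(p)\ge\chi_{\inf}$; so assume $x$ is not eventually periodic (in particular not precritical, since $\chi(x)$ is finite). Telescoping $\log\lvert(f^n)'(x)\rvert$ and using that $\chi(x)$ exists and is finite yields $\tfrac1n\log\lvert f'(f^n(x))\rvert\to 0$, and then $\chi(x)\ge 0$ by the argument in the proof of Theorem~\ref{chi} (compare \cite[Lemma 6]{GelPrzRam:10}). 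If $\chi_{\inf}=0$ we are done; so assume $\chi_{\inf}>0$, which by the equivalences recalled in Subsection~\ref{ss:TCE} means that $(f,K)$ satisfies TCE and has no indifferent periodic orbits.

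The core step is to exhibit, for every $\varepsilon>0$, infinitely many pull-backs along the orbit of $x$ whose diameter is at least $e^{-n(\chi(x)+\varepsilon)}$. Fix a small $r>0$ (admissible in the definition of $\chi_{\rm{ExpShrink}}$, below the $\delta$ of BaShrink, and small enough that every point of $K$ has at most one large gap of $K$ within distance $r$) and a small $\xi>0$, and apply TCE to $x$: there is a sequence $n_j\nearrow\infty$ such that, along the backward branch ending at $x$, the pull-backs $\widehat W_k$ of $\widehat W=B(f^{n_j}(x),r)$ meet $\Crit(f)$ for at most $M$ indices $k$. Writing $W=B(f^{n_j}(x),r/2)$ and $W_{n_j}$ for the corresponding pull-back, the $\tau$-scaled neighbourhood Claim from the proof of Theorem~\ref{chi}, combined with the bounded distortion estimate \eqref{disttt} on the at most $M+1$ intercritical stretches and with \eqref{e:slowl} at the at most $M$ critical captures, gives (this is exactly the mechanism behind \eqref{dist-est}) $\sup_{W_{n_j}}\lvert(f^{n_j})'\rvert\le C(\tau)^{M}e^{\xi Mn_j}\lvert(f^{n_j})'(x)\rvert$ for $j$ large. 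Since $f^{n_j}(\partial W_{n_j})\subset\partial W$ while $f^{n_j}(x)\in f^{n_j}(W_{n_j})$, we have $\lvert f^{n_j}(W_{n_j})\rvert\ge r/2$, so from $\lvert f^{n_j}(W_{n_j})\rvert\le\sup_{W_{n_j}}\lvert(f^{n_j})'\rvert\cdot\lvert W_{n_j}\rvert$ and $\lvert(f^{n_j})'(x)\rvert^{-1}\ge e^{-n_j(\chi(x)+\varepsilon/2)}$ (regularity of $x$) one obtains, choosing $\xi M\le\varepsilon/2$, that $\lvert W_{n_j}\rvert\ge e^{-n_j(\chi(x)+\varepsilon)}$ for $j$ large.

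To conclude, note that $W_{n_j}$ is a connected component of $f^{-n_j}$ of an interval of length $\le 2r$ and meets $K$ at $x$. If $f^{n_j}(x)$ happens to lie within $r/2$ of a (necessarily unique) large gap $G_0$, then for $j$ large $f^{n_j}(x)$ is not an endpoint of $G_0$ (otherwise the non-eventually-periodic $x$ would have $f^{n_j}(x)$ in a finite set infinitely often), and one replaces $W$ by the one-sided interval between $f^{n_j}(x)-r/2$ and the near endpoint of $G_0$: it still has $f^{n_j}(x)$ in its interior, is disjoint from all large gaps, and its pull-back component through $x$ still has $f^{n_j}$-image of length $\ge r/2$, so the estimate above is unchanged. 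Hence $W_{n_j}$ (or this one-sided variant) is admissible in the definition of $\chi_{\rm{ExpShrink}}$, so $\max\{\lvert T_{n_j}\rvert\}\ge e^{-n_j(\chi(x)+\varepsilon)}$, whence $\limsup_n\tfrac1n\log\max\{\lvert T_n\rvert\}\ge-(\chi(x)+\varepsilon)$, i.e. $\chi_{\rm{ExpShrink}}\le\chi(x)+\varepsilon$. Letting $\varepsilon\to0$ and invoking \eqref{comparison} gives $\chi(x)\ge\chi_{\rm{ExpShrink}}\ge\chi_{\inf}$.

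The main obstacle is the uniform distortion control of the pull-back in the second paragraph: bounding $\sup_{W_{n_j}}\lvert(f^{n_j})'\rvert$ by $\lvert(f^{n_j})'(x)\rvert$ up to a subexponential factor genuinely needs both the $\sA^{\BD}$ hypothesis and the boundedness of the number of critical captures provided by TCE (this is where $\chi_{\inf}>0$ enters, and why, exactly as in the proof of Theorem~\ref{chi}, one must pass to the $\tau$-scaled neighbourhood to keep the near-critical distortion under control). The adjustment of the test interval near large gaps is a genuine but minor point, settled by the finiteness of the family of large gaps together with the non-eventual-periodicity of $x$.
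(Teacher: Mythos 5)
Your overall architecture is sound and largely recombines the paper's own ingredients: the distortion/telescope estimate behind \eqref{dist-est} in the proof of Theorem~\ref{chi} (bounded critical captures at TCE times, the $\tau$-scaled neighborhood Claim, and the subexponential lower bound on $\lvert f'(f^n(x))\rvert$ coming from finiteness of $\chi(x)$), followed by the reduction to Rivera-Letelier's inequality \eqref{comparison}. For comparison, the paper's Appendix~B proof of this corollary does not invoke TCE at this step at all: it pulls back the slowly shrinking balls $B(f^n(x),r e^{-\e n}/2)$, whose pull-backs avoid $\Crit(f)$ entirely because a finite $\chi(x)$ forces the orbit to approach $\Crit(f)$ at most subexponentially, giving $\chi(x)\ge\chi_{\rm ExpShrink}$ directly and for all $n$ large, not only along TCE times.

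There is, however, a genuine gap in your handling of the admissibility constraint near large gaps. When $f^{n_j}(x)$ lies within $r/2$ of an endpoint $g$ of a large gap and you replace $W=B(f^{n_j}(x),r/2)$ by the one-sided interval $W'$ with endpoints $f^{n_j}(x)-r/2$ and $g$, your claim that the pull-back of $W'$ through $x$ still has $f^{n_j}$-image of length at least $r/2$ is unjustified. The argument giving $\lvert f^{n_j}(W_{n_j})\rvert\ge r/2$ uses only that the image is an interval containing $f^{n_j}(x)$ with endpoints in $\partial W$, and for the symmetric $W$ both boundary points are at distance exactly $r/2$; for $W'$ the nearer boundary point is $g$, so one can only conclude that the image reaches either $f^{n_j}(x)-r/2$ or $g$, and in the second case its length is merely $\lvert g-f^{n_j}(x)\rvert$. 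Non-eventual-periodicity of $x$ rules out $f^{n_j}(x)=g$ but gives no lower bound on this distance, which may decay arbitrarily fast (even super-exponentially) in $n_j$; then the lower bound on $\lvert W'_{n_j}\rvert$, hence on $\max\{\lvert T_{n_j}\rvert\}$, collapses. This is exactly the difficulty the paper treats separately: Appendix~B restricts to times at which $f^n(x)$ is at distance at least $r/2$ from $\partial G$ and argues that the absence of such a sequence forces pre-periodicity, while the proof of Theorem~\ref{chi} devotes all of its Step~2 (the sets $\partial_\rho$, condition \eqref{ball2}, the block analysis near the eventually periodic points of $\partial_{r/3}$, and the enlarged constant $M'$ in \eqref{M'}) to producing good times $n$ at which $f^n(W_n)$ contains a ball of definite radius around a point of $K$. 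Your proposal needs an argument of this kind — for instance a case distinction on the upper density of times when $f^n(x)$ is $r/2$-far from $\partial G$, as in cases 2.a/2.b of that proof — since as written the estimate $\chi_{\rm ExpShrink}\le\chi(x)$ is not established when the orbit of $x$ spends essentially all of its time near endpoints of large gaps.
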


\begin{proof}
First notice, as in Proof of Theorem~\ref{chi}  which refers to \cite{GelPrzRam:10}, that  $\chi(x)\ge 0$.
Notice also that by the existence of a finite $\chi(x)$ we have
\begin{equation}\label{subexponential_distance}
\liminf_{n\to\infty}{\rm{dist}}(f^n(x),\Crit(f))=0.
\end{equation}
Using this, one can prove, compare \cite[Claim in Section 3]{GelPrzRam:10}, that there exists $r>0$ (maybe smaller than the previous one) such that for any $\e>0$ arbitrarily  close to 0, for every $n$ large enough the map $f^n$ is a diffeomorphism of
$\Comp_x f^{-n}(B(f^n(x),r\exp(-\e n)/2))$ onto $B(f^n(x),r\exp(-\e n)/2)$ with distortion bounded by, say, $1/2$. Therefore $\chi(x)\ge \chi_{\rm{ExpShrink}}-\e$, hence taking $\e\to 0$
\begin{equation}\label{chichi}
\chi(x)\ge \chi_{\rm{ExpShrink}}.
\end{equation}
Hence, by \eqref{comparison} applied to $T=B(f^n(x),r/2)$,   we get \eqref{111},
provided there is a sequence $n_j$ such that all $f^{n_j}(x)$ are within the distance at least $r/2$ from $\partial G$.
This is not the case only if $x$ is pre-periodic, where $\chi(x)\ge \chi_{\inf}$ holds directly.
\end{proof}

\begin{remark}
In fact we use above only an easy case of \eqref{comparison}, where the distance of $T$ from $\partial G$ is bounded away from 0.
This is explained in more detail below.
\end{remark}

\

The strategy above, together with the proof of \eqref{comparison} is in fact similar to ours. Here is a comparison of both.

\medskip

$\bullet$ Our proof is direct. We shadow $x,...,f^n(x)$, extended by a block of a backward trajectory of $x$  of length bounded by a constant independent of $n$, ending close to $f^n(x)$ (we call it `closing the loop'),
by a periodic trajectory of a point $p_n$ close to $K$ thus in $K$ by the weak isolation assumption.
TCE allows to easily compare the derivatives, though on the other hand we can consider only $n=n_j$, see
\eqref{TCE}, which causes, fortunately minor, difficulties. Hence
$\chi(x)\ge\liminf_{n\to\infty}\chi(p_n)\ge\chi_{\inf}$.

We can `close the loop' whenever we can choose $n$ so that $f^n(x)$ is not too close to $\partial G$, see Section~\ref{s:completeness}, \eqref{a}. If we cannot then $x$ is pre-periodic as above and we are done.

\medskip

$\bullet$ To follow \cite{R-L:} one considers an auxiliary number $\chi_{CE2}(z)$ and proves
that for $z$ being safe and hyperbolic, see definitions in
\cite[Definition 1.22 and Definition 1.23]{PrzRiv:13} and \cite[Definition 12.5.7]{PrzUrb:10},
\begin{equation}\label{4chi}
\chi(x) \ge \chi_{\rm{ExpShrink}} \ge \chi_{CE2}(z)\ge \chi_{\inf}
\end{equation}

\smallskip

{\bf 1.} The first inequality has been already proven above. It is sufficient to consider $x$ not pre-periodic, so we can consider in the
definition of $\chi_{\rm{ExpShrink}}$ only $T$ within the distance at least $r$ from $G$, getting the same inequality.


\medskip

{\bf 2.}
Define  $\chi_{CE2}(z)$ for a point $z\in K$ as logarithm of $\lambda_{CE2}(z)$ being the supremum of such
$\lambda$ that there exists $C=C(z,\lambda)>0$ such that for every $n \ge 1$ and
every $w \in f^{-n}(z)$,
\begin{equation}\label{CE2}
|(f^n)'(w)|\ge C \lambda^n.
\end{equation}
Here $w$ is not necessarily in $K$ but we consider all $w$ such that $\Comp_w f^{-n}(B(z,r_1))$ intersects $K$, for a constant $r_1$.

\smallskip

Safe and hyperbolic points exist, see\cite[Lemma 4.4]{PrzRiv:13}.

\smallskip




The conditions `safe' and `hyperbolic" allow to find via shadowing, as in \cite[Lemma 3.1]{PrzRivSmi:03} or e.g. \cite[Section 3]{PrzRiv:13},
compare also Corollary~\ref{cor:conicalupperlower} (though we do not care in this Corollary about the times of going to large scale), periodic points $p_n$ such that
$\chi_{CE2}(z)\ge\liminf_{n\to\infty}\chi(p_n)\ge\chi_{\inf}$, thus proving the last inequality
in \eqref{4chi}


(The property $\chi_{CE2}(z)>0$  is called $CE2^*(z)$ (or backward Collet-Eckmann condition at $z$ for preimages close to $K$), see \cite[Section 3]{PrzRiv:13}. So the last inequality yields LyapHyp $\Rightarrow CE2^*(z)$ for safe hyperbolic $z$.)


\medskip

{\bf 3.} The novelty in \cite{R-L:} is proving the missing inequality
\begin{equation}\label{missing}
\chi_{\rm{ExpShrink}} \ge \chi_{CE2}(z)
\end{equation}

As remarked above we need here only its easy case where $T$ is not close to $\partial G$. Then also the proof in
\cite[Proof of Theorem C. Case 1]{PrzRiv:13} works. We consider a finite set $Y$
of $f^j$-preimages $y\in K$ of $z$, $j=0,1,...$
which constitute a $\delta$-dense set for $K$ for $\delta$ small enough. By finiteness for every $\lambda$ as in the definition of $\chi_{CE2}(z)$ we can find a common $C$. If $T$ is not close to $\partial$ we can assume it is between two consecutive points $y,y'$ of $V$. Then pulling $[y,y']$ back we obtain \eqref{missing} for $z$ being $y$ or $y'$.

\smallskip

Note the following.

The pull-backs of $T$ become short after a bounded time by Backward Shrinking, see Subsection~\ref{ss:TCE}.
Note that we can assume $\chi_{CE2}(z)>0$, otherwise there is nothing to prove, since $\chi_{\rm{ExpShrink}}$ is obviously non-negative.
Then we deal with short intervals and capture critical points rarely. One proves by induction that increasingly  rarely. This yields at most subexponential growth of $|T_n|\cdot |(f^n)'(x_n)|$. The proof resembles the proof of the Telescope Proposition~\ref{p.telescope}.
One need not use TCE, one allows to capture critical points unbounded number of times.

In the complex situation this proof does not work. One does not have the Backward Shrinking property
and the pull-backs already at the beginning can become uncontrollable large. Fortunately
one has a different method: joining $f^n(x)$ to $z$ satisfying CE2 with a curve $\gamma$ in sufficient distance from
critical values (of subexponential quasi-hyperbolic length, see \cite{GelPrzRam:10} and \cite{Prz:99}).

Surprisingly our proof works in the complex case too, since the Backward Shrinking (even ExpShrink) holds due
to TCE. Since $f$ is open in the complex case, only the easy part of the proof of Theorem 5.1 is needed (\eqref{a} holds automatically). So we get a new proof of $\chi(x)\ge\chi_{\inf}$, easier than the one in \cite{GelPrzRam:10}, just by
shadowing by periodic orbits,
without joining $f^n(x)$ with $\gamma$ to a remote safe hyperbolic $z$.

\bibliographystyle{amsplain}

\begin{thebibliography}{11}

\bibitem{ABV} J. Alves, C. Bonatti, M. Viana,  \emph{SRB measures for partially hyperbolic systems whose central direction is mostly expanding}, Invent. Math. \textbf{140} (2000), 351--398.
%
%
\bibitem{ColLebPor:87} P.~Collet, J.~Lebowitz, and A.~Porzio, \emph{The dimension spectrum of some dynamical systems}, J. Statist. Phys.~\textbf{47} (1987), 609--644.
%
\bibitem{CorRiv:13} D. Coronel and J. Rivera-Letelier, \emph{Low-temperature phase transitions in the quadratic family}, Adv. Math. \textbf{248} (2013), 453�-494.
%
\bibitem{DenMauNitUrb:98} M. Denker, R. D. Mauldin, Z. Nitecki, and M. Urba\'nski, \emph{Conformal measures for rational functions revisited}, Fund. Math. \textbf{157} (1998), 161--173.
%
\bibitem{DPU:96} M.~Denker, F.~Przytycki, M.~Urba\'nski, \emph{On the transfer operator for rational functions on the Riemann sphere}, Ergodic Theory and Dyn. Sys. \textbf{16} (1996), 255-266.
%
 \bibitem {EckPro:86} J.-P.~Eckmann and I.~Procaccia, \emph{Fluctuations of dynamical scaling indices in non-linear systems}, Phys. Rev. A \textbf{34} (1986), 659--661.
%
\bibitem{Gel:10} K.~Gelfert, \emph{Expanding repellers for non-uniformly expanding maps with singularities and criticalities}, Bull. Braz. Math. Soc. (2) \textbf{41} (2010), 237--257.
%
\bibitem{GelPrzRam:10} K.~Gelfert, F.~Przytycki, and M.~Rams, \emph{Lyapunov spectrum for rational maps}, Math. Ann. \textbf{348} (2010), 965--1004.
%
\bibitem{GraSmi:09} J.~Graczyk and S.~Smirnov, \emph{Non-uniform hyperbolicity in complex dynamics}, Invent. Math. \textbf{175} (2009), 335--415.
%
\bibitem{Hof:10} F.~Hofbauer, \emph{Multifractal spectra of Birkhoff averages for a piecewise monotone interval map}, Fundam. Math. \textbf{208} (2010), 95--121.
%
\bibitem{InoRiv:12}   I.~Inoquio-Renteria and J.~Rivera-Letelier, \emph{A Characterization of hyperbolic potentials of rational maps}, Bull. Braz. Math. Soc. (N. S.)\textbf{43}  (2012),  99--127.
%
\bibitem{IomKiw:09} G.~Iommi and J.~Kiwi, \emph{The Lyapunov spectrum is not always concave}, J. Stat. Phys. \textbf{135} (2009), 535--546.
%
\bibitem{IomTod:10} G.~Iommi and M.~Todd, \emph{Natural equilibrium states for multimodal maps}, Comm. Math. Phys. 300 (2010), 65--94.
%
\bibitem{IomTod:11} G.~Iommi and M.~Todd, \emph{Dimension theory for multimodal maps}, Ann. Henri Poincar\'e \textbf{12} (2011), 591--620.
%
\bibitem{IomTod:13} G.~Iommi and M.~Todd, \emph{Thermodynamic formalism for interval maps: inducing schemes} Dyn. Sys. Issue edited by Quas and Vaienti. \textbf{28.3} (2013), 354--380.
%
\bibitem{Kam:02} A.~Kameyama, \emph{Topological transitivity and strong transitivity}, Acta Math. Univ. Comenianae (N.S.) 71.2 (2002), 139--145.
%
\bibitem{Levin} G.~Levin, F.~Przytycki, W.~Shen, \emph{Lower Lyapunov exponent}, manuscript.
%
%
\bibitem{MauUrb:00} D. Mauldin and M. Urba\'{n}ski, \emph{Graph Directed Markov Systems: Geometry and Dynamics of Limit Sets}, Cambridge University Press, 2003.
%
\bibitem{deMvanS:} W.~de Melo and S.~van Strien, \emph{One Dimensional Dynamics}, Springer-Verlag, 1993.
%
\bibitem{MisSzl:80} M.~Misiurewicz and W.~Szlenk, \emph{Entropy of piecewise monotone mappings}, Studia Math.  \textbf{67}  (1980), 1, 45--63.
%
\bibitem{NowPrz:98} T.~Nowicki and F.~Przytycki, \emph{Topological invariance of the Collet-Eckmann property for $S$-unimodal maps}, Fund. Math. \textbf{155} (1998), 33--43.
%
%
%
\bibitem{Ols:95} L.~Olsen, \emph{A multifractal formalism}, Adv. Math. \textbf{116} (1995), 82--196.
%
%
\bibitem{Pli:72} V.~Pliss, \emph{On a conjecture due to Smale}, Differ. Uravn. \textbf{8} (1972), 262--268.
%
\bibitem{Prz:90} F.~Przytycki, \emph{On the Perron-Frobenius-Ruelle operator for rational maps on the Riemann sphere and for H\"older continuous functions}, Bull. Braz. Math. Soc. (N. S.) \textbf{20} (1990), 95--125.
%
\bibitem{Prz:93} F.~Przytycki, \emph{Lyapunov characteristic exponents are nonnegative}, Proc. Amer. Math. Soc.  \textbf{119}  (1993), 309--317.
%
\bibitem{Prz:98} F.~Przytycki, \emph{Iteration of holomorphic Collet-Eckmann maps: Conformal and invariant measures. Appendix: On non-renormalizable quadratic polynomials}, Trans. Amer. Math. Soc. \textbf{350} (1998), 717--742.
%
\bibitem{Prz:99} F.~Przytycki, \emph{Conical limit set and Poincar\'e exponent for iterations of rational functions}, Trans. Amer. Math. Soc. \textbf{351} (1999), 2081--2099.
%
%
\bibitem{Pes:97} Ya.~Pesin, \emph{Dimension Theory in Dynamical Systems: Contemporary Views and Applications},  The University of Chicago Press, 1997.
%
\bibitem{PrzRiv:07} F. Przytycki, J. Rivera-Letelier, \emph{Statistical properties of Topological Collet-Eckmann maps}, Annales Scientifiques de l'\'Ecole Normale Superieure
$4^e$ s\'erie, \textbf{40} (2007), 135--178.
%
%
\bibitem{PrzRiv:13} F.~Przytycki and J.~Rivera-Letelier, \emph{Geometric pressure for multimodal maps of the interval}, {\tt arXiv:1405.2443}.
%
\bibitem{PrzRivSmi:03} F.~Przytycki, J.~Rivera-Letelier, and S.~Smirnov, \emph{Equivalence and topological invariance of conditions for non-uniform hyperbolicity in the iteration of rational maps}, Invent. Math. \textbf{151} (2003), 29--63.
%
\bibitem{PrzRivSmi:04} F.~Przytycki, J.~Rivera-Letelier, and S.~Smirnov, \emph{Equality of pressures for rational functions}, Ergodic Theory Dynam. Systems \textbf{24} (2004) 891--914.
%
%
%
%
\bibitem{PrzRo:98} F.~Przytycki and S.~Rohde, \emph{Porosity of Collet-Eckmann Julia sets}, Fund. Math. 155 (1998), 189--199.
%
\bibitem{PrzUrb:10} F.~Przytycki and M.~Urba\'nski, \emph{Conformal Fractals: Ergodic Theory Methods}, London Mathematical Society Lecture Note Series 371, Cambridge University Press, 2010.
%
\bibitem{Ran:89} D.~Rand, \emph{The singularity spectrum f(a) for cookie-cutters}, Ergodic Theory Dynam. Systems \textbf{9} (1989), 527--541.
%
\bibitem{R-L:} J.~Rivera-Letelier, \emph{Asymptotic expansion of smooth interval maps}, {\tt arXiv:1204.3071v2}.
%
\bibitem{Sch:99} J. Schmeling, \emph{On the completeness of multifractal spectra}, Ergodic Theory Dynam. Systems \textbf{19} (1999), 1595�-1616.
%
%
\bibitem{Wal:81} P.~Walters, \emph{An Introduction to Ergodic Theory}, Graduate Texts in Mathematics 79, Springer, 1981.
%
\bibitem{Wei:99} H.~Weiss, \emph{The Lyapunov spectrum for conformal expanding maps and axiom-A surface diffeomorphisms}, J. Statist. Phys.~\textbf{95} (1999), 615--632.
%
\end{thebibliography}

\end{document}